\newcommand{\twiddle}{\raisebox{1pt}{\scalebox{.75}{$\mathord{\sim}$}}}
\theoremstyle{definition}
\newtheorem{definition}{Definition}[section]
\newtheorem{question}[definition]{Question}
\newtheorem{remark}[definition]{Remark}
\theoremstyle{plain}
\newtheorem{theorem}[definition]{Theorem}
\newtheorem*{theorem*}{Theorem}
\newtheorem{lemma}[definition]{Lemma}
\newtheorem{corollary}[definition]{Corollary}
\newtheorem{proposition}[definition]{Proposition}
\newtheorem*{claim*}{Claim}
\newtheorem{fact}[definition]{Fact}
\begin{document}

\begin{abstract}
We study the topological version of the partition calculus in the setting of countable ordinals. Let $\alpha$ and $\beta$ be ordinals and let $k$ be a positive integer. We write 
$\beta\to_{top}(\alpha,k)^2$ to mean that, for every red-blue coloring of the collection of 2-sized subsets of $\beta$, there is either a red-homogeneous set homeomorphic to $\alpha$ or 
a blue-homogeneous set of size $k$. The least such $\beta$ is the topological Ramsey number $R^{top}(\alpha,k)$.

We prove a topological version of the Erd\H{o}s-Milner theorem, namely that $R^{top}(\alpha,k)$ is countable whenever $\alpha$ is countable. More precisely, we prove that 
$R^{top}(\omega^{\omega^\beta},k+1)\leq\omega^{\omega^{\beta\cdot k}}$ for all countable ordinals $\beta$ and finite $k$. Our proof is modeled on a new easy proof of a weak version 
of the Erd\H{o}s-Milner theorem that may be of independent interest.

We also provide more careful upper bounds for certain small values of $\alpha$, proving among other results that $R^{top}(\omega+1,k+1)=\omega^k+1$, $R^{top}(\alpha,k)<
\omega^\omega$ whenever $\alpha<\omega^2$, $R^{top}(\omega^2,k)\leq\omega^\omega$ and $R^{top}(\omega^2+1,k+2)\leq\omega^{\omega\cdot k}+1$ for all finite $k$.

Our computations use a variety of techniques, including a topological pigeonhole principle for ordinals, considerations of a tree ordering based on the Cantor normal form of ordinals, 
and some ultrafilter arguments.
\end{abstract}

\author{Andr\'{e}s Eduardo Caicedo and Jacob Hilton}
\address{
  Andr\'es Eduardo Caicedo \\
  Boise State University \\
  Department of Mathematics \\
  1910 University Drive \\
  Boise, ID 83725-1555 \\
  USA
 }
\curraddr{Mathematical Reviews \\
  416 Fourth Street \\
  Ann Arbor, MI 48103-4820 \\
  USA
}
\email{aec@ams.org}
\urladdr{\href{http://www-personal.umich.edu/~caicedo/}{http://www-personal.umich.edu/{\twiddle}caicedo/}}

\address{
 Jacob Hilton \\
 School of Mathematics \\
 University of Leeds \\
 Leeds LS2 9JT \\
 UK}
\email{mmjhh@leeds.ac.uk}
\thanks{The second author's research was conducted under the supervision of John K. Truss with the support of an EPSRC Doctoral Training Grant Studentship.}

\keywords{Partition calculus, countable ordinals}

\subjclass[2010]{Primary 03E02. Secondary 03E10, 54A25}

\date{\today}

\dedicatory{To W. Hugh Woodin, on the occasion of his birthday.}

\title[Topological Ramsey numbers]{Topological Ramsey numbers and countable ordinals}

\maketitle

\tableofcontents

\section{Introduction}

Motivated by Ramsey's theorem, the partition calculus for cardinals and Rado's arrow notation were introduced by Erd\H{o}s and Rado in \cite{MR0058687}. The version for ordinals, 
where the homogeneous set must have the correct order type, first appears in their seminal paper \cite{erdosrado}. In the following definition, $[X]^n$ denotes the set of subsets of $X$ 
of size $n$.

\begin{definition}
Let $\kappa$ be a cardinal, let $n$ be a positive integer, and let $\beta$ and all $\alpha_i$ be ordinals for $i\in\kappa$. We write
\[\beta\to(\alpha_i)^n_{i\in\kappa}\]
to mean that for every function $c:[\beta]^n\to\kappa$ (a \emph{coloring}) there exists some subset $X\subseteq\beta$ and some $i\in\kappa$ such that $X$ is \emph{an 
$i$-homogeneous copy of $\alpha_i$}, i.e., $[X]^n\subseteq c^{-1}(\{i\})$ and $X$ is order-isomorphic to $\alpha_i$.
\end{definition}

This relation has been extensively studied, particularly the case $n=2$, where we define the (\emph{classical}) \emph{ordinal Ramsey number} $R(\alpha_i)_{i\in\kappa}$ to be the least 
ordinal $\beta$ such that $\beta\to(\alpha_i)^2_{i\in\kappa}$. Ramsey's theorem can be stated using this notation as $R(\omega,\omega)=\omega$.

A simple argument that goes back to Sierpi\'nski shows that $\beta\not\to(\omega+1,\omega)^2$ for every countable ordinal $\beta$ \cite{sierpinski}, which means that if $\alpha >
\omega$ and $R(\alpha,\gamma)$ is countable, then $\gamma$ must be finite (see also \cite[Theorem 19]{erdosrado} and \cite[Theorem 4]{specker}). On the other hand, Erd\H{o}s and 
Milner showed that indeed $R(\alpha,k)$ is countable whenever $\alpha$ is countable and $k$ is finite \cite{erdosmilner}. Much work has been done to compute these countable ordinal 
Ramsey numbers. In particular, as announced without proof by Haddad and Sabbagh \cite{HS,haddadsabbagh,HS2}, there are algorithms for computing $R(\alpha,k)$ for several 
classes of ordinals $\alpha<\omega^\omega$ and all finite $k$; details are given in \cite{caicedohs} for the case $\alpha<\omega^2$ and in \cite{Mil} for the case $\alpha=\omega^m$ 
for finite $m$. See also \cite[Chapter 7]{W}, \cite{hajnallarson}, \cite{schipperusord} and \cite{weinert}.

With the development of structural Ramsey theory, many variants of the basic definition above have been introduced. In this article we are concerned with a ``topological'' version and a 
closely-related ``closed'' version. (Jean Larson suggested to call the second version ``limit closed''.) Both of these make use of the topological structure of an ordinal, which we endow 
with the order topology. Although the second version may appear more natural, it is the first that has been considered historically, since it can be defined for arbitrary topological spaces; 
the second version additionally requires an order structure.

In the following definition, $\cong$ denotes the homeomorphism relation. Following Baumgartner \cite{baumgartner}, we also say that a subspace $X$ of an ordinal is 
\emph{order-homeomorphic} to an ordinal $\alpha$ to mean that there is a bijection $X\to\alpha$ that is both an order-isomorphism and a homeomorphism, or equivalently, $X$ is both 
order-isomorphic to $\alpha$ and closed in its supremum.

\begin{definition}
Let $\kappa$ be a cardinal, let $n$ be a positive integer, and let $\beta$ and all $\alpha_i$ be ordinals for $i\in\kappa$.

We write
\[\beta\to_{top}(\alpha_i)^n_{i\in\kappa}\]
to mean that for every function $c:[\beta]^n\to\kappa$ there exists some subspace $X\subseteq\beta$ and some $i\in\kappa$ such that $X$ is an \emph{$i$-homogeneous topological 
copy of $\alpha_i$}, i.e., $[X]^n\subseteq c^{-1}(\{i\})$ and $X\cong\alpha_i$.

We write
\[\beta\to_{cl}(\alpha_i)^n_{i\in\kappa}\]
to mean that for every function $c:[\beta]^n\to\kappa$ there exists some subset $X\subseteq\beta$ and some $i\in\kappa$ such that $X$ is an \emph{$i$-homogeneous closed copy of 
$\alpha_i$}, i.e., $[X]^n\subseteq c^{-1}(\{i\})$ and $X$ is order-homeomorphic to $\alpha_i$.

Note that in both cases, $c$ is arbitrary (no continuity or definability is required).
\end{definition}

The two versions coincide in many cases, in particular for ordinals of the form $\omega^\gamma$ or $\omega^\gamma\cdot m+1$ with $m\in\omega$ \cite[Theorem 2.16]{ttppfo}. But in 
general they may differ; for example, $\omega+2$ is homeomorphic but not order-homeomorphic to $\omega+1$, and thus $\omega+1\to_{top}(\omega+2)^1_1$ while $\omega+1
\not\to_{cl}(\omega+2)^1_1$.

We focus primarily on the case $n=2$, where we define the \emph{topological ordinal Ramsey number} $R^{top}(\alpha_i)_{i\in\kappa}$ to be the least ordinal $\beta$ (when one 
exists) such that $\beta\to_{top}(\alpha_i)_{i\in\kappa}$, and the \emph{closed ordinal Ramsey number} $R^{cl}(\alpha_i)_{i\in\kappa}$ likewise. In particular, we study $R^{top}
(\alpha,k)$ and $R^{cl}(\alpha,k)$ when $\alpha$ is a countable ordinal and $k$ a positive integer, which have not previously been explored.

An important ingredient in our work is the case $n=1$, which was first studied by Baumgartner and Weiss \cite{baumgartner}. Analogously to Ramsey numbers, we define the 
\emph{pigeonhole number} $P(\alpha_i)_{i\in\kappa}$ to be the least ordinal $\beta$ such that $\beta\to(\alpha_i)^1_{i\in\kappa}$, and we define the \emph{topological pigeonhole 
number} $P^{top}(\alpha_i)_{i\in\kappa}$ and the \emph{closed pigeonhole number} $P^{cl}(\alpha_i)_{i\in\kappa}$ in a similar fashion. An algorithm for finding the topological 
pigeonhole numbers is given in \cite{ttppfo}, and we will describe the modifications required to obtain the closed pigeonhole numbers (see Section \ref{section:pcl}). Repeatedly we 
make use of the classical fact that the \emph{indecomposable ordinals}, the powers of $\omega$, satisfy $P(\omega^\alpha)_k=\omega^\alpha$ for all finite $k$. 

Returning to the case $n=2$, previous work has tackled uncountable Ramsey numbers. Erd\H{o}s and Rado \cite{erdosrado} showed that $\omega_1\to(\omega_1,\omega+1)^2$. 
Laver noted in \cite{laver} (and a proof, using a pressing down argument, is described for example in \cite{topologicalbh}) that one actually has $ \omega_1\to(\text{Stationary},
\mathrm{top}\,\omega+1)^2$, meaning that one can ensure either a $0$-homogeneous stationary subset or a $1$-homogeneous topological copy of $\omega+1$. Since every 
stationary subset of $\omega_1$ contains topological copies of every $\alpha\in\omega_1$ \cite{friedman}, this therefore shows that $\omega_1\to_{top}(\alpha,\omega+1)^2$ for all 
countable $\alpha$. In turn, this result was later extended by Schipperus using elementary submodel techniques to show that $\omega_1\to_{top}(\alpha)^2_k$ for all $\alpha\in
\omega_1$ and all finite $k$ \cite{topologicalbh} (the topological Baumgartner-Hajnal theorem). Meanwhile, both $\omega_1\to(\omega_1,\alpha)^2$ for all $\alpha\in\omega_1$ 
\cite{todorcevic} and $\omega_1\not\to(\omega_1,\omega+2)^2$ \cite{hajnal} are consistent with $\mathsf{ZFC}$, though the topological version of the former remains unchecked. 
Finally, it is also known that $\beta\not\to_{top}(\omega+1)^2_{\aleph_0}$ for all ordinals $\beta$ \cite{weiss}.

When $n>2$, not much more can be said in the setting of countable ordinals, since Kruse showed that if $n\geq 3$ and $\beta$ is a countable ordinal, then 
$\beta\not\to(\omega+1,n+1)^n$ \cite{MR0214476}. As for the ordinal $\omega_1$, \cite[Theorem 39 (ii)]{erdosrado} shows that $\omega_1\to(\omega+1)^n_k$ for all finite $n,k$ and, 
in fact, it seems to be a folklore result that $\omega_1\to_{top}(\omega+1)^n_k$ for all finite $n,k$;  a proof can be found in \cite{MR1058798}. On the other hand, we have the negative 
relations $\omega_1\not\to(n+1)^n_\omega$ for $n\ge 2$ \cite{erdosrado}, $\omega_1\not\to(\omega+2,n+1)^n$ for $n\ge 4$ \cite{MR0214476}, $\omega_1\not\to(\omega+2, 
\omega)^3$ \cite{MR1755613} and $\omega_1\not\to(\omega_1,4)^3$ \cite{MR0160734}. All that remains to be settled is the conjecture that $\omega_1\to(\alpha,k)^3$ for all $\alpha
\in\omega_1$ and all finite $k$, with the strongest result to date being that $\omega_1\to(\omega\cdot 2+1,k)^3$ for all finite $k$ \cite{albin}, building on the techniques of 
\cite{todorcevic}. The topological version of this result remains unexplored. In a different direction, Rosenberg \cite{rosenberg} has recently verified that $X\to_{top}(\omega+1)^n_k$ for 
all finite $n,k$ whenever $X$ is an uncountable separable metric space. Apparently, this last result has been discovered a few times. In particular, Stevo Todorcevic has informed us 
that in unpublished work from around 1990, he and Weiss characterized those metric spaces $X$ for which $X\to_{top}(\omega+1)^r_k$ for all finite $r,k$. He also provided us with a 
copy of his short unpublished note \cite{todorcevic96} from 1996, where he shows that a regular space $X$ with a point-countable basis is left-separated iff $X\not\to_{top}
(\omega+1)^2_2$ iff for some integers $r,k\ge2$, $X\not\to_{top}(\omega+1)^r_k$.

For a general introduction to the partition calculus of countable ordinals, see \cite{C}, which also provides some context for the work we present here. For a general introduction to 
topological Ramsey theory, see \cite{weiss}. In a sense, \cite{baumgartner} is the direct precursor of this paper and \cite{ttppfo}. Both authors were working independently on this topic 
when the talk on which \cite{C} is based was given. Upon learning of each other's research, we decided to combine and extend our notes. Among other extensions, this resulted in the 
weak topological Erd\H{o}s-Milner theorem.

\subsection{Our results}

Our main result is the weak topological Erd\H{o}s-Milner theorem:
\begin{theorem*}
Let $\alpha$ and $\beta$ be countable nonzero ordinals, and let $k>1$ be a positive integer. If
\[\omega^{\omega^\alpha}\to_{top}(\omega^\beta,k)^2,\]
then
\[\omega^{\omega^\alpha\cdot\beta}\to_{top}(\omega^\beta,k+1)^2.\]
\end{theorem*}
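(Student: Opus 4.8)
The plan is to mimic the classical Erd\H{o}s--Milner induction, but carried out entirely in the topological category, using a product/exponentiation trick adapted to ordinals of the form $\omega^{\omega^\alpha}$. Assume we are given a coloring $c:[\omega^{\omega^\alpha\cdot\beta}]^2\to 2$ with no blue-homogeneous $(k+1)$-set; we must produce a red-homogeneous topological copy of $\omega^\beta$. The key structural observation is that $\omega^{\omega^\alpha\cdot\beta}$ can be written as an increasing ``tower'' indexed by $\omega^\beta$: more precisely, one should think of $\omega^{\omega^\alpha\cdot\beta} = \sup_{\xi<\omega^\beta}\omega^{\omega^\alpha\cdot\xi}$ when $\beta$ is a limit, and exploit the recursive identity $\omega^{\omega^\alpha\cdot(\xi+1)} = (\omega^{\omega^\alpha\cdot\xi})^{\omega^{\omega^\alpha}}$ together with the hypothesis $\omega^{\omega^\alpha}\to_{top}(\omega^\beta,k)^2$ at the successor step.

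First I would set up the right transfinite recursion on $\beta$. For $\beta=1$ the statement asserts $\omega^{\omega^\alpha}\to_{top}(\omega,k+1)^2$, which should follow easily from the hypothesis $\omega^{\omega^\alpha}\to_{top}(\omega,k)^2$ (note $\omega^\beta=\omega$) plus a pigeonhole-style argument: either some point has red-degree containing a copy of $\omega$ — actually one wants to iterate, peeling off points of large red neighborhood — or blue cliques grow; here the topological pigeonhole principle for ordinals (the $n=1$ results of Baumgartner--Weiss, and the algorithm of \cite{ttppfo}) and the fact that $P(\omega^\gamma)_k=\omega^\gamma$ are the tools. For the successor step $\beta\to\beta+1$: given $c$ on $\omega^{\omega^\alpha\cdot(\beta+1)}$, decompose the domain into $\omega^{\omega^\alpha}$-many consecutive blocks each order-homeomorphic to $\omega^{\omega^\alpha\cdot\beta}$. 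Color a pair of block-indices $\{i,j\}$ (with $i<j$) by asking whether ``most'' points of block $i$ see ``most'' of block $j$ red — made precise via the inductive hypothesis applied inside each block, which hands us a red copy of $\omega^\beta$ inside block $i$ that is red-connected to a large chunk of block $j$. Apply $\omega^{\omega^\alpha}\to_{top}(\omega^\beta,k)^2$ to this induced coloring on block-indices: a blue-homogeneous $k$-set of blocks, combined with a point witnessing the blueness across all of them, yields a blue $(k+1)$-clique, contradiction; so we get a red-homogeneous topological copy of $\omega^\beta$ among the block-indices, and stacking the corresponding red copies of $\omega^\beta$ (one per block, chosen coherently so that cross-block pairs are red) closes up topologically into a copy of $\omega^\beta\cdot\omega^\beta$... but we only need $\omega^{\beta+1}=\omega^\beta\cdot\omega$, so in fact even a red copy of $\omega$ among block-indices suffices, which simplifies matters. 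For limit $\beta$, one takes a strictly increasing cofinal sequence $\beta_n\uparrow\beta$, runs the successor-type argument along a nested sequence of sub-blocks of order type $\omega^{\omega^\alpha\cdot\beta_n}$, and glues the resulting red copies of $\omega^{\beta_n}$ into a closed copy of $\omega^\beta=\sup_n\omega^{\beta_n}$.

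The two technical points that need care are: (1) ensuring the pieces glue \emph{topologically}, i.e.\ that the union of red copies of $\omega^\beta$ over a red copy of $\omega$ of block-indices is actually closed in its supremum and homeomorphic to $\omega^{\beta+1}$ — this requires choosing the block-copies so that their suprema form a strictly increasing sequence converging to the next block boundary, and checking no new limit points are created; and (2) the bookkeeping in the ``most points see most points'' reduction, which is really a nested application of the inductive hypothesis and should be organized so that the red copy of $\omega^\beta$ extracted from block $i$ can be uniformly refined as $j$ ranges over later blocks, i.e.\ one should extract it \emph{after} seeing which blocks end up in the homogeneous set. A clean way to handle (2) is to first thin the blocks using a tree/fusion argument keyed to the Cantor normal form, so that within the surviving part of each block the inductive hypothesis applies ``uniformly.''

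The main obstacle I anticipate is the topological gluing in step (1): in the classical (non-topological) Erd\H{o}s--Milner argument one simply concatenates order types, but here the concatenation must be \emph{closed}, so one cannot merely take suprema of the red copies at will — one must arrange the block-copies and their limit points to fit together into a single closed set of the correct order type, which is exactly the kind of delicate limit-point management that distinguishes $\to_{top}$ from $\to$. I expect this is where the paper invests its real effort, likely via the order-homeomorphism characterization (closed in its supremum and order-isomorphic to $\alpha$) and a careful choice of which points to include at each limit stage, possibly organized around the tree ordering on Cantor normal forms mentioned in the abstract.
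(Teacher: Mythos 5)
Your proposal takes a genuinely different architecture from the paper's, and it has a real gap at exactly the spot you flag as ``point (2).''

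The paper does \emph{not} induct on $\beta$. Instead it runs a single diagonal recursion of length $\omega$: the space $\omega^{\omega^\alpha\cdot\beta}$ is covered by pieces $X_{(x,y)}$ (each a closed copy of $\omega^{\omega^\alpha}$, one for each ``level'' $x\in\beta$ and each admissible multiplier $y$), and one fixes an $\omega$-enumeration $(x_n,y_n)$ of the index set in which every pair recurs infinitely often. At stage $m$, a point $a_m$ is chosen inside $X_{(x_m,y_m)}$, above all previously chosen points in that piece, so that $a_m$ is red-connected to every previously chosen $a_n$; the hypothesis $\omega^{\omega^\alpha}\to_{top}(\omega^\beta,k)^2$ together with $P^{cl}(\omega^{\omega^\alpha})_{m-1}=\omega^{\omega^\alpha}$ is used only to justify that such a choice is always possible (or else we finish immediately with a red $\omega^\beta$ or a blue $(k+1)$-set). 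Crucially, the set $A=\{a_n\mid n\in\omega\}$ so built is red-homogeneous \emph{globally}, with no cross-block coherence problem, because each new point is checked against \emph{all} earlier ones. The delicate part is then a separate claim, proved by transfinite induction on $\delta\le\beta$: any set meeting each $X_{(x,y)}$ in a cofinal $\omega$-sequence (which $A$ does, by the choice of the sequence and the ``above $\max Q$'' trick) contains a closed copy of $\omega^\beta$. That closure argument is exactly the limit-point management you anticipate in your point (1), and the paper handles it by constructing cofinal closed copies $C_{\delta,y}$ of $\omega^\delta$ inside the right intervals, reusing an argument from Baumgartner--Weiss.

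Your two-stage scheme (first homogenize the block indices, then fill in a red $\omega^\beta$ per block) cannot work as stated, and the reason is the circularity you name: the red copy of $\omega^\beta$ you must select inside block $i$ has to be red-connected to the copies chosen in the \emph{later} blocks $j$, but your block-index coloring for the pair $\{i,j\}$ is only allowed to quantify over block $i$'s candidate copies against block $j$ alone. For three pairwise-red blocks $i<j<j'$, the witness copy for $\{i,j\}$ and the witness copy for $\{i,j'\}$ need not be compatible, and there is no uniform intersection argument: there are continuum-many candidate copies, and thinning ``by Cantor normal form'' does not obviously produce a single copy in block $i$ that works simultaneously for an entire tail of later blocks. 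This is precisely why Erd\H{o}s--Milner (classical and here) interleave the block visits in a single $\omega$-recursion rather than committing to a block's red witness up front. Your proposed fix (``extract it after seeing which blocks end up in the homogeneous set,'' or ``thin the blocks so the inductive hypothesis applies uniformly'') amounts to re-inventing the diagonal construction, but you'd have to carry it out in full; as written it is a placeholder for the missing idea. One smaller slip: the identity $\omega^{\omega^\alpha\cdot(\xi+1)} = \bigl(\omega^{\omega^\alpha\cdot\xi}\bigr)^{\omega^{\omega^\alpha}}$ is false as an exponentiation (take $\xi=1$); you want ordinal multiplication, $\omega^{\omega^\alpha\cdot(\xi+1)} = \omega^{\omega^\alpha\cdot\xi}\cdot\omega^{\omega^\alpha}$, which is what the ``block'' picture actually uses.
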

This is Theorem \ref{theorem:wtem}. Since trivially $\omega^{\omega^\alpha}\to_{top}(\omega^{\omega^\alpha},2)^2$, it follows by induction on $k$ that $R^{top}(\omega^
{\omega^\alpha},k+1)\leq\omega^{\omega^{\alpha\cdot k}}$. Hence the ordinals $R^{top}(\alpha,k)$ (and $R^{cl}(\alpha,k)$, since the two versions coincide when $\alpha$ is a power of 
$\omega$) are countable for all countable $\alpha$ and all finite $k$. (The reason why we use here the adjective \emph{weak} is discussed in Section \ref{section:wtem}.)

The paper is organized as follows.

Section \ref{section:preliminaries} completes the preliminaries, including further details of the relationship between the topological and closed partition relations.

In Section \ref{section:pcl} we introduce the case $n=1$ and solve the closed pigeonhole principle for ordinals: given a cardinal $\kappa$ and an ordinal $\alpha_i\geq 2$ for each 
$i\in\kappa$, we provide an algorithm to compute $P^{cl}(\alpha_i)_{i\in\kappa}$. This is presented in Theorem \ref{theorem:pcl}. (The corresponding result for $P^{top}$, building on 
\cite[Theorem 2.3]{baumgartner}, is the main theorem of \cite{ttppfo}.) We also prove a simple lower bound for Ramsey numbers in terms of pigeonhole numbers, which remains our 
best lower bound with the exception of a couple of special cases.

In Section \ref{section:omega+1} we look at $\omega+1$ and prove Theorem \ref{theorem:omega+1}:
\begin{theorem*}
If $k$ is a positive integer, then $R^{top}(\omega+1,k+1)=R^{cl}(\omega+1,k+1)=\omega^k+1$.
\end{theorem*}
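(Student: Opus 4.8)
The plan is to prove the two inequalities $R^{top}(\omega+1,k+1)\ge\omega^k+1$ and $R^{top}(\omega+1,k+1)\le\omega^k+1$, and to observe that the closed and topological numbers agree here for free: a subspace of an ordinal is homeomorphic to $\omega+1$ exactly when it is order-homeomorphic to it (in either case it is a strictly increasing $\omega$-sequence together with its supremum), so $\beta\to_{top}(\omega+1,k+1)^2$ and $\beta\to_{cl}(\omega+1,k+1)^2$ are literally the same relation; this is also the instance $\omega+1=\omega^1\cdot1+1$ of \cite[Theorem 2.16]{ttppfo}.

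For the lower bound I would exhibit a $2$-coloring of $[\omega^k]^2$ with no red topological copy of $\omega+1$ and no blue $(k+1)$-clique. Let $d\colon\omega^k\to\{0,1,\dots,k-1\}$ send each nonzero $\xi$ to the exponent of the least term of its Cantor normal form (equivalently, the unique $i$ with $\xi=\omega^i\cdot\gamma$ for a successor ordinal $\gamma$) and put $d(0)=0$, and color $\{\xi,\eta\}$ red iff $d(\xi)=d(\eta)$. A blue-homogeneous set has pairwise distinct $d$-values, hence size at most $k$. And if $\{x_n:n<\omega\}\cup\{x^\ast\}$ with $x_n\uparrow x^\ast$ were red-homogeneous, then $d$ would be some constant $i$ on it; since $x^\ast$ is a limit ordinal this forces $i\ge1$ and $x^\ast=\omega^i\gamma_0+\omega^i$, whereas every $\xi<x^\ast$ with $d(\xi)=i$ satisfies $\xi\le\omega^i\gamma_0$, contradicting $x_n\uparrow x^\ast$. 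Restricting this coloring gives $\gamma\not\to_{top}(\omega+1,k+1)^2$ for all $\gamma\le\omega^k$; this is precisely the lift, via the Ramsey-from-pigeonhole lower bound of Section \ref{section:pcl}, of the pigeonhole inequality $\omega^k\not\to_{top}(\omega+1)^1_k$ (equivalently, $P^{top}(\omega+1)_k=\omega^k+1$, by \cite{ttppfo}).

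The substantive direction is $\omega^k+1\to_{top}(\omega+1,k+1)^2$, which I would prove by induction on $k$. The base case $k=1$ is immediate: in a coloring of $[\omega+1]^2$, either some pair is blue, giving a blue $2$-clique, or all pairs are red, in which case $\omega+1$ itself is a red topological copy of $\omega+1$. For the inductive step, fix a coloring $c$ of $[\omega^k+1]^2$ with no red topological copy of $\omega+1$, write $\tau=\omega^k$ for the top point, and split $\omega^k$ into the consecutive intervals $B_n=[\omega^{k-1}n,\omega^{k-1}(n+1))$, so that each closed block $\bar B_n:=B_n\cup\{\omega^{k-1}(n+1)\}$ is order-homeomorphic to $\omega^{k-1}+1$. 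Applying the inductive hypothesis inside each $\bar B_n$ produces (since there is no red topological copy of $\omega+1$) a blue $k$-clique $K_n\subseteq\bar B_n$. One must then either locate a red topological copy of $\omega+1$ or merge some $K_n$ with one further vertex into a blue $(k+1)$-clique, and to that end I would examine the colors $c(\xi,\tau)$: write $R_\tau$ for the set of $\xi<\tau$ with $c(\xi,\tau)$ red and $B_\tau=\omega^k\setminus R_\tau$. If $B_\tau$ contains a topological copy of $\omega^{k-1}+1$, the inductive hypothesis there yields a blue $k$-clique $K\subseteq B_\tau$, and $K\cup\{\tau\}$ is a blue $(k+1)$-clique. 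Otherwise $B_\tau$ is ``thin'', and one should use the topological pigeonhole principle of \cite{ttppfo} to extract from $R_\tau$ a closed cofinal copy $C$ of $\omega^k$, all of whose points are red-joined to $\tau$, noting that a red-homogeneous cofinal subset of $C$ would together with $\tau$ form a red topological copy of $\omega+1$, and then recursing inside the blocks of $C$.

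The main obstacle is this merging step: the inductive hypothesis delivers a blue $k$-clique in each block, but extending such a clique by one vertex requires control of \emph{all} of the edges from the clique to the new vertex, whereas the naive arguments (passing to a cofinal sequence of top-of-block vertices, or comparing successive cliques) only ever produce one new blue edge at a time. I expect the correct argument to track, for each limit ordinal $x<\tau$, the set of red-neighbors of $x$ lying below $x$, to use the topological pigeonhole principle to show that these omit a topological copy of $\omega^{k-1}+1$ as long as no red topological copy of $\omega+1$ exists, and to feed the resulting abundance of red-closed cofinal structure into a single construction in which the failure to build a red topological copy of $\omega+1$ forces the blue clique to grow to size $k+1$. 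Managing the interaction between the blue cliques coming from different blocks—so that the cross-block edges needed to extend a clique are the right color—is where the bulk of the bookkeeping lies.
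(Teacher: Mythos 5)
Your lower bound (which is exactly the lift of the pigeonhole coloring $x\mapsto\operatorname{CB}(x)$ through Proposition~\ref{proposition:lowerbound}) and your justification that $R^{top}(\omega+1,k+1)=R^{cl}(\omega+1,k+1)$ are both correct and match the paper. The genuine gap is in the inductive step of the upper bound, which you acknowledge you have not closed.

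Two separate problems occur in the strategy you sketch for the inductive step. First, the dichotomy ``either $B_\tau$ contains a topological copy of $\omega^{k-1}+1$, or else $R_\tau$ contains a closed cofinal copy of $\omega^k$'' is not a valid pigeonhole statement. Coloring $x\in\omega^k$ blue iff $\operatorname{CB}(x)=k-1$ and red otherwise gives a blue part of order type $\omega$ (discrete, certainly no copy of $\omega^{k-1}+1$ for $k\ge2$) while the red part contains no closed copy of $\omega^k$: any closed $X\subseteq\omega^k$ with $X$ order-homeomorphic to $\omega^k$ satisfies $X^{(k-1)}\subseteq(\omega^k)^{(k-1)}=\{\omega^{k-1}\cdot m : m\ge1\}$, which is exactly the blue set. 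In the notation of Theorem~\ref{theorem:pcl}, $P^{cl}(\omega^k,\omega^{k-1}+1)=P^{cl}(\omega^k,\omega^k)=\omega^{k\odot k}=\omega^{2k-1}$, which exceeds $\omega^k$ for $k\ge2$, so even if the rest of the argument went through it would not yield the bound $\omega^k+1$. Second, and more fundamentally, this dichotomy never interacts with the blue $k$-cliques $K_n$ you extracted from the blocks, so the ``merging step'' you flag as the obstacle remains open; the hand-wave about tracking red-neighbor sets of each limit ordinal does not resolve it.

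The missing idea is much simpler and uses nothing but Ramsey's theorem on a cofinal $\omega$-sequence. Use $\tau=\omega^k$ itself as the only candidate extension for the cliques: for each $n$, either every vertex of $K_n$ is blue-joined to $\tau$, in which case $K_n\cup\{\tau\}$ is a blue $(k{+}1)$-clique and you are done, or there is a vertex $x_n\in K_n$ with $c(\{x_n,\tau\})$ red. In the latter case the set $\{x_n:n\in\omega\}$ is a cofinal $\omega$-sequence in $\omega^k$, all of whose members are red-joined to $\tau$. Apply the infinite Ramsey theorem to $c$ restricted to $\{x_n:n\in\omega\}$: a blue-homogeneous subset of size $k+1$ finishes one way, while an infinite red-homogeneous $H$ gives $H\cup\{\tau\}$, a red-homogeneous closed copy of $\omega+1$. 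No pigeonhole on $R_\tau$, no recursion inside $C$, and no per-limit bookkeeping is needed. This is precisely where your proposal, despite having the right block structure and the right induction hypothesis, falls short.
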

In Section \ref{section:steppingup} we describe several ``stepping up'' techniques, resulting in Propositions \ref{proposition:easysteppingup} and \ref{proposition:steppingup2}:
\begin{theorem*}
Let $\alpha$ be a successor ordinal, and let $k$, $m$ and $n$ be positive integers with $k\geq 2$.
\begin{enumerate}
\item
$R^{cl}(\alpha+1,k+1)\leq P^{cl}( R^{cl}(\alpha,k+1), R^{cl}(\alpha+1,k) )+1$.
\item
$R^{cl}(\alpha+1,k+1)\leq P^{cl}(R^{cl}(\alpha,k+1))_k+R^{cl}(\alpha+1,k)$.
\item
$R^{cl}(\omega\cdot m+n+1,k+1)\le R^{cl}(\omega\cdot m+1,k+1)+P^{cl}((R^{cl}(\omega\cdot m+n+1,k))_{2m},R(n,k+1))$.
\end{enumerate}
\end{theorem*}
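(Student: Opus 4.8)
The plan is to prove all three bounds by the same ``peeling'' device: to produce a red closed copy of a successor ordinal $\gamma+1$ I will build a red closed copy of $\gamma$ inside a suitable lower block and then adjoin finitely many red points strictly above it. This keeps the copy closed precisely because $\gamma$ is a successor, so that a closed copy of $\gamma$ has a largest element and attaching points above it creates no new limit points. Throughout I write ``red $p$-set''/``blue $p$-set'' for $c$-homogeneous sets of size $p$, and use freely that $R^{cl}(n,k+1)=R(n,k+1)$ for finite $n$.

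For (1), I would set $\mu=P^{cl}(R^{cl}(\alpha,k+1),R^{cl}(\alpha+1,k))$, take a colouring $c\colon[\mu+1]^2\to2$, and split the point set $\mu$ by the colour the top point $\mu$ assigns to each $\xi<\mu$. The definition of $\mu$ gives either a closed copy $Y\cong R^{cl}(\alpha,k+1)$ on which $\mu$ is red, or a closed copy $Y'\cong R^{cl}(\alpha+1,k)$ on which $\mu$ is blue. In the first case $c|_{[Y]^2}$ gives a blue $(k+1)$-set (done) or a red closed copy $Z\cong\alpha$, and then $Z\cup\{\mu\}$ is a red closed copy of $\alpha+1$; in the second case $c|_{[Y']^2}$ gives a red closed copy of $\alpha+1$ (done) or a blue $k$-set $W$, and $W\cup\{\mu\}$ is a blue $(k+1)$-set. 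For (2) I would do the same with a top \emph{block} $J$ of type $R^{cl}(\alpha+1,k)$ sitting above a bottom block $I$ of type $P^{cl}(R^{cl}(\alpha,k+1))_k$: $c|_{[J]^2}$ gives a red closed copy of $\alpha+1$ (done) or a blue $k$-set $W=\{w_1<\dots<w_k\}\subseteq J$; if some $\xi\in I$ is blue to every $w_j$ then $W\cup\{\xi\}$ finishes, and otherwise colouring $\xi\in I$ by $\min\{j:c(\{\xi,w_j\})=\text{red}\}\in\{1,\dots,k\}$ and applying the $k$-colour closed pigeonhole on $I$ produces a closed copy $Y\cong R^{cl}(\alpha,k+1)$ whose points are all red to one fixed $w_{j_0}$; then $c|_{[Y]^2}$ gives a blue $(k+1)$-set or a red closed copy $Z\cong\alpha$, and $Z\cup\{w_{j_0}\}$ (with $w_{j_0}>\max Z$) is a red closed copy of $\alpha+1$.

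The real work is (3). I would take $c\colon[\gamma+\delta]^2\to2$ with $\gamma=R^{cl}(\omega\cdot m+1,k+1)$ and $\delta=P^{cl}((R^{cl}(\omega\cdot m+n+1,k))_{2m},R(n,k+1))$, splitting the domain into a bottom block $I$ of type $\gamma$ and a top block $J$ of type $\delta$. From $c|_{[I]^2}$ I get a blue $(k+1)$-set (done) or a red closed copy $C\subseteq I$ of $\omega\cdot m+1$, written $C=\bigsqcup_{i=1}^m(S_i\cup\{\ell_i\})$, where $\ell_1<\dots<\ell_m$ are the limit points of $C$ and each $S_i$ is the $\omega$-sequence of $C$ with $\sup S_i=\ell_i$. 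Fixing a nonprincipal ultrafilter $U_i$ on each $S_i$, I colour $J$ with $2m+1$ colours: $\xi$ gets colour $0$ if for every $i$ both $c(\{\xi,\ell_i\})=\text{red}$ and $\{x\in S_i:c(\{\xi,x\})=\text{red}\}\in U_i$; otherwise, with $i_0$ least where this fails, $\xi$ gets colour $(i_0,\ell)$ if $c(\{\xi,\ell_{i_0}\})=\text{blue}$, and colour $(i_0,S)$ if instead $\{x\in S_{i_0}:c(\{\xi,x\})=\text{blue}\}\in U_{i_0}$. Applying the closed pigeonhole on $J$ with target $R(n,k+1)$ on colour $0$ and target $R^{cl}(\omega\cdot m+n+1,k)$ on each of the other $2m$ colours, I get one of two outcomes. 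If colour $0$ supplies a set of size $R(n,k+1)$, classical Ramsey on it gives a blue $(k+1)$-set (done) or a red $n$-set $\{p_1<\dots<p_n\}$; then each $\ell_i$ is red to each $p_t$ and $\bigcap_t\{x\in S_i:c(\{x,p_t\})=\text{red}\}\in U_i$ is an infinite subset of $S_i$ converging to $\ell_i$, so the union over $i$ of these sets together with the $\ell_i$, with $\{p_1,\dots,p_n\}$ placed on top, is a red closed copy of $\omega\cdot m+n+1$. Otherwise a bad colour $(i_0,\ast)$ supplies a closed copy $T\cong R^{cl}(\omega\cdot m+n+1,k)$; then $c|_{[T]^2}$ gives a red closed copy of $\omega\cdot m+n+1$ (done) or a blue $k$-set $B'\subseteq T$, which I extend to a blue $(k+1)$-set by adjoining $\ell_{i_0}$ if $\ast=\ell$, or any point of $\bigcap_{\xi\in B'}\{x\in S_{i_0}:c(\{\xi,x\})=\text{blue}\}$ (nonempty as an intersection of finitely many $U_{i_0}$-sets) if $\ast=S$.

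The hard part will be (3), specifically the extraction of the extra point(s): a point of $J$ blue to infinitely many terms of some $S_i$ need not be blue to a common infinite subset of $S_i$ across several such points, so the naive $2m$-colouring ``by which block fails'' cannot be closed off. Fixing the ultrafilters $U_i$ is what repairs this, since ``$U_i$-largely red'' and ``$U_i$-largely blue'' on $S_i$ survive the finitely many intersections coming from the red $n$-set $\{p_t\}$ and the blue $k$-set $B'$; it is also what holds the colour count at $2m+1$, exactly matching the target tuple of the pigeonhole. After that, verifying that the assembled sets are closed and of order type $\omega\cdot m+n+1$ (resp.\ $\alpha+1$) is routine, using that $\omega\cdot m+1$ (resp.\ $\alpha$) is a successor so that its closed copy has a maximum below the points glued on top.
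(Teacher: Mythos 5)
Your proposal is correct and follows essentially the same route as the paper for all three parts: for (1) and (2) the same peeling-off-the-top-point/top-block argument, and for (3) the same decomposition into a bottom block (yielding a red closed copy of $\omega\cdot m+1$), the same $(2m+1)$-fold pigeonhole with targets $((R^{cl}(\omega\cdot m+n+1,k))_{2m},R(n,k+1))$, and the same use of ultrafilters to close off the finitely many intersections along the $\omega$-blocks. The only differences are cosmetic: you make the $(2m+1)$-fold cover into a genuine partition via ``least $i_0$,'' and you carry one ultrafilter $U_i$ per block $S_i$ instead of a single ultrafilter on $\omega$ used as an index set as in the paper.
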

From these one may deduce upper bounds for $R^{cl}(\omega+n,k)$ for all finite $n,k$. We also indicate a lower bound in Lemma \ref{lemma:omegaplus2lower}, yielding 
$R^{cl}(\omega+2,3)=\omega^2\cdot 2+\omega+2$.

In Section \ref{section:lessthanomegasquared} we use the last of these techniques, which in turn utilizes an ultrafilter argument, to prove Theorem 
\ref{theorem:lessthanomegasquared}:
\begin{theorem*}If $k$ and $m$ are positive integers, then $R^{top}(\omega\cdot m+1,k+1)=R^{cl}(\omega\cdot m+1,k+1)<\omega^\omega$.
\end{theorem*}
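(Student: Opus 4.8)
The plan is to split the argument into two parts: first reduce the topological relation to the closed one, and then bound $R^{cl}(\omega\cdot m+1,k+1)$ by induction on $m$. For the reduction, I would invoke the discussion in Section~\ref{section:preliminaries} (based on \cite[Theorem~2.16]{ttppfo}): since $\omega\cdot m+1=\omega^1\cdot m+1$ is of the form for which the two versions coincide, every subspace of an ordinal homeomorphic to $\omega\cdot m+1$ contains a subspace order-homeomorphic to $\omega\cdot m+1$. As homogeneity passes to subsets, and a closed copy is a topological copy, this gives $\beta\to_{top}(\omega\cdot m+1,k+1)^2\iff\beta\to_{cl}(\omega\cdot m+1,k+1)^2$ for every $\beta$, hence $R^{top}(\omega\cdot m+1,k+1)=R^{cl}(\omega\cdot m+1,k+1)$, and it remains to show the latter is below $\omega^\omega$.

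I would in fact prove the stronger statement, by induction on $m\ge1$ (and, for fixed $m$, a secondary induction on $k$), that $R^{cl}(\omega\cdot m+n+1,k+1)<\omega^\omega$ for all finite $n,k$; the case $n=0$ is the theorem. The case $m=1,\ n=0$ is exactly Theorem~\ref{theorem:omega+1}. Given the case $n=0$ for a fixed $m$, the cases $n\ge1$ for that $m$ follow from Proposition~\ref{proposition:steppingup2}, namely
\[R^{cl}(\omega\cdot m+n+1,k+1)\le R^{cl}(\omega\cdot m+1,k+1)+P^{cl}((R^{cl}(\omega\cdot m+n+1,k))_{2m},R(n,k+1)),\]
by induction on $k$: the base $R^{cl}(\omega\cdot m+n+1,2)=\omega\cdot m+n+1$ is immediate, the classical $R(n,k+1)$ are finite, and $\omega^\omega$ is closed under the closed pigeonhole operation (by the algorithm behind Theorem~\ref{theorem:pcl}), so each use of the displayed inequality keeps the value below $\omega^\omega$.

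The crux is the inductive step for $n=0$: bounding $R^{cl}(\omega\cdot(m+1)+1,k+1)$, given $R^{cl}(\omega\cdot m+1,j)<\omega^\omega$ for all $j$ and $R^{cl}(\omega\cdot(m+1)+1,k)<\omega^\omega$. Writing $\omega\cdot(m+1)+1=(\omega\cdot m+1)+(\omega+1)$, the target is a red-homogeneous closed copy $Y$ of $\omega\cdot m+1$ in an initial segment together with a red-homogeneous closed copy $Z$ of $\omega+1$ entirely above it, all cross edges red. Following the technique behind Proposition~\ref{proposition:steppingup2} (this is where the ultrafilter enters), one first locates $Y$ (using $R^{cl}(\omega\cdot m+1,k+1)<\omega^\omega$) inside a long enough initial segment, leaving a tail that is a copy of a large ordinal; fixing a nonprincipal ultrafilter $U$ on $\omega$, one assigns to each tail point a \emph{signature} recording, for each of the $m$ blocks of $Y$, the $U$-majority colour of its edges into that block, and, for each of the $m$ limit points of $Y$, the colour of the edge to it, thereby sorting tail points into $2m$ defective classes and one ``all-red'' class. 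Applying the closed pigeonhole principle thins the tail to a closed copy of a large ordinal on which the signature is constant: a defective constant signature, via the secondary induction hypothesis applied inside this thinned tail, yields a blue $k$-set which together with the corresponding point of $Y$ is a blue $(k+1)$-set; the all-red signature lets one search the thinned tail (of closed order type at least $R^{cl}(\omega+1,k+1)$) for $Z$, while re-selecting the $\omega$-blocks of $Y$ as $U$-large, hence still cofinal, sub-blocks, so that the resulting $Y\cup Z$ is a red-homogeneous closed copy of $\omega\cdot(m+1)+1$.

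The hard part is exactly this last coherence step. Because both $Y$ and $Z$ are infinite, a naive concatenation of Ramsey numbers does not control the cross edges between the blocks, and one cannot fall back on the classical ordinal Ramsey numbers of \cite{caicedohs} since a classical red-homogeneous copy of an ordinal need not contain a closed one. The role of the ultrafilter — and, at the level of Proposition~\ref{proposition:steppingup2}, of the Cantor-normal-form tree ordering — is to coordinate the construction of $Y$ and $Z$ so that a single $U$-guided re-choice of the blocks of $Y$ makes the whole set red-homogeneous, all the while keeping the ambient ordinal below $\omega^\omega$; verifying that the induction then closes (in particular that the bad-signature classes really do feed back into the same relation with a smaller $k$) is the routine but essential bookkeeping.
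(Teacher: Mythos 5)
Your reduction of the topological to the closed relation (via the order-reinforcing property of $\omega\cdot m+1$) is correct and matches the paper, and your use of Proposition~\ref{proposition:steppingup2} to handle the $n\ge1$ cases is fine but not actually needed. The genuine problem is in the heart of the argument: the inductive step on $m$ at $n=0$.

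You claim that once one has a red-homogeneous closed copy $Y=H_1\cup\{x_1\}\cup\dots\cup H_m\cup\{x_m\}$ of $\omega\cdot m+1$ and a red-homogeneous closed copy $Z$ of $\omega+1$ found in the ``all-red'' class above it, a \emph{single} $U$-guided re-selection of each block $H_i$ to a $U$-large sub-block makes $Y\cup Z$ red-homogeneous. This is exactly where the proof breaks. For each $a\in Z$ the set $\{n\mid c(\{h_{i,n},a\})=\text{red}\}$ lies in $U$, but $Z$ has infinitely many points, and the intersection of infinitely many $U$-sets need not lie in $U$ (indeed it may be empty). So there is no single $U$-guided thinning of $H_i$ that is red to all of $Z$ at once, and the sub-block you propose simply need not exist. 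The role you ascribe to the ultrafilter --- ``coordinating'' between two already-chosen infinite pieces in one step --- is not a role an ultrafilter can play.

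The paper's proof (the proof of Theorem~\ref{theorem:lessthanomegasquared}, modelled on Lemma~\ref{lemma:omega2+1}) sidesteps this entirely. It inducts on $k$ only, never on $m$. At stage $k$, it builds $p=R(K^\ast_m,L_{k+1})$ many blocks $H_1\cup\{x_1\},\dots,H_p\cup\{x_p\}$ of type $\omega+1$ in one pass, with the key invariants that each block is red-homogeneous, each $x_i$ is red to everything above it, and each $H_i$ is $U$-red to every single later point. Then each $H_i$ is thinned once, to be red to the \emph{finitely many} later limit points $x_{i+1},\dots,x_p$ --- finitely many $U$-conditions, so this is legitimate --- and the classical Erd\H{o}s--Rado theorem $\omega\cdot p\to(\omega\cdot m,k+1)^2$ is applied to $H_1'\cup\dots\cup H_p'$ to extract a red-homogeneous set $M$ of order type $\omega\cdot m$; the pre-selected $x_i$'s, being red to everything, then close $M$ up to a closed copy of $\omega\cdot m+1$. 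So your remark that ``one cannot fall back on the classical ordinal Ramsey numbers'' is also off the mark: the classical relation is essential to the paper's argument, precisely because it lets one find a red-homogeneous $M$ spread across blocks \emph{without} ever needing the blocks to be pairwise red to one another. If you wish to salvage an induction on $m$, you would need a back-and-forth construction interleaving the choice of $Z$ with the thinning of the $H_i$'s, in the style of the paper's proof of Theorem~\ref{theorem:omegasquared}; the single re-selection you describe is not enough.
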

In fact, in each case the proof gives an explicit upper bound below $\omega^\omega$.

The ultrafilter approach is then further refined in Section \ref{section:omegasquared} to prove Theorem \ref{theorem:omegasquared}:
\begin{theorem*}
If $k$ is a positive integer, then $R^{top}(\omega^2,k)=R^{cl}(\omega^2,k)\leq\omega^\omega$.
\end{theorem*}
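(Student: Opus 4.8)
The plan is to reduce to the closed relation and then establish $\omega^\omega\to_{cl}(\omega^2,k)^2$, from which $R^{top}(\omega^2,k)=R^{cl}(\omega^2,k)\le\omega^\omega$ follows. For the reduction, recall that $\omega^2$ is a power of $\omega$, so the topological and closed versions of the partition relation coincide here (\cite[Theorem 2.16]{ttppfo}, as discussed in Section \ref{section:preliminaries}); hence $\beta\to_{top}(\omega^2,k)^2$ iff $\beta\to_{cl}(\omega^2,k)^2$ for every $\beta$, and in particular $R^{top}(\omega^2,k)=R^{cl}(\omega^2,k)$. So it suffices to show that every $c\colon[\omega^\omega]^2\to 2$ (colours \emph{red} and \emph{blue}) admits either a red order-homeomorphic copy of $\omega^2$ or a blue $k$-set.

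I would argue by induction on $k$. For $k\le 2$ there is nothing to do: if there is no blue $k$-set then $c$ is constantly red, and $\omega^\omega$, having order type at least $\omega^2$, contains an order-homeomorphic copy of $\omega^2$. For the inductive step, fix $c$ with no blue $(k+1)$-set. For a point $p$, write $N_b(p)=\{q:c(\{p,q\})=b\}$; since $p$ is blue-joined to every point of $N_b(p)$, the set $N_b(p)$ contains no blue $k$-set, and so if $N_b(p)$ happens to contain an order-homeomorphic copy of $\omega^\omega$, then applying the inductive hypothesis to the restriction of $c$ to $N_b(p)$ already produces a red copy of $\omega^2$ and we are done. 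Hence we may assume that \emph{no} $N_b(p)$ contains an order-homeomorphic copy of $\omega^\omega$ --- a uniform topological ``smallness'' of the blue graph --- and the problem is reduced to constructing a red copy of $\omega^2$ under this hypothesis.

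For the construction I would build the copy of $\omega^2$ as a union $\bigcup_{m<\omega}Q_m$ of consecutive red-homogeneous order-homeomorphic copies $Q_m$ of $\omega\cdot f(m)+1$ with $f(m)\to\infty$: such a union of cofinally many closed copies of the $\omega\cdot n+1$ is automatically an order-homeomorphic copy of $\omega^2$, so the real content is that all pairs joining two different $Q_m$ are red. The recursion has length $\omega$: at stage $m$ one works inside a residual space $W_m\subseteq\omega^\omega$, lying above everything built so far and order-homeomorphic to $\omega^\omega$, and uses Theorem \ref{theorem:lessthanomegasquared} --- which gives $R^{cl}(\omega\cdot f(m)+1,k+1)<\omega^\omega$, and so, there being no blue $(k+1)$-set, guarantees a red closed copy $Q_m$ of $\omega\cdot f(m)+1$ inside any closed copy of $\omega^\omega$. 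The delicate point is passing from $W_m$ to a $W_{m+1}\subseteq W_m$ that is again order-homeomorphic to $\omega^\omega$ while having all its points red-joined to the countably infinite set $Q_0\cup\dots\cup Q_m$. Here one fixes once and for all a nonprincipal ultrafilter $U$ on $\omega$, uses it to make sense of a point being red-joined to $U$-almost-every term of a given convergent $\omega$-sequence, and, since $Q_{\le m}$ consists of only finitely many limit points together with finitely many convergent $\omega$-sequences, invokes the (topological) pigeonhole principle for the indecomposable ordinal $\omega^\omega$ (in the spirit of $P(\omega^\omega)_r=\omega^\omega$; see Section \ref{section:pcl} and \cite{ttppfo}) to pass to a subspace of $W_m$ of order type $\omega^\omega$ on which the finitely many possible ``$U$-connection patterns'' to $Q_{\le m}$ are constant. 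Carrying suitable bookkeeping through all $\omega$ stages --- and using $f(m)\to\infty$ as a buffer --- one finally thins each $Q_m$ to a sub-copy so that the union of the thinned blocks is red-homogeneous, which is the desired red copy of $\omega^2$.

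The main obstacle is exactly this coordination across the $\omega$ stages. A naive stacking of red copies of the $\omega\cdot m+1$ fails because pairs joining two different copies are uncontrolled, and the set of points red-joined to everything already built can fail to contain a copy of $\omega^\omega$ --- for instance if some chosen point is blue-joined to cofinally much of what lies above it, a possibility that the smallness hypothesis on blue neighbourhoods only partially constrains (a finite union of such ``small'' sets need not be small). Arranging the ultrafilter and pigeonhole arguments to interact so that the residual spaces stay large and the losses inflicted on each $Q_m$ by all later stages remain absorbable within its blocks is the heart of the proof, and is precisely where the ultrafilter method of Section \ref{section:lessthanomegasquared} has to be genuinely refined.
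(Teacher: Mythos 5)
Your reduction to the closed relation and your identification of where the hard work lies are both correct, but the proposal stops exactly at the point where the paper does the real work, so as written it is a plan with an acknowledged gap rather than a proof. You observe, rightly, that no blue neighbourhood can contain a closed copy of $\omega^\omega$, and that this ``smallness'' is not closed under finite unions; you then propose to build red blocks $Q_m\cong\omega\cdot f(m)+1$ inside a shrinking chain of residual $\omega^\omega$'s, controlling cross-pairs by ultrafilter-patterns and the pigeonhole principle $P^{cl}(\omega^\omega)_r=\omega^\omega$. The obstacle you flag at the end --- ``arranging the ultrafilter and pigeonhole arguments to interact so that the residual spaces stay large and the losses inflicted on each $Q_m$ by all later stages remain absorbable'' --- is not solved; it is precisely the content of the theorem.

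Two specific ideas are missing. First, the ultrafilter control you describe only gives you, for each $a$ in a later block, that $a$ is red-joined to $\mathcal U$-most of each convergent sequence in the earlier blocks; this does not by itself let you thin each $Q_m$ once and for all, because the ``bad'' $\mathcal U$-small exceptional sets accumulate over infinitely many later stages. The paper handles this with an explicit strengthening of the construction (it arranges, when choosing the $j$-th limit point $x_j$, that it is red-joined to the first $j$ terms of every earlier $\omega$-sequence) together with a back-and-forth construction of two infinite index sets $I,N\subseteq\omega$, so that at each step only finitely many new constraints appear and the ultrafilter can always be consulted. Your proposal has no analogue of either device; nothing in it prevents the exceptional sets from exhausting a block. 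Second, and more fundamentally, even after all that, the paper does \emph{not} obtain a red-homogeneous $\omega^2$ by bookkeeping alone: the pairs between the non-limit points of different blocks are still uncontrolled. The paper's final step is to apply Specker's theorem $\omega^2\to(\omega^2,k)^2$ to the set of non-limit points of the thinned configuration, and then close up. Your plan never invokes Specker (or any classical ordinal Ramsey input beyond the finite case), and it is not apparent how to finish without it. As a smaller remark, your use of Theorem~\ref{theorem:lessthanomegasquared} to obtain blocks of type $\omega\cdot f(m)+1$ is unnecessary extra machinery: the paper's proof of Theorem~\ref{theorem:omegasquared} only builds copies of $\omega+1$ at each stage and does not depend on Section~\ref{section:lessthanomegasquared} at all.
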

Despite the use of a non-principal ultrafilter on $\omega$, our arguments are formalizable without any appeal to the axiom of choice (see Remark \ref{remark:zf}).

In Section \ref{section:anti-tree} we introduce a different approach in terms of the anti-tree partial ordering on ordinals, which was independently considered by Pi\~na in \cite{pina} for 
her work on extending some results of \cite{baumgartner}. This approach is not used directly in later sections, but does provide a helpful perspective. We use this approach to provide a 
second proof of Theorem \ref{theorem:omega+1}, and to prove Theorem \ref{theorem:topomega2} (but see Remark \ref{remark:omer}):

\begin{theorem*}
$\omega^2\cdot 3\leq R^{top}(\omega\cdot 2,3)\leq\omega^3\cdot 100$.
\end{theorem*}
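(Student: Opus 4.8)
The plan is to establish the two inequalities separately, in each case reducing to the exact value $R^{top}(\omega+1,3)=\omega^2+1$ from Theorem~\ref{theorem:omega+1} and organising the colourings along the anti-tree (Cantor normal form) ordering. Two simple observations are used throughout. First, if a colouring $c$ has no blue triangle, then for every $\gamma$ its blue neighbourhood $N(\gamma)=\{\delta:c(\{\gamma,\delta\})=\text{blue}\}$ is red-homogeneous, so extra red-homogeneous sets come essentially for free. Second, a topological copy of $\omega\cdot 2$ is $(\omega+1)\oplus\omega$, and any set of ordinals that is closed in its supremum and has order type $\ge\omega\cdot 2$ is (or contains) a topological copy of $\omega\cdot 2$; hence, for the upper bound it is enough to produce a red-homogeneous set that is closed in its supremum of order type $\ge\omega\cdot 2$, and, for the lower bound, exactly such configurations must be destroyed.

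For the lower bound $\omega^2\cdot 3\le R^{top}(\omega\cdot 2,3)$, I would fix $\delta<\omega^2\cdot 3$; since $\to_{top}$ is monotone in its first coordinate it suffices to treat $\delta=\omega^2\cdot 2+\gamma$ with $\gamma<\omega^2$. Split $\delta$ into consecutive blocks $B_0,B_1$ of type $\omega^2$ and $B_2$ of type $\gamma$; on each block put a translate of a colouring witnessing $\omega^2\not\to_{top}(\omega+1,3)^2$, respectively $\gamma\not\to_{top}(\omega+1,3)^2$ (valid since $\gamma<\omega^2+1=R^{top}(\omega+1,3)$); and colour the remaining cross-block pairs red apart from a carefully chosen set of exceptions incident to the two boundary ordinals $\omega^2$ and $\omega^2\cdot 2$. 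Because no block internally contains a red topological copy of $\omega+1$, the unique non-isolated point of any putative red topological copy of $\omega\cdot 2$ cannot lie in the interior of a block, and a short argument forces it to be $\omega^2$ or $\omega^2\cdot 2$. One then has to choose the exceptional colours at these two points so that a red-homogeneous sequence cofinal below $\omega^2$ (respectively $\omega^2\cdot 2$) can never be completed, together with a disjoint infinite red-homogeneous piece, to a red copy of $\omega\cdot 2$, while keeping the blue neighbourhood of that boundary point red-homogeneous. This boundary bookkeeping is the delicate part of the lower bound, and it is precisely where having only two boundary ordinals --- that is, $\delta<\omega^2\cdot 3$ --- makes the construction possible.

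For the upper bound $R^{top}(\omega\cdot 2,3)\le\omega^3\cdot 100$, I would fix $c\colon[\omega^3\cdot 100]^2\to 2$ with no blue triangle and use the anti-tree structure: $\omega^3\cdot 100$ is a union of $100$ consecutive blocks of order type $\omega^3$, each of which contains infinitely many pairwise disjoint closed copies of $\omega^2+1$. Inside any such closed copy, Theorem~\ref{theorem:omega+1} together with the absence of blue triangles produces a red topological copy of $\omega+1$, whose maximum is a limit point of the ambient ordinal. The remaining task is to combine two such copies, or one such copy with an extra infinite ``$\omega$''-piece, into a red-homogeneous set that is closed in its supremum of order type $\ge\omega\cdot 2$: running a finite pigeonhole over the $100$ blocks --- the role of the very generous constant $100$, see Remark~\ref{remark:omer} --- one either finds two of the red copies of $\omega+1$ that are entirely red to one another (so their union already works), or locates a point $x$ on one of the copies whose blue neighbourhood $N(x)$ is an infinite red-homogeneous set topologically rich enough to contain a red copy of $\omega\cdot 2$, or passes to a still-large sub-configuration inside such an $N(x)$ and repeats; the extra Cantor--Bendixson level (working with $\omega^3$ rather than $\omega^2$) is what guarantees room remains for the disjoint second ``$\omega$'' after such a step.

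The main obstacle in both directions is topological rather than Ramsey-theoretic: a copy of $\omega\cdot 2$ is not merely a copy of $\omega+1$ together with an infinite set, since the union must have exactly one non-isolated point and all pairs across the two pieces must be red. In the upper bound this is precisely what forces the passage from $\omega^2$ to $\omega^3$ and the finite multiplicity $100$; in the lower bound it is what makes the analysis at the two boundary ordinals, rather than the within-block colourings, the heart of the argument.
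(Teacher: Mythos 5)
Your proposal is a high-level sketch rather than a proof, and on closer inspection both directions have genuine gaps; I'll describe the sharpest one for each.

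\textbf{Lower bound.} You propose to put, on each of the blocks $B_0,B_1\cong\omega^2$, a colouring witnessing $\omega^2\not\to_{top}(\omega+1,3)^2$, to colour all cross-block pairs red, and to confine the blue exceptions to pairs ``incident to the two boundary ordinals $\omega^2$ and $\omega^2\cdot 2$.'' That restriction is fatal. Take the natural within-block colouring (blue iff different Cantor--Bendixson rank), and write $\text{mid}_i$, $\text{bot}_i$ for the CB-rank-$1$ and CB-rank-$0$ points of $B_i$. Let $R_i=\{y\in\text{mid}_i:c(\{\omega^2,y\})=\text{red}\}$ and $S_i=\{x\in\text{bot}_i:c(\{\omega^2,x\})=\text{red}\}$. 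Avoiding a blue triangle at $\omega^2$ forces, for each $i$, that $R_i=\text{mid}_i$ or $S_i=\text{bot}_i$ (otherwise $\omega^2$ is blue to one point of each CB level in $B_i$, and those two are blue to each other within the block). On the other hand, the sets $\text{mid}_1\cup\text{mid}_2\cup\{\omega^2\}$, $\text{mid}_1\cup\text{bot}_2\cup\{\omega^2\}$, $\text{bot}_1\cup\text{mid}_2\cup\{\omega^2\}$ and $\text{bot}_1\cup\text{bot}_2\cup\{\omega^2\}$ are each a disjoint union $(\omega+1)\sqcup\omega$, i.e., copies of $\omega\cdot 2$, and within your framework all cross-block pairs in them are red, as are the same-CB-rank pairs. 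Killing all four forces, respectively, ``$R_1$ or $R_2$ finite,'' ``$R_1$ finite or $S_2$ finite,'' ``$S_1$ bounded or $R_2$ finite,'' and ``$S_1$ bounded or $S_2$ finite.'' Checking the four cases of the blue-triangle constraint against these shows every case is contradictory. So some blue exception away from the boundary ordinals is unavoidable; indeed the paper's colouring (Lemma~\ref{lemma:omega2lower}) explicitly colours $\text{mid}_1$--$\text{mid}_2$ pairs blue, and the whole construction is an explicit triangle-free graph on six vertex classes, not boundary bookkeeping.

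\textbf{Upper bound.} ``Run a finite pigeonhole over the $100$ blocks'' and ``pass to a still-large sub-configuration and repeat'' are not arguments: each of your red copies of $\omega+1$ is an infinite set, so deciding whether two of them are ``entirely red to one another'' is itself an infinite problem, and the proposed iteration has no termination bound. In the paper, the constant $100$ does not arise as a pigeonhole multiplicity at all; it is $R(K_{10}^\ast,L_3)\le 100$ (Larson--Mitchell), used together with $R(\omega^2\cdot 2,3)=\omega^2\cdot 10$ (Haddad--Sabbagh/Weinert) and $\omega^2\to(\omega^2,3)^2$ (Specker) to extract a red-homogeneous structure \emph{top-down} from $X^{(2)}$. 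The decisive step is then the structural Lemma~\ref{lemma:fullsubtreeanalogue}, which shows that a colouring of $\omega^2+1$ with no red $\omega\cdot 2$ and no blue triangle looks, on a full subtree, like the canonical CB-rank colouring, and the blue triangle is manufactured by running that lemma level by level inside a red-homogeneous copy of $\omega^3\cdot 2$. Your bottom-up ``find red $\omega+1$'s and try to glue'' plan needs something like this lemma to have any quantitative control, and as stated it does not supply it.

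Both directions also invoke Theorem~\ref{theorem:omega+1} in ways the paper does not: the lower bound rests on the concrete graph, not on $R^{top}(\omega+1,3)$, and the upper bound rests on the classical ordinal Ramsey numbers and Lemma~\ref{lemma:fullsubtreeanalogue} rather than on repeated applications of $\omega^2+1\to_{top}(\omega+1,3)^2$.
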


In Section \ref{section:omegasquared+1} we climb beyond $\omega^2$ and prove Corollary \ref{corollary:omegasquared+1}:
\begin{theorem*}
If $k$ is a positive integer, then $R^{top}(\omega^2+1,k+2)=R^{cl}(\omega^2+1,k+2)\leq\omega^{\omega\cdot k}+1$.
\end{theorem*}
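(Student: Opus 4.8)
The plan is to derive this from the stepping-up machinery of Section \ref{section:steppingup} together with the bound $R^{top}(\omega^2,k)\leq\omega^\omega$ from Theorem \ref{theorem:omegasquared}, iterating over $k$. The target $\omega^{\omega\cdot k}+1$ has the shape of a successor of a power of $\omega$, which is exactly the kind of ordinal for which the topological and closed relations agree (by \cite[Theorem 2.16]{ttppfo}), so it suffices to work with $R^{cl}(\omega^2+1,k+2)$ and invoke this coincidence at the end; this also explains why the statement asserts equality of the two Ramsey numbers. The base case is $k=0$: we must show $R^{cl}(\omega^2+1,2)=\omega^2+1$, which is immediate since a $2$-homogeneous set of size $2$ is any pair, so a red-homogeneous copy of $\omega^2+1$ is forced as soon as $\beta\geq\omega^2+1$ and the whole space works.

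For the inductive step, I would feed $\alpha=\omega^2$ into part (1) or part (2) of the stepping-up proposition, which gives
\[
R^{cl}(\omega^2+1,k+1)\leq P^{cl}\bigl(R^{cl}(\omega^2,k+1),R^{cl}(\omega^2+1,k)\bigr)+1
\]
(noting $\omega^2$ is a successor ordinal in the relevant sense only after checking the hypothesis — here one really wants $\alpha+1=\omega^2+1$, so $\alpha=\omega^2$, and since the proposition as stated requires $\alpha$ a successor, I would instead use the fact, provable by the same argument, that it applies whenever $\alpha$ is a power of $\omega$ or handle $\omega^2$ directly). Using $R^{cl}(\omega^2,k+1)\leq\omega^\omega$ from Theorem \ref{theorem:omegasquared} and the inductive hypothesis $R^{cl}(\omega^2+1,k+1)\leq\omega^{\omega\cdot k}+1$, the bound becomes
\[
R^{cl}(\omega^2+1,k+2)\leq P^{cl}\bigl(\omega^\omega,\omega^{\omega\cdot k}+1\bigr)+1.
\]
It then remains to evaluate this closed pigeonhole number using the algorithm of Theorem \ref{theorem:pcl}. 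Since $\omega^{\omega\cdot k}+1$ is one more than a power of $\omega$ and $\omega^\omega$ is itself a power of $\omega$ with $\omega^\omega\leq\omega^{\omega\cdot k}$, I expect $P^{cl}(\omega^\omega,\omega^{\omega\cdot k}+1)=\omega^{\omega\cdot k}\cdot\omega^\omega=\omega^{\omega\cdot k+\omega}=\omega^{\omega\cdot(k+1)}$, giving exactly $R^{cl}(\omega^2+1,k+2)\leq\omega^{\omega\cdot(k+1)}+1$ as required.

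The main obstacle is twofold. First, one must verify carefully that the stepping-up inequality applies with $\alpha=\omega^2$ — the statement in the excerpt restricts to successor $\alpha$, so either the proof of Section \ref{section:steppingup} actually goes through for indecomposable $\alpha$ (most likely, since the argument typically splits off a final segment and the key is that $\alpha+1$ has a largest point that is a limit), or one needs a small variant; pinning down which version of stepping-up is cited is the delicate bookkeeping step. Second, the closed pigeonhole computation $P^{cl}(\omega^\omega,\omega^{\omega\cdot k}+1)$ must be run through the algorithm of Theorem \ref{theorem:pcl} correctly: one needs that adding the ``$+1$'' to the second argument only contributes a trailing point and does not inflate the leading exponent, and that combining a copy of $\omega^\omega$ with a closed copy of $\omega^{\omega\cdot k}+1$ forces the product-like bound $\omega^{\omega\cdot(k+1)}$. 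Both of these are routine given the earlier sections, but getting the exponents to land precisely on $\omega\cdot(k+1)$ rather than something slightly larger is where care is needed; an off-by-a-factor error here would only yield an upper bound like $\omega^{\omega\cdot k}\cdot 2+1$ or worse, so the coincidence that $\omega^\omega\cdot\omega^{\omega\cdot k}=\omega^{\omega\cdot(k+1)}$ exactly is what makes the clean statement possible.
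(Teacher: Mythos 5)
Your high-level plan — step up from the $\omega^2$ bound of Theorem \ref{theorem:omegasquared}, iterate over $k$, and close with a $P^{cl}$ computation — correctly identifies the ingredients, and your closed-pigeonhole computation $P^{cl}(\omega^\omega,\omega^{\omega\cdot k}+1)=\omega^{\omega\cdot(k+1)}$ does check out against Theorem \ref{theorem:pcl} (via $\omega^{\omega\cdot k}+1\mapsto\omega^{\omega\cdot k+1}$ and $\omega\odot(\omega\cdot k+1)=\omega\cdot(k+1)$). But there is a genuine gap, and it sits exactly where you flagged uncertainty and then waved it away as ``routine bookkeeping.''

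The hypothesis that $\alpha$ be a \emph{successor} in Proposition \ref{proposition:easysteppingup} is not incidental; it is the entire point. The proof of that proposition hinges on the observation, stated in its first line, that ``since $\alpha$ is a successor ordinal, a closed copy of $\alpha+1$ is obtained from any closed copy of $\alpha$ together with any larger point.'' When $\alpha$ is a limit ordinal such as $\omega^2$, this fails: if $Y$ is a closed copy of $\omega^2$ lying strictly below a point $z$, then $Y\cup\{z\}$ is order-isomorphic to $\omega^2+1$ but need not be closed in its supremum, because $\sup Y$ may be strictly less than $z$ and is then a limit point of $Y\cup\{z\}$ not belonging to it. So the red-homogeneous closed copy of $\omega^2$ that the stepping-up proof produces inside the ``red degree set'' of the top point cannot simply be capped off. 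There is no cheap variant of Proposition \ref{proposition:easysteppingup} that fixes this: one needs the red copy of $\omega^2$ to be \emph{cofinal} in the ambient ordinal, and a pigeonhole number gives no such guarantee.

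This cofinality problem is precisely what the paper's actual proof is built around. Corollary \ref{corollary:omegasquared+1} is deduced from Theorem \ref{theorem:generalomegasquared+1}, which takes as input not the plain relation $\omega^\omega\to_{cl}(\omega^2,k+2)^2$ but a strengthened ``cofinal version'' of it (recorded in Remark \ref{remark:omegasquare}), ensuring that the red copy of $\omega^2$ can be chosen cofinal. The heavy lifting is Lemma \ref{lemma:generalomegasquared+1}, an induction on $l$ in which one carefully builds a cofinal closed copy of $\omega^{\omega\cdot l}$ of blue neighbors of the top point, using Weiss's lemma \cite[Lemma 2.6]{baumgartner} and an argument from \cite[Theorem 2.3]{baumgartner} to splice together cofinal pieces across a sequence of subintervals. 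None of this is subsumed by Section \ref{section:steppingup}. Separately, note that your proposed base case $k=0$ asserts $R^{cl}(\omega^2+1,2)\le\omega^{\omega\cdot 0}+1=2$, which is false and also outside the stated range ($k$ positive); the genuine base case $k=1$, namely $R^{cl}(\omega^2+1,3)\le\omega^\omega+1$, is already nontrivial and itself requires the cofinality argument.
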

We deduce this from a more general result, which can also be seen as a generalization of Theorem \ref{theorem:omega+1}:
\begin{theorem*}
Let $\alpha$ and $\beta$ be countable ordinals with $\beta>0$, let $k$ be a positive integer, and suppose they satisfy a ``cofinal version'' of
\[\omega^{\omega^\alpha}\to_{cl}(\omega^\beta,k+2)^2.\]
Then
\[\omega^{\omega^\alpha\cdot(k+1)}+1\to_{cl}(\omega^\beta+1,k+2)^2.\]
Moreover, if $\omega^{\omega^\alpha}>\omega^\beta$, then in fact
\[\omega^{\omega^\alpha\cdot k}+1\to_{cl}(\omega^\beta+1,k+2)^2.\]
\end{theorem*}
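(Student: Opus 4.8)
The plan is to prove the ``cofinal stepping up'' result by combining a closed pigeonhole argument at the top level with the hypothesis applied inside a suitable ``block.'' Suppose we are given a red-blue coloring $c$ of $[\gamma+1]^2$ where $\gamma=\omega^{\omega^\alpha\cdot(k+1)}$ (or $\omega^{\omega^\alpha\cdot k}$ in the improved case, when $\omega^{\omega^\alpha}>\omega^\beta$), and assume there is no blue-homogeneous set of size $k+2$. We must produce a red-homogeneous closed copy of $\omega^\beta+1$. First I would fix attention on the top point $\gamma$. For each $\delta<\gamma$, the color $c(\{\delta,\gamma\})$ is either red or blue; since a blue-homogeneous set of size $k+2$ is forbidden, the set $B=\{\delta<\gamma: c(\{\delta,\gamma\})=\text{blue}\}$ cannot itself contain a blue-homogeneous set of size $k+1$, so it suffices to understand the red ``neighborhood'' $A=\{\delta<\gamma:c(\{\delta,\gamma\})=\text{red}\}$. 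The key point is that $\gamma$ is a power of $\omega$, so by the topological/closed pigeonhole principle for $P^{cl}(\omega^{\omega^\alpha\cdot(k+1)})_2=\omega^{\omega^\alpha\cdot(k+1)}$, one of $A,B$ contains a closed copy of $\gamma$ cofinal in $\gamma$; we will want $A$ to, which is where the hypothesis on $B$ (no large blue-homogeneous subset) has to be leveraged — roughly, a closed copy of $\gamma$ inside $B$ would, by an inductive/Erd\H{o}s--Milner-type bite, already contain a blue-homogeneous $(k+1)$-set, contradiction. So we may assume $A$ contains a closed cofinal copy $C$ of $\gamma$, and every element of $C$ sees $\gamma$ in red.

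Next I would exploit the arithmetic $\omega^{\omega^\alpha\cdot(k+1)} = \sup_{\xi<\omega^{\omega^\alpha\cdot k}} \omega^{\omega^\alpha\cdot k}\cdot(1+\xi)$ — more usefully, partition $C$ (or $\gamma$) into $\omega^{\omega^\alpha\cdot k}$-indexed consecutive blocks each of order type $\omega^{\omega^\alpha}$ (closed, so that the blocks are genuine intervals with closed copies of $\omega^{\omega^\alpha}$ sitting inside). Within each block $I_\xi$, apply the cofinal version of the hypothesis $\omega^{\omega^\alpha}\to_{cl}(\omega^\beta,k+2)^2$ to the restriction of $c$ to $[I_\xi]^2$: either we get a red-homogeneous closed copy of $\omega^\beta$ inside $I_\xi$ that is cofinal in $I_\xi$, or we get a blue-homogeneous $(k+2)$-set (forbidden). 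So each block yields a cofinal red-homogeneous closed copy $H_\xi\cong\omega^\beta$. Now run the standard Erd\H{o}s--Milner recursion across the blocks: pick points and blocks greedily so that the red-homogeneous pieces $H_\xi$ chosen are pairwise red-connected; because there are $\omega^{\omega^\alpha\cdot k}$ blocks and the ``bad'' (blue) interactions are controlled by the absence of a blue $(k+2)$-set — at each stage the blue-neighborhood of a chosen finite red-homogeneous configuration, intersected with the remaining blocks, cannot be ``too large'' in the pigeonhole sense — we can keep going through enough blocks. The factor $k+1$ versus $k$ is exactly the usual Erd\H{o}s--Milner bookkeeping: in the general case we budget one extra copy of $\omega^{\omega^\alpha}$ to absorb the first ``bite,'' while if $\omega^{\omega^\alpha}>\omega^\beta$ the target $\omega^\beta$ already fits cofinally inside a single $\omega^{\omega^\alpha}$-block with room to spare, saving one factor.

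Assembling the pieces: having selected a red-homogeneous transversal — one point from each of sufficiently many blocks, all mutually red, call this set $T$ of order type $\omega^\beta$ — and noting that we really want a closed copy of $\omega^\beta+1$, I would take the red-homogeneous copy of $\omega^\beta$ obtained and then cap it with the point $\gamma$: since every point used lies in $C\subseteq A$, it sees $\gamma$ in red, so $T\cup\{\gamma\}$ is still red-homogeneous. The remaining check is topological: we need $T\cup\{\gamma\}$ to be order-homeomorphic to $\omega^\beta+1$, i.e.\ closed in its supremum $\gamma$. This is arranged by insisting throughout that the blocks chosen are cofinal in $\gamma$ and that inside each block the chosen red-homogeneous copy is closed; the union of closed copies along a closed cofinal block sequence, together with the single limit point $\gamma$ on top, is closed — this is the same gluing lemma used in the closed pigeonhole arguments of Section~\ref{section:pcl}. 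In the case where we only ``bite'' enough blocks to get order type $\omega^\beta$ (not $\omega^\beta+1$) within the top structure, the final $+1$ comes for free from the point $\gamma$ precisely because of this closedness.

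The main obstacle I expect is the second paragraph: running the Erd\H{o}s--Milner recursion across the blocks while keeping \emph{both} the red-homogeneity of the transversal \emph{and} the closedness/cofinality needed for the topological conclusion, and simultaneously ensuring we never exhaust the available blocks before reaching order type $\omega^\beta$ (respectively $\omega^\beta+1$). This is where the precise ``cofinal version'' of the hypothesis has to be formulated carefully — it must deliver a red copy that is cofinal in the block, not merely somewhere inside it — and where the counting $\omega^{\omega^\alpha\cdot k}$ many blocks has to be shown to outlast the blue-neighborhood losses, which are bounded using the no-blue-$(k+2)$-set assumption together with the pigeonhole number $P^{cl}$ of the relevant power of $\omega$. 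Getting the exact exponent $\omega^\alpha\cdot(k+1)$ (versus $\omega^\alpha\cdot k$) to match the Erd\H{o}s--Milner loss, rather than something slightly larger, will require being economical at exactly one step of the recursion, namely the initial bite.
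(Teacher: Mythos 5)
Your plan has the roles of $A$ and $B$ reversed, and this is a fundamental problem rather than a fixable detail. You want the \emph{red} neighbourhood $A=\{\delta<\gamma: c(\{\delta,\gamma\})=\text{red}\}$ of the top point to contain a cofinal closed copy of $\gamma$, and you argue that $B$ cannot have one because a closed copy of $\gamma$ inside $B$ ``would contain a blue $(k{+}1)$-set.'' This last claim is unjustified --- nothing about $B$ beyond the absence of a blue $(k{+}1)$-set is given, and there is no inductive hypothesis available on $k$ here, because the cofinal hypothesis in the theorem fixes $k$: the cofinal relation $\omega^{\omega^\alpha}\to_{cl}(\omega^\beta,k{+}2)^2$ does not yield the stronger relation for $k{+}1$ in place of $k{+}2$, so ``the theorem for $k{-}1$'' is not at your disposal. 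In fact the truth is the opposite of what you want. If $A$ contained even a cofinal closed copy $Y$ of $\omega^{\omega^\alpha}$, then applying the cofinal hypothesis to $Y$ would produce either a blue set of size $k{+}2$ (contradiction with your standing assumption) or a cofinal red-homogeneous closed copy of $\omega^\beta$ inside $Y$, which, capped by $\gamma$ (red to everything in $A$), is already a red-homogeneous closed copy of $\omega^\beta+1$ --- the very thing you are trying to build. So under the hypotheses you have to work with, the red neighbourhood \emph{cannot} carry a cofinal closed copy of $\omega^{\omega^\alpha}$, let alone of $\gamma$, and the pigeonhole $P^{cl}(\gamma)_2=\gamma$ forces the blue neighbourhood $B$ to contain the cofinal closed copy of $\gamma$.

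The paper's proof exploits exactly this: the key Lemma (the analogue of Lemma~\ref{lemma:fullsubtree}, here Lemma~\ref{lemma:generalomegasquared+1}) shows by induction on $l$ that, under the two no-homogeneous-set assumptions, there is a \emph{cofinal closed copy $X$ of $\omega^{\omega^\alpha\cdot l}$ that is entirely blue to the top point}. One then proves $\omega^{\omega^\alpha\cdot l}+1\to_{cl}(\omega^\beta+1,l+2)^2$ by induction on $l\in\{1,\dots,k\}$ (with the cofinal hypothesis, which involves $k$, held fixed): in the base case $X$ must be internally red-homogeneous (to avoid a blue triangle) and then already contains a red closed copy of $\omega^\beta+1$; in the inductive step one applies the inductive hypothesis inside a closed copy of $\omega^{\omega^\alpha\cdot(l-1)}+1$ sitting in $X$, obtaining either a red $\omega^\beta+1$ or a blue set of $l{+}1$ points, and in the latter case the top point of $X$ --- being blue to all of $X$ --- extends it to a blue $(l{+}2)$-set. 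Your block decomposition plus Erd\H{o}s--Milner recursion is, by contrast, essentially the method of Theorem~\ref{theorem:wtem} in the next section, and neither applies here (it is built on the false premise about $A$) nor is it needed, since the lemma plus a short induction on the number of levels does the whole job.
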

This is Theorem \ref{theorem:generalomegasquared+1}. (We will give the precise meaning of this cofinal partition relation in the full statement of the theorem.)

Our main result, the weak topological Erd\H{o}s-Milner theorem, is finally proved in Section \ref{section:wtem}. The proof is somewhat technical, but does not directly appeal to any 
earlier results. From this we obtain Corollary \ref{corollary:wtem}:

\begin{theorem*}
Let $\alpha$ be a countable nonzero ordinal, and let $k$, $m$ and $n$ be positive integers.
\begin{enumerate}
\item
$R^{top}(\omega^{\omega^\alpha},k+1)=R^{cl}(\omega^{\omega^\alpha},k+1)\leq\omega^{\omega^{\alpha\cdot k}}$.
\item
$R^{top}(\omega^{\omega^\alpha}+1,k+1)=R^{cl}(\omega^{\omega^\alpha}+1,k+1)\leq
\begin{cases}
\omega^{\omega^{\alpha\cdot k}}+1,&\text{if $\alpha$ is infinite}\\
\omega^{\omega^{(n+1)\cdot k-1}}+1,&\text{if $\alpha=n$ is finite.}
\end{cases}$
\item
$R^{top}(\omega^n\cdot m+1,k+2)=R^{cl}(\omega^n\cdot m+1,k+2)\leq\omega^{\omega^k\cdot n}\cdot R(m,k+2)+1$.
\end{enumerate}
\end{theorem*}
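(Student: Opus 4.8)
The plan is to obtain part~(1) by a short induction on $k$ from Theorem~\ref{theorem:wtem}, and to extract parts~(2) and~(3) from the same construction, run in a strengthened ``cofinal, with a top vertex'' mode; every resulting $R^{top}$ bound then transfers verbatim to $R^{cl}$, since topological and closed copies coincide for ordinals of the form $\omega^\gamma$ or $\omega^\gamma\cdot m+1$ (\cite[Theorem 2.16]{ttppfo}), so it suffices to prove the $R^{top}$ upper bounds. For part~(1): the base case $k=1$ is the trivial relation $\omega^{\omega^\alpha}\to_{top}(\omega^{\omega^\alpha},2)^2$ (a $2$-colouring with no blue pair is monochromatically red, and $\omega^{\omega^\alpha}$ is a red copy of itself). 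For the inductive step, assume $\omega^{\omega^{\alpha\cdot k}}\to_{top}(\omega^{\omega^\alpha},k+1)^2$; since $\omega^{\alpha\cdot k}$ is again a power of $\omega$, Theorem~\ref{theorem:wtem} applied with its ``$\beta$'' equal to $\omega^\alpha$ and its ``$k$'' equal to $k+1>1$ yields $\omega^{\omega^{\alpha\cdot k}\cdot\omega^\alpha}\to_{top}(\omega^{\omega^\alpha},k+2)^2$, and $\omega^{\alpha\cdot k}\cdot\omega^\alpha=\omega^{\alpha\cdot k+\alpha}=\omega^{\alpha\cdot(k+1)}$, which is the bound for $k+1$. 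Thus after $k-1$ applications of Theorem~\ref{theorem:wtem} the exponent-of-the-exponent has grown from $\alpha$ to $\alpha+\alpha\cdot(k-1)=\alpha\cdot k$.

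For part~(2) one must adjoin a top vertex to the copies furnished by part~(1). The prototype for this is the $\omega^2+1$-style argument behind Theorem~\ref{theorem:generalomegasquared+1}, but rather than invoking that theorem once (whose exponent bookkeeping is tuned to its own setting and is not sharp here for $k\ge 3$), I would re-run the induction of Section~\ref{section:wtem}, at every stage both keeping the red copy \emph{cofinal} in the portion of the domain being used and threading one extra point through the construction, so that at the end one has a cofinal red closed copy of $\omega^{\omega^\alpha}$ together with a point above it seeing it red, i.e.\ a red closed copy of $\omega^{\omega^\alpha}+1$. The cofinality demand is mild—the trivial base case already enjoys it and the stepping-up step preserves it—but the extra point costs an additional unit in the exponent-of-the-exponent at each stage, so each step now contributes $\alpha+1$ rather than $\alpha$, and after the $k-1$ steps from the base case one reaches $\alpha+(\alpha+1)\cdot(k-1)=(\alpha+1)\cdot k-1$. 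When $\alpha$ is infinite this collapses, since then $(\alpha+1)\cdot k=\alpha\cdot k+1$, to $\alpha\cdot k$, giving $\omega^{\omega^{\alpha\cdot k}}+1$; when $\alpha=n$ is finite the increment is genuine and gives $\omega^{\omega^{(n+1)\cdot k-1}}+1$.

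For part~(3) the target $\omega^n\cdot m+1$ is $m$ consecutive intervals of order type $\omega^n$ followed by a top point. I would first bound $R^{cl}(\omega^n,k+2)$ by $\omega^{\omega^k\cdot n}$ by an induction on $n$ resembling the one in Section~\ref{section:wtem} but carried out along the Cantor normal form, each of the $n$ levels contributing a factor $\omega^{\omega^k}$, while keeping the copies cofinal. To pass from $\omega^n$ to $\omega^n\cdot m$, split a domain of size $\omega^{\omega^k\cdot n}\cdot R(m,k+2)$ into $R(m,k+2)$ blocks each of type $\omega^{\omega^k\cdot n}$, colour pairs of blocks according to whether the red copies they supply can be amalgamated without creating a blue edge, and feed this into the classical Ramsey number $R(m,k+2)$ to get either a blue $(k+2)$-set outright or $m$ mutually compatible red copies of $\omega^n$, which glue (using cofinality of the copies, so that the gluing is closed) into a red closed copy of $\omega^n\cdot m$. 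Finally the top vertex is adjoined exactly as in part~(2), at the cost of the trailing ``$+1$''.

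The main obstacle is that Theorem~\ref{theorem:wtem} cannot be used as a black box for parts~(2) and~(3): what is really needed is the cofinal form of the copies its proof produces, together with the freedom to carry an extra point (and, in part~(3), an extra block-level colour) through the induction without disturbing the exponent growth by more than the amounts indicated above. So the substantive work is to check that the argument of Section~\ref{section:wtem} genuinely supports these refinements and to carry out the finite-$\alpha$ (and finite-$n$) bookkeeping exactly; the rest—the base cases, the ordinal arithmetic $\omega^{\alpha\cdot k}\cdot\omega^\alpha=\omega^{\alpha\cdot(k+1)}$, and the passage from $R^{top}$ to $R^{cl}$ via \cite[Theorem 2.16]{ttppfo}—is routine. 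One also notes that part~(3) with $m=1$ gives the bound for $\omega^n+1$, which for infinite $n$ is already contained in part~(2).
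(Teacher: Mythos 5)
Part (1) is exactly the paper's proof (induction on $k$ via Theorem~\ref{theorem:wtem}, using $\omega^{\alpha\cdot k}\cdot\omega^\alpha=\omega^{\alpha\cdot(k+1)}$), so no issues there.

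For parts (2) and (3) you propose to re-run the whole induction of Section~\ref{section:wtem} in a ``cofinal with a top vertex'' mode, threading an extra point through every stage, and you acknowledge that ``the substantive work is to check that the argument of Section~\ref{section:wtem} genuinely supports these refinements.'' This substantive work is not carried out, and it is not needed: the paper handles both parts with a \emph{single} application of the ``successor'' variant of the theorem, Theorem~\ref{theorem:wtemsuccessor}, which you seem not to have noticed. Concretely, for (2) one applies part~(1) with the exponent $\alpha+1$ in place of $\alpha$ — the point being that $\omega^{\omega^{\alpha+1}}>\omega^{\omega^\alpha}+1$ — to get $\omega^{\omega^{(\alpha+1)\cdot(k-1)}}\to_{cl}(\omega^{\omega^\alpha}+1,k)^2$, and then a single application of Theorem~\ref{theorem:wtemsuccessor} (with $\beta=\omega^\alpha$, $m=1$) yields $\omega^{\omega^{(\alpha+1)\cdot(k-1)+\alpha}}+1\to_{cl}(\omega^{\omega^\alpha}+1,k+1)^2$. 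Part~(3) is analogous: part~(1) gives $\omega^{\omega^k}\to_{cl}(\omega^\omega,k+1)^2\Rightarrow\omega^{\omega^k}\to_{cl}(\omega^n\cdot m+1,k+1)^2$, and one application of Theorem~\ref{theorem:wtemsuccessor} (with $\beta=n$) gives the claimed bound, with the $R(m,k+2)$ factor arising from the block of top endpoints handled in that theorem's proof, not from an ad hoc compatibility colouring of blocks.

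There is also an ordinal-arithmetic slip in your bookkeeping for (2). You write $\alpha+(\alpha+1)\cdot(k-1)=(\alpha+1)\cdot k-1$, but this is false for infinite $\alpha$: taking $\alpha=\omega$, $k=2$ gives $\omega+(\omega+1)=\omega\cdot2+1$, whereas $(\omega+1)\cdot2-1=\omega\cdot2$. The identity that actually holds, and is what the paper uses, is $(\alpha+1)\cdot(k-1)+\alpha=(\alpha+1)\cdot k-1$, with $\alpha$ added on the \emph{right}; this is what collapses to $\alpha\cdot k$ for infinite $\alpha$ and to $(n+1)k-1$ for $\alpha=n$ finite. Your ordering of the sum would give the strictly larger bound $\omega^{\omega^{\alpha\cdot k+1}}+1$ in the infinite case, missing the stated result. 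Finally, the closing remark that ``part~(3) with $m=1$ gives the bound for $\omega^n+1$, which for infinite $n$ is already contained in part~(2)'' is a slip: in part~(3) the exponent $n$ is a positive integer, so ``infinite $n$'' does not arise.
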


We close in Section \ref{section:questions} with some questions.

\subsection{Acknowledgements}

The first author wants to thank Jean Larson for her enthusiasm and encouragement when this project was just getting started. 
We also want to thank Omer Mermelstein and Stevo Todorcevic, for their 
feedback on an earlier version of this paper and their permission to quote some of their results. 

\section{Preliminaries}\label{section:preliminaries}

In this paper, all arithmetic operations on ordinals are in the ordinal sense. We use interval notation in the usual fashion, so that for example if $\alpha$ and $\beta$ are ordinals then 
$[\alpha,\beta)=\{x\mid\alpha\leq x<\beta\}$. We denote the cardinality of a set $X$ by $|X|$, and use the symbol $\cong$ to denote the homeomorphism relation.

The classical, topological and closed partition relations are defined in the introduction. When $n=1$ we may work with $\beta$ rather than $[\beta]^1$ for simplicity, and we write 
$\beta\to(\alpha)^n_\kappa$ for $\beta\to(\alpha_i)^n_{i\in\kappa}$ when $\alpha_i=\alpha$ for all $i\in\kappa$, and similarly for the topological and closed relations. We refer to the 
function $c$ in these definitions as a \emph{coloring}, and when the range of $c$ is $2$ we identify $0$ with the color red and $1$ with the color blue.

The classical, topological, and closed ordinal pigeonhole and Ramsey numbers are also defined in the introduction. Note that $R((\alpha_i)_{i\in\kappa},(2)_\lambda)=R(\alpha_i)_
{i\in\kappa}$ for any cardinal $\lambda$, and that for fixed $\kappa$, $R(\alpha_i)_{i\in\kappa}$ is a monotonically increasing function of $(\alpha_i)_{i\in\kappa}$ (pointwise), and 
similarly for the topological and closed relations. Note also that the closed partition relation implies the other two, and hence $R(\alpha_i)_{i\in\kappa}\leq R^{cl}(\alpha_i)_{i\in\kappa}$ 
and $R^{top}(\alpha_i)_{i\in\kappa}\leq R^{cl}(\alpha_i)_{i\in\kappa}$ (and similarly for the pigeonhole numbers). 

The topological and closed partition relations are closely related, thanks to the following notion.

\begin{definition}
An ordinal $\alpha$ is said to be \emph{order-reinforcing} if and only if, whenever $X$ is a subspace of an ordinal with $X\cong\alpha$, then there is a subspace $Y\subseteq X$ such 
that $Y$ is order-homeomorphic to $\alpha$.
\end{definition}

It is clear from this definition that, when applied only to order-reinforcing ordinals, the topological and closed pigeonhole and Ramsey numbers coincide. In particular, if $\alpha$ is 
order-reinforcing, then $R^{top}(\alpha,k)=R^{cl}(\alpha,k)$ for all finite $k$. Building on work of Baumgartner \cite[Theorem 0.2]{baumgartner}, the order-reinforcing ordinals were 
classified in \cite[Corollary 2.17]{ttppfo}.

\begin{theorem}
An ordinal $\alpha$ is order-reinforcing if and only if either $\alpha$ is finite, or $\alpha=\omega^\gamma\cdot m+1$ for some nonzero ordinal $\gamma$ and some positive integer 
$m$, or $\alpha=\omega^\gamma$ for some nonzero ordinal $\gamma$.
\end{theorem}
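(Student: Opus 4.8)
The statement is an equivalence, so my plan is to prove separately that each of the three listed families is order-reinforcing and that no other ordinal is. Both halves rest on the homeomorphism classification of countable scattered spaces and of subspaces of ordinals --- in particular the Mazurkiewicz--Sierpi\'nski fact that a successor ordinal $\delta+1$, as the compact space $[0,\delta]$, is homeomorphic to $\omega^{\gamma_1}\cdot n_1+1$, where $\omega^{\gamma_1}\cdot n_1$ is the leading term of the Cantor normal form of $\delta$, together with the companion description of $[0,\alpha)$ for limit $\alpha$ as a disjoint union of clopen blocks homeomorphic to smaller ordinals of the form $\omega^\gamma+1$ plus one non-compact tail. This is the place where one really invokes the work of Baumgartner \cite{baumgartner} and the pigeonhole analysis of \cite{ttppfo}. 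Throughout I use that for $X$ a subset of an ordinal the subspace and order topologies on $X$ may disagree, and that $Y$ is order-homeomorphic to $\alpha$ exactly when $\ot(Y)=\alpha$ and $Y$ contains every one of its limit points lying below $\sup Y$ (equivalently, the order-isomorphism $\alpha\to Y$ is then automatically a homeomorphism).

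For the necessity direction, let $\alpha$ be infinite and of none of the listed forms. If $\alpha$ is a successor, then $\alpha=\delta+1$ with the Cantor normal form of $\delta$ having at least two terms, so $\omega^{\gamma_1}\cdot n_1+1<\alpha$; taking $X$ to be the ordinal $\omega^{\gamma_1}\cdot n_1+1$ we get $X\cong\alpha$ while $\ot(X)<\alpha$, so $X$ has no subset of order type $\alpha$ at all and $\alpha$ is not order-reinforcing. If $\alpha$ is a limit but not a power of $\omega$, then its largest Cantor--Bendixson rank $\rho\ge 1$ is attained by only finitely many points, and the largest such point is followed in $[0,\alpha)$ by further elements. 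I would then realize a homeomorphic copy $X$ of $\alpha$ in which a clopen block $C\cong\omega^\rho+1$ carrying a rank-$\rho$ point is placed at the very top, with $\max X$ equal to the rank-$\rho$ point of $C$ and the remaining blocks (including the non-compact tail, which ``escapes'' below $\max X$) sitting underneath. If $Y\subseteq X$ were order-homeomorphic to $\alpha$, the order-isomorphism $\alpha\to Y$ would be a homeomorphism, hence would map the rank-$\rho$ points of $\alpha$ bijectively onto those of $Y$; since $D^\rho(Y)\subseteq D^\rho(X)$ and each is homeomorphic to the finite set $D^\rho(\alpha)$, those are exactly the rank-$\rho$ points of $X$, and being order-preserving the map sends the largest rank-$\rho$ point of $\alpha$ to $\max X$ --- yet in $\alpha$ that point is followed by further elements, so $\max X$ would have to be followed by further elements of $Y$, which is absurd. (In some cases, e.g.\ $\alpha=\omega^2+\omega$, the arithmetic already forces $\ot(X)<\alpha$ and the bookkeeping is unnecessary; the model case $\alpha=\omega\cdot 2$, however, needs it: realizing $X=\{0,1,2,\dots\}\cup\{\omega+1,\omega+2,\dots\}\cup\{\omega\cdot 2\}$ one finds $X\cong\omega\cdot 2$ but $\ot(X)=\omega\cdot 2+1$, and every topological copy of $\omega\cdot 2$ inside $X$ must use $\omega\cdot 2$ as its limit point and take infinitely many points from the lower free $\omega$, hence has order type at least $\omega+\omega+1$.)

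For the sufficiency direction the finite case is trivial, so fix $\alpha=\omega^\gamma$ or $\alpha=\omega^\gamma\cdot m+1$ and a subspace $X$ of an ordinal with $X\cong\alpha$. I would induct on $\gamma$, and within a fixed $\gamma$ on $m$, treating $\omega^\gamma$ and $\omega^\gamma+1$ before $\omega^\gamma\cdot m+1$ for $m\ge 2$. The homeomorphism type of $\alpha$ forces a clopen decomposition of $X$: when $\alpha=\omega^\gamma\cdot m+1$ the $m$ points of Cantor--Bendixson rank $\gamma$ cut $X$ into $m$ clopen pieces, each homeomorphic to $\omega^\gamma+1$ and each containing its own top rank-$\gamma$ point; when $\alpha=\omega^\gamma+1$ the unique top point is the supremum of countably many clopen pieces of strictly smaller rank (of type $\omega^\delta+1$ if $\gamma=\delta+1$, of types $\omega^{\gamma_n}+1$ with $\gamma_n\nearrow\gamma$ if $\gamma$ is a limit); and when $\alpha=\omega^\gamma$ one gets countably many such pieces and no top. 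In every case I apply the inductive hypothesis to replace each piece by an order-homeomorphic subcopy $Z_n$, and set $Y=\bigcup_n Z_n$, adjoining the top point in the ``$+1$'' cases. Since the pieces lie in pairwise disjoint intervals of the ambient ordinal, any limit point of $Y$ below $\sup Y$ is a limit point of a finite union $Z_1\cup\dots\cup Z_N$, hence already lies in $Y$, so $Y$ is closed in its supremum; and the identities $(\omega^\gamma+1)\cdot m=\omega^\gamma\cdot m+1$, $\sum_{n}(\omega^{\gamma_n}+1)=\omega^\gamma$ for $\gamma_n\nearrow\gamma$, and $(\omega^\delta+1)\cdot\omega=\omega^{\delta+1}$ give $\ot(Y)=\alpha$ exactly. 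Hence $Y$ is order-homeomorphic to $\alpha$.

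I expect the sufficiency induction to be the main obstacle: extracting the clopen decomposition with the stated Cantor--Bendixson types from an abstract homeomorphic copy (this is exactly where the classification of countable scattered compacta, and its locally compact refinement for subspaces of ordinals, is used), and setting the recursion up so the reassembled pieces sit in disjoint intervals and the union is genuinely closed. On the necessity side the only delicate point is the Cantor--Bendixson bookkeeping behind the ``compact block on top'' copies; the successor case is immediate once the Mazurkiewicz--Sierpi\'nski classification is in hand.
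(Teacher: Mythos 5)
The paper does not prove this theorem: it is quoted from \cite[Corollary 2.17]{ttppfo}, which in turn builds on \cite[Theorem 0.2]{baumgartner}, so there is no in-paper argument for me to compare your proposal against. Your overall strategy --- Flum--Mart\'{\i}nez/Mazurkiewicz--Sierpi\'{n}ski classification for the necessity direction, Baumgartner-style clopen decomposition plus induction for sufficiency --- is exactly the machinery those references use, so the plan is the right one.

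The necessity half is essentially complete. For an infinite successor not of the listed form, $\omega^{\gamma_1}\cdot m_1+1<\alpha$ is a homeomorph that simply has no subset of order type $\alpha$; correct. For a limit that is not a power of $\omega$, your ``compact block on top'' construction together with the observation that the order-isomorphism $\alpha\to Y$ must carry the finite set $\alpha^{(\rho)}$ onto $Y^{(\rho)}=X^{(\rho)}$ (equality coming from $Y^{(\rho)}\subseteq X^{(\rho)}$ plus both being homeomorphic to the finite set $\alpha^{(\rho)}$), and hence send the largest rank-$\rho$ point of $\alpha$ to $\max X$, gives the desired contradiction. Two small notes: in the model case $\omega\cdot 2$ it is cleaner to say that $Y$ must contain the unique non-isolated point of $X$, namely $\max X$, whereas $\ot(Y)=\omega\cdot 2$ forbids $Y$ having a maximum; and the relocated blocks must be placed with gaps in the ambient ordinal so that the rank-$\rho$ point of the top block remains a limit in the subspace topology (this is routine but easy to get wrong).

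The sufficiency half is where the real content lives, and as written it is a sketch with acknowledged holes. Concretely: (i) you assert that a subspace $X$ of an ordinal with $X\cong\omega^\gamma\cdot m+1$ splits into $m$ clopen ambient intervals each $\cong\omega^\gamma+1$ (plus perhaps a lower-rank leftover clopen piece), and similarly that $X\cong\omega^\gamma$ is exhausted by compact clopen intervals; extracting such a decomposition from an arbitrary homeomorphic copy is not automatic and is exactly the content of \cite[Lemma~2.6]{baumgartner} and the analysis in \cite{ttppfo}, which would have to be proved or cited; (ii) the theorem is stated for \emph{all} ordinals, and for $\gamma$ of uncountable cofinality the ``countably many pieces'' picture fails and the decomposition must be indexed by $\operatorname{cf}(\gamma)$, a case your sketch does not touch. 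The arithmetic you use for reassembly ($\sum_n(\omega^{\gamma_n}+1)=\omega^\gamma$ for $\gamma_n\nearrow\gamma$, $(\omega^\delta+1)\cdot\omega=\omega^{\delta+1}$, $(\omega^\gamma+1)\cdot m=\omega^\gamma\cdot m+1$) and the closedness argument are fine once the decomposition lemma is in hand.
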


Finally we require the notions of Cantor-Bendixson derivative and rank, which are central to the study of ordinal topologies.

\begin{definition}
Let $X$ be a topological space. The \emph{Cantor-Bendixson derivative} $X^\prime$ of $X$ is the set of limit points of $X$, i.e.,
\[X^\prime=X\setminus\{x\in X\mid x\text{ is isolated}\}.\]

The iterated derivatives of $X$ are defined recursively for $\gamma$ an ordinal by
\begin{enumerate}
\item
$X^{(0)}=X$,
\item
$X^{(\gamma+1)}=\left(X^{(\gamma)}\right)^\prime$, and
\item
$X^{(\gamma)}=\bigcap_{\delta<\gamma}X^{(\delta)}$ when $\gamma$ is a nonzero limit.
\end{enumerate}
\end{definition}

\begin{definition}
If $x$ is a nonzero ordinal, then there is a unique sequence of ordinals $\gamma_1>\gamma_2>\dots>\gamma_n$ and a unique sequence of positive integers $m_1,m_2,\dots,m_n$ 
such that
\[x=\omega^{\gamma_1}\cdot m_1+\omega^{\gamma_2}\cdot m_2+\cdots+\omega^{\gamma_n}\cdot m_n. \]
This representation is the well-known \emph{Cantor normal form} of $x$. The \emph{Cantor-Bendixson rank} of $x$ is defined by
\[\operatorname{CB}(x)=
\begin{cases}
\gamma_n,&\text{if $x>0$}\\
0,&\text{if $x=0$}.
\end{cases}
\]
\end{definition}

The relationship between these two notions is that if $\alpha$ is an ordinal endowed with the order topology and $x\in\alpha$, then the Cantor-Bendixson rank of $x$ is the greatest 
ordinal $\gamma$ such that $x\in\alpha^{(\gamma)}$. Some basic proofs that make use of these notions can be found in the preliminaries of \cite{ttppfo}.

Though we do not need it here, we recall the classification of ordinals up to homeomorphism, due to Flum and Mart\'\i{}nez \cite[2.5 Remark 3]{flummartinez}. Here we state the result 
using the terminology of Kieftenbeld and L\"owe \cite{kieftenbeldlowe}. In order to do this, for a nonzero ordinal $x$ with Cantor normal form
\[x=\omega^{\gamma_1}\cdot m_1+\omega^{\gamma_2}\cdot m_2+\cdots+\omega^{\gamma_n}\cdot m_n,\]
write $\gamma_1(x)$ for $\gamma_1$ and $m_1(x)$ for $m_1$, and let $p(x)$, the \emph{purity} of $x$, be $0$ if $x=\omega^{\gamma_1(x)}\cdot m_1(x)$ and $\gamma_1(x)>0$, and 
$\omega^{\operatorname{CB}(x)}$ otherwise.

\begin{theorem}[Flum-Mart{\'\i}nez]
Two nonzero ordinals $\alpha$ and $\beta$ are homeomorphic if and only if $\gamma_1(\alpha)=\gamma_1(\beta)$, $m_1(\alpha)=m_1(\beta)$, and $p(\alpha)=p(\beta)$.
\end{theorem}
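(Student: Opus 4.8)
The plan is to prove the ``if'' and ``only if'' directions separately, both reducing to the classification of \emph{compact} ordinals, i.e.\ to the ordinal case of the Mazurkiewicz--Sierpi\'nski theorem: a nonempty countable compact Hausdorff space $X$ is homeomorphic to $\omega^\gamma\cdot m+1$, where $\gamma=\min\{\delta\mid X^{(\delta)}\text{ is finite}\}$ and $m=|X^{(\gamma)}|$. Either cite this or prove it by induction on the Cantor--Bendixson rank; since the nonzero compact ordinals are precisely the successors, this is the theorem in the compact case (where $p\equiv 1$). I would also record the elementary observation that for $\beta<\alpha$ the initial segment $[0,\beta]$ is clopen in $\alpha$, so that $\alpha$ is the topological disjoint sum $[0,\beta]\sqcup(\beta,\alpha)$ of the compact ordinal $[0,\beta]\cong\beta+1$ and the tail $(\beta,\alpha)$.

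For the ``if'' direction, one checks first that the triples $(\gamma_1,m_1,p)$ attained by nonzero ordinals are exactly those of the \emph{canonical ordinals} $m$ $(m\ge 1)$, $\omega^\gamma m$ $(\gamma,m\ge 1)$, $\omega^\gamma m+1$ $(\gamma,m\ge 1)$, and $\omega^\gamma m+\omega^\varepsilon$ $(\gamma>\varepsilon\ge 1,\ m\ge 1)$, and that distinct canonical ordinals carry distinct triples. So it suffices to show that every nonzero $\alpha$ is homeomorphic to the canonical ordinal with the same triple. Write $\alpha=\omega^{\gamma_1}m_1+\dots+\omega^{\gamma_n}m_n$ in Cantor normal form; the cases $\gamma_1=0$ and $n=1$ are immediate. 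Otherwise put $\beta=\omega^{\gamma_1}m_1+\dots+\omega^{\gamma_{n-1}}m_{n-1}+\omega^{\gamma_n}(m_n-1)$, so that $(\beta,\alpha)$ is order-isomorphic to $\omega^{\gamma_n}$ while $[0,\beta]\cong\beta+1$ is a countable compact ordinal with $\min\{\delta\mid(\beta+1)^{(\delta)}\text{ finite}\}=\gamma_1$ and $|(\beta+1)^{(\gamma_1)}|=m_1$; by Mazurkiewicz--Sierpi\'nski, $[0,\beta]\cong\omega^{\gamma_1}m_1+1$. Hence $\alpha\cong(\omega^{\gamma_1}m_1+1)\sqcup\omega^{\gamma_n}$, which equals $\omega^{\gamma_1}m_1+1$ when $\gamma_n=0$ and equals $\omega^{\gamma_1}m_1+\omega^{\gamma_n}$ when $\gamma_n\ge 1$ (again by the initial-segment decomposition, now applied to the canonical ordinal). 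In every case $\alpha$ is homeomorphic to its canonical ordinal.

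For the ``only if'' direction, it is enough, by the previous paragraph, to verify that $\gamma_1$, $m_1$ and $p$ agree with topological quantities on the canonical ordinals. Compactness is a topological property and, among nonzero ordinals, holds exactly when $p=1$. The leading exponent is $\gamma_1(\alpha)=\min\{\delta\mid\alpha^{(\delta)}\text{ is finite}\}$; the leading coefficient is $m_1(\alpha)=\bigl|(\alpha^{+})^{(\gamma_1(\alpha))}\bigr|$, where $\alpha^{+}$ is the one-point compactification of $\alpha$ (with $\alpha^{+}=\alpha$ when $\alpha$ is already compact) --- the point at infinity has Cantor--Bendixson rank $\gamma_1(\alpha)$ exactly when $\alpha=\omega^{\gamma_1}m_1$ and strictly less otherwise, which is precisely what makes this count come out to $m_1$ in all cases. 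Finally, set $\operatorname{cb}(\alpha)=\min\{\gamma_1(B^{+})\mid\alpha=A\sqcup B\text{ clopen},\ A\text{ compact},\ B\ne\emptyset\}$, where for compact $Y$ we write $\gamma_1(Y)=\min\{\delta\mid Y^{(\delta)}\text{ finite}\}$. One checks that $\operatorname{cb}(\alpha)=0$ when $\alpha$ is compact and $\operatorname{cb}(\alpha)=\operatorname{CB}(\alpha)$ when $\alpha$ is a limit, the minimum being realised by a thin tail of $\alpha$. Then $p(\alpha)$ is determined: $p(\alpha)=1$ if $\alpha$ is compact, $p(\alpha)=0$ if $\alpha$ is a limit with $\operatorname{cb}(\alpha)=\gamma_1(\alpha)$, and $p(\alpha)=\omega^{\operatorname{cb}(\alpha)}$ if $\alpha$ is a limit with $\operatorname{cb}(\alpha)<\gamma_1(\alpha)$. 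Checking these formulas against the four families of canonical ordinals finishes the proof.

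I expect the real work to be concentrated in two places: having the Mazurkiewicz--Sierpi\'nski classification available in the form above, and the point-set bookkeeping in the ``only if'' direction --- in particular the verification that $\operatorname{cb}(\alpha)=\operatorname{CB}(\alpha)$, which rests on the facts that a clopen neighbourhood of infinity in $\omega^\delta$ always has one-point compactification of rank $\delta$, and that passing to a smaller tail, or to a clopen subspace, cannot raise the rank. The remainder is routine manipulation of Cantor normal forms and clopen initial segments.
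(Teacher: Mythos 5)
The paper does not give a proof of this theorem: it states it with a citation to Flum--Mart\'\i{}nez and the reformulation of Kieftenbeld--L\"owe, prefaced by ``Though we do not need it here, we recall the classification\ldots''. So there is no in-paper proof to compare against, and your proposal has to be judged on its own.

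Your argument is correct in substance and nicely organised. The ``if'' direction --- reducing any $\alpha$ to the canonical ordinal with the same triple via the clopen splitting $\alpha=[0,\beta]\sqcup(\beta,\alpha)$ with $\beta=\alpha-\omega^{\gamma_n}$ and the classification of compact ordinals --- works as you describe, including the absorption $(\omega^{\gamma_1}m_1+1)\sqcup\omega^{\gamma_n}\cong\omega^{\gamma_1}m_1+\omega^{\gamma_n}$. The ``only if'' direction is the genuinely clever part. Your three topological formulas check out: $\gamma_1(\alpha)=\min\{\delta\mid\alpha^{(\delta)}\text{ finite}\}$; $m_1(\alpha)=|(\alpha^+)^{(\gamma_1(\alpha))}|$ (the point at infinity of a noncompact $\alpha$ has rank $\operatorname{CB}(\alpha)$, which equals $\gamma_1(\alpha)$ precisely in the case $\alpha=\omega^{\gamma_1}m_1$, and that is exactly what makes the count come out to $m_1$ uniformly); and $\operatorname{cb}(\alpha)=\operatorname{CB}(\alpha)$ for limits. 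The key lemma you gesture at, that $\operatorname{cb}(\alpha)=\operatorname{CB}(\alpha)$, follows because a compact clopen $A$ must be bounded, so $B$ contains a full tail $(\eta,\alpha)\cong\omega^{\operatorname{CB}(\alpha)}$ clopen, hence $B^+\cong(B\cap[0,\eta])\sqcup(\eta,\alpha)^+$ and $\gamma_1(B^+)\ge\operatorname{CB}(\alpha)$, with equality attained by a thin tail.

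One caveat worth flagging: the theorem as stated covers \emph{all} nonzero ordinals, not just countable ones, whereas the Mazurkiewicz--Sierpi\'nski theorem you invoke is specifically for \emph{countable} compact Hausdorff spaces. What you actually need is only the compact-ordinal classification --- that $[0,\beta]\cong\omega^{\gamma}m+1$ with $\gamma,m$ read off from the CB derivatives --- and this does hold for uncountable ordinals and is provable by the transfinite induction you mention (using the same absorption $(\omega^{\gamma}m+1)\sqcup(\omega^\mu p+1)\cong\omega^{\gamma}m+1$ for $\mu<\gamma$). So if you take the ``prove it by induction on Cantor--Bendixson rank'' route rather than citing MS, there is no gap; if you cite MS, you should state the ordinal version explicitly rather than the general countable-space theorem. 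With that adjustment the argument is complete.
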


More relevant to this paper is perhaps the classification of ordinals up to \emph{biembeddability} (being homeomorphic to a subspace of one another) rather than homeomorphism. See 
\cite{ttppfo} for further details.

\section{The closed pigeonhole principle for ordinals}\label{section:pcl}

The pigeonhole numbers are an important prerequisite to the study of Ramsey numbers. For example, we have the following lower bound for a Ramsey number given by considering a 
$k$-partite graph.

\begin{proposition}\label{proposition:lowerbound}
If $\alpha\geq 2$ is an ordinal and $k$ is a positive integer, then
\[R(\alpha,k+1)\geq P(\alpha)_k,\]
and similarly for the topological and closed relations: $R^{top}(\alpha,k+1)\ge P^{top}(\alpha,k)$ and $R^{cl}(\alpha,k+1)\ge P^{cl}(\alpha,k)$.
\end{proposition}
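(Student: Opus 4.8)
The plan is to establish all three inequalities simultaneously by exhibiting, for each version of the partition relation, a ``$k$-partite'' coloring witnessing that small ordinals fail the Ramsey relation. Fix a positive integer $k$ and an ordinal $\alpha\geq 2$. Write $\gamma$ for whichever pigeonhole number is under consideration ($P(\alpha)_k$, $P^{top}(\alpha,k)$, or $P^{cl}(\alpha,k)$); by definition of the pigeonhole number there is a coloring $d:\gamma\to k$ such that no color class contains a copy of $\alpha$ of the appropriate type (order-isomorphic, topological, or closed, respectively). Here one must be slightly careful: the definition gives that $\gamma$ is \emph{least} with the arrow property, so for any $\beta<\gamma$ there is such a bad coloring $d_\beta$ of $\beta$; in particular take any $\beta<\gamma$ and a witnessing $d=d_\beta:\beta\to k$. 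I will show $\beta\not\to(\alpha,k+1)^2$ (respectively $\not\to_{top}$, $\not\to_{cl}$), which gives $R(\alpha,k+1)\geq\gamma=P(\alpha)_k$ and likewise for the other two.

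First I would define the $2$-coloring $c:[\beta]^2\to 2$ by declaring a pair $\{x,y\}$ red if $d(x)=d(y)$ and blue if $d(x)\neq d(y)$. Then I would check the two alternatives. If $H\subseteq\beta$ is blue-homogeneous, then $d$ restricted to $H$ is injective, so $|H|\leq k$; hence there is no blue-homogeneous set of size $k+1$. If $H\subseteq\beta$ is red-homogeneous, then $d$ is constant on $H$, say $d\restriction H\equiv i$ for some $i<k$, so $H\subseteq d^{-1}(\{i\})$; since $d$ was chosen so that no color class contains a copy of $\alpha$ of the relevant type, $H$ is not such a copy. Thus $c$ witnesses the failure of the relation, as required. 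The same coloring $c$ works verbatim for all three versions, because the notion of ``blue-homogeneous set of size $k+1$'' does not depend on the topology, and the three notions of ``red-homogeneous copy of $\alpha$'' are exactly matched by the three notions of pigeonhole number via the choice of $d$.

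The only real subtlety — and the step I would be most careful about — is the bookkeeping around the definition of the pigeonhole number as a \emph{least} ordinal and whether the bound is $\geq\gamma$ or $\geq\gamma$ with the supremum attained. Since $R(\alpha,k+1)$ is defined as the least $\beta$ with $\beta\to(\alpha,k+1)^2$, showing $\beta\not\to(\alpha,k+1)^2$ for every $\beta<P(\alpha)_k$ yields precisely $R(\alpha,k+1)\geq P(\alpha)_k$; no attainment issues arise. One should also note the trivial point that $P(\alpha)_k$ itself is a well-defined ordinal (it exists because, e.g., $\alpha\cdot k\to(\alpha)^1_k$ classically, and the topological and closed analogues exist by \cite{ttppfo} and Section \ref{section:pcl}), so the inequalities are not vacuous. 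With these remarks the proof is a short direct argument and requires no calculation.
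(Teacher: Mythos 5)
Your proposal is correct and takes essentially the same approach as the paper: you construct the same $k$-partite two-coloring from a bad pigeonhole coloring and observe that blue-homogeneous sets have size at most $k$ while red-homogeneous sets sit inside a color class. The only difference is that you spell out the verification that the paper leaves as "straightforward," and you swap the roles of the letters $c$ and $d$.
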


\begin{proof}
Suppose $\beta<P(\alpha)_k$. By definition of the pigeonhole number, it follows that there exists a coloring $c:\beta\to k$ such that for all $i\in k$, no subset of $c^{-1}(\{i\})$ is 
order-isomorphic to $\alpha$. To see that $\beta\not\to(\alpha,k+1)^2$, simply consider the coloring $d:[\beta]^2\to\{\text{red,blue}\}$ given by
\[
d(\{x,y\})=
\begin{cases}
\text{red},&\text{if $c(x)=c(y)$}\\
\text{blue},&\text{if $c(x)\neq c(y)$}.
\end{cases}
\]
It is straightforward to verify that $d$ does indeed witness $\beta\not\to(\alpha,k+1)^2$.

For the topological and closed relations, simply replace ``order-isomorphic'' with ``homeomorphic'' or ``order-homeomorphic'' as necessary.
\end{proof}

The values of $P^{top}$ are given in \cite{ttppfo}, and the key result of this section extends this to $P^{cl}$.

\begin{theorem}[The closed pigeonhole principle for ordinals]\label{theorem:pcl}
Given a cardinal $\kappa$ and an ordinal $\alpha_i\geq 2$ for each $i\in\kappa$, there is an algorithm to compute $P^{cl}(\alpha_i)_{i\in\kappa}$. In particular, suppose that $k <
\omega$  and that $\alpha_1,\dots,\alpha_k$ are countable and bigger than one. For any nonzero countable ordinals $\beta_1,\dots,\beta_k$ we have:
\begin{enumerate}
\item
$P^{cl}(\beta_1,\dots,\beta_k,1)=P^{cl}(\beta_1,\dots,\beta_k)$, $P^{cl}(\beta_1)=\beta_1$, $P^{cl}(\beta_1,\dots,\beta_k)$ is invariant under permutations of the $\beta_i$.
\item
$P^{cl}(\omega^{\beta_1}+1,\dots,\omega^{\beta_k}+1)=\omega^{\beta_1\#\cdots\#\beta_k}+1$, where $\#$ denotes the natural (Hessenberg) sum.
\item \label{clause2}
$P^{cl}(\omega^{\beta_1},\dots,\omega^{\beta_k})=\omega^{\beta_1\odot\cdots\odot\beta_k}$, where $\odot$ denotes the Milner-Rado sum. 
\item
If for all $i\le k$, $\beta_i$ is least such that $\alpha_i\le\omega^{\beta_i}$, and if equality holds for at least one $i$, then $P^{cl}(\alpha_1,\dots,\alpha_k)=P^{cl}(\omega^{\beta_1},\dots,
\omega^{\beta_k})$.
\item
If no $\alpha_i$ is a power of $\omega$, write $\alpha_i=1+\gamma_i$ if $\alpha_i$ is finite and $\alpha_i=\omega^{\beta_i}+1+\gamma_i$ otherwise, where $\gamma_i<\omega^
{\beta_i+1}$. Write $t_i=1$ or $t_i=\omega^{\beta_i}+1$ depending on whether $\alpha_i$ is finite. If, for each $i\le k$, we let $Q_i=P^{cl}(\alpha_1,\dots,\alpha_{i-1},\gamma_i,\alpha_
{i+1},\dots,\alpha_k)$, then $P^{cl}(\alpha_1,\dots,\alpha_k)=P^{cl}(t_1,\dots,t_k)+\max\{Q_1,\dots,Q_k\}$.
\end{enumerate}
\end{theorem}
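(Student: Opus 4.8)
I would prove the five clauses of Theorem~\ref{theorem:pcl} in the order stated, since each builds on the previous ones, and the final ``algorithm'' assertion is just the observation that clauses (1)--(5) recurse on strictly smaller data (either fewer colors, or the same number of colors but a strictly smaller ordinal in the $i$th coordinate, namely $\gamma_i<\alpha_i$) and terminate. The strategy throughout is to exploit the Cantor--Bendixson derivative: a subspace $X$ of an ordinal is order-homeomorphic to $\omega^\beta$ iff it is order-isomorphic to $\omega^\beta$ and closed in its supremum, and such an $X$ has $X^{(\beta)}$ a single point (its supremum), while being order-homeomorphic to $\omega^\beta+1$ means $X^{(\beta)}$ is a single isolated-from-above point. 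So the analysis of $P^{cl}$ for powers of $\omega$ and their successors reduces to a bookkeeping of how CB-rank behaves under partitions, exactly as in \cite{baumgartner,ttppfo} for $P^{top}$; the difference is that the ``closed'' requirement forces us to track closedness in the supremum, which is what makes the natural sum appear in (2) rather than something larger.

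\emph{Clauses (1) and (4)} are essentially formal. Clause (1): a single color forces $X=\beta_1$ itself, which is trivially order-homeomorphic to $\beta_1$; a dummy color of target $1$ can absorb at most one point and hence never helps; permutation-invariance is immediate from the symmetry of the definition. Clause (4): if $\alpha_i\le\omega^{\beta_i}$ with $\beta_i$ least, then an order-homeomorphic copy of $\omega^{\beta_i}$ in color $i$ contains (by taking an initial segment that is still closed in its sup, using that $\beta_i$ is least so $\alpha_i>\omega^{\beta_i'}$ for $\beta_i'<\beta_i$, together with the order-reinforcing structure) an order-homeomorphic copy of $\alpha_i$; conversely a copy of $\alpha_i$ need not contain a copy of $\omega^{\beta_i}$ unless equality holds somewhere, which is why we need the ``equality for at least one $i$'' hypothesis to pin down the value exactly. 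So $P^{cl}(\alpha_1,\dots,\alpha_k)$ is squeezed between $P^{cl}(\omega^{\beta_1},\dots,\omega^{\beta_k})$ read two ways and collapses to it.

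\emph{Clauses (2) and (3)} are the heart of the matter. For (3), the lower bound $P^{cl}(\omega^{\beta_1},\dots,\omega^{\beta_k})\ge\omega^{\beta_1\odot\cdots\odot\beta_k}$ comes from exhibiting a coloring of $\omega^{\beta_1\odot\cdots\odot\beta_k}$ (built recursively from the definition of the Milner--Rado sum, splitting the ordinal below the target into $k$ pieces each of which is too short in the relevant coordinate to be closed-homeomorphic to $\omega^{\beta_i}$); the upper bound is an induction on $\beta_1\#\cdots$ (or on $\max\beta_i$) showing that any $k$-coloring of $\omega^{\beta_1\odot\cdots\odot\beta_k}$ yields a closed copy of some $\omega^{\beta_i}$ in color $i$, by looking at the color of a cofinal set of points of top CB-rank and pigeonholing. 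For (2), one reduces a copy of $\omega^{\beta_i}+1$ to its ``body'' $\omega^{\beta_i}$ plus an isolated top point; the top points use up only finitely much room, and the natural sum $\beta_1\#\cdots\#\beta_k$ (rather than $\odot$) appears because, unlike in (3), each color must produce a set that is closed in its supremum \emph{with the supremum attained}, so the recursive splitting is the symmetric one underlying the Hessenberg sum. I would prove (2) by the same derivative-rank induction, carefully checking that the ``$+1$'' on each side matches up.

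\emph{Clause (5)} handles the general case by peeling off, in the $i$th coordinate, the ``top part'' $t_i$ (either a point or a copy of $\omega^{\beta_i}+1$) from the ``tail'' $\gamma_i<\omega^{\beta_i+1}$. The idea: a closed copy of $\alpha_i$ in color $i$ is a closed copy of $t_i$ followed by a closed copy of $\gamma_i$, so one first runs out a $P^{cl}(t_1,\dots,t_k)$-sized prefix to guarantee \emph{some} color $j$ gets its $t_j$, then on the remaining $\max\{Q_1,\dots,Q_k\}$-sized tail that same color $j$ is forced to complete its $\gamma_j$ (here $Q_j$ is exactly the pigeonhole number for the problem with $\alpha_j$ replaced by $\gamma_j$); conversely a matching coloring shows this is optimal. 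The additive form $P^{cl}(t_1,\dots,t_k)+\max\{Q_1,\dots,Q_k\}$ is the natural output of this ``prefix then tail'' analysis.

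\textbf{Main obstacle.} The real work is in clauses (2) and (3): getting the lower-bound colorings to match the Milner--Rado and Hessenberg sums exactly, and making the upper-bound induction on CB-rank go through uniformly in the number of colors $k$, including the degenerate cases where some $\beta_i$ is $0$ or where colors coincide. This is precisely where one has to be faithful to the combinatorics of \cite{baumgartner} and \cite{ttppfo} while inserting the extra ``closed in its supremum'' constraint at every step; the bookkeeping is delicate but not conceptually deep. Clause (5)'s recursion terminates because $\gamma_i<\alpha_i$, so once (1)--(4) are in place the algorithm claim follows.
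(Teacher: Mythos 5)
Your plan takes a genuinely different route from the paper's, and the difference is worth flagging because it is precisely where the paper saves all the work. The paper's proof does not re-derive clauses (2)--(4) at all: it observes that the entire argument of \cite{ttppfo} computing $P^{top}$ goes through verbatim for $P^{cl}$ except in exactly two places (an uncountable case, and clause (5) here), so that for countable ordinals the values $P^{cl}(\omega^{\beta_1}+1,\dots)$, $P^{cl}(\omega^{\beta_1},\dots)$, and the clause-(4) reduction are literally equal to the already-computed $P^{top}$ values and can be cited from \cite[Theorems 4.5--4.7]{ttppfo}. The only genuinely new computation is clause (5), where $P^{cl}$ and $P^{top}$ actually diverge, and there the paper proves the formula $P^{cl}(t_1,\dots,t_k)+\max_i Q_i$ by extending the specific optimal colorings $c_r$ of \cite[Theorem 4.5]{ttppfo} by one additional point at the top (setting $c_r(\omega^{\beta_1\#\cdots\#\beta_k})=r$) for the lower bound, and a short ``prefix then tail'' argument for the upper bound. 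Your sketch of clause (5) matches this in spirit, and your observation that the recursion terminates because $\gamma_i<\alpha_i$ is exactly the paper's remark. But by proposing to re-prove (2) and (3) by a fresh CB-rank induction, you are re-doing the hardest combinatorics of \cite{ttppfo} rather than noticing it applies unchanged.

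Beyond the inefficiency, there is a genuine gap in your treatment of clause (4). Your upper-bound direction ($P^{cl}(\alpha_1,\dots,\alpha_k)\le P^{cl}(\omega^{\beta_1},\dots,\omega^{\beta_k})$, by monotonicity since $\alpha_i\le\omega^{\beta_i}$) is fine, but the phrase ``squeezed between $P^{cl}(\omega^{\beta_1},\dots,\omega^{\beta_k})$ read two ways'' does not give a lower bound. The lower bound requires a coloring of each $\gamma<P^{cl}(\omega^{\beta_1},\dots,\omega^{\beta_k})$ that avoids a closed copy of $\alpha_i$ (not $\omega^{\beta_i}$) in color $i$; a coloring that merely avoids closed copies of the larger ordinals $\omega^{\beta_i}$ may still contain closed copies of the smaller $\alpha_i$. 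This is exactly where the minimality of each $\beta_i$ and the ``equality for at least one $i$'' hypothesis are used in \cite[Theorem 4.7]{ttppfo}, and it requires a construction, not a squeeze. Likewise, the lower-bound colorings for (2) and (3) are only described in words (``built recursively from the definition of the Milner--Rado sum''); those recursions are precisely the content of \cite{ttppfo} and would need to be written out and re-verified for the closed relation if you decline to cite them. So the plan is salvageable, but as stated it neither proves (4)'s lower bound nor short-circuits (2)--(4) the way the paper does.
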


The cases listed cover all uses of the theorem in this paper. To avoid cluttering the paper with an off-topic detour, the reader can take our use of the word ``algorithm'' here merely as a 
shorthand for the full statement of the result, which divides into several additional cases (depending on whether we use infinitely many colors rather than a finite number $k$, or if some 
of the $\alpha_i$ are uncountable). Rather than listing all these cases  explicitly, we refer the reader to \cite{ttppfo} for the full statement for $P^{top}$, and explain within the proof the 
modifications required to obtain $P^{cl}$. We should remark, in particular, that $P^{cl}$ restricted to countable ordinals is a primitive recursive set function and that, given reasonable 
representations of the Cantor normal forms of the countable ordinals $\alpha_1,\dots,\alpha_k$, and of the exponents present in these expressions, the reader should have no 
difficulties providing an actual algorithm that returns (a representation of) the Cantor normal form of $P^{cl}(\alpha_1,\dots,\alpha_k)$.

For completeness, we recall that if $\alpha=\omega^{\gamma_1}\cdot m_1+\dots+\omega^{\gamma_k}\cdot m_k$ and $\beta=\omega^{\gamma_1}\cdot n_1+\dots+
\omega^{\gamma_k}\cdot n_k$, where $\gamma_1>\dots>\gamma_k$ and the $m_i$ and $n_i$ are natural numbers, then $\alpha\#\beta=\omega^{\gamma_1}\cdot(m_1+n_1)+
\dots+\omega^{\gamma_k}\cdot (m_k+n_k)$, and set $0\#\alpha=\alpha\#0=\alpha$ for all $\alpha$. Also, $\alpha\odot\beta$ is the least $\gamma$ such that if 
$\tilde\alpha<\alpha$ and $\tilde\beta<\beta$, then $\tilde\alpha\#\tilde\beta\ne\gamma$. Both of these operations are commutative and associative, allowing brackets to be omitted. 

Before providing the proof of the theorem, let us first look at a few special cases. Some of these will be used later, while others are merely illustrative. The reader may attempt to verify 
them directly.

\begin{fact}\label{fact:pcl}
Let $k$ and $m$ be positive integers.
\begin{enumerate}
\item
$P^{top}(m,k)=P^{cl}(m,k)=P(m,k)=m+k-1$.
\item
$P^{top}(\omega)_2=P^{cl}(\omega)_2=P(\omega)_2=\omega$.
\item
$P^{top}(\omega+1,k)=P^{cl}(\omega+1,k)=\omega\cdot k+1$.
\item
$P^{top}(\omega+1,\omega)=P^{cl}(\omega+1,\omega)=\omega^2$.
\item
$P^{top}(\omega+m,\omega+k)=P^{top}(\omega+1)_2=\omega^2+1$. On the other hand, if $m\geq k$ then $P^{cl}(\omega+m,\omega+k) = \omega^2+\omega\cdot(m-1)+k$.
\item
$P^{top}(\omega\cdot 2)_2=P^{cl}(\omega\cdot 2)_2=\omega^2\cdot 2$. If $m>2$, then $P^{cl}(\omega\cdot m)_2 = \omega^2\cdot(2m-2)$, whereas $P^{top}(\omega\cdot m)_2 = 
\omega^2\cdot(2m-3)+1$.
\item
$P^{top}(\omega\cdot m+k)_2=P^{top}(\omega\cdot m+1)_2=\omega\cdot(2m-1)+1$, while $P^{cl}(\omega\cdot m +k)_2 = \omega^2\cdot(2m-1) + \omega\cdot(k-1) +k$.
\end{enumerate}
\end{fact}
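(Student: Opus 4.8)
The plan is to read each value off the two pigeonhole algorithms: the topological ones from \cite{ttppfo} and the closed ones from Theorem~\ref{theorem:pcl}. I would begin with a triage by order-reinforcement. Items~(1)--(4) involve only the ordinals $m$, $\omega$ and $\omega+1$, which are order-reinforcing (being finite, $\omega^\gamma$, or $\omega^\gamma\cdot m+1$), so there $P^{top}=P^{cl}$ and a single computation suffices; by contrast $\omega\cdot m$ and $\omega+m$ for $m\geq2$ are not order-reinforcing, which is exactly why the topological and closed numbers may --- and in (5)--(7) do --- diverge, so both must be computed separately.

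For the order-reinforcing batch I would argue directly rather than invoke the full algorithm. Item~(1): on finite ordinals the order topology is discrete, so $P^{top}$, $P^{cl}$ and $P$ all collapse to the classical pigeonhole count $m+k-1$. Item~(2) is the classical indecomposability $P(\omega^\gamma)_j=\omega^\gamma$ at $\gamma=1$, equivalently clause~(\ref{clause2}) of Theorem~\ref{theorem:pcl} with $1\odot 1=1$. Items~(3) and~(4) follow from a single template: for the lower bounds, color the limit points of $\omega\cdot k$ (resp.\ $\omega^2$) with the ``bounded'' color and everything else red, so that no red point is a limit of reds and the other color occurs only finitely often (here one uses that in an ordinal every copy of $\omega$ is automatically a closed copy); for the upper bounds, in $\omega\cdot k+1$ (resp.\ $\omega^2$) a counting argument forces one of the limit points $\omega,\omega\cdot 2,\dots$ to be red and to be the limit of cofinally many reds in the block immediately below it, producing a red closed copy of $\omega+1$.

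For items~(5)--(7) the topological sides are quick once one notes, via Flum--Mart\'\i{}nez, that $\omega+m\cong\omega+1$ and $\omega\cdot m+k\cong\omega\cdot m+1$ for all positive integers $m,k$, so these reduce to $P^{top}(\omega+1)_2$ and $P^{top}(\omega\cdot m+1)_2$, which are delivered by the algorithm of \cite{ttppfo} (and $P^{top}(\omega+1)_2=\omega^2+1$ admits a short direct proof, analogous to the one above but using the top point of $\omega^2+1$ as a limit of limit points). The closed sides are where the real work lies: one feeds $\omega+m$, $\omega\cdot m$ and $\omega\cdot m+k$ into the recursive clauses of Theorem~\ref{theorem:pcl}, writing each argument in the form $\omega^{\beta_i}+1+\gamma_i$, reading off the $t_i$ and the $Q_i$, recursing on the $\gamma_i$, and simplifying the resulting natural-sum and Milner--Rado-sum expressions. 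I expect this bookkeeping to be the main obstacle: keeping the recursion under control well enough to land on the asymmetric closed forms such as $\omega^2+\omega\cdot(m-1)+k$ and $\omega^2\cdot(2m-1)+\omega\cdot(k-1)+k$, and in particular pinning down where the closed numbers overtake the topological ones --- the underlying point being that a closed copy of $\omega+m$, unlike a merely topological one, must contain its internal limit point as a genuine limit of the set. A sanity check throughout is that whenever the ordinal in question happens to be order-reinforcing the two algorithms must return the same value.
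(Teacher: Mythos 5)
The paper itself offers no proof of this Fact --- after stating it, it says only that ``the reader may attempt to verify them directly,'' i.e., the items are consequences of the $P^{top}$ algorithm from \cite{ttppfo} and the $P^{cl}$ algorithm of Theorem~\ref{theorem:pcl}. Your triage is exactly the intended one: items (1)--(4) involve order-reinforcing ordinals so the two numbers coincide; for (5)--(7) the arguments $\omega+m$, $\omega\cdot m$, $\omega\cdot m+k$ (for $m,k\geq 2$) fail Theorem~2.5 and must be handled twice. Your reduction of the topological sides of (5)--(7) via Flum--Mart\'{\i}nez to $P^{top}(\omega+1)_2$ and $P^{top}(\omega\cdot m+1)_2$ is the right move, and your direct arguments for (1)--(3) are sound.

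Two things, however. First, a genuine misstatement in the lower bound for item (4): you propose to ``color the limit points of $\omega^2$ with the bounded color,'' but $\omega^2$ has \emph{infinitely} many limit points, so this would hand the adversary a closed copy of $\omega$ (since, as you yourself note, every $\omega$-type subset of an ordinal is automatically order-closed). The correct lower-bound argument fixes an arbitrary $\beta<\omega^2$, which has only finitely many limit points, colors those with the second color and the rest red --- then the second color is finite and no red point is a limit of reds. Your phrasing needs this repair.

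Second, you explicitly stop short of carrying out the recursion in Theorem~\ref{theorem:pcl} for the closed sides of (5)--(7), which you correctly identify as the only nontrivial part of the Fact. Since that is where the asymmetric answers $\omega^2+\omega\cdot(m-1)+k$ and $\omega^2\cdot(2m-1)+\omega\cdot(k-1)+k$ come from, and where one has to track the interaction of the $t_i$'s with the $Q_i$-recursion, the proposal as written is an outline rather than a proof. One side benefit of actually doing the computation: the paper's stated value $P^{top}(\omega\cdot m+1)_2=\omega\cdot(2m-1)+1$ in item (7) appears to be a typo for $\omega^2\cdot(2m-1)+1$ (at $m=1$ it must agree with $P^{top}(\omega+1)_2=\omega^2+1$, and it must dominate the $P^{top}(\omega\cdot m)_2=\omega^2\cdot(2m-3)+1$ of item (6)); running your own plan to completion would have surfaced this.
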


Here is one final special case, for which we provide a short proof.

\begin{proposition}\label{proposition:pomega+1}
If $k$ is a positive integer, then
\[P^{top}(\omega+1)_k=P^{cl}(\omega+1)_k=\omega^k+1.\]
\end{proposition}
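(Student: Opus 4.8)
The plan is to prove the two inequalities $P^{cl}(\omega+1)_k \geq \omega^k+1$ and $P^{cl}(\omega+1)_k \leq \omega^k+1$ separately, and to obtain the equality with $P^{top}$ for free: since $\omega+1 = \omega^1\cdot 1+1$ is order-reinforcing by the classification theorem quoted in Section~\ref{section:preliminaries}, we have $P^{top}(\omega+1)_k = P^{cl}(\omega+1)_k$ automatically, so it suffices to work with the closed relation. (Alternatively, one can deduce both from clause~(2) of Theorem~\ref{theorem:pcl} with all $\beta_i=1$, since $1\#1\#\cdots\#1 = k$; but the point here is to give a short self-contained argument.)

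For the lower bound $P^{cl}(\omega+1)_k \geq \omega^k+1$, I would exhibit a $k$-coloring of $\omega^k$ with no closed copy of $\omega+1$ (equivalently, since $\omega+1$ is order-reinforcing, no homogeneous set whose closure in its supremum is order-isomorphic to $\omega+1$, i.e.\ no homogeneous set of order type $\omega+1$ that is closed). Every nonzero $x<\omega^k$ has Cantor normal form $x = \omega^{k-1}\cdot m_{k-1} + \cdots + \omega\cdot m_1 + m_0$; color $x$ by $c(x) = \operatorname{CB}(x)$, the least exponent appearing, which lies in $\{0,1,\dots,k-1\}$. A set $X$ of order type $\omega+1$ that is closed in its supremum $\lambda$ must have $\lambda$ as its top point and $\lambda$ a limit of the other points of $X$; write $\operatorname{CB}(\lambda) = j$. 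Then $\lambda$ is a limit point of $\omega^k$ of rank $j$, so any sequence converging to $\lambda$ from below must eventually consist of ordinals of Cantor–Bendixson rank $<j$, hence of color $<j$, while $\lambda$ itself has color $j$; so $X$ cannot be homogeneous. This shows $\omega^k\not\to_{cl}(\omega+1)^1_k$, giving the lower bound. (One should double-check the boundary case $k=1$: the coloring is trivial and $\omega\not\to_{cl}(\omega+1)^1_1$ just says $\omega$ contains no closed copy of $\omega+1$, which is clear since $\omega$ has no top point.)

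For the upper bound $P^{cl}(\omega+1)_k \leq \omega^k+1$, I would argue by induction on $k$ that any coloring $c:\omega^k+1 \to k$ has a closed homogeneous copy of $\omega+1$. The base case $k=1$ is immediate: a single color on $\omega+1$, and $\omega+1$ itself is a closed copy of $\omega+1$. For the inductive step, consider the color $c(\omega^k)$ of the top point; call it $i$. Look at the final block: the point $\omega^k$ is approached by the intervals $[\omega^{k-1}\cdot n,\ \omega^{k-1}\cdot(n+1))$ for $n<\omega$. If cofinitely many of these blocks contain a point of color $i$, pick one such point $x_n$ from block $n$ for infinitely many $n$; then $\{x_n\}\cup\{\omega^k\}$ is order type $\omega+1$, closed (the only limit point is $\omega^k$, which is in the set), and $i$-homogeneous, and we are done. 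Otherwise, infinitely many blocks $[\omega^{k-1}\cdot n,\ \omega^{k-1}\cdot(n+1))$ are colored with only the $k-1$ colors in $k\setminus\{i\}$; each such block is order-isomorphic to $\omega^{k-1}$ — not quite $\omega^{k-1}+1$ — so to apply the induction hypothesis cleanly I would instead phrase the induction to extract, from cofinitely many such blocks, a closed copy of $\omega^{k-2}+1$ within block $n$ together with its supremum $\omega^{k-1}\cdot(n+1)$ adjoined, and then let $n$ vary; this reassembles a closed copy of $\omega^{k-1}+1$ inside $\bigcup_n[\omega^{k-1}\cdot n, \omega^{k-1}\cdot(n+1)]$ omitting the color $i$, and then recursion on the structure finishes. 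The cleanest packaging is: prove by induction on $k$ the slightly stronger statement that for every $c:\omega^k+1\to k$ there is an $i<k$ and a closed $i$-homogeneous copy of $\omega+1$ whose supremum is $\omega^k$ — and reduce via the block decomposition.

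**Main obstacle.** The delicate point is the upper bound's inductive bookkeeping: a block $[\omega^{k-1}\cdot n,\omega^{k-1}\cdot(n+1))$ is order-isomorphic to $\omega^{k-1}$, which is \emph{not} order-homeomorphic to $\omega^{k-1}+1$, so one cannot directly feed a block into the induction hypothesis for $P^{cl}(\omega+1)_{k-1}$ (which is about the space $\omega^{k-1}+1$). The fix is to always carry along the right endpoint: work with the closed interval $[\omega^{k-1}\cdot n,\ \omega^{k-1}\cdot(n+1)]\cong \omega^{k-1}+1$, on which $c$ uses at most $k$ colors, and set up the induction so that either we already win (a block sees the top color $i$ and contributes a point, ultimately assembling $\omega+1$) or every relevant block avoids color $i$ and we recurse inside $\omega^{k-1}+1$ with $k-1$ colors. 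Getting the quantifiers right — "infinitely many blocks do $X$" versus "cofinitely many" — and ensuring the assembled set is genuinely closed (its sole limit point, the supremum, must be included and must be the top point of the copy of $\omega+1$) is the only thing requiring care; the rest is routine.
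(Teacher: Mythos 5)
Your proposal matches the paper's proof in every essential respect: the $P^{top}=P^{cl}$ equality from $\omega+1$ being order-reinforcing, the Cantor--Bendixson-rank coloring of $\omega^k$ (each color class is discrete) for the lower bound, and the block-decomposition induction for the upper bound. The ``main obstacle'' you flag is not actually an obstacle: the paper takes the closed intervals $X_m=[\omega^{k-1}\cdot m+1,\ \omega^{k-1}\cdot(m+1)]$, each order-homeomorphic to $\omega^{k-1}+1$, so if some $X_m$ omits a color the induction hypothesis applies directly (a closed copy of $\omega+1$ inside $X_m$ is already closed inside $\omega^k+1$), and there is no need to strengthen the inductive statement to control the supremum.
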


(By contrast, $P(\omega+1)_k=\omega\cdot k+1$.)

\begin{proof}
The first equality is immediate from the fact that $\omega+1$ is order-reinforcing.

To see that $\omega^k\not\to_{top}(\omega+1)^1_k$, simply color each $x\in\omega^k$ with color $\operatorname{CB}(x)$, and observe that each color class is discrete.

We prove that $\omega^k+1\to_{top}(\omega+1)^1_k$ by induction on $k$. The case $k=1$ is trivial. For the inductive step, suppose $k\geq 2$ and let $c:\omega^k+1\to k$ be a 
coloring. For each $m\in\omega$, let $X_m=\left[\omega^{k-1}\cdot m+1,\omega^{k-1}\cdot(m+1)\right]\cong\omega^{k-1}+1$. If $c^{-1}(\{i\})\cap X_m=\emptyset$ for some $i\in k$ 
and some $m\in\omega$, then the restriction of $c$ to $X_m$ uses at most $k-1$ colors, and we are done by the inductive hypothesis. Otherwise there exists $x_m\in c^{-1}(\{j\})
\cap X_m$ for each $m\in\omega$ where $j=c(\omega^k)$, whence $\{x_m\mid m\in\omega\}\cup\{\omega^k\}$ is a topological copy of $\omega+1$ in color $j$.
\end{proof}

Here is the proof in full of the closed pigeonhole principle for ordinals.

\begin{proof}[Proof of Theorem \ref{theorem:pcl}]
The argument assumes familiarity with \cite{ttppfo}, where $P^{top}(\alpha_i)_{i\in\kappa}$ is computed. The same proof shows that $P^{cl}(\alpha_i)_{i\in\kappa}=P^{top}(\alpha_i)_
{i\in\kappa}$ except in two cases, which we now examine.

The first case is when $\kappa$ is finite and greater than 1, $\alpha_r\geq\omega_1+1$ for some $r\in\kappa$, $\alpha_i$ is finite for all $i\in\kappa\setminus\{r\}$, and $\alpha_r$ is 
not a power of $\omega$, say $\alpha_r=\omega^\beta\cdot m+1+\gamma$ for some ordinal $\beta$, some positive integer $m$ and some ordinal $\gamma\leq\omega^\beta$ (case 
2(c)(ii) in \cite{ttppfo}). Then
\[P^{top}(\alpha_i)_{i\in\kappa}=\omega^\beta\cdot\left(\sum_{i\in\kappa\setminus\{r\}}(\alpha_i-1)+m\right)+1,\]
whereas a very similar argument shows that
\[P^{cl}(\alpha_i)_{i\in\kappa}=\omega^\beta\cdot\left(\sum_{i\in\kappa\setminus\{r\}}(\alpha_i-1)+m\right)+1+\gamma.\]

The second case is when $(\alpha_i)_{i\in\kappa}$ is a finite sequence of countable ordinals (case 6). In this case the following results carry across for $\beta_1,\beta_2,\dots,\beta_k\in
\omega_1\setminus\{0\}$.
\begin{itemize}
\item
$P^{cl}(\omega^{\beta_1}+1,\omega^{\beta_2}+1,\dots,\omega^{\beta_k}+1)=\omega^{\beta_1\#\beta_2\#\cdots\#\beta_k}+1$, where $\#$ denotes the natural sum 
\cite[Theorem 4.5]{ttppfo}.
\item
$P^{cl}(\omega^{\beta_1},\omega^{\beta_2},\dots,\omega^{\beta_k})=\omega^{\beta_1\odot\beta_2\odot\cdots\odot\beta_k}$, where $\odot$ denotes the Milner-Rado sum 
\cite[Theorem 4.6]{ttppfo}.
\item
Let $\alpha_1,\alpha_2,\dots,\alpha_k\in\omega_1$ with $\alpha_r=\omega^{\beta_r}$ for some $r\in\{1,2,\dots,k\}$. Suppose $\beta_i$ is minimal subject to the condition that $\alpha_i
\leq\omega^{\beta_i}$ for all $i\in\{1,2,\dots,k\}$. We then have that $P^{cl}(\alpha_1,\alpha_2,\dots,\alpha_k)=P^{cl}(\omega^{\beta_1},\omega^{\beta_2},\dots,\omega^{\beta_k})$ by 
\cite[Theorem 4.7]{ttppfo}.
\end{itemize}

Thus it remains to compute $P^{cl}(\alpha_1,\alpha_2,\dots,\alpha_k)$ when $\alpha_i\in\omega_1$ is not a power of $\omega$ for any $i\in\{1,2,\dots,k\}$. In that case, for all $i\in
\{1,2,\dots,k\}$ we may write $\alpha_i=\omega^{\beta_i}+1+\gamma_i$ for some $\beta_i\in\omega_1\setminus\{0\}$ and some ordinal $\gamma_i<\omega^{\beta_i+1}$ (or $\alpha_i=
1+\gamma_i$ for some ordinal $\gamma_i<\omega$ if $\alpha_i$ is finite).

Let $Q_i=P^{cl}(\alpha_1,\dots,\alpha_{i-1},\gamma_i,\alpha_{i+1},\dots,\alpha_k)$ for each $i\in\{1,2,\dots,k\}$. We claim that
\[P^{cl}(\alpha_1,\alpha_2,\dots,\alpha_k)=P^{cl}(\omega^{\beta_1}+1,\omega^{\beta_2}+1,\dots,\omega^{\beta_k}+1)+\operatorname{max}\{Q_1,Q_2,\dots,Q_k\}\]
(replacing $\omega^{\beta_i}+1$ with $1$ if $\alpha_i$ is finite). That this is large enough is clear. That no smaller ordinal is large enough follows from the existence for each 
$r\in\{1,2,\dots,k\}$ of a coloring $c_r:\omega^{\beta_1\#\beta_2\#\cdots\#\beta_k}+1\to\{1,2,\dots,k\}$ with the property that for each $i\in\{1,2,\dots,k\}$, there is a closed copy of 
$\omega^{\beta_i}+1$ in color $i$ if and only if $i=r$, and no closed copy of any ordinal larger than $\omega^{\beta_r}+1$ in color $r$. To obtain such a coloring, simply extend the 
coloring given in \cite[Theorem 4.5]{ttppfo} by setting $c_r(\omega^{\beta_1\#\beta_2\#\cdots\#\beta_k})=r$.
This equation allows $P^{cl}(\alpha_1,\alpha_2,\dots,\alpha_k)$ to be computed recursively, since it will be reduced to the three cases above after a finite number of steps.
\end{proof}

The main idea ultimately leading to Theorem \ref{theorem:pcl} in the setting of countable ordinals is a result of Baumgartner and Weiss \cite[Theorem 2.3]{baumgartner}. Some of the 
details of that argument reappear in several of the proofs below. A particular special case of Theorem \ref{theorem:pcl} of interest is the following easy consequence of that result 
\cite[Corollary 2.4]{baumgartner}.

\begin{corollary}[Baumgartner-Weiss]
If $\beta$ is a nonzero countable ordinal and $k$ is finite, then $P^{cl}(\omega^{\omega^\beta})_k=\omega^{\omega^\beta}$.
\end{corollary}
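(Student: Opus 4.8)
The plan is to obtain this directly from Theorem~\ref{theorem:pcl}. Taking $\beta_1=\dots=\beta_k=\omega^\beta$ --- each of which is a nonzero countable ordinal, since $\beta$ is --- the third clause of that theorem gives
\[P^{cl}(\omega^{\omega^\beta})_k=P^{cl}(\omega^{\omega^\beta},\dots,\omega^{\omega^\beta})=\omega^{\omega^\beta\odot\cdots\odot\omega^\beta},\]
with $k$ copies of $\omega^\beta$ in the exponent. So it suffices to show that the Milner--Rado sum of $k$ copies of $\omega^\beta$ is again $\omega^\beta$.

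By definition and the associativity of $\odot$, the ordinal $\omega^\beta\odot\cdots\odot\omega^\beta$ ($k$ times) is the least ordinal that cannot be written as $\tilde\alpha_1\#\cdots\#\tilde\alpha_k$ with $\tilde\alpha_i<\omega^\beta$ for each $i$. On the one hand, every $\delta<\omega^\beta$ arises this way, by taking $\tilde\alpha_1=\delta$ and $\tilde\alpha_2=\dots=\tilde\alpha_k=0$. On the other hand, no ordinal $\geq\omega^\beta$ arises this way: if each $\tilde\alpha_i<\omega^\beta$, then every exponent occurring in the Cantor normal form of $\tilde\alpha_i$ is strictly below $\beta$, hence the same is true of $\tilde\alpha_1\#\cdots\#\tilde\alpha_k$, and a finite sum of terms $\omega^\gamma\cdot m$ with $\gamma<\beta$ is readily seen to be below $\omega^\beta$ (it lies below $\omega^{\gamma_0+1}\le\omega^\beta$, where $\gamma_0<\beta$ is the largest such exponent). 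Thus the set of ordinals of the form $\tilde\alpha_1\#\cdots\#\tilde\alpha_k$ with all $\tilde\alpha_i<\omega^\beta$ is exactly $[0,\omega^\beta)$, so its least non-member is $\omega^\beta$. Substituting into the displayed equation yields $P^{cl}(\omega^{\omega^\beta})_k=\omega^{\omega^\beta}$.

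There is no real obstacle here: the only point requiring a moment's thought is that $\omega^\beta$ is closed under natural sums --- equivalently, that a finite natural sum of ordinals below $\omega^\beta$ remains below $\omega^\beta$ --- which is the routine Cantor-normal-form observation just indicated (and which is simply the additive indecomposability of $\omega^\beta$). One may also note in passing that the inequality $P^{cl}(\omega^{\omega^\beta})_k\geq\omega^{\omega^\beta}$ is trivial, since no ordinal below $\omega^{\omega^\beta}$ even contains a subset of order type $\omega^{\omega^\beta}$, let alone a closed copy; so the substance of the corollary lies entirely in the upper bound supplied by Theorem~\ref{theorem:pcl}.
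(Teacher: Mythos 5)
Your proof is correct and follows essentially the same route as the paper's: invoke clause (3) of Theorem~\ref{theorem:pcl} and then check that the $k$-fold Milner--Rado sum $\omega^\beta\odot\cdots\odot\omega^\beta$ equals $\omega^\beta$, using the closure of $\omega^\beta$ under natural sums. The paper's version is terser (it states only the closure fact and concludes), but the underlying argument is identical.
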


Although this result follows directly from \cite[Theorem 2.3]{baumgartner}, we briefly illustrate how to obtain it as a special case of our more general result.

\begin{proof}
If $\alpha_i<\omega^\beta$ for all $i\in\{1,\dots,k\}$, then $\alpha_1\#\cdots\#\alpha_k<\omega^\beta$, so that $\omega^\beta\odot\cdots\odot\omega^\beta=\omega^\beta$. Now use 
clause \emph{(\ref{clause2})} of Theorem \ref{theorem:pcl}.
\end{proof}

\section{The ordinal \texorpdfstring{$\omega+1$}{omega plus one}}\label{section:omega+1}

In this section we prove the following result.

\begin{theorem}\label{theorem:omega+1}
If $k$ is a positive integer, then
\[R^{top}(\omega+1,k+1)=R^{cl}(\omega+1,k+1)=\omega^k+1.\]
\end{theorem}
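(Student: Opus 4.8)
The plan is to prove the two equalities $R^{top}(\omega+1,k+1)=R^{cl}(\omega+1,k+1)=\omega^k+1$ by establishing the lower bound $R^{cl}(\omega+1,k+1)\geq\omega^k+1$ (equivalently, that $\omega^k\not\to_{top}(\omega+1,k+1)^2$, since a coloring witnessing the failure of the topological relation also witnesses the failure of the closed relation) and the upper bound $R^{cl}(\omega+1,k+1)\leq\omega^k+1$ (i.e. $\omega^k+1\to_{cl}(\omega+1,k+1)^2$, which then automatically gives the same upper bound for the topological number). Since $\omega+1$ is order-reinforcing, the topological and closed Ramsey numbers agree, so it suffices to pin down $R^{cl}$.

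For the lower bound, I would produce a red-blue coloring of $[\omega^k]^2$ with no red-homogeneous topological copy of $\omega+1$ and no blue-homogeneous set of size $k+1$. The natural candidate comes from the Cantor--Bendixson rank: recall from Proposition~\ref{proposition:pomega+1} that coloring each $x\in\omega^k$ by $\operatorname{CB}(x)$ uses $k$ colors and makes every color class discrete (hence containing no topological copy of $\omega+1$). Feed this into the $k$-partite construction of Proposition~\ref{proposition:lowerbound}: color $\{x,y\}$ red if $\operatorname{CB}(x)=\operatorname{CB}(y)$ and blue otherwise. A red-homogeneous set lies in a single CB-level, hence is discrete, so it cannot contain a (topological) copy of $\omega+1$; a blue-homogeneous set meets each of the $k$ CB-levels at most once, so it has size at most $k$. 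This gives $\omega^k\not\to_{top}(\omega+1,k+1)^2$, hence $R^{cl}(\omega+1,k+1)\geq\omega^k+1$.

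For the upper bound I would argue $\omega^k+1\to_{cl}(\omega+1,k+1)^2$ by induction on $k$, mirroring the pigeonhole argument in Proposition~\ref{proposition:pomega+1}. The base case $k=1$ says $\omega+1\to_{cl}(\omega+1,2)^2$, which is trivial. For the inductive step, let $c:[\omega^k+1]^2\to\{\text{red},\text{blue}\}$ be given and consider the top point $\omega^k$. Look at its "red neighborhood" $A=\{x<\omega^k: c(\{x,\omega^k\})=\text{red}\}$. If $A$ is cofinal in $\omega^k$ then (since a cofinal subset of $\omega^k$ contains a closed copy of $\omega^k$, in particular an $\omega$-sequence converging to its sup) I can extract from $A$ an increasing sequence $\langle x_m:m\in\omega\rangle$ converging to $\omega^k$; but this only makes $\{x_m\}\cup\{\omega^k\}$ red with the top point, not internally red, so I instead recurse: partition $\omega^k=\bigsqcup_{m\in\omega}[\omega^{k-1}\cdot m+1,\omega^{k-1}\cdot(m+1)]$ into blocks each order-homeomorphic to $\omega^{k-1}+1$, exactly as in Proposition~\ref{proposition:pomega+1}. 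If on some block the restricted coloring, after also accounting for the color to $\omega^k$, lets us apply the inductive hypothesis we win; otherwise in every block there is a point joined in blue to $\omega^k$ and the blocks' internal structure is "red-saturated", and chasing this carefully produces either a blue $(k+1)$-set (built from the top point together with $k$ points spread across the analysis) or a closed red copy of $\omega+1$ inside one block. The main obstacle — and the step I expect to require the most care — is bookkeeping the interaction between "edges to the top point $\omega^k$" and "edges within a block": one must set up the induction so that a block on which only $k-1$ colors effectively appear (counting the color forced toward the top) yields the closed copy of $\omega+1$ directly, while the complementary case accumulates blue edges into a genuine blue clique of size $k+1$ rather than merely a blue star. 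I would handle this by strengthening the inductive statement slightly, e.g. tracking a point $y$ together with its color toward all previously chosen points, so that "$\omega^k+1$ with $k$ colors, one of which is already committed at the top" reduces cleanly to "$\omega^{k-1}+1$ with $k-1$ colors." Combining the two bounds yields $R^{cl}(\omega+1,k+1)=\omega^k+1$, and order-reinforcement gives the same value for $R^{top}$.
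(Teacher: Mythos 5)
Your lower bound is exactly the paper's: the CB-rank coloring from Proposition~\ref{proposition:pomega+1} fed through the bipartite construction of Proposition~\ref{proposition:lowerbound}, and the reduction from $R^{top}$ to $R^{cl}$ via order-reinforcing. That part is fine.

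The upper bound, however, has a genuine gap. You correctly set up the block decomposition $\omega^k=\bigsqcup_m X_m$ with $X_m\cong\omega^{k-1}+1$ and correctly begin by applying the inductive hypothesis to each block, but then you lose the thread. The missing step is simple and does not require any ``strengthened inductive statement'': after the inductive hypothesis gives, in each block $X_m$, either a red-homogeneous closed copy of $\omega+1$ (done) or a blue-homogeneous set $B_m$ of size $k$, you look at the edges from each $B_m$ to the top point $\omega^k$. If some $B_m$ is entirely blue to $\omega^k$, then $B_m\cup\{\omega^k\}$ is a blue $(k+1)$-set. Otherwise pick $x_m\in B_m$ with $c(\{x_m,\omega^k\})=\text{red}$ for every $m$, and apply \emph{Ramsey's theorem} to the infinite set $\{x_m:m\in\omega\}$: a blue-homogeneous $(k+1)$-subset finishes one way, and an infinite red-homogeneous $H$ gives $H\cup\{\omega^k\}$ as a red-homogeneous closed copy of $\omega+1$, finishing the other. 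Your sketch never invokes Ramsey's theorem and gets the endgame wrong in two ways: you expect the blue $(k+1)$-set to always include $\omega^k$ (it need not --- the Ramsey application produces it entirely among the $x_m$), and you expect the red $\omega+1$ to live inside a single block (the crucial case is the one spread across all blocks with $\omega^k$ as its limit).

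Your proposed fix --- tracking ``$\omega^k+1$ with $k$ colors, one of which is already committed at the top'' reducing to ``$\omega^{k-1}+1$ with $k-1$ colors'' --- is the argument pattern for the \emph{pigeonhole} result $P^{top}(\omega+1)_k=\omega^k+1$ (Proposition~\ref{proposition:pomega+1}), where one is coloring points with $k$ colors. It does not transfer to the present situation, which is a $2$-coloring of pairs; there is no ``color committed at the top'' to peel off, and the natural closing move is Ramsey's theorem on the selected transversal, not a color-reduction.
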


Our proof is quite similar to the proof of Proposition \ref{proposition:pomega+1}. Later, we will provide a second proof (see Section \ref{section:anti-tree}).

\begin{proof}
The first equality is immediate from the fact that $\omega+1$ is order-reinforcing, and the fact that $R^{top}(\omega+1,k+1)\geq\omega^k+1$ follows from Propositions 
\ref{proposition:lowerbound} and \ref{proposition:pomega+1}.

It remains to prove that $\omega^k+1\to_{top}(\omega+1,k+1)^2$, which we do by induction on $k$. The case $k=1$ is trivial. For the inductive step, suppose $k\geq 2$ and let $c:
[\omega^k+1]^2\to \{\text{red},\text{blue}\}$ be a coloring. As in the proof of Proposition \ref{proposition:pomega+1}, for each $m\in\omega$, let $X_m=\left[\omega^{k-1}\cdot 
m+1,\omega^{k-1}\cdot(m+1)\right]\cong\omega^{k-1}+1$. For each $m\in\omega$, we may assume by the inductive hypothesis that $X_m$ has a blue-homogeneous set $B_m$ of 
size $k$, or else it has a red-homogeneous closed copy of $\omega+1$ and we are done. If for any $m\in\omega$ it is the case that $c(\{x,\omega^k\})=\text{blue}$ for all $x\in B_m$, 
then $B_m\cup\{\omega^k\}$ is a blue-homogeneous set of size $k+1$ and we are done. Otherwise for each $m\in\omega$ we may choose $x_m\in B_m$ with $c(\{x_m,\omega^k\})=
\text{red}$. Finally, by Ramsey's theorem, within the set $\{x_m\mid m\in\omega\}$ there is either a blue-homogeneous set of size $k+1$, in which case we are done, or an infinite 
red-homogeneous set $H$, in which case $H\cup\{\omega^k\}$ is a red-homogeneous closed copy of $\omega+1$, and we are done as well.
\end{proof}

It is somewhat surprising that we are able to obtain an exact equality here using the lower bound from Proposition \ref{proposition:lowerbound}. As we shall see, for ordinals larger than 
$\omega+1$ there is typically a large gap between our upper and lower bounds, and we expect that improvement will usually be possible on both sides.

\section{Stepping up by one}\label{section:steppingup}

In this section we consider how to obtain an upper bound for $R^{cl}(\alpha+1,k+1)$, given upper bounds for $R^{cl}(\alpha,k+1)$ and $R^{cl}(\alpha+1,k)$ for a successor ordinal 
$\alpha$ and a positive integer $k$. Note that if $\alpha$ is an infinite successor ordinal, then $\alpha+1\cong\alpha$, so trivially $R^{top}(\alpha+1,k+1)=R^{top}(\alpha,k+1)$, which is 
why we study $R^{cl}$ instead.

First we give two simple upper bounds. The first comes from considering the edges incident to the largest point, as in a standard proof of the existence of the finite Ramsey numbers. 
The second typically gives worse bounds, but is more similar to the technique we will consider next.

\begin{proposition}\label{proposition:easysteppingup}
Let $\alpha$ be a successor ordinal, and let $k\geq 2$ be a positive integer.
\begin{enumerate}
\item
$R^{cl}(\alpha+1,k+1)\leq P^{cl}( R^{cl}(\alpha,k+1), R^{cl}(\alpha+1,k) )+1$.
\item
$R^{cl}(\alpha+1,k+1)\leq P^{cl}(R^{cl}(\alpha,k+1))_k+R^{cl}(\alpha+1,k)$.
\end{enumerate}
\end{proposition}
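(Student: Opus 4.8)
The plan is to prove each clause by exhibiting, given a red-blue coloring $c$ of $[\beta]^2$ where $\beta$ is the claimed upper bound, either a red-homogeneous closed copy of $\alpha+1$ or a blue-homogeneous set of size $k+1$. In both clauses the strategy is to locate a large ``top part'' of $\beta$ which is closed and order-homeomorphic to $\alpha$, hunt for a red-homogeneous closed copy of $\alpha$ inside it (using $R^{cl}(\alpha,k+1)$ together with a pigeonhole argument), and then try to cap it off with one more point above. Since $\alpha$ is a successor ordinal, a closed copy of $\alpha$ together with a point strictly above all of it is automatically a closed copy of $\alpha+1$; this is what makes the ``$+1$'' in the conclusion work, and it is why we insist $\alpha$ be a successor.

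For clause (1): write $P = P^{cl}(R^{cl}(\alpha,k+1), R^{cl}(\alpha+1,k))$ and work with $\beta = P+1$. Let $\theta$ be the top point of $\beta$, and consider the ``link coloring'' $d:P\to 2$ sending $x\mapsto c(\{x,\theta\})$. By definition of the closed pigeonhole number, either there is a closed copy $A$ of $R^{cl}(\alpha,k+1)$ inside $d^{-1}(\text{red})$, or a closed copy $B$ of $R^{cl}(\alpha+1,k)$ inside $d^{-1}(\text{blue})$. In the first case, apply $c$ restricted to $[A]^2$: we get either a blue-homogeneous $k+1$-set (done) or a red-homogeneous closed copy of $\alpha$, all of whose points are red-joined to $\theta$, so adjoining $\theta$ yields a red-homogeneous closed copy of $\alpha+1$ (using that $\alpha$ is a successor, so $\theta$ is above everything and the new set is closed). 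In the second case, apply $c$ restricted to $[B]^2$ with parameters $(\alpha+1,k)$: either a red-homogeneous closed copy of $\alpha+1$ (done) or a blue-homogeneous $k$-set all of whose points are blue-joined to $\theta$, and again adjoining $\theta$ gives a blue-homogeneous $(k+1)$-set. One must be slightly careful that $A$, resp.\ $B$, really is closed in $\beta$ and not merely in $P$; since $\theta=\max\beta$ and $A,B\subseteq P$, closedness in $P$ is the same as closedness in $\beta$, and adjoining the single larger point $\theta$ preserves closedness. The main thing to verify carefully here is exactly this interaction between ``closed in the big ordinal'' and the pigeonhole/Ramsey copies produced, which is routine but is where the argument could silently go wrong.

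For clause (2): write $\beta = P^{cl}(R^{cl}(\alpha,k+1))_k + R^{cl}(\alpha+1,k)$, splitting $\beta$ as an initial segment $I$ of order type $P^{cl}(R^{cl}(\alpha,k+1))_k$ followed by a final segment $J$ of order type $R^{cl}(\alpha+1,k)$. The idea is: for each point $y\in J$, the link coloring $x\mapsto c(\{x,y\})$ on $I$ uses $2$ colors; if for some $y$ the blue class contains no closed copy of $R^{cl}(\alpha,k+1)$ then the red class must contain a closed copy of $R^{cl}(\alpha,k+1)$ by the pigeonhole number $P^{cl}(R^{cl}(\alpha,k+1))_k$ — wait, more carefully, we should iterate: try to peel off, one at a time, up to $k$ points $y_1,\dots,y_k$ from $J$ and corresponding nested closed copies inside $I$ so that we are building a potential blue-homogeneous set; if at any stage the relevant blue link class fails to contain a closed copy of $R^{cl}(\alpha,k+1)$, we are left with fewer than $k$ colors effectively acting and can extract a red-homogeneous closed copy of $\alpha$ from $I$ and cap it with one of the peeled points; otherwise we successfully peel $k$ points and look for the homogeneous behaviour among them together with the surviving copy. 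The cleanest bookkeeping is: apply the partition relation $R^{cl}(\alpha+1,k)$ to $J$ (via a coloring recording whether pairs in $J$ agree on... ) — actually the most robust route is to induct, treating $J$ as the place where we recurse on the second parameter via $R^{cl}(\alpha+1,k)$, and $I$ as the reservoir from which, whenever ``blue'' fails to be rich enough $k$ times over, the quantity $P^{cl}(R^{cl}(\alpha,k+1))_k$ guarantees a red closed copy of $R^{cl}(\alpha,k+1)$ and hence (applying $R^{cl}(\alpha,k+1)$ inside it) either a blue $(k+1)$-set or a red closed copy of $\alpha$, which we cap with a point of $J$ lying above all of $I$. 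The hard part will be organizing this iteration so that the peeled-off points of $J$ both lie above the chosen copy in $I$ (automatic, since $J$ is a final segment) and are mutually blue and blue-joined to the surviving copy — i.e.\ getting the quantifier order right so that the single application of $P^{cl}(\cdots)_k$ (with $k$ colors, not $2$) is what absorbs all $k$ of the ``blue-failure'' cases simultaneously. I expect this combinatorial bookkeeping in clause (2) to be the only real obstacle; clause (1) is essentially immediate once the closedness points above are checked.
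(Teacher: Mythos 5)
Your clause (1) is correct and is exactly the paper's argument: apply the two-colored closed pigeonhole to the link coloring toward the top point, then use the appropriate Ramsey number inside whichever closed copy you get; the closedness bookkeeping you flag is fine because the top point dominates everything below.

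For clause (2) you circle around the right idea but never pin it down, and the one concrete thing you assert about the final step is wrong. The clean argument is: write $\beta=P^{cl}(R^{cl}(\alpha,k+1))_k$ and work in $\beta+R^{cl}(\alpha+1,k)$. First apply $R^{cl}(\alpha+1,k)$ to the \emph{final} segment $J=[\beta,\beta+R^{cl}(\alpha+1,k))$ with the original coloring $c$; this yields either a red-homogeneous closed copy of $\alpha+1$ (done) or a blue-homogeneous set $\{x_1,\dots,x_k\}\subseteq J$. Now for each $y<\beta$: if $y$ is blue-joined to all $x_i$ we immediately have a blue $(k{+}1)$-set, so we may define a $k$-coloring $d$ on $I=\beta$ by letting $d(y)$ be some index $i$ with $c(\{y,x_i\})=\text{red}$. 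By $P^{cl}(R^{cl}(\alpha,k+1))_k$ there is a single index $i$ and a closed copy $X\subseteq d^{-1}(\{i\})$ of $R^{cl}(\alpha,k+1)$. Inside $X$, $R^{cl}(\alpha,k+1)$ gives either a blue $(k{+}1)$-set (done) or a red closed copy $Y$ of $\alpha$, and then $Y\cup\{x_i\}$ is a red-homogeneous closed copy of $\alpha+1$, \emph{because every point of $X$ is red-joined to $x_i$}. There is no iteration, no ``peeling one point at a time,'' and no need for the extracted copy to be blue-joined to anything; the whole point of the $k$-coloring $d$ is that it picks out a single $x_i$ to which the final red copy of $\alpha$ is uniformly \emph{red}-joined. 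Your remark about the peeled points needing to be ``blue-joined to the surviving copy'' is backwards and, if pursued, would derail the proof; blue-joining only enters in the trivial escape route where some $y\in I$ is blue to every $x_i$.
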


\begin{proof}
First note that since $\alpha$ is a successor ordinal, a closed copy of $\alpha+1$ is obtained from any closed copy of $\alpha$ together with any larger point.
\begin{enumerate}
\item
Let $\beta=P^{cl}(R^{cl}(\alpha,k+1),R^{cl}(\alpha+1,k))$ and let $c:[\beta+1]^2\to\{\text{red},\text{blue}\}$ be a coloring. This induces a coloring $d:\beta\to\{\text{red},\text{blue}\}$ given 
by $d(x)=c(\{x,\beta\})$. By definition of $P^{cl}$, there exists $X\subseteq\beta$ such that either $X\subseteq d^{-1}(\{\text{red}\})$ and $X$ is a closed copy of $R^{cl}(\alpha,k+1)$, or 
$X\subseteq d^{-1}(\{\text{blue}\})$ and $X$ is a closed copy of $R^{cl}(\alpha+1,k)$. In the first case, by definition of $R^{cl}$, we are either done immediately, or we obtain a 
red-homogeneous closed copy $Y$ of $\alpha$, in which case $Y\cup\{\beta\}$ is a red-homogeneous closed copy of $\alpha+1$. The second case is similar.
\item
Let $\beta=P^{cl}(R^{cl}(\alpha,k+1))_k$ and let $c:[\beta+R^{cl}(\alpha+1,k)]^2\to\{\text{red},\text{blue}\}$ be a coloring. By definition of $R^{cl}$, either we are done or there is a 
blue-homogeneous set of $k$ points, say $\{x_1,x_2,\dots,x_k\}$, with $x_i\ge\beta$ for all $i\in\{1,\dots,k\}$. If for any $y<\beta$ we have $c(\{y,x_i\})=\text{blue}$ for all $i$, then we 
are done. Otherwise define a coloring $d:\beta\to k$ by taking $d(y)$ to be some $i$ such that $c(\{y,x_i\})=\text{red}$. Then by definition of $P^{cl}$, there exists $i$ and $X\subseteq
\beta$ such that $X\subseteq d^{-1}(\{i\})$ and $X$ is a closed copy of $R^{cl}(\alpha,k+1)$. But then $X$ either contains a blue-homogeneous set of $k+1$ points, or a 
red-homogeneous closed copy $Y$ of $\alpha$, in which case $Y\cup\{x_i\}$ is a red-homogeneous closed copy of $\alpha+1$, as required.\qedhere
\end{enumerate}
\end{proof}

Note that $R^{cl}(\alpha,2)=\alpha$ for any ordinal $\alpha$. Hence if $\alpha$ is a successor ordinal and we have an upper bound on $R^{cl}(\alpha,k)$ for every positive integer $k$, 
then we may easily apply either of these two inequalities recursively to obtain upper bounds on $R^{cl}(\alpha+n,k)$ for all positive integers $n$ and $k$.

Unfortunately, neither of these techniques appears to generalize well to limit ordinals, since they are ``backward-looking'' in some sense. Our ``forward-looking'' technique is a little more 
complicated and, in the form presented here, it only works below $\omega^2$. Before we state the general result, here is an illustrative special case.

\begin{lemma}\label{lemma:omega+2}
$R^{cl}(\omega+2,3)\leq R^{cl}(\omega+1,3)+P^{cl}(\omega+2)_2=\omega^2\cdot 2+\omega+2$.
\end{lemma}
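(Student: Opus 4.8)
First I would recall the two facts that do all the combinatorial work. From Proposition \ref{proposition:pomega+1} and Fact \ref{fact:pcl}, $R^{cl}(\omega+1,3)=\omega^2+1$ (indeed $R^{cl}(\omega+1,k+1)=\omega^k+1$ by Theorem \ref{theorem:omega+1}), and $P^{cl}(\omega+2)_2=P^{cl}(\omega+2,\omega+2)$, which by Fact \ref{fact:pcl}(5) with $m=k=2$ equals $\omega^2+\omega\cdot(2-1)+2=\omega^2+\omega+2$. Hence the claimed arithmetic identity reads $R^{cl}(\omega+1,3)+P^{cl}(\omega+2)_2=(\omega^2+1)+(\omega^2+\omega+2)=\omega^2\cdot 2+\omega+2$, using that $1+\omega^2=\omega^2$. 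So the only thing to prove is the inequality $R^{cl}(\omega+2,3)\leq R^{cl}(\omega+1,3)+P^{cl}(\omega+2)_2$, and the plan is to exhibit, for every red--blue coloring of $[\gamma]^2$ with $\gamma=R^{cl}(\omega+1,3)+P^{cl}(\omega+2)_2$, either a red-homogeneous closed copy of $\omega+2$ or a blue triangle.

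The ``forward-looking'' idea is this. Split $\gamma$ into an initial segment $I$ of order type $R^{cl}(\omega+1,3)=\omega^2+1$ and a tail $T$ of order type $P^{cl}(\omega+2)_2=\omega^2+\omega+2$. Apply $R^{cl}(\omega+1,3)$ to the restriction of the coloring to $[I]^2$: either we already get a blue triangle (done), or we get a red-homogeneous closed copy $C\subseteq I$ of $\omega+1$; write $\ell=\sup C\in C$ for its top point. Since $\omega+2$ is a successor ordinal, any closed copy of $\omega+1$ together with one strictly larger point forms a closed copy of $\omega+2$; so it suffices to find a single point $t\in T$ with $c(\{x,t\})=\text{red}$ for all $x\in C$. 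Suppose no such $t$ exists: then for every $t\in T$ there is some $x\in C$ with $c(\{x,t\})=\text{blue}$. The top point $\ell$ plays a special role: either $c(\{\ell,t\})=\text{blue}$ for a set of $t$'s large enough to reduce to a 2-coloring on which we can finish, or else we may discard those few $t$'s and assume $c(\{\ell,t\})=\text{red}$ for all remaining $t$, so that for each such $t$ the ``blue witness'' $x$ lies in $C\setminus\{\ell\}$, a set of order type $\omega$.

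Now I would run the pigeonhole step. Enumerate $C\setminus\{\ell\}=\{x_0<x_1<x_2<\cdots\}$ and define a coloring $d$ of the $P^{cl}(\omega+2)_2$-many points $t$ of (the relevant cofinal part of) $T$ by $d(t)=$ some $n$ with $c(\{x_n,t\})=\text{blue}$; but a finite palette is not available since $n$ ranges over $\omega$, so instead I would be more careful and split into two subcases depending on whether some single $x_n$ is a blue witness for cofinally many $t$. If yes, then that $x_n$ together with a red-homogeneous closed copy of $\omega+1$ found among those $t$'s (using $P^{cl}(\omega+1)_2=\omega^2$, which sits below $P^{cl}(\omega+2)_2$, and Theorem \ref{theorem:omega+1} applied to the tail) either yields a blue triangle with $x_n$ or a red closed copy of $\omega+2$. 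If no single $x_n$ works cofinally, then by a diagonal/pressing-type argument the blue witnesses climb through $C\setminus\{\ell\}$, and together with $\ell$ (for which edges to the $t$'s are red) one assembles a blue triangle among $\{x_m, x_n, t\}$ or else contradicts the assumption; the precise bookkeeping here is exactly what $P^{cl}(\omega+2)_2=\omega^2+\omega+2$ is tailored to make work, since a closed copy of $\omega+2$ in the tail, colored red, would finish, and its absence forces the blue structure. The main obstacle is organizing this last case cleanly — matching the auxiliary coloring to a genuinely finite number of colors (or replacing it by the correct ``closed'' pigeonhole instance on $\omega+2$), so that the value $P^{cl}(\omega+2)_2$ rather than something larger suffices; once the case analysis on the behaviour of the top point $\ell$ and on whether a single $x_n$ is a cofinal blue witness is set up, the rest is routine. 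The general pattern of this argument is what gets abstracted in Proposition \ref{proposition:steppingup2}(3).
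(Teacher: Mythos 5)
Your arithmetic, the split of $\gamma$ into an initial segment of type $\omega^2+1$ and a tail $T$ of type $\omega^2+\omega+2$, and the extraction of a red-homogeneous closed copy $C$ of $\omega+1$ (with top point $\ell$ and $H=C\setminus\{\ell\}$) all match the paper. The gap is at the key step, the coloring of the tail. You observe, correctly, that assigning to each $t\in T$ a blue witness $x_n\in H$ gives an $\omega$-palette coloring, to which the two-color pigeonhole number $P^{cl}(\omega+2)_2$ does not apply. Your attempted rescue (case split on whether one $x_n$ is a cofinal blue witness, else a ``diagonal/pressing-type argument'') does not close: $\omega$-many bounded sets can have cofinal union in $T$, so the second case is nonempty, and the sketch there does not actually produce a blue triangle or a red $\omega+2$. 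You flag the problem yourself but do not resolve it.

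The missing idea is to redefine the second class using a finite/cofinite dichotomy on $H$ rather than a witness: let $A_1=\{t\in T : c(\{\ell,t\})=\text{blue}\}$ and $A_2=\{t\in T : c(\{h,t\})=\text{blue}$ for all but finitely many $h\in H\}$. Both are red-homogeneous (for $A_2$, two blue-joined points $s,t$ would give a blue triangle through any $h$ in the cofinite intersection of their blue-witness sets), and $T\subseteq A_1\cup A_2$: if $t$ is in neither, then $c(\{\ell,t\})=\text{red}$ and $c(\{h,t\})=\text{red}$ for infinitely many $h$, and those $h$'s together with $\ell$ and $t$ form a red-homogeneous closed copy of $\omega+2$. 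This is a genuine $2$-coloring of $T$, so by definition of $P^{cl}(\omega+2)_2$ one of $A_1,A_2$ contains a closed copy of $\omega+2$, necessarily red-homogeneous, finishing the proof. Your plan is otherwise on target, but this reformulation of $A_2$ is the idea that makes the step work, and it is absent from the proposal.
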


\begin{proof}
First note that $R^{cl}(\omega+1,3)+P^{cl}(\omega+2)_2=(\omega^2+1)+(\omega^2+\omega+2)=\omega^2\cdot 2 +\omega+2$ by Theorem \ref{theorem:omega+1} and Fact 
\ref{fact:pcl}. It remains to prove that $R^{cl}(\omega+1,3)+P^{cl}(\omega+2)_2\to_{cl}(\omega+2,3)^2$.

Let $\beta=R^{cl}(\omega+1,3)$, let $c:[\beta+P^{cl}(\omega+2)_2]^2\to\{\text{red},\text{blue}\}$ be a coloring, and suppose for contradiction that there is no red-homogeneous closed 
copy of $\omega+2$ and no blue triangle.

By definition of $R^{cl}$ and our assumption that the coloring admits no blue triangles, there exists a red-homogeneous closed copy $X$ of $\omega+1$ contained in $\beta$. Let $x$ 
be the largest point in $X$ and let $H=X\setminus\{x\}$.

Let $A_1=\{y\geq\beta\mid c(\{x,y\})=\text{blue}\}$ and let $A_2=\{y\geq\beta\mid c(\{h,y\})=\text{blue}$ for all but finitely many $h\in H\}$.

First of all, we claim that $A_1$ is red-homogeneous. This is because if $y,z\in A_1$ and $c(\{y,z\})=\text{blue}$, then $\{x,y,z\}$ is a blue triangle.

Next, we claim that $A_2$ is red-homogeneous. This is because if $y,z\in A_2$ and $c(\{y,z\})=\text{blue}$, then $c(\{h,y\})=c(\{h,z\})=\text{blue}$ for all but finitely many $h\in H$, and 
so $\{h,y,z\}$ is a blue triangle for any such $h$.

Finally, we claim that if $y\geq\beta$, then $y\in A_1\cup A_2$. For otherwise we would have $y\geq\beta$ with $c(\{x,y\})=\text{red}$ and $c(\{h,y\})=\text{red}$ for all $h$ in some 
infinite subset $K\subseteq H$, whence $K\cup\{x,y\}$ is a red-homogeneous closed copy of $\omega+2$. Hence by definition of $P^{cl}$, either $A_1$ or $A_2$ contains a closed 
copy of $\omega+2$, which by the above claims must be red-homogeneous, and we are done.
\end{proof}

Digressing briefly, we show that in this particular case the upper bound is optimal, and thus $R^{cl}(\omega+2,3)=\omega^2\cdot 2+\omega+2$. The coloring we present was found by 
Omer Mermelstein.

\begin{lemma}\label{lemma:omegaplus2lower}
$R^{cl}(\omega+2,3)\geq\omega^2\cdot 2+\omega+2$.
\end{lemma}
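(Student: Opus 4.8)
The plan is to exhibit a red-blue coloring of $[\omega^2\cdot2+\omega+1]^2$ with no blue triangle and no red-homogeneous closed copy of $\omega+2$; such a coloring witnesses $\omega^2\cdot2+\omega+1\not\to_{cl}(\omega+2,3)^2$, i.e., $R^{cl}(\omega+2,3)\geq\omega^2\cdot2+\omega+2$ (and, together with Lemma~\ref{lemma:omega+2}, pins down $R^{cl}(\omega+2,3)=\omega^2\cdot2+\omega+2$). The structural fact that should guide the construction is that a closed copy of $\omega+2$ is nothing but a strictly increasing $\omega$-sequence $H$, its supremum $\ell$, and one extra point $m>\ell$; hence a red-homogeneous such copy amounts to a red-homogeneous closed copy $H\cup\{\ell\}$ of $\omega+1$ together with an $m>\ell$ joined in red to $\ell$ and to infinitely many members of $H$. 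Since in the ordinal $\omega^2\cdot2+\omega+1$ the maximum $M=\omega^2\cdot2+\omega$ is the only limit point with nothing above it, the goal reduces to: build a coloring with no blue triangle in which every red-homogeneous closed copy of $\omega+1$ either has supremum $M$ or admits no such cap $m$.

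I would look for this coloring as an amalgam of the extremal colorings behind $R^{cl}(\omega+1,3)=\omega^2+1$ and $P^{cl}(\omega+2)_2=\omega^2+\omega+2$, writing $\omega^2\cdot2+\omega+1$ as a front block $B_0$ of order type $\omega^2=R^{cl}(\omega+1,3)-1$ followed by a back block $B_1$ of order type $\omega^2+\omega+1=P^{cl}(\omega+2)_2-1$, with connecting point $\omega^2=\min B_1$. On $B_0$ one wants a no-blue-triangle coloring all of whose red-homogeneous subsets are discrete (so $B_0$ contains no red-homogeneous closed copy of $\omega+1$); on $B_1$ one uses the graph coloring ``red iff same class'' coming from a partition witnessing $\omega^2+\omega+1\not\to_{cl}(\omega+2)^1_2$, so that a red-homogeneous set lies in a single class, which by construction contains no closed copy of $\omega+2$; and the pairs between $B_0$ and $B_1$ are coloured red, with the crucial exception of a carefully chosen family of pairs joining $B_0$ to the connector $\omega^2$. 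Granting a suitable such coloring, the verification is a case analysis on the supremum $\ell$ of a hypothetical red-homogeneous closed copy of $\omega+2$: if $\ell$ is interior to $B_0$, its sequence together with $\ell$ is a red-homogeneous closed copy of $\omega+1$ inside $B_0$ — none exists; if $\ell$ lies in $B_1$, whether in the interior or at $\omega^2\cdot2$, the tail of the sequence together with $\ell$ and $m$ lives inside one class of $B_1$, giving a closed copy of $\omega+2$ there — impossible; if $\ell=M$ there is no room above for $m$; and the remaining case, $\ell=\omega^2$ with the sequence inside $B_0$, is exactly what the special pairs at the connector must rule out.

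The heart of the matter — and the reason the threshold is $\omega^2\cdot2+\omega+2$ rather than anything smaller — is that last case. Because $B_0$ has order type $\omega^2$ and carries a coloring with no blue triangle, Ramsey's theorem \emph{forces} red-homogeneous sets cofinal in $\omega^2$, and in a naive construction such ``long red fans'' occur in several incomparable classes at once. To stop any of them from closing into a red copy of $\omega+2$ with $\omega^2$ as its limit, each fan must be joined to $\omega^2$ by a blue edge; but doing this for fans drawn from two different classes threatens a blue triangle through $\omega^2$. Reconciling these demands — choosing the $B_0$-coloring and the family of blue edges at $\omega^2$ so that every red fan cofinal in $\omega^2$ is severed while the blue graph stays triangle-free — is precisely the content of Mermelstein's coloring, and is the step I expect to absorb essentially all of the ingenuity; the rest is bookkeeping along the case analysis above.
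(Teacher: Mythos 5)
Your overall plan---produce a red-blue coloring of $[\omega^2\cdot2+\omega+1]^2$ with no blue triangle and no red-homogeneous closed copy of $\omega+2$---is the paper's, and your description of what such a copy must look like is correct. But you never actually write down a coloring, deferring to ``Mermelstein's coloring,'' and the block architecture you commit to cannot be realized, so the step you call ``essentially all of the ingenuity'' is not merely missing: as you have posed it, it has no solution. Let $N$ be the blue neighborhood of $\omega^2$ inside $B_0$ and $R=B_0\setminus N$ its red neighborhood. Triangle-freeness at $\omega^2$ forces $N$ to be red-homogeneous, hence (by your constraint on $B_0$) discrete. You insist that every pair between $B_0$ and $B_1\setminus\{\omega^2\}$ is red, and that on $B_1$ two points are red iff they lie in the same part of a partition witnessing $\omega^2+\omega+1\not\to_{cl}(\omega+2)^1_2$. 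Since the part containing $\omega^2$ cannot be a singleton (else the other part has type $\omega^2+\omega$ and contains a closed copy of $\omega+2$), there is some $m>\omega^2$ with $\{\omega^2,m\}$ red and $\{b,m\}$ red for all $b\in B_0$. If $R$ were cofinal in $\omega^2$, taking a cofinal $\omega$-sequence in $R$ and applying Ramsey's theorem (no blue triangle) would give a cofinal red-homogeneous $H\subseteq R$ of type $\omega$, and then $H\cup\{\omega^2,m\}$ would be a red-homogeneous closed copy of $\omega+2$. So $R$ must be bounded, forcing $N$ to contain a final segment $[\gamma,\omega^2)$ of $B_0$; but any such segment has type $\omega^2$ and is far from discrete, contradicting the discreteness of $N$.

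The way out---and what Mermelstein's coloring actually does---is to drop the rule that $B_0$ is everywhere red to $B_1\setminus\{\omega^2\}$. The paper's coloring is the blowup of a single triangle-free graph on eight classes: for each block, its isolated points, its Cantor--Bendixson rank-$1$ limit points, and (where present) its rank-$2$ point. There, $\omega^2$ is blue to the rank-$1$ class of $B_0$ but red to its isolated points; red fans drawn from the isolated points of $B_0$ therefore \emph{do} close into red-homogeneous copies of $\omega+1$ with supremum $\omega^2$, and the extension to $\omega+2$ is blocked because those isolated points are blue to every class of $B_1\setminus\{\omega^2\}$ that is red to $\omega^2$. The case $\ell=\omega^2$ is thus handled not by severing each fan at the connector but by a complementarity between the red neighborhoods of $\omega^2$ in the two blocks. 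With cross-block blue edges permitted and the construction set up this way, your case analysis does go through and yields the lemma.
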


\begin{proof}
We provide a coloring witnessing $\omega^2\cdot 2+\omega+1\not\to_{cl}(\omega+2,3)^2$. In order to define it, let $G$ be the graph represented by the following diagram.
\begin{center}
\begin{tikzpicture}[x=80,y=40]
\node(bottom1)at(-2,-1){$\left\{0\right\}\cup\left\{x+1\mid x\in\omega^2\right\}$};
\node(middle1)at(-2,0){$\left\{\omega\cdot(n+1)\mid n\in\omega\right\}$};
\node(top1)at(-2,1){$\left\{\omega^2\right\}$};
\node(bottom2)at(0,-1){$\left\{\omega^2+x+1\mid x\in\omega^2\right\}$};
\node(middle2)at(0,0){$\left\{\omega^2+\omega\cdot (n+1)\mid n\in\omega\right\}$};
\node(top2)at(0,1){$\left\{\omega^2\cdot 2\right\}$};
\node(bottom3)at(2,-1){$\left\{\omega^2\cdot 2+x+1\mid x\in\omega\right\}$};
\node(middle3)at(2,0){$\left\{\omega^2\cdot 2+\omega\right\}$};
\path(top1)edge(middle1);
\path(middle1)edge(bottom1);
\path(top1)edge(top2);
\path(top1)edge(bottom2);
\path(bottom1)edge(middle2);
\path(top2)edge(middle2);
\path(middle2)edge(bottom2);
\path(bottom1)edge(middle3);
\path(top2)edge(middle3);
\path(bottom2)edge(bottom3);
\path(bottom1)edge[bend right=10](bottom3);
\end{tikzpicture}
\end{center}
Define a coloring $c:[\omega^2\cdot 2+\omega+1]^2\to\{\text{red},\text{blue}\}$ by setting $c(\{x,y\})=\text{blue}$ if and only if $x$ and $y$ lie in distinct, adjacent vertices of $G$. First 
note that there is no blue triangle since $G$ is triangle-free. Now suppose $X$ is a closed copy of $\omega+2$, and write $X=Z\cup\{x\}\cup\{y\}$ with $z<x<y$ for all $z\in Z$. If $Z\cup
\{x\}$ is red-homogeneous, then either $x=\omega^2$ and $Z\subseteq\{0\}\cup\{x+1\mid x\in\omega^2\}$, or $x=\omega^2\cdot 2$ and (discarding a finite initial segment of $Z$ if 
necessary) we may assume $Z\subseteq\{\omega^2+x+1\mid x\in \omega^2\}$. In each case either $c(\{x,y\})=\text{blue}$ or $c(\{z,y\})=\text{blue}$ for all $z\in Z$. Hence $X$ cannot 
be red-homogeneous, and we are done.
\end{proof}

Here is the general formulation of our ``forward-looking'' technique.

\begin{proposition}\label{proposition:steppingup}
Let $k$, $m$ and $n$ be positive integers with $k\geq 2$. Then
\[R^{cl}(\omega\cdot m+n+1,k+1)\leq R^{cl}(\omega\cdot m+n,k+1)+P^{cl}(R^{cl}(\omega\cdot m+n+1,k))_{2m+n-1}.\]
\end{proposition}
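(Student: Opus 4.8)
The plan is to generalize the argument of Lemma \ref{lemma:omega+2}. Write $\beta = R^{cl}(\omega\cdot m+n,k+1)$ and $P = P^{cl}(R^{cl}(\omega\cdot m+n+1,k))_{2m+n-1}$, let $c:[\beta+P]^2\to\{\text{red},\text{blue}\}$ be a coloring, and suppose for contradiction that there is neither a red-homogeneous closed copy of $\omega\cdot m+n+1$ nor a blue-homogeneous set of size $k+1$. First I would apply the definition of $\beta = R^{cl}(\omega\cdot m+n,k+1)$ to the restriction of $c$ to $[\beta]^2$: since there is no blue set of size $k+1$, we obtain a red-homogeneous closed copy $X\subseteq\beta$ of $\omega\cdot m+n$. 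The key observation (as in Lemma \ref{lemma:omega+2}) is that a closed copy of $\omega\cdot m+n+1$ is obtained from $X$ by adjoining a single larger point, since $\omega\cdot m+n$ is a successor ordinal; more generally, a closed copy of $\omega\cdot m+n+1$ is also obtained by adjoining one larger point to a suitable cofinal closed subcopy of $\omega\cdot m+n$ inside $X$.

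The heart of the argument is to define a coloring $d$ on the top part $[\beta,\beta+P)$ using at most $2m+n-1$ colors, in such a way that each color class is red-homogeneous, so that by the definition of $P$ one of them contains a red-homogeneous closed copy of $R^{cl}(\omega\cdot m+n+1,k)$; applying the definition of $R^{cl}$ to that copy then yields either a blue $(k+1)$-set (contradiction) or a red closed copy of $\omega\cdot m+n+1$, completing the proof. To build $d$, I would analyze the structure of $X\cong\omega\cdot m+n$. Its CB-rank-$1$ points split it into $m$ "blocks" each order-isomorphic to $\omega+1$ (the last of which carries the extra $n$ isolated points at the top), and $X$ has $n$ maximal isolated points above its last limit point. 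For $y\in[\beta,\beta+P)$, the relevant data is: for each of the $m$ limit points $\ell_1<\dots<\ell_m$ of $X$, whether $c(\{\ell_j,y\})$ is red or blue; for each of the $n$ top isolated points $t_1,\dots,t_n$ of $X$, the color $c(\{t_i,y\})$; and for each block $B_j$ ($1\le j\le m$), whether $c(\{h,y\})=\text{red}$ for infinitely many $h\in B_j$ or only finitely many. I would define $d(y)$ to record just enough of this to guarantee: if $d(y)=d(y')$ then $\{y,y'\}$ cannot be blue, because otherwise one could find a common finite/cofinal red-witness inside $X$ making some red closed copy of $\omega\cdot m+n+1$; and simultaneously, if some color class failed to be "blue-eligible" in this sense, one could already directly extract a red copy of $\omega\cdot m+n+1$. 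The count $2m+n-1$ should come from: $n$ colors for the possible behaviour at the top isolated points when the relevant block is "finitely red" versus $m$ colors indexing which limit point is blue-adjacent (or the all-red-limit case), combined carefully — I would verify that the natural case analysis collapses to exactly $2m+n-1$ values, matching the special case $m=1,n=1$ which gives $2$ colors as in Lemma \ref{lemma:omega+2} ($A_1$ and $A_2$).

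The main obstacle I anticipate is getting the coloring $d$ exactly right so that it uses no more than $2m+n-1$ colors while still forcing red-homogeneity of each class. The delicate point is the interaction between the "infinitely many red neighbours in block $B_j$" condition and the behaviour at the limit points and top isolated points: one must ensure that whenever two points $y,y'$ agree on $d$, the red witnesses inside $X$ that show $\{y,y'\}$ red can be chosen compatibly to assemble a genuine \emph{closed} copy of $\omega\cdot m+n+1$ (taking a cofinal $\omega$-sequence of red neighbours in the relevant block together with a top point and $y$ or $y'$), and that "having no such witness" is exactly what lets us directly build a red copy from a single $y$. Balancing these two requirements is what pins down the precise definition of $d$ and hence the exact bound $2m+n-1$; once the definition is fixed, the verification of each clause is a routine (if somewhat lengthy) case analysis of the kind carried out in Lemma \ref{lemma:omega+2}.
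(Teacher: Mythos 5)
Your outline correctly identifies the decomposition of the red closed copy $X$ of $\omega\cdot m+n$ into $m$ blocks $H_1,\dots,H_m$ of type $\omega$ together with the $m-1$ interleaved limit points and the $n$ top isolated points, and the count $2m+n-1$. But the mechanism you propose for closing the argument — that each of the $2m+n-1$ color classes should be red-homogeneous — is wrong for $k\geq 3$, and it is also internally inconsistent with the bound you are proving: if a class were genuinely red-homogeneous, a closed copy of $R^{cl}(\omega\cdot m+n+1,k)$ inside it would already be a red-homogeneous set of order type at least $\omega\cdot m+n+1$, so you would only need $P^{cl}(\omega\cdot m+n+1)_{2m+n-1}$ rather than $P^{cl}(R^{cl}(\omega\cdot m+n+1,k))_{2m+n-1}$, and your step ``applying the definition of $R^{cl}$ to that copy then yields \dots a blue $(k+1)$-set'' would be vacuous. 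Red-homogeneity of the classes is what makes Lemma \ref{lemma:omega+2} work precisely because there $k=2$: a blue edge $\{y,y'\}$ in a class, together with its common blue witness in $X$, is already a blue \emph{triangle}. For $k\geq 3$ that yields a blue $3$-set, not a blue $(k+1)$-set, so the classes can and generally will contain blue edges, and your claim that $d(y)=d(y')$ forces $\{y,y'\}$ red cannot be salvaged. (Your stated justification is also backwards: if $\{y,y'\}$ is blue, finding a \emph{red} witness in $X$ cannot produce any homogeneous set containing both $y$ and $y'$.)

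The role of $R^{cl}(\omega\cdot m+n+1,k)$ inside the pigeonhole number is instead the following. If none of the $2m+n-1$ conditions fails for some $z\geq\beta$, then $X\cup\{z\}$ contains a red closed copy of $\omega\cdot m+n+1$ and we are done; so we may color each $z\geq\beta$ by a condition that fails for it. By $P^{cl}$, some single color class contains a closed copy $Y$ of $R^{cl}(\omega\cdot m+n+1,k)$. Applying $R^{cl}(\omega\cdot m+n+1,k)$ to $Y$ gives either a red closed copy of $\omega\cdot m+n+1$ (done), or a blue $k$-set $D\subseteq Y$. Now the shared failed condition provides one extra point of $X$ that is blue-adjacent to every element of $D$: if the condition is $c(\{x_i,\cdot\})=\text{red}$ or $c(\{y_i,\cdot\})=\text{red}$, that point is $x_i$ or $y_i$; if it is ``$c(\{h,\cdot\})=\text{red}$ for infinitely many $h\in H_i$'', then each $d\in D$ is blue to all but finitely many $h\in H_i$, so (since $D$ is finite) some $h\in H_i$ is blue to all of $D$. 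Either way $D$ extends to a blue $(k+1)$-set. This ``one uniform blue witness'' step is the missing idea in your proposal.
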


\begin{proof}
Let $\beta=R^{cl}(\omega\cdot m+n,k+1)$ and let $c:[\beta+P^{cl}(R^{cl}(\omega\cdot m+n+1,k))_{2m+n-1}]^2\to\{\text{red},\text{blue}\}$ be a coloring.

By definition of $R^{cl}$, either there is a blue-homogeneous set of $k+1$ points, in which case we are done, or there exists $X\subseteq\beta$ such that $X$ is a red-homogeneous 
closed copy of $\omega\cdot m+n$. In that case, write
\[X=H_1\cup\{x_1\}\cup H_2\cup\{x_2\}\cup\dots\cup\{x_{m-1}\}\cup H_m\cup\{y_1,y_2,\dots,y_n\}\]
with $H_1,H_2,\dots,H_m$ each of order type $\omega$ and $h_1<x_1<h_2<x_2<\dots<x_{m-1}<h_m<y_1<y_2<\dots<y_n$ whenever $h_i\in H_i$ for all $i\in\{1,2,\dots m\}$.

For each $z\geq\beta$, if every single one of the following $2m+n-1$ conditions holds, then $X\cup\{z\}$ contains a closed copy of $\omega\cdot m+n+1$, and we are done.
\begin{itemize}
\item
$c(\{h,z\})=\text{red}$ for infinitely many $h\in H_i$ (one condition for each $i\in\{1,2,\dots m\}$)
\item
$c(\{x_i,z\})=\text{red}$ (one condition for each $i\in\{1,2,\dots,m-1\}$)
\item
$c(\{y_i,z\})=\text{red}$ (one condition for each $i\in\{1,2,\dots,n\}$)
\end{itemize}

Thus we may assume that one of these conditions fails for each $z\geq\beta$. This induces a $2m+n-1$-coloring of these points, so by definition of $P^{cl}$ there is a closed copy $X$ 
of $R^{cl}(\omega\cdot m+n+1,k)$ among these points such that the same condition fails for each $z\in X$.

Finally, by definition of $R^{cl}$, $X$ contains either a red-homogeneous closed copy of $\omega\cdot m+n+1$, in which case we are done, or a blue-homogeneous set of $k$ points, in 
which case using the failed condition we can find a final point with which to construct a blue-homogeneous set of $k+1$ points.
\end{proof}

We now present a refinement of Proposition \ref{proposition:steppingup} that introduces ideas we will be elaborating on in the following sections. This refinement allows us to step up 
from $\omega\cdot m+1$ to $\omega\cdot m+n+1$ in one step, which in turn translates into improved bounds. 

\begin{proposition} \label{proposition:steppingup2}
For all positive integers $k,m,n$ with $k\ge 2$, we have 
 $$ R^{cl}(\omega\cdot m+n+1,k+1) \leq R^{cl}(\omega\cdot m+1,k+1) + P^{cl}( (R^{cl}(\omega\cdot m+n+1,k))_{2m}, R(n,k+1) ), $$
where the notation $((x)_{2m},y)$ is shorthand for $(x,x,\dots,x,y)$ with $x$ appearing precisely 2m times.
\end{proposition}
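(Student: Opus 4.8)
\textbf{Proof plan for Proposition \ref{proposition:steppingup2}.}

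The plan is to mimic the proof of Proposition \ref{proposition:steppingup}, but to handle the ``tail'' of $n$ extra finite points not by incorporating them into the red-homogeneous copy of $\omega\cdot m+n$ from the start, but by building that tail only at the end using a classical Ramsey-number argument, which is why the colour $R(n,k+1)$ appears in the pigeonhole expression instead of $n-1$ extra colours. Concretely, let $\beta=R^{cl}(\omega\cdot m+1,k+1)$ and let $\gamma=P^{cl}((R^{cl}(\omega\cdot m+n+1,k))_{2m},R(n,k+1))$, and let $c:[\beta+\gamma]^2\to\{\text{red},\text{blue}\}$ be a colouring. By definition of $\beta$ and $R^{cl}$, either we already have a blue-homogeneous set of size $k+1$, or there is a red-homogeneous closed copy $X\subseteq\beta$ of $\omega\cdot m+1$; write $X=H_1\cup\{x_1\}\cup\dots\cup\{x_{m-1}\}\cup H_m\cup\{x_m\}$ with each $H_i$ of order type $\omega$ and all points of $H_i$ below $x_i$ below $H_{i+1}$, as in the earlier proof.

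Next I would set up a $2m$-colouring together with one extra ``good'' option on the points $z\in[\beta,\beta+\gamma)$. For each such $z$, if \emph{all} $2m$ of the following conditions hold --- $c(\{h,z\})=\text{red}$ for infinitely many $h\in H_i$ (for each $i\le m$), and $c(\{x_i,z\})=\text{red}$ (for each $i\le m$) --- then I call $z$ \emph{good}; otherwise at least one of these $2m$ conditions fails, and I colour $z$ by (an index of) a failing condition. By definition of $P^{cl}$, either there is a closed copy of $R^{cl}(\omega\cdot m+n+1,k)$ on which a single condition fails --- and then, exactly as in Proposition \ref{proposition:steppingup}, that copy yields either a red-homogeneous closed copy of $\omega\cdot m+n+1$ (done) or a blue-homogeneous set of $k$ points which we extend to one of size $k+1$ using the common failing condition (done) --- or else there is a set $G$ of good points with $G$ of order type $R(n,k+1)$ (a closed copy of the finite ordinal $R(n,k+1)$ is just a set of that size). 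It remains to exploit $G$.

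Finally, restrict $c$ to $[G]^2$. Since $|G|=R(n,k+1)$, either there is a blue-homogeneous subset of size $k+1$ (done), or there is a red-homogeneous subset $F\subseteq G$ of size $n$. Now I claim $X\cup F$ contains a red-homogeneous closed copy of $\omega\cdot m+n+1$. Indeed, $F$ is red internally; every point of $F$ is good, so it is joined red to $x_1,\dots,x_m$ and to infinitely many points of each $H_i$; and since $F$ is finite, we may simultaneously thin each $H_i$ to an infinite subset $H_i'$ all of whose points are joined red to every point of $F$. Then $H_1'\cup\{x_1\}\cup\dots\cup H_m'\cup\{x_m\}\cup F$ is red-homogeneous (using red-homogeneity of $X$, of $F$, and the cross edges just arranged) and, being an increasing union of $m$ copies of $\omega+1$ followed by the $n$ points of $F$, it is a closed copy of $\omega\cdot m+n+1$. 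That completes the proof.

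\textbf{Main obstacle.} The delicate point is the very last step: arranging that the $n$ tail points $F$ and the body $X$ together still form a \emph{closed} copy, i.e. that thinning the $H_i$ does not destroy closedness and that goodness gives red edges from $F$ to enough of $X$. This is exactly where the choice to make goodness require \emph{infinitely many} red neighbours in each $H_i$ (rather than cofinitely many) pays off, since it survives the finite intersection over $F$; the bookkeeping is routine but must be done in the right order (first get $F$ from the finite Ramsey theorem, then thin the $H_i$ relative to the finite set $F$).
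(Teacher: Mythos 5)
Your plan mirrors the paper's overall structure, but there is a genuine gap at precisely the point you flagged as ``delicate''. You define $z$ to be good when, among other things, $c(\{h,z\})=\text{red}$ for \emph{infinitely many} $h\in H_i$, and then, after extracting a red-homogeneous $F\subseteq G$ of size $n$, you claim that since $F$ is finite you can thin each $H_i$ to an infinite $H_i'$ all of whose points are red to every $z\in F$. This fails: the intersection of finitely many infinite subsets of $\omega$ can be empty (for instance, with $F=\{z_1,z_2\}$, $z_1$ could be red exactly to the even-indexed elements of $H_i$ and $z_2$ exactly to the odd-indexed ones). Your ``main obstacle'' remark has the logic reversed --- it is the \emph{cofinite} condition, not ``infinitely many'', that survives finite intersection. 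But switching to ``cofinitely many red'' as the goodness condition does not rescue the argument either, because its negation is ``infinitely many blue'', and then the bad branch breaks for the same reason: after finding a blue-homogeneous $D$ of size $k$, you can no longer produce a single $h\in H_i$ blue to all of $D$.

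The paper repairs exactly this by fixing a non-principal ultrafilter $\mathcal U$ on $\omega$ and replacing ``infinitely many $h\in H_i$ with $c(\{h,z\})=\text{red}$'' by ``$\{n\mid c(h_{i,n},z)=\text{red}\}\in\mathcal U$''. This gives both properties your argument needs simultaneously: for each $z$ exactly one of $\{n\mid c(h_{i,n},z)=\text{red}\}$ and $\{n\mid c(h_{i,n},z)=\text{blue}\}$ lies in $\mathcal U$ (so the $2m+1$ classes genuinely cover), and any finite intersection of sets in $\mathcal U$ is again in $\mathcal U$, hence infinite (so both the thinning of each $H_i$ in the good branch and the common blue $h$ in the bad branch exist). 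Remark~\ref{remark:zf} of the paper does note that the ultrafilter is dispensable, but via an absoluteness argument; a direct combinatorial elimination is described only as ``somewhat more cumbersome'', and the elementary shortcut you attempted does not suffice.
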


Proposition \ref{proposition:steppingup2} formally supersedes Proposition \ref{proposition:steppingup}. We have decided to include both versions since the combinatorics are somewhat 
simpler in the earlier argument. The new proof uses a non-principal ultrafilter on $\omega$. It turns out that this use is superfluous, but makes the idea more transparent, since in some 
sense the ultrafilter makes many choices for us. The result is actually provable in (a weak sub-theory of) $\mathsf{ZF}$ (see Remark \ref{remark:zf}).

\begin{proof}
Fix a non-principal ultrafilter $\mathcal U$ on $\omega$, let 
 $$ \gamma=R^{cl}(\omega\cdot m+1,k+1)\quad\mbox{ and }\quad\beta= P^{cl}( (R^{cl}(\omega\cdot m+n+1,k))_{2m}, R(n,k+1) ), $$ 
and consider a coloring $c:[\gamma+\beta]^2\to\{\mbox{red,blue}\}$. By definition of $\gamma$, we may assume that there is a set $H=H_1\cup\{x_1\}\cup\dots\cup H_m\cup\{x_m\}
\subseteq\gamma$ that is a red-homogenous closed copy of $\omega\cdot m+1$ (else, there is a blue-homogeneous subset of $\gamma$ of $k+1$ points, and we are done). Here, 
each $H_i$ has type $\omega$, and $a<x_i<b<x_{i+1}$ whenever $i<m$, $a\in H_i$ and $b\in H_{i+1}$. List the elements of each $H_i$ in increasing order as $h_{i,0}<h_{i,1}<
\dots$\,.

Now every ordinal in $\gamma+\beta$ larger than $x_m$ lies in at least one of the following $2m+1$ sets:
\begin{itemize}
\item 
$A_i=\{a>x_m\mid c(x_i,a)=\mbox{blue}\}$, for each $i\le m$,
\item   
$B_i=\{a>x_m\mid\{n\mid c(h_{i,n},a)=\mbox{blue}\}\in{\mathcal U}\}$, for each $i\le m$, and 
\item
$C=\{a>x_m\mid c(x_j,a)=\mbox{red and }\{n\mid c(h_{j,n},a)=\mbox{red}\}\in{\mathcal U}\mbox{ for all }j\le m\}$.
\end{itemize}

Hence, by definition of $\beta$, one of the following must hold.
\begin{enumerate}
\item
Some $A_i$ contains a closed copy of $R^{cl}(\omega\cdot m+n+1,k)$. In turn, this copy either contains a red-homogeneous closed copy of $\omega\cdot m+n+1$, and we are done, 
or a blue-homogeneous set $D$ of $k$ points, in which case $\{x_i\}\cup D$ is blue-homogeneous of size $k+1$, and we are done as well.
\item
Some $B_i$ contains a closed copy of $R^{cl}(\omega\cdot m+n+1,k)$. This copy either contains a red-homogeneous closed copy of $\omega\cdot m+n+1$, and we are done, or a 
blue-homogeneous set $D$ of $k$ points, in which case $\{x\}\cup D$ is blue-homogeneous of size $k+1$ for any $x\in\{h\in H_i\mid c(h,a)=\mbox{blue}$ for all $a\in D\}$, and we are 
done as well. Here we are using that the intersection of finitely many sets in $\mathcal U$ is non-empty.
\item
$C$ contains a set of size $R(n,k+1)$. This set either contains a red-homogeneous set $D$ of size $n$, in which case for each $i$ there is an infinite subset $H'_i$ of $H_i$ such that 
$H'_i\cup D$ is red-homogeneous, and therefore
 $$ H_0'\cup\{x_0\}\cup\cdots\cup H_m'\cup\{x_m\}\cup D $$ 
is a red-homogeneous closed copy of $\omega\cdot m+n+1$, and we are done, or else it contains a blue-homogeneous set $E$ of size $k+1$, in which case we are done as well. Here 
we are using that the intersection of finitely many sets in $\mathcal U$ is actually infinite. 
\end{enumerate}

The result follows.
\end{proof}

Because of Theorem \ref{theorem:omega+1}, any one of the inequalities from Propositions \ref{proposition:easysteppingup}, \ref{proposition:steppingup} and 
\ref{proposition:steppingup2} is enough for us to obtain upper bounds for $R^{cl}(\omega+n,k)$ for all finite $n,k$. We conclude this section with an explicit statement of a few of these 
upper bounds. Curiously, in the second part, we require both Propositions \ref{proposition:easysteppingup} and \ref{proposition:steppingup2} in order to obtain the best bound.

\begin{corollary}
\begin{enumerate}
\item\label{item:omega+2,4}
$R^{cl}(\omega+2,4)\leq\omega^4\cdot 3+\omega^3+\omega^2+\omega+2$.
\item
If $k\geq 4$ is a positive integer, then
\[R^{cl}(\omega+2,k+1)\leq\omega^r\cdot 3+\omega^{r-1}+\omega^{r-2}+\omega^{r-3}+\omega^{r-4}+\omega^{r-8}+\omega^{r-13}+\omega^{r-19}+\dots+\omega^k+2,\]
where $r=\frac{k^2+k-4}2$.
\item\label{item:omega+n,3}
If $n$ is a positive integer, then $R^{cl}(\omega+n,3)<\omega^3$. In particular, 
\[R^{cl}(\omega+3,3)\leq \omega^2\cdot 4+\omega\cdot 2+3.\]
\end{enumerate}
\end{corollary}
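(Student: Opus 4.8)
The plan is to feed $R^{cl}(\omega+1,j)=\omega^{j-1}+1$ from Theorem~\ref{theorem:omega+1}, the base case $R^{cl}(\omega+2,3)\leq\omega^2\cdot2+\omega+2$ from Lemma~\ref{lemma:omega+2}, and the identity $R^{cl}(\alpha,2)=\alpha$ into the stepping-up inequalities of Propositions~\ref{proposition:easysteppingup} and~\ref{proposition:steppingup2}, and then to evaluate the closed pigeonhole numbers that arise by unwinding clauses~(2) and~(5) of Theorem~\ref{theorem:pcl} (with Fact~\ref{fact:pcl} supplying some small values). For part~(\ref{item:omega+2,4}) I would apply Proposition~\ref{proposition:steppingup2} with $m=n=1$ and $k=3$ (noting $R(1,4)=1$), which gives
\[R^{cl}(\omega+2,4)\leq R^{cl}(\omega+1,4)+P^{cl}(R^{cl}(\omega+2,3))_2=(\omega^3+1)+P^{cl}(\omega^2\cdot2+\omega+2)_2,\]
and then compute $P^{cl}(\omega^2\cdot2+\omega+2)_2=\omega^4\cdot3+\omega^3+\omega^2+\omega+2$ by writing $\omega^2\cdot2+\omega+2=\omega^2+1+(\omega^2+\omega+2)$ and applying clause~(5) repeatedly, each peeling step contributing a copy of $P^{cl}(\omega^2+1,\omega^2+1)=\omega^4+1$ (clause~(2)) until the tail is exhausted; the $\omega^3+1$ on the left is then absorbed.

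For part~(\ref{item:omega+n,3}) I would step directly from $\omega+1$ to $\omega+n$ via Proposition~\ref{proposition:steppingup2} with $m=1$, with the role of $n$ played by $n-1$, and $k=2$:
\[R^{cl}(\omega+n,3)\leq R^{cl}(\omega+1,3)+P^{cl}\bigl((R^{cl}(\omega+n,2))_2,R(n-1,3)\bigr)=(\omega^2+1)+P^{cl}(\omega+n,\omega+n,j),\]
where $j=R(n-1,3)$ is finite. A short induction on $j$ via clause~(5) of Theorem~\ref{theorem:pcl}, with base $P^{cl}(\omega+n,\omega+n)=\omega^2+\omega\cdot(n-1)+n$ from Fact~\ref{fact:pcl}(7), gives $P^{cl}(\omega+n,\omega+n,j)=\omega^2\cdot j+\omega\cdot(n-1)+n$ — the cross terms $P^{cl}(n-1,\omega+n,j')$ that appear have leading term only $\omega\cdot(\text{finite})$ and are absorbed. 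Hence $R^{cl}(\omega+n,3)\leq\omega^2\cdot(j+1)+\omega\cdot(n-1)+n<\omega^3$, and for $n=3$, using $R(2,3)=3$, this is exactly $\omega^2\cdot4+\omega\cdot2+3$.

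Part~(2) is the most laborious, and I would prove it by induction on $k\geq4$ with the base case $k=4$ fed by part~(\ref{item:omega+2,4}), using this time the \emph{backward-looking} inequality Proposition~\ref{proposition:easysteppingup}(1):
\[R^{cl}(\omega+2,k+1)\leq P^{cl}(R^{cl}(\omega+1,k+1),R^{cl}(\omega+2,k))+1=P^{cl}\bigl(\omega^k+1,R^{cl}(\omega+2,k)\bigr)+1,\]
since the forward-looking inequalities give strictly worse bounds once $k>3$. The crux is to show that if $R^{cl}(\omega+2,k)$ has the claimed Cantor normal form (with parameter $r_{k-1}=\tfrac{(k-1)^2+(k-1)-4}{2}$), then $P^{cl}(\omega^k+1,R^{cl}(\omega+2,k))$ has the claimed form with $r_k=r_{k-1}+k$: unwinding clause~(5) of Theorem~\ref{theorem:pcl}, each exponent $e$ occurring in the normal form of $R^{cl}(\omega+2,k)$ is, roughly, sent to the natural sum $k\#e$, since $P^{cl}(\omega^k+1,\omega^e+1)=\omega^{k\#e}+1$ by clause~(2). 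One then has to check that the leading coefficient stays $3$, that the exponent pattern $r,r-1,r-2,r-3,r-4,r-8,r-13,\dots,k$ is reproduced (the jump sizes $1,1,1,1,4,5,6,\dots$ growing because each step adds one term to the normal form), and — this is the delicate point — that the trailing ``$\omega^k+2$'' comes out correctly, the ``$+2$'' of $R^{cl}(\omega+2,k)$ interacting with the final ``$+1$'' of Proposition~\ref{proposition:easysteppingup}(1). I expect this last piece of bookkeeping, together with confirming the closed form $r=\tfrac{k^2+k-4}{2}$, to be the main obstacle; everything else reduces to routine applications of the stepping-up lemmas and of the closed pigeonhole principle.
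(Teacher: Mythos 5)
Your proposal is correct and follows essentially the same approach as the paper: part~(\ref{item:omega+2,4}) via Proposition~\ref{proposition:steppingup2} and Lemma~\ref{lemma:omega+2}, part~(2) by iterating Proposition~\ref{proposition:easysteppingup}(1) with $R^{cl}(\omega+1,k+1)=\omega^k+1$, and part~(\ref{item:omega+n,3}) via Proposition~\ref{proposition:steppingup2} from $R^{cl}(\omega+1,3)=\omega^2+1$. The paper's own proof is considerably terser, leaving the Cantor-normal-form bookkeeping implicit, but your spelled-out arithmetic---including the recursion $r_k=r_{k-1}+k$, the absorption of the cross terms, and the tracking of the trailing ``$+2$'' through the repeated applications of clause~(5) of Theorem~\ref{theorem:pcl}---correctly reproduces what that recursion yields.
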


\begin{proof}
\begin{enumerate}
\item
By Proposition \ref{proposition:steppingup2} and Lemma \ref{lemma:omega+2}, $R^{cl}(\omega+2,4)\leq R^{cl}(\omega+1,4)+P^{cl}((R^{cl}(\omega+2,3))_2,R(1,4))=\omega^3+1+
P^{cl}(\omega^2\cdot 2+\omega+2)_2=\omega^4\cdot 3+\omega^3+\omega^2+\omega+2$.
\item
This can be obtained from part \ref{item:omega+2,4} by recursively applying the first inequality from Proposition \ref{proposition:easysteppingup}.
\item
By Proposition \ref{proposition:steppingup2}, $R^{cl}(\omega+n+1,3)\leq \omega^2+1+P^{cl}((\omega+n+1)_2,R(n,3))$. Using Lemma \ref{lemma:omega+2} for the base case, it is easy 
to see by induction that the second term here is below $\omega^3$, but then so is the full term. The particular case can be verified by direct computation.\qedhere
\end{enumerate}
\end{proof}

For comparison, the corresponding lower bounds given by Proposition \ref{proposition:lowerbound} are as follows. If $k,n\geq 3$ are positive integers, then
\[R^{cl}(\omega+2,k+1)\geq P^{cl}(\omega+2)_k=\omega^k+\omega^{k-1}+\dots+\omega+2\]
and
\[R^{cl}(\omega+n,3)\geq P^{cl}(\omega+n)_2=\omega^2+\omega\cdot(n-1)+n,\]
but note the latter is already far behind $R^{cl}(\omega+n,3)\ge R^{cl}(\omega+2,3)=\omega^2\cdot 2+\omega+2$.

\section{Ordinals less than \texorpdfstring{$\omega^2$}{omega squared}}\label{section:lessthanomegasquared}

So far we only have upper bounds on $R^{cl}(\alpha,k)$ for $\alpha<\omega\cdot 2$. In this section we extend this to $\alpha<\omega^2$ with the following result.

\begin{theorem}\label{theorem:lessthanomegasquared}
If $k$ and $m$ are positive integers, then
\[R^{cl}(\omega\cdot m+1,k+1)<\omega^\omega.\]
\end{theorem}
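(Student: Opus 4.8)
The plan is to establish, by induction on $m$, that $R^{cl}(\omega\cdot m+1,k+1)<\omega^\omega$ for every positive integer $k$; the base case $m=1$ is Theorem \ref{theorem:omega+1}, which gives $R^{cl}(\omega+1,k+1)=\omega^k+1$. I would first record that $\omega^\omega$ is closed under the operations that compute closed pigeonhole numbers in Theorem \ref{theorem:pcl}: an ordinal lies below $\omega^\omega$ exactly when all the exponents in its Cantor normal form are finite, and natural sums, Milner--Rado sums, and ordinary finite sums of such ordinals again have only finite exponents. Hence any $P^{cl}$-value built from finitely many ordinals below $\omega^\omega$ is again below $\omega^\omega$, so it suffices at each stage to exhibit some $\eta<\omega^\omega$ with $\eta\to_{cl}(\omega\cdot m+1,k+1)^2$, expressed as a sum of a previously-bounded Ramsey number and a $P^{cl}$-value of previously-bounded ordinals. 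For the inductive step, with $m+1$ fixed, I would run a secondary induction on $k$; the case $k=1$ is trivial since $R^{cl}(\alpha,2)=\alpha$.

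For the step I would use an ultrafilter argument modeled on the proof of Proposition \ref{proposition:steppingup2}, but now stepping up from $\omega\cdot m+1$ to $\omega\cdot(m+1)+1$, that is, gluing on one more block of type $\omega+1$. Fix a non-principal ultrafilter $\mathcal U$ on $\omega$, put $\gamma=R^{cl}(\omega\cdot m+1,k+1)$ (below $\omega^\omega$ by the outer hypothesis) and $\delta=R^{cl}(\omega\cdot(m+1)+1,k)$ (below $\omega^\omega$ by the inner hypothesis), and take $\eta=\gamma+\beta$ where $\beta$ is a $P^{cl}$-value in $2m+1$ colours, with $\delta$ in $2m$ of the slots and $R^{cl}(\omega+1,k+1)=\omega^k+1$ in the last (possibly enlarged to absorb the sub-argument below); then $\eta<\omega^\omega$. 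Given $c:[\eta]^2\to\{\text{red},\text{blue}\}$, either there is a blue-homogeneous $(k+1)$-set and we are done, or the initial segment of order type $\gamma$ contains a red-homogeneous closed copy $H=H_1\cup\{x_1\}\cup\dots\cup H_m\cup\{x_m\}$ of $\omega\cdot m+1$, with each $H_i=\{h_{i,0}<h_{i,1}<\dots\}$ of type $\omega$ and supremum $x_i$. Then every $a\geq\gamma$ lies in at least one of the $2m+1$ sets $A_i=\{a:c(\{x_i,a\})=\text{blue}\}$ and $B_i=\{a:\{n:c(\{h_{i,n},a\})=\text{blue}\}\in\mathcal U\}$ for $i\leq m$, together with $C=\{a:c(\{x_j,a\})=\text{red}$ and $\{n:c(\{h_{j,n},a\})=\text{red}\}\in\mathcal U$ for all $j\leq m\}$, so by the choice of $\beta$ one of these sets contains a closed copy of its target ordinal.

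In the cases $A_i$ and $B_i$ the target is $\delta$, and the argument is exactly as in Proposition \ref{proposition:steppingup2}: the copy either already contains a red-homogeneous closed copy of $\omega\cdot(m+1)+1$, or contains a blue-homogeneous $k$-set, which extends to a blue-homogeneous $(k+1)$-set by adjoining $x_i$ in case $A_i$, and by adjoining some $h_{i,n}$ blue-connected to all of it in case $B_i$ (using that a finite intersection of members of $\mathcal U$ is nonempty). The crux, and what I expect to be the main obstacle, is the case $C$. In Proposition \ref{proposition:steppingup2} the analogue of $C$ need only supply a \emph{finite} red-homogeneous set to append to $H$, so a single finite intersection of $\mathcal U$-sets produces cofinal subsets $H_i'\subseteq H_i$ red-connected to all of it; here one must produce a red-homogeneous closed copy $D_0\cup\{x_0\}$ of $\omega+1$, and the corresponding thinning cannot be carried out by one intersection over the infinite set $D_0$. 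Indeed the naive adaptation genuinely fails: an infinite ``staircase'' pattern of blue edges between some $H_i$ and a candidate $\omega$-block contains no infinite monochromatic sub-grid.

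I would treat the case $C$ by a finer recursion. Apply $R^{cl}(\omega+1,k+1)$ to a suitable closed copy inside $C$: either a blue $(k+1)$-set appears and we are done, or a red-homogeneous closed copy $E_0\cup\{y\}$ of $\omega+1$ does. Then build, in tandem and stage by stage, an infinite $D_0\subseteq E_0$ (automatically red-homogeneous) and cofinal $H_i'\subseteq H_i$ with every cross-pair between $D_0\cup\{x_0\}$ and each $H_i'\cup\{x_i\}$ red: at stage $\ell$ choose the new point $d_\ell$ from the part of $C$ not yet discarded, use $d_\ell\in C$ to keep an $\mathcal U$-large reservoir of elements of $H_i$ red to all of $d_0,\dots,d_\ell$, and pick the next representative $h_{i,n_\ell}$ diagonally from that reservoir so that $H_i'$ stays an increasing $\omega$-sequence with supremum $x_i$, insisting also that $d_{\ell+1}$ be red to the finitely many representatives chosen so far. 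Where this tandem construction stalls — necessarily because some representative, or some already-chosen $d_\ell$, is blue-connected to cofinally many of the remaining candidates — one instead extracts a vertex of some $H_i$ blue-connected to cofinally many points of the candidate block, passes to the corresponding cofinal sub-region of $C$, and recurses there with the blue requirement lowered by one, which after at most $k$ such reductions produces a blue-homogeneous $(k+1)$-set. In every case the step closes, and since the sub-argument consumes only a bounded (and below-$\omega^\omega$) amount of additional room, absorbed into $\beta$, the double induction yields an explicit bound showing $R^{cl}(\omega\cdot m+1,k+1)<\omega^\omega$.
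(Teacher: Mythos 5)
Your setup is close in spirit to the paper's (ultrafilter, partition of the tail into $A_i$, $B_i$ and a ``remainder'' set $C$, and the correct treatment of $A_i$ and $B_i$), and you are right to zero in on $C$ as the crux: the naive thinning genuinely fails because of the staircase obstruction you describe. But the fallback recursion you propose to resolve $C$ does not close. When the tandem construction stalls and you extract a vertex $h^*$ blue--connected to a cofinal piece $E_1$ of the candidate $\omega$-block, all the raw material you have for producing the promised blue $(k+1)$-set is red: $E_1\subseteq E_0$ is a subset of a red-homogeneous set, so contains no blue pair at all, and every later $h^*$ you could extract lies in one of the original $H_i$, which are pairwise red-homogeneous (they came from the red closed copy of $\omega\cdot m+1$), so the $h^*$'s cannot accumulate into a blue set either. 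The phrase ``recurses there with the blue requirement lowered by one'' has no content that survives this observation: each stall yields a single vertex with blue ``star'' edges to a red-homogeneous block, and stars do not stack into cliques.

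The paper avoids the $C$ problem by never attempting to make the cross-edges between the new $\omega$-block and the old $H_i$'s red. It runs a \emph{single} induction on $k$ (not a double induction on $m$ and $k$) and, inside the $C$-set, it does not glue a single block onto a pre-built $\omega\cdot m+1$; instead it iterates the extraction of red closed copies of $\omega+1$ a total of $R(K_m^*,L_{k+1})$ times, each nested inside the $C$-set of the previous stage. The ultrafilter is then used only to thin each $H_i$ by a \emph{finite} intersection of $\mathcal U$-sets so that every edge from $H_i'$ to the finitely many top points $x_j$ (for $j>i$) is red; the edges between distinct blocks $H_i'$ and $H_j'$ are left completely uncontrolled. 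The cross-block coherence is instead delivered wholesale by the classical Erd\H{o}s--Rado relation $\omega\cdot R(K_m^*,L_{k+1})\to(\omega\cdot m,k+1)^2$: a red-homogeneous $M$ of type $\omega\cdot m$ is found inside $\bigcup_i H_i'$, and its closure picks up the appropriate $x_j$'s, which are red to all of $M$ by the thinning. Your proposal has no analogue of this appeal to the classical ordinal Ramsey result, which is exactly the missing ingredient; without it, the $C$ case cannot be completed, and the double induction does not go through.
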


In fact, in each case the proof gives an explicit upper bound below $\omega^\omega$.

Our proof builds upon the stepping up technique from Proposition \ref{proposition:steppingup2}. We also make use of some classical ordinal Ramsey theory, namely, the following 
result.

\begin{theorem}[Erd\H{o}s-Rado]\label{theorem:ordinarylessthanomegasquared}
If $k$ and $m$ are positive integers, then $R(\omega\cdot m,k)<\omega^2$.
\end{theorem}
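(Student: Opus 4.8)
This is a classical result and one may simply cite \cite{erdosrado}; here is the approach I would take to a self-contained proof. The plan is to establish the quantitatively sharper statement that for all positive integers $m$ and $k$ there is a \emph{finite} $N$ with $\omega\cdot N\to(\omega\cdot m,k)^2$, which immediately gives $R(\omega\cdot m,k)\le\omega\cdot N<\omega^2$. I would argue by induction on $m$. The base case $m=1$ is Ramsey's theorem: any $2$-colouring of $[\omega]^2$ admits an infinite monochromatic set, and an infinite blue set already contains a blue $k$-set, so $\omega\to(\omega,k)^2$.

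For the inductive step it is convenient to prove simultaneously, by a secondary induction on $n$ (and an inner induction on $k$), a bound below $\omega^2$ on $R(\omega\cdot m+n,k)$ for every finite $n$; the case $n=0$ is the statement for $m$, supplied by the outer induction hypothesis. The passage from $\omega\cdot m+n$ to $\omega\cdot m+n+1$ follows the classical analogue of Proposition~\ref{proposition:easysteppingup}: colouring the pairs that contain the top point of a domain of length $P(R(\omega\cdot m+n,k),R(\omega\cdot m+n+1,k-1))+1$ and splitting into the two resulting cases yields $R(\omega\cdot m+n+1,k)\le P(R(\omega\cdot m+n,k),R(\omega\cdot m+n+1,k-1))+1$. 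A standard computation shows that the classical pigeonhole number of finitely many ordinals below $\omega^2$ is again below $\omega^2$; since ordinals below $\omega^2$ are closed under addition, iterating this finitely often stays below $\omega^2$.

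The heart of the matter is the ``$\times\omega$'' step: deducing a bound below $\omega^2$ for $R(\omega\cdot(m+1),k)=R(\omega\cdot m+\omega,k)$ from the bounds just obtained for all the $\omega\cdot m+n$. For this I would adapt the ultrafilter argument from the proof of Proposition~\ref{proposition:steppingup2}. Fix a non-principal ultrafilter $\mathcal U$ on $\omega$, take a domain $\omega\cdot N$ with $N$ large, chop it into finitely many consecutive blocks of order type $R(\omega\cdot m,k)$, and, assuming there is no blue $k$-set, use the induction hypothesis to find inside the first block a red copy $H_1\cup\dots\cup H_m$ of $\omega\cdot m$ with each $H_i$ of type $\omega$. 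One then analyses the (at least $\omega\cdot M$, for $M$ as large as desired) points lying above this copy according to their $\mathcal U$-behaviour over $H_1,\dots,H_m$; a classical pigeonhole step (again producing a number below $\omega^2$) followed by Ramsey's theorem reduces to a single such ``type'', in which case one either reads off a blue $k$-set or, by an interleaved fusion against $\mathcal U$ — alternately enlarging each $H_i$ and choosing a new point, using at every turn that finitely many sets in $\mathcal U$ have infinite, hence unbounded, intersection — glues a red copy of $\omega$ on top of $H_1\cup\dots\cup H_m$ to obtain a red copy of $\omega\cdot(m+1)$. Taking $N$ large enough that all the pigeonhole and Ramsey numbers invoked fit inside the blocks keeps the final bound of the form $\omega\cdot N<\omega^2$.

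I expect this $\times\omega$ step to be the main obstacle. Unlike a successor step it cannot be finished by ``looking at the top point'', and a naive block-by-block stitching of red $\omega$-copies fails, because the cross-colours between two $\omega$-blocks cannot in general be made homogeneous on both sides at once; this is why the construction of the new $\omega$-block must be interleaved with the thinning of $H_1,\dots,H_m$, and why the ultrafilter (or an equivalent careful diagonalisation) is needed to reconcile the two-sided ``all red'' requirement. The rest is bookkeeping: checking that every pigeonhole number, Ramsey number and ordinal sum appearing in the recursion has all its arguments below $\omega^2$, so that the construction never escapes $\omega^2$.
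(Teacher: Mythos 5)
The paper does not prove this theorem at all: it is a cited classical result, and it is derived immediately from the exact formula $R(\omega\cdot m,k)=\omega\cdot R(K_m^*,L_k)$ of Theorem~\ref{theorem:ordinarylessthanomegasquaredprecise} together with the finiteness of the digraph Ramsey number $R(K_m^*,L_k)$. So a self-contained proof along your lines is a genuinely different route: a direct inductive construction rather than a single reduction to a finite digraph Ramsey problem.

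Your sketch, however, has a gap precisely at the step you yourself flag as the main obstacle. In the ``$\times\omega$'' step you fix a red $H_1\cup\dots\cup H_m$ and pass to a pool $C$ of points above whose $\mathcal U$-type over each $H_i$ is red, then attempt an interleaved fusion to glue an infinite red $H_{m+1}\subseteq C$ on top. A single ultrafilter only controls one side of the cross-constraints. Adding a fresh index $n$ to a thinned $H_i$ is fine: intersect, over the finitely many $a\in H_{m+1}$ chosen so far, the $\mathcal U$-sets $\{n:c(\{h_{i,n},a\})=\mathrm{red}\}$. But adding a fresh $a$ to $H_{m+1}$ requires $c(\{h_{i,n},a\})=\mathrm{red}$ for the \emph{specific} finitely many indices $n$ already committed, and membership of $a$ in $C$ (that each $\{n:c(\{h_{i,n},a\})=\mathrm{red}\}$ is in $\mathcal U$) gives no control whatsoever over those particular $n$; there may simply be no admissible $a$ left in the pool. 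The paper meets exactly this two-sided obstruction in the proof of Theorem~\ref{theorem:omegasquared}, and gets past it by building in condition~(\ref{item:starstar}) in advance, namely $c(\{h_{i,n},x_j\})=\mathrm{red}$ for all $n<j$; but securing that at stage $j$ means pigeonholing into $j^2+3$ pieces and still recovering a closed copy of the \emph{same} ordinal, i.e.\ it relies on $P^{cl}(\omega^\omega)_l=\omega^\omega$ for all finite $l$. No ordinal $\omega\cdot N$ with $N>1$ finite has this stability ($P(\omega\cdot N)_l$ grows with $l$), so that device is unavailable within the budget $\omega\cdot N<\omega^2$ that your induction must respect. Erd\H{o}s and Rado's own argument sidesteps the problem by treating all the $\omega$-blocks symmetrically from the outset, reducing the coloring to a finite edge-coloured digraph on the blocks and applying digraph Ramsey once; the two-sided asymmetry is then carried by the edge orientations rather than confronted by a one-sided fusion.
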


In fact, Erd\H{o}s and Rado computed the exact values of these Ramsey numbers in terms of a combinatorial property of finite digraphs. More precisely, we consider digraphs for which 
loops are not allowed, but edges between two vertices pointing in both directions are allowed. The complete digraph on $m$ vertices is denoted by $K_m^*$. Recall that a 
\emph{tournament} of order $k$ is a digraph obtained by assigning directions to the edges of the complete (undirected) graph on $k$ vertices, and that a tournament is transitive if and 
only if these assignments are compatible, that is, if and only if whenever $x$, $y$ and $z$ are distinct vertices with an edge from $x$ to $y$ and an edge from $y$ to $z$, then there is 
also an edge from $x$ to $z$. The class of transitive tournaments of order $k$ is denoted by $L_k$.

Using this terminology, Theorem \ref{theorem:ordinarylessthanomegasquared} can be deduced from the following two results of Erd\H{o}s and Rado (who stated them in 
a slightly different manner), see \cite[Theorem 25]{erdosrado} and \cite{erdosradot}.

\begin{lemma}[Erd\H{o}s-Rado]
If $k$ and $m$ are positive integers, then there is a positive integer $p$ such that any digraph on $p$ or more vertices admits either an independent set of size $m$, or a transitive 
tournament of order $k$. We denote the least such $p$ by $R(K_m^*,L_k)$.
\end{lemma}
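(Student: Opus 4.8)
The plan is to obtain this as a straightforward consequence of the finite Ramsey theorem for pairs. First I would fix an arbitrary linear order $v_0<v_1<\dots<v_{n-1}$ on the vertex set of the given digraph $D$, where $n$ is at least the number $p$ we are trying to produce. For a pair with $i<j$, colour $\{v_i,v_j\}$ with colour $0$ if there is no edge between $v_i$ and $v_j$; with colour $1$ if the edge $v_i\to v_j$ is present (regardless of whether the reverse edge $v_j\to v_i$ is also present); and with colour $2$ if $v_i\to v_j$ is absent but $v_j\to v_i$ is present. These three alternatives are mutually exclusive and exhaustive, so this is a genuine $3$-colouring of $[\{v_0,\dots,v_{n-1}\}]^2$.

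Next I would let $p$ be large enough that any $3$-colouring of the pairs from a $p$-element set yields a set of size $m$ that is monochromatic in colour $0$, a set of size $k$ that is monochromatic in colour $1$, or a set of size $k$ that is monochromatic in colour $2$; such a $p$ exists by the finite Ramsey theorem. Applying this to the colouring above produces a monochromatic set $W$ of the relevant size. If $W$ is monochromatic in colour $0$, then $|W|\ge m$ and $W$ is an independent set, which is one of the two desired configurations. If $W=\{v_{i_1}<\dots<v_{i_k}\}$ is monochromatic in colour $1$, then $v_{i_a}\to v_{i_b}$ for all $a<b$, so $W$ with this ordering is a transitive tournament of order $k$ sitting inside $D$. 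If $W$ is monochromatic in colour $2$, the same conclusion holds after reversing the order. In every case we find an independent set of size $m$ or a transitive tournament of order $k$, so this value of $p$ works.

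I do not anticipate a real obstacle here; the only subtlety is the handling of pairs joined in both directions. It is tempting to refine colour $1$ by also recording whether the reverse edge is present, but then a set monochromatic in the ``both edges'' colour would be a copy of $K_{|W|}^*$, which contains neither an independent pair nor a transitive tournament as an \emph{induced} subdigraph. The point is that only a (not necessarily induced) subdigraph is required, and the forward edges alone already display a transitive tournament; this is why colour $1$ deliberately ignores the reverse edge. Equivalently, one could run the argument with the $4$-colouring that records the presence of the forward and the backward edges separately and then merge the two colours in which the forward edge is present.
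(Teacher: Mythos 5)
The paper does not prove this lemma; it is quoted as a classical result of Erd\H{o}s and Rado with a reference, so there is no in-text proof to compare against. Your argument is correct and is essentially the standard derivation from the finite Ramsey theorem. Concretely: the asymmetric finite Ramsey number $R(m,k,k;2)$ exists, your three colour classes partition $[\{v_0,\dots,v_{n-1}\}]^2$, a colour-$0$-homogeneous set of size $m$ is independent, and a colour-$1$- or colour-$2$-homogeneous set of size $k$ carries, in the forward (resp.\ reversed) edges alone, a copy of $L_k$ as a sub-digraph. You also correctly diagnose the one subtlety --- since the paper's convention for digraphs allows both directed edges between a pair, a colour-$1$-homogeneous set need not be a tournament as an \emph{induced} sub-digraph; but the lemma (and its application to $R(\omega\cdot m,k)$) only needs $L_k$ as a sub-digraph, as is clear from the observation that $K_p^*$ itself must satisfy the conclusion for $p\ge R(K_m^*,L_k)$ while containing no nontrivial independent set and no induced tournament at all. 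This gives the explicit bound $R(K_m^*,L_k)\le R(m,k,k;2)$, which is not tight (for instance, the paper later quotes the Larson--Mitchell bound $R(K_n^*,L_3)\le n^2$, which is far below $R(n,3,3;2)$), but tightness is not required for the lemma as stated.
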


\begin{theorem}[Erd\H{o}s-Rado]\label{theorem:ordinarylessthanomegasquaredprecise}
If $k,m>1$ are positive integers, then $R(\omega\cdot m,k)=\omega\cdot R(K_m^*,L_k)$.
\end{theorem}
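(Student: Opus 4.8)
The plan is to establish the two inequalities $R(\omega\cdot m,k)\le\omega\cdot R(K_m^*,L_k)$ and $R(\omega\cdot m,k)\ge\omega\cdot R(K_m^*,L_k)$ separately, using the finite digraph invariant $R(K_m^*,L_k)$ as the bridge between the ordinal colouring problem and a purely finite combinatorial one. Throughout, write $p=R(K_m^*,L_k)$, and think of $\omega\cdot p$ as partitioned into $p$ consecutive blocks $I_j=[\omega\cdot j,\omega\cdot(j+1))$ for $j<p$, each of order type $\omega$.

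For the upper bound, I would take a red-blue colouring $c$ of $[\omega\cdot p]^2$. The idea is a two-stage pigeonhole. Within each block $I_j$, first thin out to an infinite subset on which the colouring is ``stabilized'' with respect to later blocks: more precisely, by repeatedly applying Ramsey's theorem and a diagonal argument, obtain infinite sets $J_j\subseteq I_j$ and, for each pair $j<j'$, a colour $\varepsilon_{j,j'}\in\{\text{red},\text{blue}\}$ such that for $a\in J_j$, ``almost all'' elements $b\in J_{j'}$ satisfy $c(\{a,b\})=\varepsilon_{j,j'}$ — and also each $J_j$ is homogeneous in its own colour $\delta_j$. (This is where a non-principal ultrafilter on $\omega$, as in the proof of Proposition~\ref{proposition:steppingup2}, streamlines the bookkeeping, though it is not strictly necessary.) Now if some $\delta_j$ is blue we are essentially done once $k$ is at least $2$; otherwise every $\delta_j$ is red, and we form the digraph $G$ on vertex set $\{0,1,\dots,p-1\}$ with an edge from $j$ to $j'$ (for $j<j'$, and also the reverse) recording the colour $\varepsilon_{j,j'}$ — say, an arc $j\to j'$ precisely when $\varepsilon_{j,j'}$ is blue. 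By the defining property of $p=R(K_m^*,L_k)$, this digraph has either an independent set of size $m$ or a transitive tournament of order $k$. An independent set $\{j_1<\dots<j_m\}$ (no blue arcs either way) lets us select, going from left to right and using that ``almost all'' excludes only finitely many points, infinite red-homogeneous tails $H_{j_1},\dots,H_{j_m}$ whose union is a red copy of $\omega\cdot m$. A transitive tournament of order $k$ yields, by following the arcs in the transitive order and choosing points compatibly (again discarding finitely many each time), a blue-homogeneous set of size $k$. Either way $\omega\cdot p\to(\omega\cdot m,k)^2$, giving $R(\omega\cdot m,k)\le\omega\cdot p$.

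For the lower bound, I would start from an optimal digraph $G$ on $p-1$ vertices witnessing that $p-1<R(K_m^*,L_k)$, i.e.\ with no independent set of size $m$ and no transitive tournament of order $k$, and ``blow it up'' to a colouring of $[\omega\cdot(p-1)]^2$: colour a pair within a single block $I_j$ red; colour a pair $\{a,b\}$ with $a\in I_j$, $b\in I_{j'}$, $j<j'$, according to whether $j\to j'$ is an arc of $G$ (blue) or not (red). A red-homogeneous subset meeting blocks $j_1<\dots<j_s$ forces $\{j_1,\dots,j_s\}$ to be independent in $G$, hence $s<m$; since within each block the order type is only $\omega$, such a set has order type at most $\omega\cdot(m-1)<\omega\cdot m$. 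A blue-homogeneous subset must lie in blocks $j_1<\dots<j_t$ with all arcs $j_a\to j_b$ present for $a<b$, i.e.\ forming a transitive tournament, hence $t<k$; and a blue set has at most one point per block, so size at most $k-1$. Thus $\omega\cdot(p-1)\not\to(\omega\cdot m,k)^2$, giving $R(\omega\cdot m,k)>\omega\cdot(p-1)$, and since the Ramsey number is easily seen to be a multiple of $\omega$ (any colouring of $\omega\cdot q+r$ with $0<r<\omega$ can be handled by ignoring the last $r$ points, which cannot contribute to a copy of $\omega\cdot m$ and can be absorbed into a blue set only trivially — this needs a short separate check), we conclude $R(\omega\cdot m,k)\ge\omega\cdot p$.

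The main obstacle is the stabilization step in the upper bound: one must simultaneously make each block homogeneous in its own colour \emph{and} make the cross-block colours ``eventually constant'' in a coherent way, and then verify that ``eventually'' (finitely many exceptions per pair) does not accumulate fatally when one chains together $m$ blocks to build the red copy of $\omega\cdot m$ or $k$ points to build the blue set — the point being that at each of finitely many stages one discards only finitely many candidates from an infinite set, so a valid choice always remains. Handling the small cases $m=1$ or $k=1$, and the reduction showing the Ramsey number is divisible by $\omega$, are routine but should be stated explicitly.
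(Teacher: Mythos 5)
The paper does not give a proof of this theorem; it is cited from Erd\H{o}s and Rado (\cite[Theorem 25]{erdosrado} and \cite{erdosradot}), so there is no internal proof to compare against. Your proposal, however, contains a genuine gap: it collapses the digraph invariant $R(K_m^*,L_k)$ down to an essentially undirected (or position-blind) one, which proves a strictly weaker and in fact false bound.

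The key point you are missing is that the ``limit colour'' across two blocks is inherently \emph{directional}. For $j\neq j'$ one must record two colours: $\varepsilon_{j\to j'}$, obtained by freezing $a\in I_j$ and letting $b$ range cofinally over $I_{j'}$, and $\varepsilon_{j'\to j}$, obtained by freezing $b$ and letting $a$ range over $I_j$. These can genuinely disagree (e.g.\ $c(\{\omega j+a,\omega j'+b\})=\mathrm{red}$ iff $a<b$). Your stabilization only produces $\varepsilon_{j,j'}$ for $j<j'$ ``with respect to later blocks'', and your digraph therefore has arcs only from lower to higher index, i.e.\ it is a subdigraph of the transitive tournament on $p$ vertices. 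In such a digraph, ``independent set of size $m$ or $L_k$ as subdigraph'' is exactly the ordinary graph Ramsey statement, so your argument would really prove $R(\omega\cdot m,k)\le\omega\cdot R(m,k)$ — which is false (e.g.\ $R(2,3)=3$ but $R(K_2^*,L_3)=4$, as the cyclic digraph on $\{0,1,2\}$ shows, and correspondingly $R(\omega\cdot 2,3)=\omega\cdot 4$). The gap also surfaces concretely in the extraction step: from an ``independent set'' $j_1<\dots<j_m$ you must interleave the construction to make each $A_i$ infinite, and when you later add a point to a \emph{lower} block $J_{j_i}$ it has to be compatible with points already chosen in \emph{higher} blocks $J_{j_{i'}}$; controlling those constraints requires $\varepsilon_{j_{i'}\to j_i}=\mathrm{red}$, i.e.\ the backward limit colour, which your stabilization does not provide.

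The lower bound has the mirror defect. Your coloring of $\omega\cdot(p-1)$ depends only on the pair of block indices and on the forward arc $j\to j'$; it is blind to backward arcs $j'\to j$ and to the positions $s,t$ within blocks. Taking $G$ to be the directed $3$-cycle $0\to1\to2\to0$ (which has no non-adjacent pair and no $L_3$, witnessing $R(K_2^*,L_3)>3$), your coloring renders $I_0\cup I_2$ entirely red (since $0\to 2$ is not an arc), producing a red-homogeneous copy of $\omega\cdot 2$ — so the coloring fails as a witness. The correct Erd\H{o}s--Rado lower-bound coloring must use the within-block positions to encode the full digraph; for instance, with $a=\omega j+s$, $b=\omega j'+t$, $j<j'$, one sets $c(\{a,b\})=\mathrm{blue}$ iff $(s<t\text{ and }j\to j')$ or $(s>t\text{ and }j'\to j)$. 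With that coloring an infinite-per-block red set really does force an independent set in $G$ (picking $s<t$ and $s>t$ pairs kills arcs in both directions), and a blue set, sorted by its $s$-coordinates, really does trace out a transitive tournament inside $G$.
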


Before indicating how to prove Theorem \ref{theorem:lessthanomegasquared} in general, we first illustrate the key ideas with the following special case. We use the special case of 
Theorem \ref{theorem:ordinarylessthanomegasquaredprecise} that $R(\omega\cdot 2,3)=\omega\cdot 4$.

\begin{lemma}\label{lemma:omega2+1}
$R^{cl}(\omega\cdot 2+1,3)\leq\omega^8\cdot 7+1.$
\end{lemma}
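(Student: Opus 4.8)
\textbf{Proof proposal for Lemma \ref{lemma:omega2+1}.}

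The plan is to run the stepping-up machinery of Proposition \ref{proposition:steppingup2} with $m=2$, $n=1$, $k=2$, and then bound each ingredient. Taking $m=2$, $n=1$, $k=2$ in that proposition yields
\[R^{cl}(\omega\cdot 2+2,3)\leq R^{cl}(\omega\cdot 2+1,3)+P^{cl}((R^{cl}(\omega\cdot 2+2,2))_4,R(1,3)),\]
but that is the wrong target; instead I first want $R^{cl}(\omega\cdot 2+1,3)$ itself. The point is that $\omega\cdot 2+1$ is a successor built over the \emph{limit} ordinal $\omega\cdot 2$, so Proposition \ref{proposition:steppingup2} does not apply to reach it directly; rather I will imitate its proof directly for the target $\omega\cdot 2+1$. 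So the real plan is: fix a non-principal ultrafilter $\mathcal U$ on $\omega$; set $\gamma=R(\omega\cdot 2,3)=\omega\cdot 4$ (using $R(\omega\cdot 2,3)=\omega\cdot 4$, i.e. Theorem \ref{theorem:ordinarylessthanomegasquaredprecise} together with the computation $R(K_2^*,L_3)=4$) and an appropriate $\beta$, consider a coloring $c:[\gamma+\beta]^2\to\{\text{red},\text{blue}\}$, and proceed exactly as in the proof of Proposition \ref{proposition:steppingup2}.

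Concretely: by definition of $\gamma=R(\omega\cdot 2,3)$, either there is a blue triangle (and we are done) or there is a red-homogeneous set $H=H_1\cup\{x_1\}\cup H_2\cup\{x_2\}$ of order type $\omega\cdot 2+1$ inside $\gamma$ — note this is automatically a \emph{closed} copy since $\gamma=\omega\cdot 4$ has Cantor–Bendixson rank $1$, so any copy of $\omega\cdot 2+1$ in it is closed. (If the classical Ramsey number only delivers an order copy, pass to a closed subcopy; for order type $\omega\cdot 2+1$ this costs nothing below $\omega^2$.) Then every point $a$ above $x_2$ lands in one of the five sets $A_1,A_2,B_1,B_2,C$ defined as in that proof: $A_i=\{a\mid c(x_i,a)=\text{blue}\}$, $B_i=\{a\mid\{n\mid c(h_{i,n},a)=\text{blue}\}\in\mathcal U\}$, and $C=\{a\mid c(x_j,a)=\text{red}$ and $\{n\mid c(h_{j,n},a)=\text{red}\}\in\mathcal U$ for $j=1,2\}$. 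We need $\beta$ large enough that for \emph{any} $5$-coloring of $\beta$-many points, one colour class contains a closed copy of whatever we need: the four "blue-ish" classes $A_1,A_2,B_1,B_2$ only need to contain a closed copy of $R^{cl}(\omega\cdot 2+1,2)=\omega\cdot 2+1$ (to extract a red copy of $\omega\cdot 2+1$ or a single blue point completing a triangle), while $C$ only needs to contain $R(1,3)=3$ points — from a red point in $C$ we thin each $H_i$ to rebuild a red-homogeneous closed copy of $\omega\cdot 2+2$, hence certainly of $\omega\cdot 2+1$, using that finitely many $\mathcal U$-sets have infinite intersection. So it suffices to take
\[\beta=P^{cl}((\omega\cdot 2+1)_4,\,3),\]
and then $R^{cl}(\omega\cdot 2+1,3)\leq\omega\cdot 4+\beta$.

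The remaining work is the arithmetic: compute $P^{cl}((\omega\cdot 2+1)_4,3)$ from Theorem \ref{theorem:pcl}. Since $\omega\cdot 2+1$ is not a power of $\omega$, write it as $\omega^1+1+\gamma$ with $\gamma=\omega$; clause (5) of Theorem \ref{theorem:pcl} gives $P^{cl}(\alpha_1,\dots,\alpha_k)=P^{cl}(t_1,\dots,t_k)+\max Q_i$ where each $t_i=\omega+1$ here (the finite argument $3$ contributes $t=1$, which vanishes), so the "pure" part is $P^{cl}(\omega+1,\omega+1,\omega+1,\omega+1,1)=\omega^{1\#1\#1\#1}+1=\omega^4+1$ by clause (2) (and clause (1) to drop the $1$), and one then recurses on $\max\{Q_i\}$, where replacing one $\omega\cdot 2+1$ by $\omega$ forces (via clause (4)) a jump to $P^{cl}(\omega,(\omega^2)_3,\dots)$-type computations controlled by the Milner–Rado sum. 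Carrying this recursion out — each step peeling off another $\omega^{(\cdot)}$-block — lands at $\omega^8\cdot 7+1$ (the exponent $8$ and multiplicity $7$ coming from iterating the natural/Milner–Rado sums of four copies of $1$, resp.\ of the exponents generated en route). Adding the harmless $\omega\cdot 4$ in front is absorbed. I expect the main obstacle to be precisely this bookkeeping in Theorem \ref{theorem:pcl}: keeping track of which argument is being reduced at each stage of clause (5), making sure the "$t_i$" contributions multiply correctly under the natural sum, and verifying that the successive $Q_i$-maxima telescope to exactly $\omega^8\cdot 7+1$ rather than something slightly larger. The ultrafilter part and the reduction to $P^{cl}$ are routine given Proposition \ref{proposition:steppingup2}; the delicate point is the explicit evaluation of the closed pigeonhole number with a repeated argument.
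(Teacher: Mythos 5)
Your plan has a fatal gap at the very first step, before any of the $P^{cl}$ bookkeeping even matters. You set $\gamma=R(\omega\cdot 2,3)=\omega\cdot 4$ and claim that the first $\gamma$-many ordinals yield a red-homogeneous \emph{closed copy} $H_1\cup\{x_1\}\cup H_2\cup\{x_2\}$ of $\omega\cdot 2+1$. But $R(\omega\cdot 2,3)$ is the \emph{classical} ordinal Ramsey number: it only guarantees a red subset of $\omega\cdot 4$ that is \emph{order-isomorphic} to $\omega\cdot 2$, with no control over its topological closure. Such a set can be entirely discrete (e.g.\ $\{n\mid n<\omega\}\cup\{\omega\cdot 2+n\mid n<\omega\}$), in which case it contains no closed copy of $\omega\cdot 2+1$ at all, nor even one of $\omega+1$. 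Your parenthetical claim that passing to a closed subcopy ``costs nothing below $\omega^2$'' is simply false. What you would actually need here is a red-homogeneous \emph{closed} copy of $\omega\cdot 2+1$, i.e.\ $R^{cl}(\omega\cdot 2+1,3)$ itself---precisely the quantity being bounded. And indeed this is forced to be large: by Proposition~\ref{proposition:lowerbound} and Fact~\ref{fact:pcl}, $R^{cl}(\omega\cdot 2+1,3)\geq P^{cl}(\omega\cdot 2+1)_2=\omega^2\cdot 3+\omega+1\gg\omega\cdot 4$.

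The paper's proof avoids this circularity by organizing the argument the other way around. It does \emph{not} try to grab a red closed copy of $\omega\cdot 2+1$ up front. Instead it begins with the much smaller, already-established $R^{cl}(\omega+1,3)=\omega^2+1$ (Theorem~\ref{theorem:omega+1}) to extract a single red closed block $H\cup\{x\}\cong\omega+1$, splits the tail into $A_1,A_2,A_3$, pigeonholes to continue in $A_3$, and then \emph{iterates} this three more times. The classical relation $\omega\cdot 4\to(\omega\cdot 2,3)^2$ is used only at the \emph{end}, applied to the four accumulated blocks $H'\cup I'\cup J'\cup K$ (after the ultrafilter cleanup) to pick out which two of the $\omega$-type blocks and their attached suprema form the red-homogeneous closed copy of $\omega\cdot 2+1$. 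Because the argument is iterated, the required ordinal has a nested, tower-like form: $\beta_1=P^{cl}(\omega\cdot 2+1,\omega\cdot 2+1,\omega^2+1)=\omega^4\cdot 3+1$, then $\beta_2=P^{cl}(\omega\cdot 2+1,\omega\cdot 2+1,\omega^2+1+\beta_1)=\omega^6\cdot 5+1$, then $\beta_3=\omega^8\cdot 7+1$; the third slot of the pigeonhole (where the construction must continue) grows at each stage. Your formula $\omega\cdot 4+P^{cl}((\omega\cdot 2+1)_4,3)$ has no such nesting, and in fact evaluates to roughly $\omega^5$, far below $\omega^8\cdot 7+1$---so the ``bookkeeping'' you deferred cannot terminate at the claimed bound, but more importantly the argument supporting it was never sound.
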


As with Proposition \ref{proposition:steppingup2}, the proof uses a non-principal ultrafilter on $\omega$, but see Remark \ref{remark:zf}. 

\begin{proof}
To make explicit the reason for using the ordinal $\omega^8\cdot 7+1$, let
\begin{itemize}
\item
$\beta_1 = P^{cl}(\omega\cdot 2+1,\omega\cdot 2+1,\omega^2 +1)=\omega^4\cdot 3+1$,
\item
$\beta_2 = P^{cl}(\omega\cdot 2+1,\omega\cdot 2+1,\omega^2+1+\beta_1)=\omega^6\cdot 5+1$,
\item
$\beta_3 = P^{cl}(\omega\cdot 2+1,\omega\cdot 2+1,\omega^2+1+\beta_2)=\omega^8\cdot 7+1$ and
\item
$\beta=\omega^2+1+\beta_3=\omega^8\cdot 7+1$.
\end{itemize}
Fix a non-principal ultrafilter $\mathcal U$ on $\omega$ and let $c:[\beta]^2\to\{\text{red},\text{blue}\}$ be a coloring.

Among the first $\omega^2+1$ elements of $\beta$, we may assume that we have a red-homogeneous closed copy of $\omega+1$. Let $x$ be its largest point and let $H$ be its
subset of order type $\omega$. Write $H=\{h_n\mid n\in\omega\}$ with $h_0<h_1<\dots$.

Now the points in $\beta$ larger than $x$ form a disjoint union $A_1\cup A_2\cup A_3$, where
\begin{itemize}
\item
$A_1=\{a>x\mid c(\{x,a\})=\text{blue}\}$,
\item
$A_2=\{a>x\mid c(\{x,a\})=\text{red}$ but $\{n\in\omega\mid c(\{h_n,a\})=\text{blue}\}\in\mathcal U\}$ and
\item
$A_3=\{a>x\mid c(\{x,a\})=\text{red}$ and $\{n\in\omega\mid c(\{h_n,a\})=\text{red}\}\in\mathcal U\}$.
\end{itemize}

If either $A_1$ or $A_2$ contains a closed copy of $\omega\cdot 2+1$, then we are done. (For $A_2$, we use the fact that if $U,V\in\mathcal U$ then $U\cap V\in\mathcal U$.) So by 
definition of $\beta_3$, we may assume that $A_3$ contains a closed copy $X$ of $\omega^2+1+\beta_2$.

Now we repeat the argument within $X$. Among the first $\omega^2+1$ members of $X$, we may assume we that have a red-homogeneous closed copy of $\omega+1$. Let $y$ be its 
largest point, and let $I$ be its subset of order type $\omega$. Write $I=\{i_n\mid n\in\omega\dots\}$ with $i_0<i_1<\dots$. (Note that at this stage, $\{n\in\omega\mid c(\{h_n,i\})=
\text{red}\}\in\mathcal U$ for all $i\in I$, yet we cannot conclude from this that there are infinite subsets $H^\prime\subseteq H$ and $I^\prime\subseteq I$ such that $H^\prime\cup 
I^\prime$ is red-homogeneous.)

Just as before, write the subset of $X$ lying above $y$ as a disjoint union $B_1\cup B_2\cup B_3$, where $b\in B_1$ if and only if $c(\{y,b\})=\text{blue}$, $b\in B_2$ if and only if 
$c(\{y,b\})=\text{red}$ but $\{n\in\omega\mid c(\{i_n,b\})=\text{blue}\}\in\mathcal U$, and $b\in B_3$ if and only if $c(\{y,b\})=\text{red}$ and $\{n\in\omega\mid c(\{i_n,b\})=\text{red}\}\in
\mathcal U$. Again we may conclude from the definition of $\beta_2$ that $B_3$ contains a closed copy $Y$ of $\omega^2+1+\beta_1$.

Repeat this argument once again within $Y$, and then pass to a final red-homogeneous closed copy of $\omega+1$. We obtain a closed set
\[H\cup\{x\}\cup I\cup\{y\}\cup J\cup\{z\}\cup K\cup\{w\}\]
of order type $\omega\cdot 4+1$ (with $J=\{j_n\mid n\in\omega\}$, $j_0<j_1<\dots$ and $K=\{k_n\mid n\in\omega\}$, $k_0<k_1<\dots$) such that $H\cup\{x\}$, $I\cup\{y\}$, $J\cup\{z\}$ 
and $K\cup\{w\}$ are red-homogeneous, $\{x,y,z,w\}$ is red-homogeneous, $c(\{x,i_n\})=c(\{x,j_n\})=c(\{x,k_n\})=c(\{y,j_n\})=c(\{y,k_n\})=c(\{z,k_n\})=\text{red}$ for all $n\in\omega$, and 
finally for any $a>x$, $b>y$ and $c>z$ in this set, we have $\{n\in\omega\mid c(\{h_n,a\})=\text{red}\}\in\mathcal U$, $\{n\in\omega\mid c(\{i_n,b\})=\text{red}\}\in\mathcal U$ and 
$\{n\in\omega\mid c(\{j_n,c\})=\text{red}\}\in\mathcal U$.

At last we use the ultrafilter, in which is the crucial step of the argument. Let
\begin{itemize}
\item
$H^\prime = \{h\in H\mid c(\{h,y\})=c(\{h,z\})=c(\{h,w\})=\text{red}\}$,
\item
$I^\prime = \{i\in I\mid c(\{i,z\})=c(\{i,w\})=\text{red}\}$ and
\item
$J^\prime = \{j\in J\mid c(\{j,w\})=\text{red}\}$,
\end{itemize}
each of which corresponds to some $U\in\mathcal U$ and is therefore infinite. (Note that if we had tried to argue directly, without using the ultrafilter or modifying the construction in a 
substantial way, then we would have been able to deduce that $J^\prime$ is infinite, but it would not have been apparent that $H^\prime$ or $I^\prime$ are.) This ensures that 
$c(\{a,b\})=\text{red}$ whenever $a\in\{x,y,z,w\}$ and $b\in H^\prime\cup\{x\}\cup I^\prime\cup\{y\}\cup J^\prime\cup\{z\}\cup K\cup\{w\}$.

To complete the proof, recall that that $\omega\cdot 4 \to (\omega\cdot 2,3)^2$. It follows that there is either a blue triangle, in which case we are done, or a red-homogeneous subset 
$M\subseteq H^\prime\cup I^\prime\cup J^\prime\cup K$ of order type $\omega\cdot 2$. Let $S$ be the initial segment of $M$ of order type $\omega$ and $T=M\setminus S$, and let 
$s=\operatorname{sup}(S)$ and $t=\operatorname{sup}(T)$ (so that $s,t\in\{x,y,z,w\}$). Then $S\cup\{s\}\cup T\cup\{t\}$ is a red-homogeneous closed copy of $\omega\cdot 2+1$, and 
we are done.
\end{proof}

This argument easily adapts to give the following.

\begin{proposition} \label{remark:omega8}
$R^{cl}(\omega\cdot 2,3)\le\omega^4\cdot 2$.
\end{proposition}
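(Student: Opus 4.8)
The plan is to adapt the proof of Lemma \ref{lemma:omega2+1}, but stopping the iteration one stage earlier, so that we only extract a red-homogeneous copy of $\omega\cdot 2$ (with supremum points) rather than building in the top point $w$ needed for $\omega\cdot 2+1$. First I would set up the ordinals as nested closed pigeonhole numbers: let $\beta_1=P^{cl}(\omega\cdot 2,\omega\cdot 2,\omega^2)$, then $\beta_2=P^{cl}(\omega\cdot 2,\omega\cdot 2,\omega^2+1+\beta_1)$, and observe that the final ordinal $\omega^2+1+\beta_2$ is at most $\omega^4\cdot 2$. (One should double-check these Milner--Rado/closed-pigeonhole computations against Theorem \ref{theorem:pcl} and Fact \ref{fact:pcl}; a little slack is fine since we only need $\le\omega^4\cdot 2$.) Fix a non-principal ultrafilter $\mathcal U$ on $\omega$ and a coloring $c:[\omega^4\cdot 2]^2\to\{\text{red},\text{blue}\}$.

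The iteration proceeds exactly as in Lemma \ref{lemma:omega2+1}: among the first $\omega^2+1$ points we may assume there is a red-homogeneous closed copy of $\omega+1$, with top point $x$ and $\omega$-sequence $H=\{h_n\}$; then split the points above $x$ into $A_1$ (those blue to $x$), $A_2$ (red to $x$ but $\mathcal U$-blue to $H$), and $A_3$ (red to $x$ and $\mathcal U$-red to $H$). If $A_1$ or $A_2$ contains a closed copy of $\omega\cdot 2$ we are done immediately, so by the choice of $\beta_2$ we may pass to a closed copy of $\omega^2+1+\beta_1$ inside $A_3$, and repeat once more to obtain $y$, $I=\{i_n\}$, and a closed copy of $\omega^2+1$ inside $B_3$, within which we extract a final red-homogeneous closed copy of $\omega+1$ with top point $z$ and $\omega$-sequence $J=\{j_n\}$. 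The result is a closed set $H\cup\{x\}\cup I\cup\{y\}\cup J\cup\{z\}$ of order type $\omega\cdot 3+1$ in which $H\cup\{x\}$, $I\cup\{y\}$, $J\cup\{z\}$ and $\{x,y,z\}$ are red-homogeneous, all the ``horizontal'' edges $c(\{x,i_n\})$, $c(\{x,j_n\})$, $c(\{y,j_n\})$ are red, and the remaining cross-edges are $\mathcal U$-red: $\{n\mid c(\{h_n,a\})=\text{red}\}\in\mathcal U$ for $a\in\{y,z\}$ and $\{n\mid c(\{i_n,b\})=\text{red}\}\in\mathcal U$ for $b=z$.

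Now I use the ultrafilter as the crucial step: let $H'=\{h\in H\mid c(\{h,y\})=c(\{h,z\})=\text{red}\}$ and $I'=\{i\in I\mid c(\{i,z\})=\text{red}\}$; each corresponds to an element of $\mathcal U$ and hence is infinite, which guarantees $c(\{a,b\})=\text{red}$ for all $a\in\{x,y,z\}$ and all $b\in H'\cup\{x\}\cup I'\cup\{y\}\cup J\cup\{z\}$. Since $R(\omega\cdot 2,3)=\omega\cdot 4>\omega\cdot 2$ and $|H'\cup I'\cup J|=\omega\cdot 3$, which is $\ge\omega\cdot 2$, applying the ordinary relation $\omega\cdot 2\to(\omega\cdot 2,3)^2$ to $H'\cup I'\cup J$ yields either a blue triangle (done) or a red-homogeneous subset $M$ of order type $\omega\cdot 2$. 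Writing $M=S\cup T$ with $S$ an initial segment of type $\omega$ and $s=\sup S$, $t=\sup T$ (both lying in $\{x,y,z\}$), the set $S\cup\{s\}\cup T\cup\{t\}$ is a red-homogeneous closed copy of $\omega\cdot 2$, completing the argument.

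\textbf{Main obstacle.} The only real point of care is verifying that the nested closed pigeonhole numbers actually sum to something $\le\omega^4\cdot 2$ rather than, say, $\omega^4\cdot 3$ as in Lemma \ref{lemma:omega2+1}: here each $P^{cl}(\omega\cdot 2,\omega\cdot 2,\delta)$ should be computed via the ``not a power of $\omega$'' clause of Theorem \ref{theorem:pcl}, reducing to $P^{cl}(\omega,\omega,\omega^{\mathrm{CB}(\delta)}+1)\text{-type}$ expressions, and one must track exactly how the leading coefficient grows. Everything else is a direct transcription of the Lemma \ref{lemma:omega2+1} argument with one fewer layer.
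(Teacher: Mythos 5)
There is a genuine gap, and in fact two independent problems.

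First, the finishing step is wrong. You propose to stop the iteration after extracting three $\omega$-blocks $H'$, $I'$, $J$ of total order type $\omega\cdot 3$, and then apply a Ramsey relation. But the relevant relation is $\omega\cdot 4\to(\omega\cdot 2,3)^2$, and since $R(\omega\cdot 2,3)=\omega\cdot 4$ (by Theorem \ref{theorem:ordinarylessthanomegasquaredprecise}) we have $\omega\cdot 3\not\to(\omega\cdot 2,3)^2$. Your invocation of ``$\omega\cdot 2\to(\omega\cdot 2,3)^2$'' is simply false --- $\omega\cdot 2$ is not a partition ordinal. So with only three blocks you cannot guarantee two of them meet a red-homogeneous set cofinally, and the argument collapses. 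The paper's proof of this proposition does \emph{not} iterate one fewer time than Lemma \ref{lemma:omega2+1}; it still produces a set $H_1\cup\{x_1\}\cup H_2\cup\{x_2\}\cup H_3\cup\{x_3\}\cup H_4$ of order type $\omega\cdot 4$, with four $\omega$-blocks. What is dropped is only the last top point.

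Second, your pigeonhole arithmetic does not land on $\omega^4\cdot 2$. Keeping the three-way ultrafilter split $A_1,A_2,A_3$ forces three-argument closed pigeonhole numbers $P^{cl}(\omega\cdot 2,\omega\cdot 2,\cdot)$, and already after two steps these exceed $\omega^4\cdot 2$: for instance $P^{cl}(\omega\cdot 2,\omega\cdot 2,\omega^2+1)=\omega^4\cdot 2$ and $P^{cl}(\omega\cdot 2,\omega\cdot 2,\omega^4\cdot 2)\geq\omega^6$, so $\omega^2+1+\beta_2\geq\omega^6>\omega^4\cdot 2$. Running the three steps that are actually needed would overshoot even further. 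The real simplification that gets the paper from $\omega^8\cdot 7+1$ down to $\omega^4\cdot 2$ is orthogonal to the one you describe: because no final top point $w$ is required, there is no need to control the cross-edges from $h_{i,n}$ to later top points $x_j$, so the $A_2$ class (``red to $x$ but $\mathcal{U}$-blue to $H$'') and the ultrafilter can be discarded entirely, leaving a plain two-way split into $\{a>x\mid c(x,a)=\text{blue}\}$ and $\{a>x\mid c(x,a)=\text{red}\}$. The resulting two-color pigeonhole targets $\beta_1=P^{cl}(\omega\cdot 2,\omega)=\omega^2$, $\beta_2=P^{cl}(\omega\cdot 2,\omega^2+1+\beta_1)=\omega^3\cdot 2$, $\beta_3=P^{cl}(\omega\cdot 2,\omega^2+1+\beta_2)=\omega^4\cdot 2$ grow slowly enough to close exactly on $\omega^4\cdot 2$ after three steps (giving the required four blocks). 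So the fix is: keep all four blocks, drop the ultrafilter and the middle class, and then the argument and the arithmetic both go through.
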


\begin{proof}
The proof follows the argument for Lemma \ref{lemma:omega2+1}, but is simpler. The point is that at each step of the process we only need to split into two rather than three sets, and 
there is no need to use ultrafilters. In more detail, let
\begin{itemize}
\item
$\beta_1=P^{cl}(\omega\cdot 2,\omega)=\omega^2$,
\item
$\beta_2=P^{cl}(\omega\cdot 2,\omega^2+1+\beta_1)=\omega^3\cdot 2$,
\item 
$\beta_3=P^{cl}(\omega\cdot 2,\omega^2+1+\beta_2)=\omega^4\cdot 2$, and
\item
$\beta=\omega^2+1+\beta_3=\omega^4\cdot 2$.
\end{itemize}
Fix a coloring $c:[\beta]^2\to\{\mbox{red,blue}\}$. Among the first $\omega^2+1$ elements of $\beta$ we may assume we have a red-homogeneous closed copy $H_1\cup\{x_1\}$ of 
$\omega+1$, with $h<x$ for all $h\in H_1$. Split the ordinals in $\beta$ above $x$ into two sets:
\begin{itemize}
\item
$A_1=\{a>x\mid c(x,a)=\mbox{blue}\}$, and 
\item
$A_2=\{a>x\mid c(x,a)=\mbox{red}\}$.
\end{itemize}
As usual, we may assume $A_1$ is red-homogeneous and does not contain a closed copy of $\omega\cdot 2$, so that $A_2$ contains a closed copy $X$ of $\omega^2+1+\beta_2$. 
As in the proof of Lemma \ref{lemma:omega2+1}, iterating the argument eventually produces a closed copy 
 $$ H=H_1\cup\{x_1\}\cup H_2\cup\{x_2\}\cup H_3\cup \{x_3\}\cup H_4 $$
of $\omega\cdot 4$ with each $H_i$ of type $\omega$ and $h_1<x_1<h_2<x_2<h_3<x_3<h_4$ for all $h_i\in H_i$, $i\in\{1,2,3,4\}$, such that $H_i\cup\{x_i\}$ is red-homogeneous for 
all $i\le 3$, and $c(x_i,h)=\mbox{red}$ for all $i\le 3$ and all $h>x_i$. Now use that $\omega\cdot 4\to(\omega\cdot 2,3)^2$ to find either a blue-homogeneous triangle in $H$, or some 
$i<j$ in $\{1,2,3,4\}$, and infinite sets $H_i'\subseteq H_i$, $H_j'\subseteq H_j$ such that $H_i'\cup H_j'$ is a red-homogeneous copy of $\omega\cdot 2$, in which case $H_i'\cup
\{x_i\}\cup H_j'$ is a red-homogeneous closed copy of $\omega\cdot 2$.
\end{proof}

We now indicate the modifications to the argument of Lemma \ref{lemma:omega2+1} that are required to obtain the general result.

\begin{proof}[Proof of Theorem \ref{theorem:lessthanomegasquared}]
The proof is by induction on $k$. The case $k=1$ is trivial. For the inductive step, suppose $k\geq 2$. We can now use the argument of Lemma \ref{lemma:omega2+1} with just a 
couple of changes.

Firstly, we require $\omega^k+1$ points in order to be able to assume that we have a red-homogeneous closed copy of $\omega+1$.

Secondly, it is no longer enough for $A_1$ or $A_2$ to contain a closed copy of $\omega\cdot m+1$, but it is enough for one of them to contain a closed copy of $R^{cl}(\omega\cdot 
m+1,k)$, which we have an upper bound on by the inductive hypothesis (and likewise for $B_1$ and $B_2$, and so on).

Finally, in order to complete the proof using Theorem \ref{theorem:ordinarylessthanomegasquaredprecise}, we must iterate the argument $R(K^\ast_m,L_{k+1})$ times.

This argument demonstrates that $R^{cl}(\omega\cdot m+1,k+1)\leq\omega^k+1+\beta_{R(K^\ast_m,L_{k+1})-1}$, where $\beta_0=0$ and $\beta_i=P^{cl}(R^{cl}(\omega\cdot 
m+1,k),R^{cl}(\omega\cdot m+1,k),\omega^k+1+\beta_{i-1})$ for $i\in\{1,2,\dots,R(K^\ast_m,L_{k+1})-1\}$.
\end{proof}

To obtain upper bounds for $R^{cl}(\omega\cdot m+n,k)$ for all finite $k$, $m$ and $n$, one can again use any of the three inequalities from Propositions 
\ref{proposition:easysteppingup} 
and \ref{proposition:steppingup2}, which may give better bounds than simply using the bound on $R^{cl}(\omega\cdot(m+1)+1,k)$ given by Theorem 
\ref{theorem:lessthanomegasquared}.

\begin{remark}
It is perhaps worth pointing out that the classical version of this problem, the precise computation of the numbers $R(\omega\cdot m+n,k)$ for finite $k$, $m$ and $n$, was solved more 
than $40$ years ago. It proceeds by reducing the problem to a question about finite graphs that can be effectively, albeit unfeasibly, solved with a computer. This was announced 
without proof in \cite{HS} and \cite{haddadsabbagh}. See \cite{caicedohs} for further details.
\end{remark}

\begin{remark}\label{remark:zf}
At the cost of a somewhat more cumbersome approach, we may eliminate the use of the non-principal ultrafilter and any appeal to the axiom of choice throughout the paper. Rather 
than presenting these versions of the proofs, we mention a simple and well-known absoluteness argument ensuring that choice is indeed not needed.

We present the argument in the context of Lemma \ref{lemma:omega2+1}; the same approach removes in all proofs the need to use choice to get access to a non-principal ultrafilter. 
Work in $\mathsf{ZF}$. With $\beta$ as in the proof of Lemma \ref{lemma:omega2+1}, consider a coloring $c:[\beta]^2\to2$, and note that $L[c]$ is a model of choice, and that the 
definitions of $\beta$ and of homogeneous closed copies of $\omega\cdot 2+1$ and $3$ are absolute between the universe of sets and this inner model. Since $L[c]$ is a model of 
choice, the argument of Lemma \ref{lemma:omega2+1} gives us a homogeneous set as required, with the additional information that it belongs to $L[c]$. Similar arguments verify that 
no use of choice is needed for \emph{(1)--(5)} in Theorem \ref{theorem:pcl} or in the relevant portions of \cite{ttppfo}). 
\end{remark}

\section{The ordinal \texorpdfstring{$\omega^2$}{omega squared}}\label{section:omegasquared}

In this section we adapt the argument from the previous section to prove the following result.

\begin{theorem}\label{theorem:omegasquared}
If $k$ is a positive integer, then $\omega^\omega\to_{cl}( \omega^2, k)$.
\end{theorem}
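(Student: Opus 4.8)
The plan is to prove $\omega^\omega\to_{cl}(\omega^2,k)$ by induction on $k$, following the template of Lemma \ref{lemma:omega2+1} and Theorem \ref{theorem:lessthanomegasquared}, but pushing the iteration all the way out through a full copy of $\omega$. The case $k\leq 2$ is immediate since $\omega^\omega\to_{cl}(\omega^2,2)^2$ trivially ($\omega^2$ is a closed copy of itself inside $\omega^\omega$), so assume $k\geq 3$ and that we have an upper bound $R^{cl}(\omega^2,k-1)<\omega^\omega$. Fix a coloring $c:[\omega^\omega]^2\to\{\text{red},\text{blue}\}$ and a non-principal ultrafilter $\mathcal U$ on $\omega$ (appealing to Remark \ref{remark:zf} to absorb the choice used). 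The key point is that we must build a red-homogeneous closed copy of $\omega^2$, i.e.\ $\omega$ many ``blocks'' of order type $\omega$, each block closed off by its supremum, with the block-suprema themselves forming a closed copy of $\omega$ converging to a final top point — so the induction/iteration runs $\omega$ many times rather than finitely many times as in Lemma \ref{lemma:omega2+1}.

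First I would set up the block-by-block construction. Because $\omega^\omega$ is indecomposable and large, I can carve out, for each $n<\omega$, a subinterval that still has order type $\omega^\omega$ (or at least some large ordinal $<\omega^\omega$ sufficient for the $n$-th stage), reserving a final ``top'' region. Within the $n$-th region I first extract a red-homogeneous closed copy $H_n\cup\{x_n\}$ of $\omega+1$ (using that the relevant region has a closed copy of $\omega^{k-1}+1$ and $P^{cl}(\omega+1)_{k-1}=\omega^{k-1}+1$, so that either we get this or a blue-homogeneous set of size $k-1$, which combined with a later point gives a blue set of size $k$ — actually more carefully, I should track the blue sets exactly as in Lemma \ref{lemma:omega2+1}). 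Then, exactly as in that proof, I split the ordinals above $x_n$ into three classes $A_1^n, A_2^n, A_3^n$: those joined to $x_n$ in blue; those joined to $x_n$ in red but $\mathcal U$-almost-all of $H_n$ in blue; and those joined to $x_n$ in red with $\mathcal U$-almost-all of $H_n$ in red. The first two classes, if large, yield either a red-homogeneous closed copy of $\omega^2$ directly or a blue-homogeneous set of size $k$ via the inductive bound on $R^{cl}(\omega^2,k-1)$ and the finite-intersection property of $\mathcal U$; so we may pass to $A_3^n$, which by a suitable $P^{cl}$ computation still contains a closed copy large enough to start stage $n+1$ plus hold everything we still need.

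The main obstacle — and the reason this is more delicate than the finite iteration — is controlling the ordinals so that after $\omega$ many stages the reserved ``top'' points $x_0<x_1<x_2<\cdots$ together with a further top point $x_\infty$ form a closed copy of $\omega+1$ that caps the whole structure, and so that the tower of $P^{cl}$-bounds $\beta_0<\beta_1<\beta_2<\cdots$ used at successive stages stays below $\omega^\omega$ — i.e.\ $\sup_n \beta_n<\omega^\omega$. This is where the choice $\omega^\omega$ (rather than some fixed $\omega^r$) is exactly right: each $\beta_n$ will be some ordinal of the form $\omega^{r_n}\cdot c_n$ with $r_n$ growing with $n$, and their supremum is $\omega^\omega$, which is why we need the whole of $\omega^\omega$ and get nothing smaller from this method. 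At the end of the $\omega$-stage construction I will have a red-homogeneous closed copy of $\omega$ inside each block $H_n$, red edges from each $x_n$ to everything above it, and — this is the crucial use of the ultrafilter, as in Lemma \ref{lemma:omega2+1} — by shrinking each $H_n$ to $H_n' = \{h\in H_n : c(\{h,x_m\})=\text{red for all }m>n\text{ in our structure}\}$, which is $\mathcal U$-large hence infinite, I ensure all cross-edges between the $x_m$'s and the earlier blocks are red. Finally I apply classical Ramsey theory ($R(\omega,k)=\omega$, i.e.\ any infinite set has either a blue set of size $k$ or an infinite red-homogeneous set) inside $\bigcup_n H_n'$: a blue set of size $k$ finishes one case; an infinite red-homogeneous subset, intersected appropriately with infinitely many blocks and capped by the corresponding $x_n$'s and then by $x_\infty$, yields a red-homogeneous closed copy of $\omega^2$. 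The bookkeeping for ``intersected appropriately with infinitely many blocks'' — guaranteeing the red-homogeneous set meets infinitely many $H_n'$ in infinitely many points so that it really has order type $\omega^2$ after capping — is the fussiest part and may require choosing the red-homogeneous set within a diagonal sub-structure; I would handle it by an extra diagonalization when extracting the infinite red-homogeneous set, ensuring it is cofinal in $\omega$ many blocks.
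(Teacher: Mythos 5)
Your outline correctly identifies the main tension — you have to iterate the block construction $\omega$ many times, and the ultrafilter only closes under finite intersections — but your proposed fix does not actually resolve it. The specific step that fails is the claim that
\[
H_n' = \{h\in H_n \mid c(\{h,x_m\})=\text{red for all }m>n\}
\]
is $\mathcal U$-large and hence infinite. For each fixed $m>n$ the set $\{h\in H_n\mid c(\{h,x_m\})=\text{red}\}$ is indeed $\mathcal U$-large by construction, but $H_n'$ is an intersection over \emph{all} $m>n$, i.e.\ an infinite intersection of sets in $\mathcal U$, which need not lie in $\mathcal U$ and can perfectly well be empty. This is exactly the obstacle the paper flags explicitly (``for each $i$ there are infinitely many $j>i$, and we can only use the ultrafilter to deal with finitely many of these''), and the paper's proof has two further ideas precisely to get around it, neither of which appears in your write-up: first, at stage $j$ the decomposition is refined by adding one extra color class for each pair $(i,n)$ with $i,n<j$ (so $P^{cl}(\omega^\omega)_{j^2+3}=\omega^\omega$ is invoked rather than $P^{cl}(\omega^\omega)_3$), which forces the \emph{deterministic} condition $c(\{h_{i,n},x_j\})=\text{red}$ for all $n<j$, on top of the $\mathcal U$-almost-all condition; second, one passes to $H'=\{h_{i,n}\mid i\in I,\,n\in N\}\cup\{x_i\mid i\in I\}$ for infinite $I,N\subseteq\omega$ built by a back-and-forth: the deterministic condition lets you add a large new element to $I$, while the $\mathcal U$-condition handles the finitely many constraints incurred when adding a new element to $N$. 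Your phrase ``I would handle it by an extra diagonalization'' gestures at this, but the mechanism is not supplied, and without the strengthened condition at stage $j$ the back-and-forth will not go through.

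A secondary point: at the end you invoke $R(\omega,k)=\omega$ on $\bigcup_n H_n'$ and then try to thin the resulting infinite red set so that it meets infinitely many blocks cofinally. This is both unnecessary and harder than needed. The paper instead observes that (after the back-and-forth) $H'\setminus\{x_i\mid i\in I\}$ has order type $\omega^2$ and applies Specker's theorem $\omega^2\to(\omega^2,k)^2$ directly, obtaining a red-homogeneous subset $M$ of order type $\omega^2$; since all cross-edges from $H'$ to the $x_i$'s are red, the topological closure of $M$ in $H'$ is already a red-homogeneous closed copy of $\omega^2$, with no further diagonalization. Switching to this step would simplify your conclusion even once the $H_n'$ issue is repaired.
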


Since $\omega^2$ is order-reinforcing, it follows that $R^{top}(\omega^2,k)=R^{cl}(\omega^2,k)\leq\omega^\omega$.

The ordinal $\omega^\omega$ appears essentially because $P^{cl}(\omega^\omega)_m=\omega^\omega$ for all finite $m$, allowing us to iterate the argument of Lemma 
\ref{lemma:omega2+1} infinitely many times.

Again we require a classical ordinal Ramsey result. This one is due to Specker \cite{specker} (see also \cite{haddadsabbagh}).

\begin{theorem}[Specker]\label{theorem:ordinaryomegasquared}
If $k$ is a positive integer, then $\omega^2\to(\omega^2,k)^2$.
\end{theorem}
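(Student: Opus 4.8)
The plan is to argue by induction on $k$, the base cases $k\le 2$ being trivial (if the coloring never uses blue then $\omega^2$ itself is a red-homogeneous copy of $\omega^2$, and a blue-homogeneous set of size $1$ or $2$ is free otherwise). So fix $k\ge 2$, assume $\omega^2\to(\omega^2,k)^2$, and let $c:[\omega^2]^2\to\{\text{red},\text{blue}\}$ be a coloring with no blue-homogeneous set of size $k+1$; we must produce a red-homogeneous copy of $\omega^2$. The idea is to peel off, one at a time, red-homogeneous copies $Y_0,Y_1,Y_2,\dots$ of $\omega$ lying in increasing positions, keeping all the while a ``reservoir'' — a subset of $\omega^2$ of order type $\omega^2$ — that is heavily red-connected to every $Y_i$ chosen so far; the union of the $Y_i$'s will then be the desired copy of $\omega^2$. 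Throughout we use that $\omega^2$ is additively indecomposable, so that removing from a set of order type $\omega^2$ a subset of order type $<\omega^2$ leaves a set of order type $\omega^2$.

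In detail, I would build a decreasing sequence $W_0\supseteq W_1\supseteq\cdots$ of subsets of $\omega^2$, each of order type $\omega^2$, together with the $Y_i$'s. Put $W_0=\omega^2$. Given $W_i$ of type $\omega^2$, use Ramsey's theorem inside the first $\omega$ points of $W_i$ to obtain a red-homogeneous $Y_i$ of type $\omega$, fix a non-principal ultrafilter $\mathcal U_i$ on $Y_i$, and split the part of $W_i$ lying above $\sup Y_i$ (still of type $\omega^2$) into the set $B_i$ of points $q$ with $\{y\in Y_i:c(\{y,q\})=\text{blue}\}\in\mathcal U_i$ and the set $R_i$ of points $q$ with $\{y\in Y_i:c(\{y,q\})=\text{red}\}\in\mathcal U_i$. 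If $B_i$ has type $\omega^2$, then the restriction of $c$ to $B_i$ has no blue-homogeneous set of size $k$: a blue $K_k$ inside $B_i$, say on vertices $q_1,\dots,q_k$, together with any $y$ in the $\mathcal U_i$-large, hence nonempty, intersection $\bigcap_{l\le k}\{y\in Y_i:c(\{y,q_l\})=\text{blue}\}$, would be a blue $K_{k+1}$. So by the induction hypothesis $B_i$ contains a red-homogeneous copy of $\omega^2$ and we are done. Otherwise $B_i$ has type $<\omega^2$, so $R_i$ has type $\omega^2$; set $W_{i+1}=R_i$ and continue. If the process never terminates it produces $Y_0,Y_1,\dots$, each red-homogeneous of type $\omega$, occupying strictly increasing positions, and with the property that for $i<j$ every point of $Y_j$ is red-connected to $\mathcal U_i$-many points of $Y_i$.

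The hard part — and the technical heart of the argument — is the last step: passing from this ``$\mathcal U_i$-majority-red'' information to genuinely red cross-pairs. One must thin each $Y_i$ to an infinite $Y_i'$ so that $c$ is red on every pair meeting two distinct $Y_i'$'s, after which $\bigcup_i Y_i'$ has order type $\omega\cdot\omega=\omega^2$ and is red-homogeneous, completing the proof. I would do this by a fusion in which the elements of the $Y_i'$'s are committed in a dovetailed order and, at each stage, only finitely many conditions — each shrinking a ``live set'' inside some $Y_i$ by a set in $\mathcal U_i$, hence keeping it infinite — are imposed; the delicacy is precisely that an \emph{infinite} intersection of $\mathcal U_i$-large sets may be empty, so the bookkeeping must arrange that every block receives its points only after finitely many commitments elsewhere, and (in the cleanest implementation) that the ultrafilters on the blocks are fixed in advance, with a finer classification of pairs of blocks, so that both directions of red-connection between two blocks are under control. (An alternative route, closer in spirit to the surrounding sections, starts from the Erd\H{o}s--Rado evaluation $R(\omega\cdot m,k)<\omega^2$ of Theorem~\ref{theorem:ordinarylessthanomegasquared}: for each $m$ there is a red copy of $\omega\cdot m$, these organize into an infinite, finitely-branching tree of ``column patterns'', and K\"onig's lemma together with a further fusion again yields a red $\omega^2$; but that fusion step is no easier.)
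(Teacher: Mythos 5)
This statement is not proved in the paper at all: it is quoted as a classical theorem of Specker, with a citation, and then used as a black box (e.g.\ in the proof of Theorem \ref{theorem:omegasquared}). So the only question is whether your proposed proof stands on its own, and it does not: the ``technical heart'' you defer to a fusion argument is a genuine gap, and the information your construction secures is in fact insufficient for that step. What you know at the end is one-directional: for $i<j$, each point $q\in Y_j$ is red to a $\mathcal U_i$-large subset of $Y_i$. This says nothing about the red neighbourhood \emph{inside $Y_j$} of a specific, already-committed point $y\in Y_i$, and that neighbourhood can be finite or empty. Concretely, take two blocks $Y_0=\{y_n\}_{n\in\omega}$, $Y_1=\{q_m\}_{m\in\omega}$ with $c(\{y_n,q_m\})=\text{blue}$ iff $n\le m$ (and everything else red): every $q_m$ is red to a cofinite, hence $\mathcal U_0$-large, subset of $Y_0$, no blue triangle is created, yet there are no infinite $Y_0'\subseteq Y_0$, $Y_1'\subseteq Y_1$ with all cross-pairs red. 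So the final claim ``this property is enough to thin out the blocks'' is false as stated, and no bookkeeping can rescue it: in any dovetailed schedule, each block must receive points at infinitely many stages, so cofinally often you must commit a point of a later block \emph{after} some point of an earlier block, and at that moment you have no largeness (indeed no nonemptiness) guarantee for the set of admissible candidates. Fixing the ultrafilters in advance does not help, because the needed largeness runs in the direction your construction never controlled.

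This is exactly the obstruction the paper itself confronts in its proof of Theorem \ref{theorem:omegasquared}: there the authors note that condition \emph{(3)} (the $\mathcal U$-majority statement) cannot simply be upgraded by passing to subsets, and they repair it by building the extra condition \emph{(4)} ($c(\{h_{i,n},x_j\})=\text{red}$ for all $n<j$) into the construction at stage $j$, using additional cells in the pigeonhole decomposition --- and even then they only need block-to-top redness, invoking Specker's theorem for the block-to-block pairs. To make your approach work you would have to secure two-sided control during the construction of the blocks themselves (e.g.\ a derived colouring of pairs of blocks via iterated $\mathcal U$-limits in both orders, a Ramsey step on that derived colouring using the no-blue-$K_{k+1}$ hypothesis, and a choice of later blocks made adaptively after finitely many points are committed), which is essentially the actual content of Specker's argument rather than a routine fusion. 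As it stands, the skeleton (reservoir of type $\omega^2$, the $B_i/R_i$ split, the use of indecomposability, and the reduction of $B_i$ to the inductive hypothesis) is correct, but the proof is incomplete precisely at its decisive step.
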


\begin{proof}[Proof of Theorem \ref{theorem:omegasquared}]
The proof is by induction on $k$. The cases $k\leq 2$ are trivial. For the inductive step, suppose $k\geq 3$. Fix a non-principal ultrafilter $\mathcal U$ on $\omega$ and let $c:[\omega^
\omega]^2\to\{\text{red},\text{blue}\}$ be a coloring.

We argue in much the same way as in the proof of Lemma \ref{lemma:omega2+1}. Among the first $\omega^{k-1}+1$ elements of $\omega^\omega$, we may assume that we have a 
red-homogeneous closed copy of $\omega+1$. Let $x_0$ be its largest point and let $H_0$ be its subset of order type $\omega$. Write the set of points $\geq\omega^{k-1}+1$ as a 
disjoint union $A_1\cup A_2\cup A_3$ as in the proof of Lemma \ref{lemma:omega2+1}. If either $A_1$ or $A_2$ contains a closed copy of $\omega^\omega$, then by the inductive 
hypothesis we may assume it contains a blue-homogeneous set of $k-1$ points, and we are done by the definitions of $A_1$ and $A_2$. But $P^{cl}(\omega^\omega)_3=
\omega^\omega$, so we may assume that $A_3$ contains a closed copy $X_1$ of $\omega^\omega$.

We can now work within $X_1$ and iterate this argument infinitely many times to obtain a closed set
\[H_0\cup\{x_0\}\cup H_1\cup\{x_1\}\cup\dots\]
of order type $\omega^2$. For each $i\in\omega$, write $H_i=\{h_{i,n}\mid n\in\omega\}$ with $h_{i,0}<h_{i,1}<\dots$. By construction, for all $i,j\in\omega$ with $i<j$,
\begin{enumerate}
\item
$c(\{x_i,x_j\})=\text{red}$,
\item
$c(\{x_i,h_{j,n}\})=\text{red}$ for all $n\in\omega$ and
\item\label{item:star}
$\{n\in\omega\mid c(\{h_{i,n},x_j\})=\text{red}\}\in\mathcal U$.
\newcounter{enumTemp}
\setcounter{enumTemp}{\theenumi}
\end{enumerate}

We would like to be able to assume that condition \ref{item:star} can be strengthened to $c(\{h_{i,n},x_j\})=\text{red}$ for all $n\in\omega$ by using the ultrafilter to pass to a subset. 
However, for each $i$ there are infinitely many $j>i$, and we can only use the ultrafilter to deal with finitely many of these.

In order to overcome this difficulty, we extend the previous argument in two ways. The first new idea is to modify our construction so as to ensure that for all $i,j\in\omega$ with $i<j$, we 
also have
\begin{enumerate}
\setcounter{enumi}{\theenumTemp}
\item\label{item:starstar}
$c(\{h_{i,n},x_j\})=\text{red for all }n<j$.
\end{enumerate}
We can achieve this by modifying the construction of $X_j$ (the closed copy of $\omega^\omega$ from which we extracted $H_j\cup\{x_j\}$). Explicitly, we now include in our disjoint 
union one additional set for each pair $(i,n)$ with $i,n<j$, which contains the points $y$ that remain with $c(\{h_{i,n},y\})=\text{blue}$. We then extract $X_j$ using the fact that 
$P^{cl}(\omega^\omega)_{j^2+3}=\omega^\omega$.

This extra condition is enough for us to continue. The second new idea is to pass to a subset of the form
\[H^\prime=\{h_{i,n}\mid i\in I,n \in N\}\cup\{x_i\mid i\in I\}\]
for some infinite $I,N\subseteq\omega$, and to build up $I$ and $N$ using a back-and-forth argument. To do this, start with $I = N = \emptyset$ and add an element to $I$ and an 
element to $N$ alternately in
such a way that $c(\{h_{i,n},x_j\})=\text{red}$ whenever $n \in N$ and $i,j \in I$ with $i < j$. Condition
\ref{item:starstar} ensures that we can always add a new element to $I$ simply by taking it to be larger
than all other members of $I$ and all members of $N$ so far. Meanwhile, condition \ref{item:star} ensures there is always some $U\in\mathcal U$ from which we may choose any 
member to add to $N$: at each stage, there are
only finitely many new conditions and so our ultrafilter is enough.

Then $c(\{a,b\})=\text{red}$ whenever $a\in H^\prime$ and $b\in H^\prime\cap\{x_i\mid i\in\omega\}$. Finally, by Theorem \ref{theorem:ordinaryomegasquared} we may assume that 
there is a red-homogeneous subset $M\subseteq H^\prime\setminus\{x_i\mid i\in\omega\}$ of order type $\omega^2$, and then the topological closure of $M$ in $H^\prime$ is a 
red-homogeneous closed copy of $\omega^2$.
\end{proof}

\begin{remark} \label{remark:omegasquare}
We have organized this argument in such a way that the reader may readily verify the following. For any positive integer $k$ and any coloring $c:[\omega^\omega]^2\to\{\text{red,blue}
\}$, there is either a blue-homogeneous set of $k$ points, or a red-homogeneous closed copy of an ordinal \emph{larger} than $\omega^2$, or a red-homogeneous closed copy of 
$\omega^2$ that is moreover \emph{cofinal} in $\omega^\omega$. This strengthening of Theorem \ref{theorem:omegasquared} will be useful in Section \ref{section:omegasquared+1}.
\end{remark}

\section{The anti-tree partial ordering on ordinals}\label{section:anti-tree}

The techniques from the last few sections enable us to reach $\omega^2$, but do not seem to get us any further without cumbersome machinery. In this section we introduce a new 
approach, which provides a helpful perspective and ultimately suggests not just how to get past $\omega^2$, but even how to reach our main theorem. Here, we use this approach to 
prove the following result.

\begin{theorem}\label{theorem:topomega2}
$\omega^2\cdot 3\leq R^{top}(\omega\cdot 2,3)\leq\omega^3\cdot 100$.
\end{theorem}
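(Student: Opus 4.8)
\textbf{Proof proposal for Theorem \ref{theorem:topomega2}.}

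The plan is to handle the two inequalities separately, with the lower bound being a direct coloring construction and the upper bound being an application of the anti-tree machinery developed in this section. For the lower bound $R^{top}(\omega\cdot 2,3)\geq\omega^2\cdot 3$, I would exhibit a red-blue coloring of $[\omega^2\cdot 3]^2$ with no red topological copy of $\omega\cdot 2$ and no blue triangle. The natural first attempt is to mimic the graph-based coloring used for Lemma \ref{lemma:omegaplus2lower}: partition $\omega^2\cdot 3$ into the three blocks $[\omega^2\cdot j,\omega^2\cdot(j+1))$ for $j<3$, and within each block further separate the successor ordinals (Cantor--Bendixson rank $0$) from the limit ordinals of rank $1$. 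A topological copy of $\omega\cdot 2$ needs two disjoint convergent $\omega$-sequences together with their (distinct) limits, so it must use points of CB-rank $1$ as the limits. The idea is to blue-color enough pairs between "low" and "high" strata, across blocks, so that any candidate red copy of $\omega\cdot 2$ is forced to create a blue edge to its would-be limit point or between its two limit points, while keeping the blue graph triangle-free. I would check triangle-freeness by verifying the blue graph is bipartite-like on each relevant piece, exactly as in Mermelstein's construction. The bookkeeping here is the delicate part of the lower bound, but it is finite-combinatorial in spirit.

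For the upper bound $R^{top}(\omega\cdot 2,3)\leq\omega^3\cdot 100$, I would work with the anti-tree partial ordering on the ordinal $\omega^3\cdot 100$: recall that this orders points by reverse end-extension of their Cantor normal forms (so points of high CB-rank are "closer to the root"), and that a topological copy of $\omega\cdot 2$ corresponds, up to passing to a subspace, to a certain configuration in this tree—two incomparable nodes of rank $1$, each with infinitely many rank-$0$ successors below it. Given a coloring $c:[\omega^3\cdot 100]^2\to 2$ with no blue triangle, I would first use the pigeonhole principle for ordinals (Theorem \ref{theorem:pcl}, via $P^{cl}(\omega^3)_m=\omega^3$ and related computations) to locate, inside $\omega^3\cdot 100$, a rich enough substructure: a node $t$ of CB-rank $2$ with cofinally many rank-$1$ nodes below it, each of which has cofinally many rank-$0$ nodes below it. The factor $100$ is the slack needed so that after discarding the finitely many "bad" strata dictated by the blue edges incident to a bounded set of chosen points, enough of the structure survives. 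Along each rank-$1$ node I would extract either a red-homogeneous $\omega$-sequence converging to it (giving half of a red $\omega\cdot 2$) or, failing that, enough blue edges to eventually force a blue triangle using a Ramsey-type argument on the rank-$1$ nodes together with the rank-$0$ points.

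The engine of the upper bound is then the same two-step extraction as in Lemma \ref{lemma:omega2+1} and Proposition \ref{remark:omega8}, but phrased in the anti-tree language: split the points above a chosen rank-$1$ node $x_1$ according to whether they are blue-joined to $x_1$, or blue-joined to cofinally many of $x_1$'s rank-$0$ predecessors, or red-joined to $x_1$ and to cofinally many of those predecessors; in the first two cases a copy of $\omega\cdot 2$ in that part yields a blue triangle, so by pigeonhole the third part still contains a large closed copy, in which we find a second rank-$1$ node $x_2$ with an attached red $\omega$-sequence, and finally an ultrafilter (or the $L[c]$-absoluteness trick of Remark \ref{remark:zf}) is used to thin the first $\omega$-sequence so that it remains red against $x_2$. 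Combined with the colouring-free fact $\omega\cdot 4\to(\omega\cdot 2,3)^2$ to resolve the finitely many limit-point-to-limit-point edges, this produces either a blue triangle or a red topological copy of $\omega\cdot 2$. I expect the main obstacle to be verifying that $\omega^3\cdot 100$ genuinely suffices—i.e., carefully tracking how many CB-rank-$1$ "columns" are consumed at each of the (boundedly many) stages and confirming the total stays below $\omega^3\cdot 100$; getting a clean accounting here is where the constant $100$ (rather than a tighter value) earns its keep, and where I would be most careful not to hand-wave.
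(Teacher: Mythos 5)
Your lower bound sketch is in the right spirit but glosses over a useful reduction: the paper first observes that every ordinal less than $\omega^2\cdot 3$ embeds as a subspace of $\omega^2\cdot 2+1$, so it suffices to exhibit $\omega^2\cdot 2+1\not\to_{top}(\omega\cdot 2,3)^2$, and the witnessing coloring is a specific triangle-free graph on \emph{six} classes (three CB-rank strata of each of the two $\omega^2$-blocks, plus the top point). Saying you would ``blue-color enough pairs... exactly as in Mermelstein's construction'' and that ``the bookkeeping here is the delicate part'' is not a proof; the delicate part is precisely the content. Still, that gap is of the fill-in-the-details variety and the intended method matches.

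The upper bound is where you have a genuine misconception. You announce that you will work with the anti-tree machinery, but the ``engine'' you then describe is the $A_1\cup A_2\cup A_3$ / ultrafilter split of Lemma~\ref{lemma:omega2+1} and Proposition~\ref{remark:omega8}, closed off with $\omega\cdot 4\to(\omega\cdot 2,3)^2$. That is the Section~\ref{section:lessthanomegasquared} technique, and as the paper records it gives $R^{cl}(\omega\cdot 2,3)\le\omega^4\cdot 2$ --- a strictly \emph{worse} bound than $\omega^3\cdot 100$, and with no explanation of how to recover the factor $100$. The actual proof does not use ultrafilters at all. It hinges on Lemma~\ref{lemma:fullsubtreeanalogue}, which, under ``no blue triangle, no red topological $\omega\cdot 2$,'' extracts a full subtree of $\omega^2+1$ with a fixed color pattern (rank-$0$ level red-homogeneous, rank-$1$ level red-homogeneous, rank-$0$-to-root blue, rank-$1$-to-root red). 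The constant $100$ then comes from two classical inputs you never invoke: $R(K_{10}^*,L_3)\le 100$ (Larson--Mitchell), yielding $\omega\cdot 100\to(\omega\cdot 10,3)^2$ via Theorem~\ref{theorem:ordinarylessthanomegasquaredprecise}; and the Haddad--Sabbagh/Weinert fact $R(\omega^2\cdot 2,3)=\omega^2\cdot 10$. These are applied to the rank-$2$ and rank-$1$ strata of $\omega^3\cdot 100$ respectively, producing a copy of $\omega^3\cdot 2$ with red-homogeneous upper strata, to which Lemma~\ref{lemma:fullsubtreeanalogue} is applied interval-by-interval, and the argument ends with the explicit blue triangle $\{\omega,\omega^3,\omega^3+\omega^2\}$. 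Without these specific classical Ramsey inputs and the full-subtree lemma, your sketch cannot reach $\omega^3\cdot 100$; ``the factor $100$ is the slack needed'' is a placeholder, not a derivation.
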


It is more transparent to describe this new approach in terms of a new partial ordering on ordinals. A variant of this ordering was independently considered by Pi\~na in \cite{pina}, who 
identified countable ordinals with families of finite sets. Readers who are familiar with that work may find it helpful to note that for ordinals less than $\omega^\omega$, our new relation 
$\leq^\ast$ coincides with the superset relation $\supseteq$ under that identification. (Note that none of the results we prove here are used outside of this section.)

\begin{definition}
Let $\alpha$ and $\beta$ be ordinals. If $\beta>0$, then write $\beta=\eta+\omega^\gamma$ with $\eta$ a multiple of $\omega^\gamma$. Then we write $\alpha<^\ast\beta$ to mean 
that $\beta>0$ and $\alpha=\eta+\zeta$ for some $0<\zeta<\omega^\gamma$. We write $\alpha\lhd^\ast\beta$ to mean that $\alpha<^\ast\beta$ and there is no ordinal $\delta$ with 
$\alpha<^\ast\delta<^\ast\beta$.
\end{definition}

Equivalently, $\alpha<^\ast\beta$ if and only if $\beta=\alpha+\omega^\gamma$ for some $\gamma>\operatorname{CB}(\alpha)$, and $\alpha\lhd^\ast\beta$ if and only if $\beta=
\alpha+\omega^{\operatorname{CB}(\alpha)+1}$. For example, $\omega^3+\omega<^\ast\omega^3\cdot 2$ and $\omega^3\cdot 2+1<^\ast\omega^3\cdot 3$, but $\omega^3+\omega
\not<^\ast\omega^3\cdot 3$ and $\omega^3\cdot 2+1\not<^\ast\omega^3\cdot 2$.

Here are some simple properties of these relations.

\begin{enumerate}
\item
$<^\ast$ is a strict partial ordering.
\item
If $\alpha<^\ast\beta$ then $\alpha<\beta$ and $\operatorname{CB}(\alpha)<\operatorname{CB}(\beta)$.
\item
If $\alpha\lhd^\ast\beta$ then $\operatorname{CB}(\beta)=\operatorname{CB}(\alpha)+1$.
\item\label{item:antitreeproperty}
The class of all ordinals forms an ``anti-tree'' under the relation $<^\ast$ in the sense that for any ordinal $\alpha$, the class of ordinals $\beta$ with $\alpha<^\ast\beta$ is well-ordered 
by $<^\ast$.
\end{enumerate}

By property \ref{item:antitreeproperty}, if $k$ is a positive integer, then $\omega^k+1$ forms a tree under the relation $>^\ast$. It fact it is what we will call a \emph{perfect 
$\aleph_0$-tree of height $k$}.

\begin{definition}
Let $k$ be a positive integer and let $X$ be a single-rooted tree. We say that $x\in X$ \emph{has height $k$} to mean that $x$ has exactly $k$ predecessors. We say that $x\in X$ is a 
\emph{leaf of $X$} to mean that $x$ has no immediate successors, and denote the set of leaves of $X$ by $\ell(X)$. We say that $X$ is a \emph{perfect $\aleph_0$-tree of height $k$} 
to mean that every non-leaf of $X$ has $\aleph_0$ immediate successors and every leaf of $X$ has height $k$.

Let $X$ be a perfect $\aleph_0$-tree of height $k$. We say that a subset $Y\subseteq X$ is a \emph{full subtree of $X$} to mean that $Y$ is a perfect $\aleph_0$-tree of height $k$ 
under the induced relation.
\end{definition}

Note that if $X$ is a full subtree of $\omega^k+1$, then $X\cong\omega^k+1$. Note also that full subtrees are determined by their leaves.

\begin{figure}[!htb]
\centering
\begin{tikzpicture}[x=320,y=40]
\foreach\i in{1.2,1.44,...,6}
{
  \draw(0,0)--(1/2-1/\i,-1);
  \foreach\j in{1.2,1.44,...,6}
  {
    \draw(1/2-1/\i,-1)--({1/2-1/\i+(1/2-1/\j)/(3*\i*(\i+0.5))},-2);
  }
}
\end{tikzpicture}
\caption{A perfect $\aleph_0$-tree of height 2, corresponding to the ordinal $\omega^2+1$}
\end{figure}
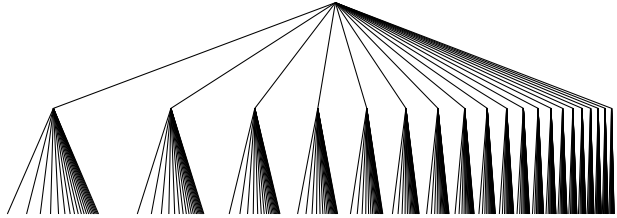

Here is a simple result about colorings of perfect $\aleph_0$-trees of height $k$. The proof essentially amounts to $k$ applications of the infinite pigeonhole principle.

\begin{lemma}\label{lemma:basicfullsubtree}
Let $k$ be a positive integer, let $X$ be a perfect $\aleph_0$-tree of height $k$ and let $c:\ell(X)\to\{\text{red},\text{blue}\}$ be a coloring. Then there exists a full subtree $Y$ of $X$ 
such that $\ell(Y)$ is monochromatic.
\end{lemma}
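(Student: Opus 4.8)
The plan is to induct on the height $k$ of the tree. The base case $k=1$ is simply the infinite pigeonhole principle: a perfect $\aleph_0$-tree of height $1$ is a root together with $\aleph_0$ leaves (its immediate successors), $c$ colors these leaves with two colors, so one color class is infinite, and that infinite class together with the root is the desired monochromatic full subtree.

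For the inductive step, suppose $k\geq 2$ and the result holds for $k-1$. Let $r$ be the root of $X$ and let $\{x_n\mid n\in\omega\}$ enumerate its immediate successors. For each $n$, the subtree $X_n$ of $X$ consisting of $x_n$ and all its successors is a perfect $\aleph_0$-tree of height $k-1$, and $c$ restricts to a coloring of $\ell(X_n)=\ell(X)\cap X_n$. First I would apply the inductive hypothesis to each $X_n$ separately to obtain a full subtree $Y_n\subseteq X_n$ with $\ell(Y_n)$ monochromatic, say in color $\chi(n)\in\{\text{red},\text{blue}\}$. This defines a two-coloring $n\mapsto\chi(n)$ of $\omega$; by the pigeonhole principle there is an infinite set $N\subseteq\omega$ on which $\chi$ is constant. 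Then set $Y=\{r\}\cup\bigcup_{n\in N}Y_n$. This $Y$ is a full subtree of $X$: its root $r$ has the $\aleph_0$ immediate successors $\{x_n\mid n\in N\}$, each of which sits atop a copy of $Y_n$, a perfect $\aleph_0$-tree of height $k-1$; and every leaf of $Y$ is a leaf of some $Y_n$, hence has height $k-1$ within $Y_n$ and therefore height $k$ within $Y$. Finally $\ell(Y)=\bigcup_{n\in N}\ell(Y_n)$ is monochromatic because each $\ell(Y_n)$ has the common color determined by $N$.

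There is essentially no serious obstacle here; the only thing to be careful about is bookkeeping with the definition of ``full subtree of height $k$''—specifically checking that heights are computed correctly after passing to the subtree (a leaf of $Y_n$ really does acquire exactly one new predecessor, namely $r$, when we view it inside $Y$) and that restricting to an infinite subset $N$ of the immediate successors of the root still leaves the root with $\aleph_0$ immediate successors. Both are immediate from the definitions. So the proof is a clean double induction/pigeonhole argument, exactly as the remark preceding the lemma suggests (``$k$ applications of the infinite pigeonhole principle'').

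\begin{proof}
We argue by induction on $k$. For $k=1$, the tree $X$ consists of its root $r$ together with its $\aleph_0$ immediate successors, which are precisely the leaves of $X$. Since $c$ colors $\ell(X)$ with two colors, some color class $L\subseteq\ell(X)$ is infinite, and $Y=\{r\}\cup L$ is the required monochromatic full subtree.

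Now suppose $k\geq 2$ and the result holds for height $k-1$. Let $r$ be the root of $X$, and let $\{x_n\mid n\in\omega\}$ enumerate the immediate successors of $r$. For each $n\in\omega$, let $X_n$ be the set of all elements of $X$ that are either equal to $x_n$ or lie above $x_n$; then $X_n$ is a perfect $\aleph_0$-tree of height $k-1$ with root $x_n$, and $\ell(X_n)=\ell(X)\cap X_n$. Applying the inductive hypothesis to $X_n$ with the coloring $c\restriction\ell(X_n)$, we obtain a full subtree $Y_n\subseteq X_n$ with $\ell(Y_n)$ monochromatic; let $\chi(n)\in\{\text{red},\text{blue}\}$ be its color. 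Since $\chi$ is a two-coloring of $\omega$, there is an infinite set $N\subseteq\omega$ on which $\chi$ is constant, with common value $\chi_0$ say.

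Set $Y=\{r\}\cup\bigcup_{n\in N}Y_n$. We check that $Y$ is a full subtree of $X$ of height $k$. Its root $r$ has immediate successors $\{x_n\mid n\in N\}$ within $Y$, and this set is infinite since $N$ is. For each $n\in N$, the subtree of $Y$ above $x_n$ (inclusive) is exactly $Y_n$, a perfect $\aleph_0$-tree of height $k-1$, so every non-leaf of $Y$ has $\aleph_0$ immediate successors in $Y$. A leaf of $Y$ is a leaf of some $Y_n$; within $Y_n$ it has height $k-1$, and it has one further predecessor in $Y$, namely $r$, so it has height $k$ in $Y$. Hence $Y$ is a perfect $\aleph_0$-tree of height $k$, i.e., a full subtree of $X$. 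Finally, $\ell(Y)=\bigcup_{n\in N}\ell(Y_n)$, and each $\ell(Y_n)$ with $n\in N$ is monochromatic of color $\chi_0$; therefore $\ell(Y)$ is monochromatic, completing the induction.
\end{proof}
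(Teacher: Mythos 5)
Your proof is correct, but it uses a different decomposition from the paper. You split $X$ at the \emph{root}: you recurse on the $\aleph_0$ subtrees $X_n$ sitting above each immediate successor of the root, obtaining a monochromatic full subtree $Y_n$ and a color $\chi(n)$ for each, and then apply the pigeonhole principle once to $\chi$ to pick out an infinite set of children of the root whose subtrees all received the same color. The paper instead splits at the \emph{leaves}: it passes to the truncation $Z$ (everything of height at most $k-1$), applies the pigeonhole principle at every leaf $z$ of $Z$ to thin the children of $z$ to an infinite monochromatic set and record the resulting color $d(z)$, and then invokes the inductive hypothesis on $Z$ with the coloring $d$. So your argument does ``recurse first, pigeonhole last (at the root)'' and the paper's does ``pigeonhole first (at the deepest level), recurse last''; unwound, both are $k$ applications of the infinite pigeonhole principle as the paper's preamble promises. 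Your version is arguably the more obviously ``structural'' recursion on a rooted tree; the paper's truncation phrasing lines up more closely with the Cantor--Bendixson derivative viewpoint used elsewhere in Section~\ref{section:anti-tree}. One small checkpoint worth making explicit (you implicitly use it) is that a full subtree $Y_n$ of $X_n$ necessarily has $x_n$ as its root, which follows because the leaves of $Y_n$ must have the full set of $k-1$ predecessors in $X_n$; your verification of heights in $Y$ is then airtight.
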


\begin{proof}
The proof is by induction on $k$. The case $k=1$ is simply the infinite pigeonhole principle, so assume $k>1$. Let $Z$ be the set of elements of $X$ of height at most $k-1$, so $Z$ is 
a perfect $\aleph_0$-tree of height $k-1$. Then for each $z\in\ell(Z)$, by the infinite pigeonhole principle again there exists $d(z)\in\{\text{red},\text{blue}\}$ and an infinite subset $Y_z$ 
of the successors of $z$ such that $c(x)=d(z)$ for all $x\in Y_z$. This defines a coloring $d:\ell(Z)\to\{\text{red},\text{blue}\}$, so by the inductive hypothesis there exists a full subtree 
$W$ of $Z$ such that $\ell(W)$ is monochromatic for $d$. Finally let
\[Y=W\cup\bigcup_{w\in\ell(W)}Y_w.\]
Then $Y$ is as required.
\end{proof}

Recall Theorem \ref{theorem:omega+1}, which states that if $k$ is a positive integer, then $R^{top}(\omega+1,k+1)=R^{cl}(\omega+1,k+1)\geq\omega^k+1$.
To illustrate the relevance of these notions, we now provide a second proof of this result. This is also the proof that we will mirror when the result is generalized in Theorem 
\ref{theorem:generalomegasquared+1}.

The crux of the proof is the following result, which says that any $\{\text{red},\text{blue}\}$-coloring of $\omega^k+1$ avoiding both a red-homogeneous topological copy of $\omega+1$ 
and a blue-homogeneous topological copy of $\omega$ is in some sense similar to the $k+1$-partite $\{\text{red},\text{blue}\}$-coloring that falls out of the proofs of Propositions 
\ref{proposition:lowerbound} and \ref{proposition:pomega+1}.

\begin{lemma}\label{lemma:fullsubtree}
Let $k$ be a positive integer and let $c:[\omega^k+1]^2\to\{\text{red},\text{blue}\}$ be a coloring. Suppose that
\begin{enumerate}[(a)]
\item\label{item:omegaplusonecondition}
there is no red-homogeneous topological copy of $\omega+1$, and
\item\label{item:omegacondition}
there is no blue-homogeneous topological copy of $\omega$.
\end{enumerate}
Under these assumptions, there is a full subtree $X$ of $\omega^k+1$ such that for all $x,y,z\in X$:
\begin{enumerate}
\item\label{item:antitreecondition}
if $x\lhd^\ast z$ and $y\lhd^\ast z$ then $c(\{x,y\})=\text{red}$; and
\item\label{item:antitreechildcondition}
if $x<^\ast y$ then $c(\{x,y\})=\text{blue}$.
\end{enumerate}
\end{lemma}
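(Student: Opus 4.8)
The plan is to build the full subtree $X$ by induction on $k$, extracting it from $\omega^k+1$ layer by layer, closely mirroring the inductive proof of Proposition \ref{proposition:pomega+1} and using Lemma \ref{lemma:basicfullsubtree} as the combinatorial engine. The base case $k=1$ is nearly vacuous: $\omega+1$ has root $\omega$ and leaves $\{0,1,2,\dots\}$, all of which satisfy $n\lhd^\ast\omega$, so condition (\ref{item:antitreecondition}) demands that some cofinite-or-infinite subset of $\omega$ be red-homogeneous, while condition (\ref{item:antitreechildcondition}) demands that $c(\{n,\omega\})=\text{blue}$ for all chosen $n$. Using assumption (\ref{item:omegaplusonecondition}) (no red topological copy of $\omega+1$) we cannot have infinitely many $n$ with $c(\{n,\omega\})=\text{red}$ forming, together with $\omega$, such a copy; so infinitely many $n$ have $c(\{n,\omega\})=\text{blue}$. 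Among those, assumption (\ref{item:omegacondition}) (no blue topological copy of $\omega$) forces, by Ramsey's theorem applied to the pairs among them, an infinite red-homogeneous subset $N$; then $N\cup\{\omega\}$ is the desired full subtree.

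For the inductive step, recall that $\omega^k+1$ decomposes under $>^\ast$ with root $\omega^k$, whose immediate $\lhd^\ast$-predecessors are the points $\omega^{k-1}\cdot(m+1)$ for $m\in\omega$, and the subtree hanging below $\omega^{k-1}\cdot(m+1)$ is (a translate of) $\omega^{k-1}+1$. So I would first, for each $m$, apply the inductive hypothesis to the restriction of $c$ to $X_m:=[\omega^{k-1}\cdot m+1,\omega^{k-1}\cdot(m+1)]$ — but one must check the hypotheses (\ref{item:omegaplusonecondition}) and (\ref{item:omegacondition}) still hold on $X_m$, which they do trivially since $X_m\subseteq\omega^k+1$ and a homogeneous copy inside $X_m$ is a homogeneous copy in $\omega^k+1$. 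This yields a full subtree $Y_m\subseteq X_m$ of $\omega^{k-1}+1$ satisfying conditions (\ref{item:antitreecondition}) and (\ref{item:antitreechildcondition}) internally, with root $r_m:=\omega^{k-1}\cdot(m+1)$. Now I need to thin the index set $m$ and adjust the $Y_m$ so that the cross-conditions involving the global root $\omega^k$ and involving pairs from different $Y_m$'s also hold.

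The cross-conditions to arrange are: (i) $c(\{r_m,\omega^k\})=\text{blue}$ for the surviving $m$ (since $r_m\lhd^\ast\omega^k$, wait — actually $r_m<^\ast\omega^k$ so (\ref{item:antitreechildcondition}) applies; more precisely every $y\in Y_m$ has $y<^\ast\omega^k$, so we need $c(\{y,\omega^k\})=\text{blue}$ for \emph{all} $y\in Y_m$); and (ii) whenever $x\in Y_m$ and $y\in Y_{m'}$ with $x\lhd^\ast\omega^k$ and $y\lhd^\ast\omega^k$, i.e. $x=r_m$ and $y=r_{m'}$, we need $c(\{r_m,r_{m'}\})=\text{red}$. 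So the work splits cleanly: first use assumption (\ref{item:omegaplusonecondition}) to discard those $m$ for which $c(\{y,\omega^k\})=\text{red}$ for some "bad" $y$ — but here is the subtlety, we cannot afford to lose too many $m$, and we must handle all of $Y_m$ at once. The clean way: for each $m$, look at the leaves of $Y_m$; if infinitely many $m$ had a leaf $\ell$ with $c(\{\ell,\omega^k\})=\text{red}$, picking one such leaf per $m$ gives an infinite set $S$ with $c(\{s,\omega^k\})=\text{red}$ for all $s\in S$, and $S\cup\{\omega^k\}$ would contain a red topological copy of $\omega+1$ (take any $\omega$-sequence from $S$ converging up to $\omega^k$ — which exists since the $r_m$ are cofinal in $\omega^k$ and we may choose the leaves with $\sup$ approaching $\omega^k$), contradicting (\ref{item:omegaplusonecondition}). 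Hence for all but finitely many $m$, every leaf $\ell$ of $Y_m$ has $c(\{\ell,\omega^k\})=\text{blue}$ — and by condition (\ref{item:antitreechildcondition}) already secured inside $Y_m$, the non-leaves of $Y_m$ are $<^\ast$ of the leaves; but that does not immediately give $c(\{\text{non-leaf},\omega^k\})=\text{blue}$.

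I expect \textbf{this last point to be the main obstacle}: propagating the color condition from leaves up to internal nodes with respect to the external root $\omega^k$. The resolution is to further shrink each $Y_m$: within $Y_m$, for each internal node $v$ (there are finitely many levels and, crucially, we may process them top-down), consider whether $c(\{v,\omega^k\})=\text{red}$; if so, we want to delete $v$, but $v$ is forced by tree structure. Instead, the right move is: when we originally apply the inductive hypothesis to get $Y_m$, we should already arrange inside $X_m$ that a \emph{further} condition holds, or — cleaner — we reorganize so that $\omega^k$ is treated together with the $r_m$'s via one more application of Lemma \ref{lemma:basicfullsubtree}-style pigeonholing: color each internal node $v$ of each $Y_m$ by $c(\{v,\omega^k\})\in\{\text{red},\text{blue}\}$, and use the inductive hypothesis / pigeonhole to pass to a full sub-subtree on which this color is constant per level; if any level comes out red, we extract a red copy of $\omega+1$ (an $\omega$-branch through that level together with its limit in $\omega^k+1$, or together with $\omega^k$), contradicting (\ref{item:omegaplusonecondition}); so all levels are blue, giving condition (\ref{item:antitreechildcondition}) for the pairs $(v,\omega^k)$. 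Finally, to get (\ref{item:antitreecondition}) for the top, we have the roots $r_m$ ($m$ in the surviving infinite set): apply assumption (\ref{item:omegacondition}) plus Ramsey's theorem to the pairs $\{r_m,r_{m'}\}$ — if infinitely many such pairs were blue we would (since $\{r_m\}$ has a subsequence of type $\omega$, but we need a topological copy of $\omega$, i.e. just an infinite discrete set, which any infinite subset of ordinals gives) obtain a blue topological copy of $\omega$, contradiction; so by Ramsey there is an infinite red-homogeneous set of roots, and restricting to those $m$ and assembling $X:=\{\omega^k\}\cup\bigcup_m Y_m$ over the surviving $m$ gives the required full subtree. I would present the argument by first stating a clean sublemma "propagating a root-color through a full subtree" so that the bookkeeping in the main induction stays readable.
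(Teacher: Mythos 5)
Your outline — induct on $k$, apply the inductive hypothesis to each block $X_m\cong\omega^{k-1}+1$ to get $Y_m$, then handle the cross-relations involving $\omega^k$ — matches the paper's first move, but the way you handle the cross-relations has genuine gaps, and you miss the paper's key device for them. First, a recurring local flaw: whenever you say ``infinitely many $m$ have a witness $v_m$ with $c(\{v_m,\omega^k\})=\text{red}$, so $\{v_m\}\cup\{\omega^k\}$ contains a red topological copy of $\omega+1$'', you have not shown the pairs $\{v_m,v_{m'}\}$ are red — you only have red edges \emph{to} $\omega^k$. You must first apply the infinite Ramsey theorem to the $v_m$'s and use assumption (\emph{b}) to pass to a red-homogeneous infinite subset (still cofinal in $\omega^k$), and only then does the union with $\omega^k$ contradict (\emph{a}). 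The paper does exactly this extra Ramsey step at the analogous point. Second, and more seriously: you correctly identify that propagating the blue-to-$\omega^k$ condition from leaves up to all internal nodes (and simultaneously arranging red among the roots $r_m$) is the crux, but your proposed fix — ``pass to a full sub-subtree on which $c(\{v,\omega^k\})$ is constant per level'' — is neither proved nor obviously available from Lemma~\ref{lemma:basicfullsubtree}, which only gives constancy on \emph{leaves}. Even if one develops a level-by-level thinning, one must then interleave it with a separate Ramsey argument on the $r_m$'s, and the bookkeeping is not worked out.

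The paper's resolution is much cleaner and is the idea you are missing: after forming $Y=\bigcup_m Y_m\cup\{\omega^k\}$, apply the inductive hypothesis a \emph{second} time, to the tree of internal nodes $Y\setminus\ell(Y)\cong\omega^{k-1}+1$. This single step yields a full subtree $Z$ in which \emph{all} of the remaining internal cross-conditions hold at once: condition~(\ref{item:antitreecondition}) at $z=\omega^k$ (pairs of roots $r_m,r_{m'}$ are red) and condition~(\ref{item:antitreechildcondition}) for every internal $x<^\ast\omega^k$ (blue to the root). One then re-attaches leaves to $Z$, and the only condition left is the leaf-to-$\omega^k$ colour, which is dispatched by Lemma~\ref{lemma:basicfullsubtree} followed by the Ramsey step described above. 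So while your decomposition is right, you should recognise $Y\setminus\ell(Y)$ as another instance of the very lemma you are proving, rather than inventing a separate per-level pigeonhole.
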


The proof makes use of Lemma \ref{lemma:basicfullsubtree}.

\begin{proof}
The proof is by induction on $k$.

For the base case, suppose $k=1$. By the infinite Ramsey theorem there exists an infinite homogeneous subset $Y\subseteq\omega$. By condition \emph{(\ref{item:omegacondition})} 
this must be red-homogeneous. Now by the infinite pigeonhole principle there must exist $i\in\{\text{red},\text{blue}\}$ and an infinite subset $Z\subseteq Y$ such that $c(\{x,\omega\})
=i$ for all $x\in Z$. By condition \emph{(\ref{item:omegaplusonecondition})} we must have $i=\text{blue}$. Then $Z\cup\{\omega\}$ is a full subtree of $\omega+1$ with the required 
properties.

For the inductive step, suppose $k>1$. First apply the inductive hypothesis to obtain a full subtree $Y_m$ of $\left[\omega^k\cdot m+1,\omega^k\cdot(m+1)\right]\cong\omega^{k-1}+1$ 
for each $m\in\omega$, and let $Y=\bigcup_{m\in\omega}Y_m\cup\{\omega^k\}$. Then use the inductive hypothesis again to obtain a full subtree $Z$ of $Y\setminus\ell(Y)\cong
\omega^{k-1}+1$, and let
\[W=Z\cup\{y\in\ell(Y)\mid y\lhd^\ast z\text{ for some $z\in Z$}\}.\]
By our uses of the inductive hypothesis, conditions \emph{(\ref{item:antitreecondition})} and \emph{(\ref{item:antitreechildcondition})} hold whenever $x,y,z\in Y_m$ for some $m\in
\omega$ or $x,y,z\in Z$. Thus it is sufficient to find a full subtree $X$ of $W$ such that $c(\{x,\omega^k\})=\text{blue}$ for all $x\in\ell(X)$. To do this, define a coloring $\widetilde c:
\ell(W)\to\{\text{red},\text{blue}\}$ by $\widetilde c(x)=c(\{x,\omega^k\})$. Then apply Lemma \ref{lemma:basicfullsubtree} to obtain $i\in\{\text{red},\text{blue}\}$ and a full subtree $X$ of 
$W$ such that $c(\{x,\omega^k\})=i$ for all $x\in\ell(X)$. Now let $V$ be a cofinal subset of $\ell(X)$ of order type $\omega$. By the infinite Ramsey theorem there exists an infinite 
homogeneous subset $U\subseteq V$, which by condition (\ref{item:omegacondition}) must be red-homogeneous. But then $U\cup\{\omega^k\}$ is a topological copy of $\omega+1$, 
so by condition (\ref{item:omegaplusonecondition}) we must have $i=\text{blue}$, and we are done.
\end{proof}

Theorem \ref{theorem:omega+1} now follows easily.

\begin{proof}[Second proof of Theorem \ref{theorem:omega+1}]
As in the first proof, $R^{top}(\omega+1,k+1)=R^{cl}(\omega+1,k+1)\geq\omega^k+1$.

To see that $\omega^k+1\to_{top}(\omega+1,k+1)^2$, let $c:[\omega^k+1]^2\to\{\text{red},\text{blue}\}$ be a coloring. If there is a red-homogeneous topological copy of $\omega+1$ or 
a blue-homogeneous topological copy of $\omega$, then we are done. Otherwise, choose $X\subseteq\omega^k+1$ as in Lemma \ref{lemma:fullsubtree}. Then any branch (i.e., 
maximal chain under $>^\ast$) of $X$ forms a blue-homogeneous set of $k+1$ points.
\end{proof}

We conclude this section by proving our bounds on $R^{top}(\omega\cdot 2,3)$. Before doing this, we remark that it is crucial that we consider here the topological rather than the 
closed Ramsey number. Since $\omega+n\cong\omega+1$ for every positive integer $n$, from a topological perspective, $\omega\cdot 2$ is the simplest ordinal space larger than 
$\omega+1$. Moreover, there are sets of ordinals containing a topological copy of $\omega\cdot 2$ but not even a closed copy of $\omega+2$, such as $(\omega\cdot 2+1)\setminus
\{\omega\}$. Accordingly, we have only been able to apply the technique we present here to this simplest of cases. Nonetheless, it may still be possible to adapt this technique to obtain 
upper bounds on closed (as well as topological) Ramsey numbers.

We begin by proving the lower bound. Recall from Fact \ref{fact:pcl} that $P^{top}(\omega\cdot 2)_2=\omega^2\cdot 2$. Thus we have indeed improved upon the lower bound given by 
Proposition \ref{proposition:lowerbound}. As with the lower bound of Lemma \ref{lemma:omegaplus2lower}, we provide a simple coloring based on a small finite graph.

\begin{lemma}\label{lemma:omega2lower}
$R^{top}(\omega\cdot 2,3)\geq\omega^2\cdot 3$.
\end{lemma}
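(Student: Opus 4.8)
The plan is to exhibit, for each ordinal $\beta<\omega^2\cdot 3$, a red--blue coloring of $[\beta]^2$ with no blue triangle and no red-homogeneous topological copy of $\omega\cdot 2$; since $R^{top}$ is monotone and $\omega^2\cdot 3$ is a limit ordinal, this gives $R^{top}(\omega\cdot 2,3)\geq\sup_{\beta<\omega^2\cdot 3}(\beta+1)=\omega^2\cdot 3$. For $\beta<\omega^2\cdot 2$ such a coloring is already provided by Proposition \ref{proposition:lowerbound} together with $P^{top}(\omega\cdot 2)_2=\omega^2\cdot 2$ (Fact \ref{fact:pcl}), so the real work lies in the range $\omega^2\cdot 2\leq\beta<\omega^2\cdot 3$; writing $\beta=\omega^2\cdot 2+\gamma$ with $\gamma<\omega^2$, we color the ``three columns'' $[0,\omega^2)$, $[\omega^2,\omega^2\cdot 2)$, $[\omega^2\cdot 2,\omega^2\cdot 2+\gamma)$ uniformly.

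The combinatorial heart is the following characterization, which follows from the Cantor--Bendixson analysis of ordinal spaces (equivalently, from the Flum--Mart\'\i{}nez classification): \emph{a subset $X$ of an ordinal contains a topological copy of $\omega\cdot 2$ if and only if $X$ has a limit point $\ell\in X$ for which $\{x\in X:x>\ell\}$ is infinite.} For the nontrivial direction, given such $\ell$ pick increasing $\omega$-sequences $a_0<a_1<\cdots$ in $X$ with supremum $\ell$ and $b_0<b_1<\cdots$ in $X$ above $\ell$; then $\{a_i\mid i\in\omega\}\cup\{\ell\}\cup\{b_i\mid i\in\omega\}$ has $\ell$ as its unique limit point and is easily seen to be a topological copy of $\omega\cdot 2$. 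Conversely, if $Z\subseteq X$ is a topological copy of $\omega\cdot 2$ and $\ell\in Z$ is its limit point, then $\{x\in Z:x>\ell\}$ must be infinite, since otherwise $Z$ would be the union of a closed subset of the compact interval $[0,\ell]$ with a finite set, hence compact --- contradicting the non-compactness of $\omega\cdot 2$.

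Next I would define the coloring from a finite \emph{triangle-free} graph $G$ together with a partition of $[0,\beta)$ into blocks $(B_v)_{v\in V(G)}$, setting $c(\{x,y\})=\text{blue}$ exactly when $x$ and $y$ lie in distinct blocks whose labels are adjacent in $G$, and $c(\{x,y\})=\text{red}$ otherwise --- precisely the scheme of Lemma \ref{lemma:omegaplus2lower}. The blocks refine the partition of $[0,\beta)$ into its three columns: within each column one separates the $\operatorname{CB}$-rank-$0$ points (the successor ordinals), the $\operatorname{CB}$-rank-$1$ points (the nonzero multiples of $\omega$ it contains), and, for the first two columns, the singleton $\operatorname{CB}$-rank-$2$ point ($\omega^2$, respectively $\omega^2\cdot 2$); some pieces will need to be subdivided further --- the ``bent'' edge in Lemma \ref{lemma:omegaplus2lower} is the toy model --- to let a triangle-free $G$ suffice. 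Granting such a $G$, there is no blue triangle: a blue triangle would consist of three points from three \emph{distinct} blocks (two points of the same block are joined red) whose labels form a triangle in $G$, which $G$ lacks.

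It remains to forbid a red-homogeneous topological copy of $\omega\cdot 2$. Every red-homogeneous set lies in $Y_I:=\bigcup_{v\in I}B_v$ for some \emph{independent} $I\subseteq V(G)$, and conversely any topological copy of $\omega\cdot 2$ inside some $Y_I$ is automatically red-homogeneous; so, by the characterization above, it suffices to ensure that for every independent $I$ the set $Y_I$ has no limit point $\ell\in Y_I$ with infinitely many elements of $Y_I$ above it. Here one uses that a limit point of a subset of an ordinal is a limit ordinal, hence lies in a rank-$1$ or rank-$2$ block; such a point $\ell$ in column $i$ is a limit point of $Y_I$ precisely when $I$ also contains the rank-$0$ block of column $i$, and $Y_I$ then has infinitely many points above $\ell$ as soon as $I$ contains any block lying entirely above $\ell$ --- in particular, the rank-$0$ block of column $i$ itself or any block of a later column. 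Hence every ``dangerous'' independent set must be met by an edge of $G$: at the least, the rank-$0$ and rank-$1$ blocks of a common column must be adjacent, and every rank-$1$ or rank-$2$ block of column $i$ must be adjacent to every block lying above it. The main obstacle is to exhibit an explicit finite graph $G$ meeting all of these adjacency requirements while staying triangle-free, and then to verify --- a finite but delicate case check over the independent sets of $G$ --- that for the resulting $G$ every $Y_I$ is, up to finitely many points, either discrete or a single convergent sequence together with its limit, and so contains no topological copy of $\omega\cdot 2$; everything else is routine.
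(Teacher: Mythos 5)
Your overall scheme matches the paper's (a triangle-free graph labeling a partition into blocks, with blue on cross-edges), but the combinatorial characterization you hinge the argument on is false, and this is precisely the pitfall the paper warns about immediately before stating this lemma. You claim $X$ contains a topological copy of $\omega\cdot 2$ if and only if $X$ has a limit point $\ell\in X$ with infinitely many points of $X$ above $\ell$; the ``only if'' direction fails. Take $Z=(\omega\cdot 2+1)\setminus\{\omega\}=\{0,1,2,\ldots\}\cup\{\omega+1,\omega+2,\ldots\}\cup\{\omega\cdot 2\}$: the naturals form an infinite discrete set whose supremum $\omega$ is absent, while $\{\omega+n\mid n\ge 1\}\cup\{\omega\cdot 2\}$ is a copy of $\omega+1$, so $Z\cong\omega\cdot 2$ --- yet the unique limit point of $Z$ is its maximum $\omega\cdot 2$, with nothing above. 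Your compactness argument implicitly assumes $Z\cap[0,\ell]$ is closed in $[0,\ell]$ because $\ell$ is the only limit point of $Z$, but a point like $\omega$ can accumulate $Z\cap[0,\ell]$ in the ambient ordinal without lying in $Z$, so $Z\cap[0,\ell]$ need not be closed and $Z$ need not be compact.

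This is fatal to the plan as written, because the characterization is exactly what tells you which edges $G$ must contain. The paper's own graph joins the rank-$0$ block $\{0\}\cup\{x+1\mid x\in\omega^2\}$ of the first column to the rank-$2$ singleton $\{\omega^2\cdot 2\}$ of the second column precisely to kill the set $(\text{discrete rank-}0\text{ block of column }1)\cup(\text{rank-}1\text{ sequence of column }2)\cup\{\omega^2\cdot 2\}$, which is a topological copy of $\omega\cdot 2$ whose only limit point is at the very top. Your criterion would certify that set as safe and you would likely omit the edge, producing a coloring with a red-homogeneous $\omega\cdot 2$. The verification condition you mention only in passing at the end (``up to finitely many points, either discrete or a single convergent sequence together with its limit'') is the correct sufficient criterion and should replace the flawed characterization throughout. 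Separately, the paper collapses your case split over all $\beta<\omega^2\cdot 3$ by observing that every such $\beta$ is homeomorphic to a subspace of $\omega^2\cdot 2+1$, so a single coloring of $\omega^2\cdot 2+1$ suffices.
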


\begin{proof}
Since any ordinal less than $\omega^2\cdot 3$ is homeomorphic to a subspace of $\omega^2\cdot 2+1$, it is sufficient to prove that $\omega^2\cdot 2+1\not\to_{top}(\omega\cdot 2, 
3)^2$. To see this, let $G$ be the graph represented by the following diagram.
\begin{center}
\begin{tikzpicture}[x=80,y=40]
\node(bottom1)at(-1,-1){$\{0\}\cup\{x+1\mid x\in\omega^2\}$};
\node(middle1)at(-1,0){$\{\omega\cdot(n+1)\mid n\in\omega\}$};
\node(top1)at(-1,1){$\{\omega^2\}$};
\node(bottom2)at(1,-1){$\{\omega^2+x+1\mid x\in\omega^2\}$};
\node(middle2)at(1,0){$\{\omega^2+\omega\cdot(n+1)\mid n\in\omega\}$};
\node(top2)at(1,1){$\{\omega^2\cdot 2\}$};
\path(top1)edge[bend right=80](bottom1);
\path(top2)edge[bend left=80](bottom2);
\path(middle1)edge(bottom1);
\path(middle2)edge(bottom2);
\path(top1)edge(bottom2);
\path(top2)edge(bottom1);
\path(middle1)edge(middle2);
\end{tikzpicture}
\end{center}
Now define a coloring $c:[\omega^2\cdot 2+1]^2\to\{\text{red},\text{blue}\}$ by setting $c(\{x,y\})=\text{blue}$ if and only if $x$ and $y$ lie in distinct, adjacent vertices of $G$. First note 
that there is no blue triangle since $G$ is triangle-free. To see that there is no red-homogeneous topological copy of $\omega\cdot 2$, first note that every vertex of $G$ is discrete, and 
moreover the union of any vertex from the left half of $G$ and any vertex the right half of $G$ is also discrete. Therefore the only maximal red-homogeneous subspaces that are not 
discrete are $\{\omega\cdot (n+1)\mid n\in\omega\}\cup\{\omega^2\}\cup\{\omega^2\cdot 2\}$ and $\{\omega^2+\omega\cdot (n+1)\mid n\in\omega\}\cup\{\omega^2\}\cup
\{\omega^2\cdot 2\}$, neither of which contains a topological copy of $\omega\cdot 2$.
\end{proof}

Our upper bound makes use of several classical ordinal Ramsey results.

Firstly, we use the special case of Theorem \ref{theorem:ordinaryomegasquared} that $\omega^2\to(\omega^2,3)^2$.

Secondly, we use Theorem \ref{theorem:ordinarylessthanomegasquaredprecise}. In fact, we essentially prove that $R^{top}(\omega\cdot 2,3)\leq\omega^3\cdot R(K_{10}^*,L_3)$. The 
best known upper bound on $R(K_{10}^*,L_3)$ is due to Larson and Mitchell \cite{MR1630775}.
\begin{theorem}[Larson-Mitchell]
If $n>1$ is a positive integer, then $R(K_n^*,L_3)\leq n^2$.
\end{theorem}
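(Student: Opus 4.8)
The plan is to prove the equivalent combinatorial statement: \emph{every finite digraph $D$ on $n^2$ vertices contains either an independent set of size $n$ or a transitive tournament of order $3$}. This immediately gives $R(K_n^*,L_3)\le n^2$. I would argue by induction on $n$, the base case $n=1$ being trivial (a single vertex is an independent set of size $1$), so that $R(K_1^*,L_3)\le 1$.

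For the inductive step, fix an $L_3$-free digraph $D$ on $n^2$ vertices and suppose for contradiction that $D$ has no independent set of size $n$. Given a vertex $v$, partition $V(D)\setminus\{v\}$ into the set $A^+$ of vertices joined to $v$ by an edge $v\to u$ only, the set $A^-$ of those joined by $u\to v$ only, the set $A^{\pm}$ of those joined by edges in both directions, and the set $A^0$ of those with no edge to $v$. The key claim is that $A^+\cup A^{\pm}$ and $A^-\cup A^{\pm}$ are both independent sets of $D$. Indeed, if $u_1,u_2\in A^+\cup A^{\pm}$ were joined by some edge, then, picking a single convenient direction when a pair is doubly joined, the vertex set $\{v,u_1,u_2\}$ carries a transitive tournament of order $3$: for instance, if $u_1\to u_2$ then the edges $v\to u_1$, $u_1\to u_2$, $v\to u_2$ form one, while if only $u_2\to u_1$ holds and $u_2\in A^{\pm}$ then the edges $u_2\to v$, $v\to u_1$, $u_2\to u_1$ do; the remaining cases are the same. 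The argument for $A^-\cup A^{\pm}$ is symmetric. The part that matters here is that $A^{\pm}$ sends no edge at all to $A^+\cup A^-$, because $v$ is doubly joined to each vertex of $A^{\pm}$ and so any such edge completes to a transitive triple.

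Granting the claim, the assumption that $D$ has no independent set of size $n$ forces $|A^+|+|A^{\pm}|\le n-1$ and $|A^-|+|A^{\pm}|\le n-1$, so the number of vertices adjacent to $v$ is $|A^+|+|A^-|+|A^{\pm}|\le 2(n-1)$, whence $|A^0|\ge n^2-1-2(n-1)=(n-1)^2$. Since an induced sub-digraph of an $L_3$-free digraph is again $L_3$-free, the inductive hypothesis $R(K_{n-1}^*,L_3)\le(n-1)^2$ implies that $D[A^0]$, having at least $(n-1)^2$ vertices and no transitive tournament of order $3$, contains an independent set $J$ of size $n-1$; as $v$ is joined to no vertex of $A^0$, the set $J\cup\{v\}$ is an independent set of size $n$ in $D$, the desired contradiction. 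This completes the induction. The only real content is the key claim, and within it the observation that the ``doubly joined'' neighbourhood $A^{\pm}$ is independent not merely internally but also from $A^+\cup A^-$; this is exactly what keeps the count of vertices adjacent to $v$ at $2(n-1)$ rather than $3(n-1)$, so that the recursion closes precisely at $n^2$ rather than at some constant multiple of $n^2$. Everything else is routine bookkeeping, including checking that the hypotheses descend to induced sub-digraphs.
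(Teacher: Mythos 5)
The paper does not actually prove this statement: it is imported from Larson and Mitchell (the reference cited alongside the theorem), so there is no internal proof to compare your argument against. Judged on its own, your proof is correct and self-contained. The heart of it is sound: in an $L_3$-free digraph, the set of all vertices receiving an edge from $v$ (including the doubly joined ones) is independent, since any edge between two such vertices $u_1,u_2$, in whichever direction, completes a transitive triple with source $v$; symmetrically, the set of all vertices sending an edge to $v$ is independent. Hence, if there is no independent $n$-set, the neighbourhood of $v$ has size at most $2(n-1)$, leaving at least $n^2-1-2(n-1)=(n-1)^2$ non-neighbours, and the induction closes as you describe, the hypotheses clearly being inherited by induced sub-digraphs. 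Two small remarks. First, your case analysis inside the key claim is redundant: after relabelling $u_1$ and $u_2$ you may always assume the edge $u_1\to u_2$ is present, and then $v\to u_1$, $u_1\to u_2$, $v\to u_2$ already gives the transitive tournament, so the separate subcase using $u_2\to v$ is not needed (though it is not wrong). Second, your argument implicitly reads ``admits a transitive tournament of order $3$'' as containment of the three required edges as a subdigraph, with additional edges (in particular doubly joined pairs) permitted; this is indeed the convention forced by the Erd\H{o}s--Rado lemma stated just before this theorem in the paper, since under an induced reading the complete digraph would admit neither an independent pair nor a transitive tournament. It would be worth making that reading explicit, but with it your proof establishes the bound $R(K_n^*,L_3)\le n^2$ exactly as claimed, giving a short justification of a result the paper only quotes.
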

In particular, $R(K_{10}^*,L_3)\leq 100$ and hence $\omega\cdot 100\to(\omega\cdot 10,3)^2$ by Theorem \ref{theorem:ordinarylessthanomegasquaredprecise}.

Finally, we use the following result, which was claimed without proof by Haddad and Sabbagh \cite{haddadsabbagh} and has since been proved independently by Weinert \cite{weinert}.
\begin{theorem}[Haddad-Sabbagh; Weinert]\label{theorem:ordinaryomegasquared2}
$R(\omega^2\cdot 2,3)=\omega^2\cdot 10$.
\end{theorem}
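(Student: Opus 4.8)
The plan is to prove the equality in both directions, in the spirit of Haddad and Sabbagh's reduction of countable ordinal Ramsey numbers below $\omega^\omega$ to effectively computable finite combinatorics: since $\omega^2\cdot 2<\omega^\omega$, the relation $\beta\to(\omega^2\cdot 2,3)^2$ is governed by a Ramsey-type statement about finite digraphs (one level ``above'' the digraphs $K_m^*,L_k$ that control $R(\omega\cdot m,k)$), and the value $\omega^2\cdot 10$ is pinned down by evaluating that statement exactly.

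\emph{Upper bound.} To see $\omega^2\cdot 10\to(\omega^2\cdot 2,3)^2$, fix a coloring $c\colon[\omega^2\cdot 10]^2\to\{\text{red},\text{blue}\}$ with no blue triangle and split $\omega^2\cdot 10$ into ten consecutive blocks $K_0,\dots,K_9$, each order-isomorphic to $\omega^2$ with least point $\omega^2\cdot i$. Inside each block Specker's theorem (Theorem~\ref{theorem:ordinaryomegasquared}) yields a red-homogeneous copy of $\omega^2$ cofinal in that block; one extracts sharper information by combining the infinite pigeonhole principle at Cantor--Bendixson rank $0$, the Erd\H{o}s--Rado formula $R(\omega\cdot m,k)=\omega\cdot R(K_m^*,L_k)$ (Theorem~\ref{theorem:ordinarylessthanomegasquaredprecise}) at rank $1$, and a coherence argument like that in the proof of Theorem~\ref{theorem:omegasquared}, where a non-principal ultrafilter on $\omega$ makes the necessary choices uniformly. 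A red-homogeneous copy of $\omega^2\cdot 2$ arises precisely when, for some pair of blocks $K_a<K_b$, one can select a red-homogeneous copy of $\omega^2$ cofinal in $K_a$ and a red-homogeneous copy of $\omega^2$ inside $K_b$ with all cross-edges red. This reduces matters to a finite statement: $c$ induces a ``reduced'' coloring of the pairs from $\{0,\dots,9\}$, recording whether the cross-edges between two blocks are red robustly enough to allow such a gluing, and one shows every reduced coloring on ten indices admits a gluable pair. The number $10$ is exactly the least index-set size for which this holds.

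\emph{Lower bound.} We must produce, for every $\gamma<\omega^2\cdot 10$, a coloring of $[\gamma]^2$ with no red-homogeneous copy of $\omega^2\cdot 2$ and no blue triangle; as the ordinals $\omega^2\cdot 9+\delta$ with $\delta<\omega^2$ are cofinal below $\omega^2\cdot 10$, it is enough to handle $\gamma=\omega^2\cdot 9+\delta$. Write $\gamma$ as nine blocks of type $\omega^2$ followed by a tail of type $\delta$. The structural fact driving the construction is that a red-homogeneous copy of $\omega^2\cdot 2$ in $\gamma$ must consist of a red-homogeneous $\omega^2$ cofinal in some block $K_a$ (with $a\le 7$) together with a red-homogeneous $\omega^2$ living in a later block, all mutually red: its level-$2$ accumulation point has to be one of $\omega^2,\omega^2\cdot 2,\dots,\omega^2\cdot 8$. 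Accordingly the coloring is assembled from two layers. Inside each block one places a recursive ``optimal'' coloring --- of the kind witnessing the sharp lower bounds $R(\omega\cdot m,3)>\omega\cdot(R(K_m^*,L_3)-1)$, built from transitive-tournament-free digraphs with no independent $3$-set --- so that no block contains a blue triangle or a red $\omega\cdot m$ arranged to be continued. Across blocks one superimposes the edge pattern of a fixed finite digraph $D$ on $9$ vertices with no independent $3$-set and no transitive tournament of order $3$, dictating the cross-colors so that no pair of blocks is red-gluable and no blue triangle appears between blocks. A direct Cantor--Bendixson analysis then shows no red $\omega^2\cdot 2$ survives, and the fact that the largest such digraph $D$ has exactly $9$ vertices --- equivalently, that the governing finite Ramsey number equals $10$ --- is what stops the construction at $\omega^2\cdot 9$.

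\emph{Main obstacle.} Unlike the $\omega\cdot m$ case, a red copy of $\omega^2\cdot 2$ need not occupy its two host blocks tidily: it can straddle a block boundary through rank-$1$ and rank-$0$ points in intricate ways, and the obstruction to gluing two $\omega^2$-blocks is already Specker-theoretic rather than a mere pigeonhole. The delicate point is to organize the reduction so that the finite coloring on the ten indices captures \emph{exactly} which configurations permit a red $\omega^2\cdot 2$, and, dually, to design the lower-bound coloring so that every such configuration is obstructed; and then to carry out the governing finite digraph Ramsey computation \emph{exactly} --- an equality, not an estimate of the kind $R(K_{10}^*,L_3)\le 100$ that suffices elsewhere in this paper --- which is a nontrivial finite case analysis.
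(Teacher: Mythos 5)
The paper does not prove this statement at all: it is quoted as an external result, announced without proof by Haddad and Sabbagh and proved by Weinert, so there is no internal argument to compare yours against. Judged on its own terms, your proposal is not a proof but an outline that defers exactly the content of the theorem. Both directions hinge on a finite combinatorial invariant that you never define precisely, let alone compute: in the upper bound you say the coloring ``induces a reduced coloring of the pairs from $\{0,\dots,9\}$'' and that ``one shows every reduced coloring on ten indices admits a gluable pair,'' and in the lower bound you invoke ``a fixed finite digraph $D$ on $9$ vertices'' with certain properties, asserting that $9$ is the maximum possible size. No such reduced coloring is specified, no gluing lemma is stated or proved, no digraph is exhibited, and the exact finite Ramsey computation that would make the answer $10$ rather than $9$ or $11$ is explicitly left as ``a nontrivial finite case analysis.'' Since the whole point of the statement is the exact value $\omega^2\cdot 10$, this is a genuine gap, not a routine omission; the actual arguments (Haddad--Sabbagh's reduction machinery, or Weinert's proof) require a carefully formulated transfer theorem between the ordinal relation and a concrete finite structure, which your sketch does not supply.

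There is also a conceptual slip worth flagging: the relation here is the classical one, $\omega^2\cdot 10\to(\omega^2\cdot 2,3)^2$, where the homogeneous set need only be \emph{order-isomorphic} to $\omega^2\cdot 2$. Your structural analysis repeatedly imports topological constraints --- that the red copy of $\omega^2$ must be \emph{cofinal} in its block, that the copy of $\omega^2\cdot 2$ has a ``level-$2$ accumulation point'' at some $\omega^2\cdot a$ --- which are relevant to the closed and topological relations elsewhere in the paper but are not requirements for an order-isomorphic copy. What is true (and what a correct argument would use) is only that, since $\omega^2$ is additively indecomposable, any subset of order type $\omega^2$ meets exactly one block in a set of full type $\omega^2$; cofinality in that block and closure properties cannot be assumed, so the proposed gluing criterion and the lower-bound obstruction argument are not correctly calibrated as stated.
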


We are now ready to prove our upper bound. The first part of the proof is the following analogue of Lemma \ref{lemma:fullsubtree}. The proof uses in an essential manner that we are 
looking for a topological rather than a closed copy of $\omega\cdot 2$.

\begin{lemma}\label{lemma:fullsubtreeanalogue}
Let $c:[\omega^2+1]^2\to\{\text{red},\text{blue}\}$ be a coloring. Suppose that
\begin{enumerate}[(a)]
\item\label{item:omegatimestwocondition}
there is no red-homogeneous topological copy of $\omega\cdot 2$, and
\item\label{item:trianglecondition}
there is no blue triangle.
\end{enumerate}
Under these assumptions, there is a full subtree $X$ of $\omega^2+1$ such that for all $x,y\in X$:
\begin{enumerate}
\item\label{item:secondantitreechildcondition}
if $\operatorname{CB}(x)=\operatorname{CB}(y)$ then $c(\{x,y\})=\text{red}$;
\item\label{item:secondantitreecondition}
if $\operatorname{CB}(x)=0$ then $c(\{x,\omega^2\})=\text{blue}$; and
\item\label{item:thirdantitreecondition}
if $\operatorname{CB}(x)=1$ then $c(\{x,\omega^2\})=\text{red}$.
\end{enumerate}
\end{lemma}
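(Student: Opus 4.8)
The plan is to mimic the structure of the proof of Lemma~\ref{lemma:fullsubtree}, but working within the single space $\omega^2+1$ rather than recursing on height, since the tree here has height only $2$. First I would apply the infinite Ramsey theorem to the copy of $\omega$ sitting cofinally below $\omega^2$ together with the point $\omega^2$: more precisely, for each $m\in\omega$ consider the block $I_m=[\omega\cdot m+1,\omega\cdot(m+1)]\cong\omega+1$, whose top point is $\omega\cdot(m+1)$ and whose bottom part $J_m$ has type $\omega$. Within each $J_m$, the infinite Ramsey theorem gives an infinite homogeneous subset, which by hypothesis~\emph{(\ref{item:trianglecondition})} (no blue triangle) must be red-homogeneous; thinning, we get an infinite red-homogeneous $J_m'\subseteq J_m$ for every $m$. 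This handles condition~\emph{(\ref{item:secondantitreechildcondition})} among points of Cantor--Bendixson rank $0$ lying in a common block.

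Next I would deal with the points of rank $1$, namely the (chosen) limit points $\omega\cdot(m+1)$. Apply the infinite Ramsey theorem to $\{\omega\cdot(m+1)\mid m\in\omega\}$: a blue-homogeneous infinite set among these would, together with $\omega^2$, be a blue triangle, so we obtain an infinite red-homogeneous subset $S$ of the limit points. This gives condition~\emph{(\ref{item:secondantitreechildcondition})} among the rank-$1$ points. Then colour each surviving limit point $s\in S$ by $c(\{s,\omega^2\})$; by the pigeonhole principle pass to an infinite $S'\subseteq S$ on which this colour is constant, and observe that this colour must be red, for otherwise $S'$ would be (cofinal copy of $\omega$, all joined blue to $\omega^2$, hence) $S'$ together with $\omega^2$ is a blue triangle using any two points of $S'$ --- wait, that needs a blue edge within $S'$, which we do not have; instead the correct argument is: $S'\cup\{\omega^2\}$ would then be a topological copy of $\omega+1$ all of whose edges to $\omega^2$ are blue, but more to the point, if the colour were blue then picking the two smallest elements of $S'$ together with $\omega^2$ still only gives one blue edge. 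The actual obstruction is the following one, which is the analogue of the last step in Lemma~\ref{lemma:fullsubtree}: I would argue that the colour on $\{s,\omega^2\}$ for $s\in S'$ cannot be blue because, combined with the red-homogeneity of $S'$, this would not immediately contradict anything --- so instead condition~\emph{(\ref{item:thirdantitreecondition})} is forced only after we also control the rank-$0$ points, via hypothesis~\emph{(\ref{item:omegatimestwocondition})}.

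Here is the correct way to force the remaining conditions, and it is the step I expect to be the main obstacle. Having fixed the red-homogeneous set $S'$ of limit points with a constant colour $i=c(\{s,\omega^2\})$, I would first suppose $i=\text{red}$ and show this is fine (giving condition~\emph{(\ref{item:thirdantitreecondition})}), and then suppose $i=\text{blue}$ and derive a contradiction with~\emph{(\ref{item:omegatimestwocondition})}: if all edges from $S'$ to $\omega^2$ are blue, then since we also need to rule out a blue triangle we must have $c$ red on pairs from $S'$ (already known), and now look below --- for each $s=\omega\cdot(m+1)\in S'$ the set $J_m'\cup\{s\}$ is a red-homogeneous copy of $\omega+1$; if additionally we could choose things so that the rank-$0$ points are joined red to a second limit point, we would build a red copy of $\omega\cdot 2$. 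This is exactly where a further Ramsey/pigeonhole thinning of the rank-$0$ points against the limit points is needed. Concretely, for each pair $m<m'$ with $\omega\cdot(m+1),\omega\cdot(m'+1)\in S'$, thin $J_m'$ to control the colour of edges to $\omega\cdot(m'+1)$; a blue such colour for infinitely many members of $J_m'$ would give, together with $\omega\cdot(m'+1)$ and $\omega^2$... again only yields triangles carefully. I would organize this as: a red-homogeneous copy of $\omega$ in $J_m'$ all joined red to some later limit point $t$, union $\{t\}$ union (a red copy of $\omega$ in $J_{m''}'$ joined red to a still-later point) union its sup --- producing the forbidden red $\omega\cdot 2$. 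The bookkeeping to make the blocks, the limit points, and the top point $\omega^2$ all simultaneously satisfy~\emph{(\ref{item:secondantitreechildcondition})}--\emph{(\ref{item:thirdantitreecondition})} --- while only finitely many conditions are imposed at each stage so that ordinary pigeonhole suffices --- is the heart of the argument, and running it carefully yields the desired full subtree $X$ of $\omega^2+1$.
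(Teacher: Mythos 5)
There is a genuine gap, and it comes from not following the order of the three conditions and from underestimating condition~\emph{(\ref{item:secondantitreechildcondition})}.

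First, condition~\emph{(\ref{item:secondantitreechildcondition})} requires \emph{all} pairs of rank-$0$ points in $X$ to be red, not just pairs lying under a common $\lhd^\ast$-parent. Your block-by-block application of Ramsey's theorem inside each $J_m$ only gives within-block red homogeneity; cross-block rank-$0$ pairs are left uncontrolled. The paper instead applies Specker's theorem $\omega^2\to(\omega^2,3)^2$ to the entire order-type-$\omega^2$ set of rank-$0$ points, obtaining a red-homogeneous $W$ of order type $\omega^2$, and then takes a full subtree $Y_0$ with $\ell(Y_0)\subseteq W$. That global red homogeneity is essential in the last step of the proof.

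Second, you never establish condition~\emph{(\ref{item:secondantitreecondition})}, and this is precisely the step that makes condition~\emph{(\ref{item:thirdantitreecondition})} fall out for free. The paper pigeonholes the colour of $c(\{x,\omega^2\})$ over $\ell(Z)$ (via Lemma~\ref{lemma:basicfullsubtree}); if it were red, then $\ell(Z)\cup\{\omega^2\}$ would be a red-homogeneous set with exactly one limit point and no accumulation point for the sequences running up to the (discarded) rank-$1$ ordinals, hence a topological copy of $\omega\cdot 2$, contradicting~\emph{(\ref{item:omegatimestwocondition})}. So it must be blue, giving~\emph{(\ref{item:secondantitreecondition})}. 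Then, if the rank-$1$ points were joined blue to $\omega^2$, the no-blue-triangle hypothesis would \emph{automatically} force every rank-$0$--rank-$1$ edge to be red (two blue edges to $\omega^2$ already), so $X\setminus\{\omega^2\}$ would be red-homogeneous and would contain a copy of $\omega\cdot 2$ --- contradiction. No further thinning of rank-$0$ points against rank-$1$ points is needed.

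Your approach tries to obtain the red rank-$0$--rank-$1$ edges by directly thinning $J_m'$ against later limit points, but each block faces infinitely many such constraints (one per later limit point), so ordinary pigeonhole does not close this off, and the blue-triangle argument you invoke to rule out blue edges there requires knowing that rank-$0$ points are joined blue to $\omega^2$ --- exactly the unproved condition~\emph{(\ref{item:secondantitreecondition})}. (A smaller point: ``union its sup'' at the end is unnecessary and would yield a second limit point; for a copy of $\omega\cdot 2$ you want one limit point plus a non-converging discrete tail.) The fix is to establish~\emph{(\ref{item:secondantitreecondition})} first from~\emph{(\ref{item:omegatimestwocondition})} as above, and to replace block-by-block Ramsey on the leaves with a single application of $\omega^2\to(\omega^2,3)^2$.
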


\begin{proof}
First note that $(\omega^2+1)\setminus(\omega^2+1)^\prime$ has order type $\omega^2$, so by condition \emph{(\ref{item:trianglecondition})} it has a red-homogeneous subset $W$ 
of order type $\omega^2$, since $\omega^2\to(\omega^2,3)^2$. Let $Y_0$ be a full subtree of $\omega^2+1$ with $\ell(Y_0)\subseteq W$. By applying the infinite Ramsey theorem to 
$Y_0^\prime\setminus Y_0^{(2)}$, we may similarly pass to a full subtree $Y_1$ of $Y_0$ such that $Y_1^\prime\setminus Y_1^{(2)}$ is red-homogeneous. Thus $Y_1$ satisfies 
condition \emph{(\ref{item:secondantitreechildcondition})}.

Next apply Lemma \ref{lemma:basicfullsubtree} to obtain $i\in\{\text{red},\text{blue}\}$ and a full subtree $Z$ of $Y_1$ such that $c(\{x,\omega^2\})=i$ for all $x\in\ell(Z)$. If $i=
\text{red}$, then $\ell(Z)\cup\{\omega^2\}$ would be red-homogeneous, so by condition \emph{(\ref{item:omegatimestwocondition})}, $i=\text{blue}$ and $Z$ satisfies condition 
\emph{(\ref{item:secondantitreecondition})}.

Finally apply the infinite pigeonhole principle to $Z^\prime\setminus Z^{(2)}$ to obtain $j\in\{\text{red},\text{blue}\}$ and a full subtree $X$ of $Z$ such that $c(\{x,\omega^2\})=j$ for all 
$x\in X$ with $\operatorname{CB}(x)=1$. If $j=\text{blue}$, then by condition \emph{(\ref{item:trianglecondition})} we would have $c(\{x,y\})=\text{red}$ for all $x,y\in X$ with 
$\operatorname{CB}(x)=0$ and $\operatorname{CB}(y)=1$, whence $X\setminus\{\omega^2\}$ would be red-homogeneous. Hence by condition 
\emph{(\ref{item:omegatimestwocondition})}, $j=\text{red}$ and $X$ is as required.
\end{proof}

We can now complete the proof of our upper bound and hence of Theorem \ref{theorem:topomega2}.

\begin{proof}[Proof of Theorem \ref{theorem:topomega2}]
By Lemma \ref{lemma:omega2lower}, it remains only to prove that $R^{top}(\omega\cdot 2,3)\leq\omega^3\cdot 100$.

Let $X=\omega^3\cdot 100$, let $c:[X]^2\to\{\text{red},\text{blue}\}$ be a coloring and suppose by contradiction that there is no red-homogeneous topological copy of $\omega\cdot 2$ 
and no blue triangle.

First note that $X^{(2)}\setminus X^{(3)}$ has order type $\omega\cdot 100$, so it has a red-homogeneous subset $U$ of order type $\omega\cdot 10$, since $\omega\cdot 100
\to(\omega\cdot 10,3)^2$. Next let $V=\{x\in X\mid x\lhd^\ast y\text{\ for some }y\in U\}$. Note that $V$ has order type $\omega^2\cdot 10$, and so $V$ has a red-homogeneous subset 
$W$ of order type $\omega^2\cdot 2$, since $\omega^2\cdot 10\to(\omega^2\cdot 2,3)^2$ by Theorem \ref{theorem:ordinaryomegasquared2}. Finally, let
\[Y=\operatorname{cl}(W)\cup\{x\in X\mid \text{$x\lhd^\ast y$ for some $y\in W$}\},\]
where $\operatorname{cl}$ denotes the topological closure operation. Replacing $Y$ with $Y\setminus\{\operatorname{max}Y\}$ if necessary, we may then assume that $Y\cong
\omega^3\cdot 2$, and by construction both $Y^{(2)}\setminus Y^{(3)}$ and $Y^\prime\setminus Y^{(2)}$ are red-homogeneous.

Assume for notational convenience that $Y=\omega^3\cdot 2$. By applying Lemma \ref{lemma:fullsubtreeanalogue} to the interval $\left[\omega^2\cdot\alpha+1,\omega^2\cdot
(\alpha+1)\right]$ for each $\alpha\in\omega\cdot 2$, we may assume that $$c(\{\omega^2\cdot\alpha+\omega\cdot(n+1),\omega^2\cdot(\alpha+1)\})=\text{red}$$ for all $n\in\omega$. 
By applying Lemma \ref{lemma:fullsubtreeanalogue} to $(\omega^3+1)^\prime$, we may then assume that $c(\{\omega^2\cdot(\alpha+1),\omega^3\})=\text{red}$ and 
$c(\{\omega^2\cdot\alpha+\omega\cdot(n+1),\omega^3\})=\text{blue}$ for all $\alpha,n\in\omega$.

Finally by applying the infinite pigeonhole principle to $\{\omega^2\cdot(\alpha+1)\mid \alpha\in[\omega,\omega\cdot 2)\}$, we may assume that $$c(\{\omega^3,\omega^2\cdot
(\alpha+1)\})=i$$ for all $\alpha\in[\omega,\omega\cdot 2)$, where $i\in\{\text{red},\text{blue}\}$, and then by applying the infinite pigeonhole principle to $\{\omega\cdot(n+1)\mid n\in
\omega\}$, we may assume that $$c(\{\omega\cdot(n+1),\omega^3+\omega^2\})=j,$$ where $j\in\{\text{red},\text{blue}\}$. Now if $i=\text{red}$, then $(\omega^3\cdot 2)^{(2)}$ would 
be a red-homogeneous topological copy of $\omega\cdot 2$, and if $j=\text{red}$, then $$\{\omega\cdot(n+1)\mid n\in\omega\}\cup\{\omega^3+\omega\cdot(n+1)\mid n\in\omega\}\cup
\{\omega^3+\omega^2\}$$ would be a red-homogeneous topological copy of $\omega\cdot 2$. So $i=j=\text{blue}$. But then $\{\omega,\omega^3,\omega^3+\omega^2\}$ is a blue 
triangle.
\end{proof}

\begin{remark} \label{remark:omer}
Contrast this upper bound with $R^{cl}(\omega\cdot 2,3)\le\omega^4\cdot 2$ (see Proposition \ref{remark:omega8}). Very recently, Omer Mermelstein has produced a draft 
\cite{Mer17} where, using a careful topological analysis building in part on the ideas from this section, he obtains that $R^{cl}(\omega\cdot 2,3)= \omega^3\cdot 2$, in particular 
improving our upper bounds in both the topological and the closed case.
\end{remark}

\section{The ordinal \texorpdfstring{$\omega^2+1$}{omega squared plus one}}\label{section:omegasquared+1}

We now use our earlier result on $\omega^2$ together with some of the ideas from the previous section to obtain upper bounds for $\omega^2+1$. We deduce these from Theorem 
\ref{theorem:omegasquared} and the following general result.

\begin{theorem}\label{theorem:generalomegasquared+1}
Let $\alpha$ and $\beta$ be countable ordinals with $\beta>0$, let $k$ be a positive integer, and suppose they satisfy a ``cofinal version'' of
\[\omega^{\omega^\alpha}\to_{cl}(\omega^\beta,k+2)^2.\]
Then
\[\omega^{\omega^\alpha\cdot(k+1)}+1\to_{cl}(\omega^\beta+1,k+2)^2.\]
Moreover, if $\omega^{\omega^\alpha}>\omega^\beta$, then in fact
\[\omega^{\omega^\alpha\cdot k}+1\to_{cl}(\omega^\beta+1,k+2)^2.\]
The cofinal version of the partition relation requires that for every coloring $c:\left[\omega^{\omega^\alpha}\right]^2\to\{\text{red},\text{blue}\}$,
\begin{itemize}
\item
there is a blue-homogeneous set of $k+2$ points, or
\item
there is a red-homogeneous closed copy of $\omega^\beta$ \emph{that is cofinal in $\omega^{\omega^\alpha}$}, or
\item
there is already a red-homogeneous closed copy of $\omega^\beta+1$.
\end{itemize}
\end{theorem}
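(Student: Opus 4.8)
The plan is to mirror the second proof of Theorem~\ref{theorem:omega+1} via a full-subtree construction, now parametrized by the exponent $\alpha$ rather than working with finite trees. Fix a coloring $c:\left[\omega^{\omega^\alpha\cdot(k+1)}+1\right]^2\to\{\text{red},\text{blue}\}$ (or the domain $\omega^{\omega^\alpha\cdot k}+1$ in the case $\omega^{\omega^\alpha}>\omega^\beta$), let $\lambda$ denote the top point, and assume for contradiction that there is neither a red-homogeneous closed copy of $\omega^\beta+1$ nor a blue-homogeneous set of $k+2$ points. The key observation is that $\omega^{\omega^\alpha\cdot(k+1)}+1$, ordered by $>^\ast$, is a tree of ``height $k+1$'' whose levels are indexed by $\operatorname{CB}$-values in $\{0,\omega^\alpha,\omega^\alpha\cdot2,\dots,\omega^\alpha\cdot k\}$ (roughly: the node $\lambda$ at the top, below it the points of $\operatorname{CB}$-value $\omega^\alpha\cdot k$, and so on, with each non-leaf having below it a copy of $\omega^{\omega^\alpha}+1$ worth of immediate $\lhd^\ast$-predecessors, up to a point). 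The analogue of Lemma~\ref{lemma:basicfullsubtree} should be an appropriate-cardinality version of the cofinal partition relation: at each of the $k+1$ levels we will extract, using the hypothesized cofinal $\omega^{\omega^\alpha}\to_{cl}(\omega^\beta,k+2)^2$, a ``full subtree'' in which the relevant level set is red-homogeneous and cofinal.

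First I would set up the recursion on $k$ (the number of levels), with the base case $k=0$ being essentially trivial or handled by the cofinal hypothesis directly. For the inductive step: apply the inductive hypothesis inside each interval $\left[\,\eta+1,\eta+\omega^{\omega^\alpha\cdot k}\,\right]$ lying below a fixed immediate $\lhd^\ast$-predecessor of $\lambda$, obtaining a full subtree of each; this handles conditions internal to those subtrees, i.e. red-homogeneity among points on a common level and blueness along $<^\ast$-chains \emph{not through} $\lambda$. Then one must arrange the behavior of $c(\{x,\lambda\})$: using the cofinal partition relation applied to the top level (the set of immediate $\lhd^\ast$-predecessors of $\lambda$, which is a copy of $\omega^{\omega^\alpha}$), pass to a cofinal red-homogeneous closed copy of $\omega^\beta$ among them --- one cannot have $c(\{x,\lambda\})=\text{red}$ for all these $x$ on a copy of $\omega^\beta$, else together with $\lambda$ (which is their sup since the copy is cofinal) we would get a red-homogeneous closed copy of $\omega^\beta+1$, so instead we thin to get $c(\{x,\lambda\})=\text{blue}$ on a full subtree at that level. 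The lower levels need the symmetric thinning (red towards $\lambda$), again justified by forbidding a red $\omega^\beta+1$. Crucially one keeps everything cofinal throughout so that closures behave, exactly as in Remark~\ref{remark:omegasquare}.

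At the end of the construction we have a full subtree $X$ in which: points on a common $\operatorname{CB}$-level are red-connected; a point of $\operatorname{CB}$-level $0$ is blue-connected to $\lambda$; and a point of positive $\operatorname{CB}$-level is red-connected to $\lambda$. Now take any $>^\ast$-branch $B$ through $X$: it has one point on each of the $k+1$ levels below $\lambda$. These $k+1$ points together with $\lambda$: are they blue-homogeneous? No --- only the level-$0$ point is blue to $\lambda$, the rest are red to $\lambda$ and red to each other. So instead one exploits the structure differently: the level-$0$ set of $X$ is red-homogeneous and, by cofinality, its closure adds $\lambda$ as a limit, giving a red-homogeneous closed copy of $\omega^\beta+1$ --- contradiction --- \emph{unless} the cofinal hypothesis instead hands us a blue-homogeneous set of $k+2$ points somewhere along the way. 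The bookkeeping that forces the contradiction is the delicate part: one must check that after all the thinnings the level-$0$ set still carries a red copy of $\omega^\beta$ cofinal in its sup, which is where the ``$k+1$ versus $k$'' distinction enters (when $\omega^{\omega^\alpha}>\omega^\beta$ the extra exponent is not needed because the cofinal red copy of $\omega^\beta$ sits strictly below the top of each $\omega^{\omega^\alpha}$ block, so one level of the tree can be absorbed).

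The main obstacle I anticipate is precisely this tracking of cofinality through iterated extractions: each application of the cofinal partition relation must not only produce a red-homogeneous $\omega^\beta$ but one that remains cofinal \emph{after} the subsequent thinnings used to control colors towards $\lambda$ and towards the higher levels, and one must verify the full-subtree being built is genuinely order-homeomorphic to $\omega^\beta+1$ (closed in its sup) rather than merely order-isomorphic. A secondary subtlety is the exact indexing of $\operatorname{CB}$-ranks in the anti-tree: with base exponent $\omega^\alpha$ the relation $\lhd^\ast$ steps $\operatorname{CB}$ up by $1$, not by $\omega^\alpha$, so the ``height $k+1$'' tree is really the tree of $\operatorname{CB}$-ranks within $\{0,\dots,\omega^\alpha\cdot k\}$ under the coarser equivalence ``same multiple of $\omega^\alpha$'', and Lemma~\ref{lemma:basicfullsubtree} must be restated accordingly. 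I would handle this by phrasing everything in terms of the decomposition $\omega^{\omega^\alpha\cdot(k+1)}+1 = \left(\bigsqcup_{\xi<\omega^{\omega^\alpha}}\left[\omega^{\omega^\alpha\cdot k}\cdot\xi+1,\,\omega^{\omega^\alpha\cdot k}\cdot(\xi+1)\right]\right)\cup\{\lambda\}$ and recursing on the first factor, which keeps the cofinal hypothesis applicable at exactly the right scale.
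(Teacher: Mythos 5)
Your proposal runs into a genuine contradiction at the final step, and the approach as described cannot be patched without essentially rebuilding it around a different intermediate structure. You arrange your full subtree $X$ so that level-$0$ points (the successor ordinals, $\operatorname{CB}$-rank $0$) are \emph{blue} to $\lambda$ and positive-level points are \emph{red} to $\lambda$, note correctly that a $>^\ast$-branch together with $\lambda$ is then not blue-homogeneous, and retreat to the claim that ``the level-$0$ set of $X$ is red-homogeneous and, by cofinality, its closure adds $\lambda$ as a limit, giving a red-homogeneous closed copy of $\omega^\beta+1$.'' But that closure contains $\lambda$, and you have stipulated that every level-$0$ point is \emph{blue} to $\lambda$, so this set is manifestly not red-homogeneous. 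There is no contradiction to be extracted from your tripartite color pattern: blue along one direction and red along another is exactly the bipartite-type coloring that witnesses lower bounds, not upper bounds. The analogy with Lemma \ref{lemma:fullsubtree} fails precisely here, because in that lemma \emph{all} $<^\ast$-chains are uniformly blue, so a branch is a blue $(k+1)$-clique; your structure has only the last edge of the chain blue.

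A second, independent gap is that you never explain how to produce the ``same-level points are red'' condition. The hypothesis $\omega^{\omega^\alpha}\to_{cl}(\omega^\beta,k+2)^2$ (cofinal) is a Ramsey statement on two colors, not a pigeonhole on one; applying it to a level set does not thin that level to be red-homogeneous, it only gives you a red closed copy of $\omega^\beta$ or a blue $(k+2)$-set, and the red copy is a small subset of the level, not a full subtree at that level. And you flag, but do not resolve, the cofinality bookkeeping (keeping closed copies closed in their sups through iterated thinnings). The paper handles exactly this with Weiss's lemma \cite[Lemma 2.6]{baumgartner}, building at each stage cofinal closed copies of $\omega^{\omega^\alpha}$ inside gaps, and its key intermediate statement (Lemma \ref{lemma:generalomegasquared+1}) is of a quite different shape from a ``full subtree with mixed colors by level'': it produces a cofinal closed copy of $\omega^{\omega^\alpha\cdot l}$ that is \emph{uniformly} blue to the top point, which is what makes the subsequent induction on $l$ (each step absorbing one more blue point) go through. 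I would suggest replacing the full-subtree picture with that uniform-color lemma, proved by induction on $l$ with the cofinal partition relation used only to rule out the red alternative for the monochromatic-to-the-top set, and with Weiss's lemma to propagate cofinality.
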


Before providing the proof, we first deduce our upper bounds for $\omega^2+1$. Since $\omega^2+1$ is order-reinforcing, it follows that $R^{top}(\omega^2+1,k+2)=R^{cl}
(\omega^2+1,k+2)\leq\omega^{\omega\cdot k}+1$.

\begin{corollary}\label{corollary:omegasquared+1}
If $k$ is a positive integer, then $\omega^{\omega\cdot k}+1\to_{cl}(\omega^2+1,k+2)^2$.
\end{corollary}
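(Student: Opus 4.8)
The plan is to obtain this as a direct instance of Theorem~\ref{theorem:generalomegasquared+1}, taking $\alpha=1$ and $\beta=2$, so that $\omega^{\omega^\alpha}=\omega^\omega$ and $\omega^\beta=\omega^2$. In this case $\omega^{\omega^\alpha}=\omega^\omega>\omega^2=\omega^\beta$, so the ``moreover'' clause of that theorem applies, and its conclusion reads $\omega^{\omega^\alpha\cdot k}+1\to_{cl}(\omega^\beta+1,k+2)^2$, i.e.\ exactly $\omega^{\omega\cdot k}+1\to_{cl}(\omega^2+1,k+2)^2$. Thus the only thing to check is that the hypothesis of Theorem~\ref{theorem:generalomegasquared+1} holds, namely the ``cofinal version'' of $\omega^\omega\to_{cl}(\omega^2,k+2)^2$.

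For this I would appeal to Remark~\ref{remark:omegasquare}, which strengthens Theorem~\ref{theorem:omegasquared}. Applied with the integer there taken to be $k+2$, it states that for every coloring $c:[\omega^\omega]^2\to\{\text{red},\text{blue}\}$ there is a blue-homogeneous set of $k+2$ points, or a red-homogeneous closed copy of some ordinal strictly larger than $\omega^2$, or a red-homogeneous closed copy of $\omega^2$ that is cofinal in $\omega^\omega$. I would then match these three alternatives against the three alternatives required by the cofinal partition relation in Theorem~\ref{theorem:generalomegasquared+1}: the first and third coincide verbatim, and for the second one notes that any ordinal exceeding $\omega^2$ is at least $\omega^2+1$, so a red-homogeneous closed copy of such an ordinal contains (as an initial segment) a red-homogeneous closed copy of $\omega^2+1$. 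Hence the hypothesis of Theorem~\ref{theorem:generalomegasquared+1} is met.

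With that verification in hand, Theorem~\ref{theorem:generalomegasquared+1} delivers the corollary immediately. I do not anticipate any genuine difficulty: this is pure bookkeeping, and the only subtlety worth stating explicitly is the passage from ``a red-homogeneous closed copy of an ordinal $>\omega^2$'' to ``a red-homogeneous closed copy of $\omega^2+1$''. (As already noted in the surrounding text, since $\omega^2+1$ is order-reinforcing this also yields $R^{top}(\omega^2+1,k+2)=R^{cl}(\omega^2+1,k+2)\le\omega^{\omega\cdot k}+1$.)
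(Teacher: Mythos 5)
Your proposal is correct and is essentially identical to the paper's own proof: invoke the ``moreover'' clause of Theorem~\ref{theorem:generalomegasquared+1} with $\alpha=1$, $\beta=2$ (valid since $\omega^\omega>\omega^2$), and supply its hypothesis via Remark~\ref{remark:omegasquare}. The only difference is that you spell out the small bookkeeping step of passing from ``a red-homogeneous closed copy of some ordinal $>\omega^2$'' to ``a red-homogeneous closed copy of $\omega^2+1$,'' which the paper leaves implicit.
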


\begin{proof}
By Theorem \ref{theorem:generalomegasquared+1}, since $\omega^\omega>\omega^2$ it is enough to prove the cofinal version of $\omega^\omega\to_{cl}(\omega^2,k+2)^2$. The 
usual version is precisely Theorem \ref{theorem:omegasquared}, and the cofinal version is easily obtained from the same proof, as indicated in Remark \ref{remark:omegasquare}.
\end{proof}

Observe that by applying Ramsey's theorem instead of Theorem \ref{theorem:omegasquared}, one obtains yet another proof of Theorem \ref{theorem:omega+1} from the case 
$\alpha=0$. Indeed, our proof of Theorem \ref{theorem:generalomegasquared+1} is similar to our second proof of that result, though we do not explicitly use any of our results on the 
anti-tree partial ordering.

The bulk of the proof of Theorem \ref{theorem:generalomegasquared+1} is in the following result, which is our analogue of Lemma \ref{lemma:fullsubtree}. The proof makes detailed 
use of the topological structure of countable ordinals. In particular, we use two arguments due to Weiss from the proof of \cite[Theorem 2.3]{baumgartner}. It may be helpful for the 
reader to first study that proof.

\begin{lemma}\label{lemma:generalomegasquared+1}
Let $\alpha$, $\beta$ and $k$ be as in Theorem \ref{theorem:generalomegasquared+1}. Let $l$ be a positive integer and let $c:\left[\omega^{\omega^\alpha\cdot l}+1\right]^2\to
\{\text{red},\text{blue}\}$ be a coloring. Suppose that
\begin{enumerate}
\item\label{item:nohomeoassumption}
there is no red-homogeneous closed copy of $\omega^\beta+1$, and
\item\label{item:nofiniteassumption}
there is no blue-homogeneous set of $k+2$ points.
\end{enumerate}
Under these assumptions, there exists a cofinal subset $X\subseteq\omega^{\omega^\alpha\cdot l}$ such that $X$ is a closed copy of $\omega^{\omega^\alpha\cdot l}$ and $c(\{x,
\omega^{\omega^\alpha\cdot l}\})=\text{blue}$ for all $x\in X$.
\end{lemma}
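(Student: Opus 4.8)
The plan is to prove Lemma \ref{lemma:generalomegasquared+1} by induction on $l$, mirroring the structure of the second proof of Theorem \ref{theorem:omega+1} via Lemma \ref{lemma:fullsubtree}, but replacing the combinatorial pigeonhole steps by the topological input from the hypothesized cofinal version of $\omega^{\omega^\alpha}\to_{cl}(\omega^\beta,k+2)^2$. The base case $l=1$ is the heart of the matter. Given $c:\left[\omega^{\omega^\alpha}+1\right]^2\to\{\text{red},\text{blue}\}$ satisfying \emph{(\ref{item:nohomeoassumption})} and \emph{(\ref{item:nofiniteassumption})}, first restrict $c$ to $\left[\omega^{\omega^\alpha}\right]^2$ and apply the cofinal partition relation. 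Since there is no blue-homogeneous set of $k+2$ points and no red-homogeneous closed copy of $\omega^\beta+1$ (and in particular no red-homogeneous closed copy of a larger ordinal either, which would contain one), we obtain a red-homogeneous closed copy $C$ of $\omega^\beta$ that is cofinal in $\omega^{\omega^\alpha}$. Now I would color $C$ by $\widetilde c(x)=c(\{x,\omega^{\omega^\alpha}\})$; this splits $C=R\cup B$ into the red part and the blue part. The key topological claim is that $B$ must contain a cofinal (in $\omega^{\omega^\alpha}$) closed copy of $\omega^\beta$: otherwise $R$ is ``topologically large'' in $C$, and since $C\cong\omega^\beta$ one can extract from $R$ a closed copy of $\omega^\beta$ whose supremum is $\omega^{\omega^\alpha}$, so that adjoining $\omega^{\omega^\alpha}$ yields a red-homogeneous closed copy of $\omega^\beta+1$, contradicting \emph{(\ref{item:nohomeoassumption})}. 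Taking $X=B$ (or rather a cofinal closed copy of $\omega^\beta$ inside $B$, together with... wait, we need a copy of $\omega^{\omega^\alpha}$ — so actually we need $X$ to be a closed copy of $\omega^{\omega^\alpha}$, not $\omega^\beta$). Let me restate: for $l=1$ we need $X\subseteq\omega^{\omega^\alpha}$ with $X$ a closed copy of $\omega^{\omega^\alpha}$, cofinal, and $c(\{x,\omega^{\omega^\alpha}\})=\text{blue}$ for all $x\in X$. To get this we must iterate the cofinal-$\omega^\beta$ extraction $\omega^\alpha$-many times (in a suitable transfinite bookkeeping), at each stage peeling off a red-homogeneous cofinal copy of $\omega^\beta$ from a successively higher ``block'', using the Cantor--Bendixson structure of $\omega^{\omega^\alpha}$ and the argument of Weiss from \cite[Theorem 2.3]{baumgartner} to assemble these into a closed copy of $\omega^{\omega^\alpha}$ on which the color to $\omega^{\omega^\alpha}$ is constant; then argue as above that this constant color is blue.

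For the inductive step $l>1$, I would split $\omega^{\omega^\alpha\cdot l}+1$ along its top Cantor--Bendixson block: write it as the ordered sum of $\omega^{\omega^\alpha}$-many intervals each order-isomorphic to $\omega^{\omega^\alpha\cdot(l-1)}+1$, namely $\left[\omega^{\omega^\alpha\cdot(l-1)}\cdot\xi+1,\ \omega^{\omega^\alpha\cdot(l-1)}\cdot(\xi+1)\right]$ for $\xi<\omega^{\omega^\alpha}$, plus the final point $\omega^{\omega^\alpha\cdot l}$. Apply the inductive hypothesis to $c$ restricted to each such interval (the hypotheses \emph{(\ref{item:nohomeoassumption})}, \emph{(\ref{item:nofiniteassumption})} are inherited) to replace each interval by a cofinal closed copy $X_\xi$ of $\omega^{\omega^\alpha\cdot(l-1)}$ on which the color to the top point of that interval is blue. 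Then look at the colors $c(\{t_\xi,\omega^{\omega^\alpha\cdot l}\})$ where $t_\xi$ is the top point of the $\xi$-th interval; these top points form a set order-isomorphic to $\omega^{\omega^\alpha}$, and we want to run the $l=1$ argument on them — i.e., find a cofinal closed subcopy of $\omega^{\omega^\alpha}$ among the $t_\xi$ on which the color to $\omega^{\omega^\alpha\cdot l}$ is blue. This is exactly the $l=1$ case applied to the coloring induced on $\{t_\xi:\xi<\omega^{\omega^\alpha}\}\cup\{\omega^{\omega^\alpha\cdot l}\}\cong\omega^{\omega^\alpha}+1$, noting that restricting to $t_\xi$ for $\xi$ in a closed cofinal set and then to the chosen $X_\xi$'s still avoids red-homogeneous closed copies of $\omega^\beta+1$ and blue $(k+2)$-sets. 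Assembling $X=\bigcup_{\xi\in S}X_\xi$ over the resulting index set $S$ and checking closedness and cofinality using Weiss's assembly argument completes the induction.

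The main obstacle, as always in this Baumgartner--Weiss circle of ideas, is the \emph{topological bookkeeping}: ensuring that the transfinite union of cofinal closed copies one assembles at each stage is itself \emph{closed} in its supremum and has the right order type $\omega^{\omega^\alpha}$ (resp.\ $\omega^{\omega^\alpha\cdot l}$), rather than merely order-isomorphic to it. This requires carefully choosing the supremum of each partial copy to be a limit point that gets included, controlling the Cantor--Bendixson ranks, and handling the limit stages of the transfinite recursion — precisely the content of the two Weiss arguments cited from \cite[Theorem 2.3]{baumgartner}. A secondary subtlety is the dichotomy ``either $B$ contains a cofinal closed copy of $\omega^\beta$ or $R$ does'': this is a statement that a closed copy of $\omega^\beta$ cannot be partitioned into two pieces both of which are ``small'' near the top, which should follow from the indecomposability of $\omega^\beta$ together with the fact that $\omega^{\omega^\alpha}$ has uncountable... no, countable cofinality is fine here since everything is countable; the real point is just that $\mathrm{cf}(\omega^{\omega^\alpha})=\omega$ forces one of the two color classes to be cofinal, and then one reconstructs a closed copy of $\omega^\beta$ inside it. Once Lemma \ref{lemma:generalomegasquared+1} is in hand, deducing Theorem \ref{theorem:generalomegasquared+1} is routine: given a coloring of $\omega^{\omega^\alpha\cdot(k+1)}+1$ with no red-homogeneous closed copy of $\omega^\beta+1$ and no blue $(k+2)$-set, apply the lemma with $l=k+1$ to get a cofinal closed copy $X\cong\omega^{\omega^\alpha\cdot(k+1)}$ with all edges from $X$ to the top point blue; then $X$ contains many closed copies of $\omega^{\omega^\alpha}$, run the cofinal partition relation inside one of them to extract a red-homogeneous cofinal closed copy of $\omega^\beta$ — then the supremum of that copy lies in $X$, is below $\omega^{\omega^\alpha\cdot(k+1)}$, and adding it gives a red-homogeneous closed copy of $\omega^\beta+1$, the desired contradiction; the improvement to $\omega^{\omega^\alpha\cdot k}+1$ when $\omega^{\omega^\alpha}>\omega^\beta$ comes from noticing one fewer ``layer'' is needed because a cofinal copy of $\omega^\beta$ already lives properly below the top block.
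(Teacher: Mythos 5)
Your inductive-step decomposition is a legitimate alternative to the paper's (you split $\omega^{\omega^\alpha\cdot l}$ into $\omega^{\omega^\alpha}$ intervals of length $\omega^{\omega^\alpha\cdot(l-1)}$, whereas the paper splits it into $\omega^{\omega^\alpha\cdot(l-1)}$ intervals of length $\omega^{\omega^\alpha}$ and fills gaps by Weiss's lemma), and it would likely work once the base case is in hand, modulo the usual care about adding limit points so that $\bigcup_\xi X_\xi$ is actually closed. But your base case has a genuine gap, and it is not a bookkeeping issue.

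You apply the cofinal partition relation \emph{first}, obtaining a red-homogeneous cofinal closed copy $C\cong\omega^\beta$, and then try to pigeonhole inside $C$. At that point you are stuck, and you notice it: you need a copy of $\omega^{\omega^\alpha}$, not $\omega^\beta$, and $C$ is typically far too small. Your proposed repair --- ``iterate the cofinal-$\omega^\beta$ extraction $\omega^\alpha$-many times'' --- cannot produce a closed copy of $\omega^{\omega^\alpha}$: stacking $\omega^\alpha$-many copies of $\omega^\beta$ gives order type at most $\omega^{\beta+\alpha}$, which is in general strictly below $\omega^{\omega^\alpha}$, and more fundamentally the Cantor--Bendixson ranks of the assembled points top out at $\beta$, so they cannot fill out the rank profile a closed copy of $\omega^{\omega^\alpha}$ requires. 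The order of operations should be reversed. The right move is to \emph{first} apply the closed pigeonhole relation $\omega^{\omega^\alpha}\to_{cl}(\omega^{\omega^\alpha})^1_2$ (which holds because $\omega^{\omega^\alpha}$ is a power of $\omega$) to the $2$-coloring $x\mapsto c(\{x,\omega^{\omega^\alpha}\})$, obtaining \emph{at one stroke} a monochromatic cofinal closed copy $X$ of $\omega^{\omega^\alpha}$. Only then does the cofinal partition relation enter, as a reductio: if the monochromatic color were red, apply the cofinal version of $\omega^{\omega^\alpha}\to_{cl}(\omega^\beta,k+2)^2$ \emph{inside $X$} to get a red-homogeneous cofinal closed copy $Y\cong\omega^\beta$; then $Y\cup\{\omega^{\omega^\alpha}\}$ is a red-homogeneous closed copy of $\omega^\beta+1$, contradicting hypothesis~(\ref{item:nohomeoassumption}). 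So the color is blue and $X$ is as required.

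A secondary point: your intermediate appeal to the cofinality of $\omega^{\omega^\alpha}$ being $\omega$ to force a color class of $C$ to be cofinal is weaker than what you need; being cofinal in order type $\omega$ does not by itself yield a closed copy of $\omega^\beta$. What actually does the work is again the pigeonhole $\omega^\beta\to_{cl}(\omega^\beta)^1_2$ (a power of $\omega$), but as explained above this is the wrong object to pigeonhole on --- the pigeonhole should be applied to $\omega^{\omega^\alpha}$, not to $C$.
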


\begin{proof}
The proof is by induction on $l$.

For the case $l=1$, since $\omega^{\omega^\alpha}\to_{cl}(\omega^{\omega^\alpha})^1_2$, there exists $X\subseteq\omega^{\omega^\alpha}$ and $i\in\{\text{red},\text{blue}\}$ such 
that $X$ is a closed copy of $\omega^{\omega^\alpha}$ (and therefore $X$ is cofinal in $\omega^{\omega^\alpha}$) and $c(\{x,\omega^{\omega^\alpha}\})=i$ for all $x\in X$. Suppose 
for contradiction that $i=\text{red}$. By our assumptions together with the definition of the cofinal version of the partition relation, there exists a cofinal subset $Y\subseteq X$ such that 
$Y$ is a closed copy of $\omega^\beta$ and $[Y]^2\subseteq c^{-1}(\{\text{red}\})$. But then $Y\cup\{\omega^{\omega^\alpha}\}$ is a red-homogeneous closed copy of $\omega^
\beta+1$, contrary to assumption \ref{item:nohomeoassumption}. Hence $i=\text{blue}$ and we are done.

For the inductive step, suppose $l>1$. Let
\[Z=\left\{\omega^{\omega^\alpha}\cdot\gamma\mid\gamma\in\omega^{\omega^\alpha\cdot(l-1)}\setminus\{0\}\right\},\]
so $Z$ is a closed copy of $\omega^{\omega^\alpha\cdot(l-1)}$. By the inductive hypothesis, there exists a cofinal subset $Y\subseteq Z$ such that $Y$ is a closed copy of 
$\omega^{\omega^\alpha\cdot(l-1)}$ and $c(\{x,\omega^{\omega^\alpha\cdot l}\})=\text{blue}$ for all $x\in Y$. Write $Y=\{y_\delta\mid\delta\in\omega^{\omega^\alpha\cdot(l-1)}\}$ in 
increasing order. Then by Weiss's lemma \cite[Lemma 2.6]{baumgartner}, for each $\delta\in\omega^{\omega^\alpha\cdot(l-1)}$ there exists a cofinal subset $Z_\delta\subseteq
(y_\delta,y_{\delta+1})$ such that $Z_\delta$ is a closed copy of $\omega^{\omega^\alpha}$.

Now since $\omega^{\omega^\alpha}\to_{cl}(\omega^{\omega^\alpha})^1_2$, for each $\delta\in\omega^{\omega^\alpha\cdot(l-1)}$ there exists $X_\delta\subseteq Z_\delta$ and 
$i_\delta\in\{\text{red},\text{blue}\}$ such that $X_\delta$ is a closed copy of $\omega^{\omega^\alpha}$ (and therefore $X_\delta$ is cofinal in $Z_\delta$) and $c(\{x,\omega^
{\omega^\alpha\cdot l}\})=i_\delta$ for all $x\in X_\delta$. Recall now that $\omega^{\omega^\alpha\cdot(l-1)}\to(\omega^{\omega^\alpha\cdot(l-1)})^1_2$ since $\omega^{\omega^\alpha
\cdot(l-1)}$ is a power of $\omega$. It follows that there exists $S\subseteq\omega^{\omega^\alpha\cdot(l-1)}$ of order type $\omega^{\omega^\alpha\cdot(l-1)}$ and $i\in\{\text{red},
\text{blue}\}$ such that $i_\delta=i$ for all $\delta\in S$.

Suppose for contradiction that $i=\text{red}$. We now use an argument from the proof of \cite[Theorem 2.3]{baumgartner}. Let $(\delta_m)_{m\in\omega}$ be a strictly increasing cofinal 
sequence from $S$, and let $(\eta_m)_{m\in\omega}$ be a strictly increasing cofinal sequence from $\omega^\alpha$ (or let $\eta_m=0$ for all $m\in\omega$ if $\alpha=0$). For each 
$m\in\omega$, pick $W_m\subseteq X_{\delta_m}$ such that $W_m$ is a closed copy of $\omega^{\eta_m}+1$, and let $W=\bigcup_{m\in\omega}W_m$. Note that $W$ is cofinal in 
$\omega^{\omega^\alpha\cdot l}$, $W$ is a closed copy of $\omega^{\omega^\alpha}$ and $c(\{x,\omega^{\omega^\alpha\cdot l}\})=\text{red}$ for all $x\in W$. By our assumptions 
together with the definition of the cofinal version of the partition relation, there exists a cofinal subset $V\subseteq W$ such that $V$ is a closed copy of $\omega^\beta$ and 
$[V]^2\subseteq c^{-1}(\{\text{red}\})$. But then $V\cup\{\omega^{\omega^\alpha\cdot l}\}$ is a red-homogeneous closed copy of $\omega^\beta+1$, contrary to assumption 
\ref{item:nohomeoassumption}.

Therefore $i=\text{blue}$. Finally, let
\[X=\bigcup_{\delta\in S}X_\delta\cup\operatorname{cl}(\{y_{\delta+1}\mid\delta\in S\}),\]
where $\operatorname{cl}$ denotes the topological closure operation. Then the set $X$ is as required.
\end{proof}

We may now deduce Theorem \ref{theorem:generalomegasquared+1} in the much same way that we deduced Theorem \ref{theorem:omega+1} from Lemma \ref{lemma:fullsubtree}.

\begin{proof}[Proof of Theorem \ref{theorem:generalomegasquared+1}]
First assume that $\omega^{\omega^\alpha}>\omega^\beta$. We prove by induction on $l$ that for all $l\in\{1,2,\dots,k\}$,
\[\omega^{\omega^\alpha\cdot l}+1\to_{cl}(\omega^\beta+1,l+2)^2.\]

In every case, if either of the assumptions in Lemma \ref{lemma:generalomegasquared+1} does not hold, then we are done since $l\leq k$. We may therefore choose $X$ as in Lemma 
\ref{lemma:generalomegasquared+1}.

For the base case $l=1$, to avoid a blue triangle, $X$ must be red-homogeneous. But $X$ contains a closed copy of $\omega^\beta+1$ since $\omega^{\omega^\alpha}\geq\omega^
\beta+1$, and so we are done.

For the inductive step, suppose $l\geq 2$. Then $X$ has a closed copy $Y$ of $\omega^{\omega^\alpha\cdot(l-1)}+1$. By the inductive hypothesis, either $Y$ contains a 
red-homogeneous closed copy of $\omega^\beta+1$, in which case we are done, or $Y$ contains a blue-homogeneous set $Z$ of $l+1$ points. But in that case $Z\cup
\{\omega^{\omega^\alpha\cdot l}\}$ is a blue-homogeneous set of $l+2$ points, and we are done.

Finally, if we cannot assume that $\omega^{\omega^\alpha}>\omega^\beta$, then the base case breaks down. However, we may instead use the base case $\omega^{\omega^\alpha}
+1\to_{cl}(\omega^\beta+1,2)^2$, which follows from the fact that $\omega^{\omega^\alpha}\geq\omega^\beta$. The inductive step is then identical.
\end{proof}

\section{The weak topological Erd\H{o}s-Milner theorem}\label{section:wtem}

Finally we reach our main result, which demonstrates that $R^{top}(\alpha,k)$ and $R^{cl}(\alpha,k)$ are countable for all countable $\alpha$ and all finite $k$.

This is a topological version of a classical result due to Erd\H{o}s and Milner \cite{erdosmilner}. Before stating it, we first provide a simplified proof of the classical version.

\begin{theorem}[Weak Erd\H{o}s-Milner]
Let $\alpha$ and $\beta$ be countable nonzero ordinals, and let $k>1$ be a positive integer. If
\[\omega^\alpha\to(\omega^{1+\beta},k)^2,\]
then
\[\omega^{\alpha+\beta}\to(\omega^{1+\beta},k+1)^2.\]
\end{theorem}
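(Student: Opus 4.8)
The plan is to run a standard ``stepping-up'' induction on $k$, using the hypothesis $\omega^\alpha\to(\omega^{1+\beta},k)^2$ as the engine at each stage and exploiting the fact that $\omega^{1+\beta}$ is indecomposable, so that it absorbs blocks and admits the block decomposition $\omega^{\alpha+\beta}=\sum_{\xi<\omega^\beta}\omega^\alpha\cdot(\text{copies})$, i.e. we view $\omega^{\alpha+\beta}$ as $\omega^\beta$ many consecutive intervals each of order type $\omega^\alpha$. Given a coloring $c:[\omega^{\alpha+\beta}]^2\to\{\text{red},\text{blue}\}$, write $\omega^{\alpha+\beta}=\bigsqcup_{\xi<\omega^\beta}I_\xi$ with each $I_\xi\cong\omega^\alpha$ and the $I_\xi$ listed in increasing order. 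The first move is to apply the hypothesis inside each block $I_\xi$: either some $I_\xi$ contains a red-homogeneous copy of $\omega^{1+\beta}$ (done), or every block yields a blue-homogeneous set $B_\xi$ of size $k$.

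Next I would try to splice the $B_\xi$ together across blocks. The natural idea is to pick, for each pair $\xi<\eta$, one representative from $B_\eta$ that sees $B_\xi$ in a controlled way; more precisely, for a fixed $\xi$ and each $\eta>\xi$ either some point of $B_\eta$ is joined to all of $B_\xi$ in blue (giving a blue set of size $k+1$, done) or every point of $B_\eta$ sends a red edge to some point of $B_\xi$. In the surviving case I want to extract, by a pigeonhole/Ramsey argument on the index set $\omega^\beta$, a sub-family of blocks and a coherent choice of points so that a single point $b_\xi\in B_\xi$ is joined in red to a point $b_\eta\in B_\eta$ for all $\eta$ in a tail — and then hope that the set $\{b_\xi:\xi\in S\}$ for a suitable $S$ of order type $\omega^{1+\beta}$ is red-homogeneous, which would finish the argument with a red copy of $\omega^{1+\beta}$. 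Since $\omega^\beta\cdot k$ (finitely many colors/choices per block) still has a homogeneous set of order type $\omega^\beta$ by indecomposability of $\omega^\beta$, and $\omega^\beta\cong\omega^{1+\beta}$ is not literally true, I would instead aim to produce $\omega^{1+\beta}$ by a more careful two-level scheme: inside the "large" block structure one first gets a red $\omega^\beta$ of block-indices and then amplifies using that each block is already of type $\omega^\alpha\ge$ something usable, exactly mirroring how the full statement will be proved in Theorem~\ref{theorem:wtem}.

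The honest skeleton is the following recursion. Either we finish inside a block, or we have blocks $(I_\xi)_{\xi<\omega^\beta}$ each with a blue $k$-set $B_\xi$; for $\xi<\eta$ define a finite-valued coloring recording, for each $b\in B_\eta$, the (lexicographically least) $b'\in B_\xi$ with $c(\{b',b\})=\text{red}$, together with the pattern of which $B_\xi$-points $b$ sees red; if for some $\xi$ and some $b\in B_\eta$ no such $b'$ exists then $B_\xi\cup\{b\}$ is blue of size $k+1$. Otherwise apply the partition relation $\omega^\beta\to(\omega^\beta)^1_{\text{finite}}$ (valid since $\omega^\beta$ is indecomposable) repeatedly — or a suitable product argument — to thin to a set of indices $S$, $\ot(S)=\omega^\beta$, on which all these finite patterns stabilize and moreover a single selected point $b_\xi\in B_\xi$ works uniformly; a final application of $\omega^\alpha\to(\omega^{1+\beta},k)^2$ (or of Ramsey's theorem, the $k=2$-avoiding case) to an appropriately reindexed copy upgrades the stabilized red structure to a genuine red-homogeneous copy of $\omega^{1+\beta}$.

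The main obstacle is exactly this last splicing step: each block index $\xi$ has infinitely (indeed $\omega^\beta$-many) successors $\eta$, so the ultrafilter-free stabilization must be done with care, and getting a red-homogeneous copy of the full $\omega^{1+\beta}$ (rather than merely $\omega^\beta$, or $\omega^{1+\beta'}$ for smaller $\beta'$) is where indecomposability of $\omega^{1+\beta}$ and the precise bookkeeping of ``how much of $B_\xi$ is seen in red'' has to be leveraged. I expect the write-up to organize this via an auxiliary coloring on $[\omega^\beta]^2$ (pairs of block indices), reduce the problem to the classical Erd\H{o}s--Rado-type relation for $\omega^\beta$, and then reconstitute the topological/order type by gluing the surviving per-block blue sets and chosen red representatives — which is precisely the template that the subsequent section carries out in full in Theorem~\ref{theorem:wtem}.
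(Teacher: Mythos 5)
Your plan takes a genuinely different route from the paper's, and the difference is exactly where the gap lies. You propose to first extract a blue $k$-set $B_\xi$ inside every block $I_\xi\cong\omega^\alpha$ and then splice these across blocks by encoding, for each pair $\xi<\eta$, the finite ``red/blue profile'' of $B_\eta$ against $B_\xi$, and stabilizing this profile by a partition argument on the index set $\omega^\beta$. But stabilizing a finite-valued coloring of \emph{pairs} of block indices to a homogeneous set of full order type $\omega^\beta$ requires $\omega^\beta\to(\omega^\beta)^2_r$ for some finite $r\ge 2$, and this relation fails for most countable $\beta$ (already $\omega^\beta\to(\omega^\beta,3)^2$ holds only for the special ``partition ordinals'' discussed in Section \ref{section:questions}; for instance $\omega^3\not\to(\omega^3,3)^2$). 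So the splicing step, which you correctly flag as the main obstacle, cannot be carried out by reducing to a Ramsey-type relation on $[\omega^\beta]^2$. Appealing only to the pigeonhole $\omega^\beta\to(\omega^\beta)^1_{\mathrm{finite}}$, as you also suggest, handles at most one block index $\xi$ at a time and does not give the coherence you need simultaneously for all $\omega^\beta$ many $\xi$.

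The paper avoids this trap entirely by never extracting blue $k$-sets up front. Instead it runs a single greedy, one-point-at-a-time construction over $\omega$ stages: fix a sequence $(x_n)_{n\in\omega}$ in which each $x\in\omega^\beta$ occurs infinitely often, and try to choose $a_n\in I_{x_n}$ so that $a_n$ is red with all previously chosen $a_1,\dots,a_{n-1}$. If at stage $m$ no such point exists, then the tail $J$ of $I_{x_m}$ is covered by the $m-1$ ``bad'' sets $J_n=\{a\in J:c(\{a_n,a\})=\text{blue}\}$, so by the one-dimensional pigeonhole $\omega^\alpha\to(\omega^\alpha)^1_{m-1}$ (always valid for powers of $\omega$) some $J_n$ has order type $\omega^\alpha$; applying the hypothesis $\omega^\alpha\to(\omega^{1+\beta},k)^2$ to $J_n$ then either finishes with a red copy of $\omega^{1+\beta}$, or yields a blue $k$-set $B\subseteq J_n$, whence $B\cup\{a_n\}$ is a blue $(k+1)$-set. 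If the greedy construction succeeds at every stage, the resulting $A$ is red-homogeneous and, since $A\cap I_x$ has order type $\omega$ for every $x\in\omega^\beta$, has order type $\omega\cdot\omega^\beta=\omega^{1+\beta}$. The point is that each stage introduces only finitely many constraints and is resolved by pigeonhole alone, so no partition relation on $[\omega^\beta]^2$ is ever needed. This is also the template that Theorem \ref{theorem:wtem} generalizes, not the block-by-block extraction you sketch.
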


Since trivially $\omega^{1+\alpha}\to(\omega^{1+\alpha},2)^2$, it follows by induction on $k$ that $R(\omega^{1+\alpha},k+1)\leq\omega^{1+\alpha\cdot k}$ for all countable $\alpha$ 
and finite $k$. In fact, Erd\H{o}s and Milner proved a stronger version of the above theorem, in which $k+1$ is replaced by $2k$, implying that $R(\omega^{1+\alpha},2^k)\leq\omega^
{1+\alpha\cdot k}$. This is why we use the adjective \emph{weak} here.

Our proof is essentially a simplified version of the original, which can be found in \cite[Theorem 7.2.10]{W}. The basic idea is to write $\omega^{\alpha+\beta}$ as a sequence of 
$\omega^\beta$ intervals, each of order type $\omega^\alpha$, to list these intervals in type $\omega$, and to recursively build up a red-homogeneous copy of $\omega^\beta$ 
consisting of one element from each interval. This would achieve the above theorem with $1+\beta$ weakened to $\beta$. To obtain a copy of $\omega^{1+\beta}$, we simply choose 
infinitely many elements from each interval instead of just one. In the proof we use the fact that $\omega^\alpha\to(\omega^\alpha)^1_m$ for all finite $m$.

\begin{proof}
Let $c:\left[\omega^{\alpha+\beta}\right]^2\to\{\text{red},\text{blue}\}$ be a coloring.

For each $x\in\omega^\beta$, let
\[I_x=\left[\omega^\alpha\cdot x,\omega^\alpha\cdot(x+1)\right),\]
so $I_x$ has order type $\omega^\alpha$. Let $(x_n)_{n\in\omega}$ be a sequence of points from $\omega^\beta$ in which every member of $\omega^\beta$ appears infinitely many 
times. We attempt to inductively build a red-homogeneous set $A=\{a_n\mid n\in\omega\}$ of order-type $\omega^{1+\beta}$ with $a_n\in I_{x_n}$ for every $n\in\omega$.

Suppose that we have chosen $a_1,a_2,\dots,a_{m-1}$ for some $m\in\omega$. Let
\[P=\{a_n\mid n\in\{1,2,\dots,m-1\},a_n\in I_{x_m}\},\]
and let
\[
J=
\begin{cases}
I_{x_m},&\text{if $P=\emptyset$}\\
I_{x_m}\setminus[0,\max P],&\text{if $P\neq\emptyset$},
\end{cases}
\]
so $J$ has order type $\omega^\alpha$. For each $n\in\{1,2,\dots,m-1\}$ let
\[J_n=\{a\in J\mid c(\{a_n,a\})=\text{blue}\}.\]
If $\bigcup_{n=1}^{m-1}J_n=J$, then since $\omega^\alpha\to(\omega^\alpha)^1_{m-1}$, $J_n$ has order type $\omega^\alpha$ for some $n\in\{1,2,\dots,m-1\}$. Then since $\omega^
\alpha\to(\omega^{1+\beta},k)^2$, $J_n$ either has a red-homogeneous subset of order type $\omega^{1+\beta}$, in which case we are done, or a blue-homogeneous subset $B$ of 
$k$ points, in which case $B\cup\{a_n\}$ is a blue-homogeneous set of $k+1$ points, and we are done. Thus we may assume that $J\setminus\bigcup_{n=1}^{m-1}J_n\neq\emptyset$, 
and choose $a_m$ to be any member of this set.

Clearly the resulting set $A$ is red-homogeneous. To see that it has order type $\omega^{1+\beta}$, simply observe that by choice of $(x_n)_{n\in\omega}$, $A\cap I_x$ is infinite for all 
$x\in\omega^\beta$, and that by choice of $J$, $A\cap I_x$ in fact has order type $\omega$ for all $x\in\omega^\beta$.
\end{proof}

Here is our topological version of the weak Erd\H{o}s-Milner theorem.

\begin{theorem}[The weak topological Erd\H{o}s-Milner theorem]\label{theorem:wtem}
Let $\alpha$ and $\beta$ be countable nonzero ordinals, and let $k>1$ be a positive integer. If
\[\omega^{\omega^\alpha}\to_{top}(\omega^\beta,k)^2,\]
then
\[\omega^{\omega^\alpha\cdot\beta}\to_{top}(\omega^\beta,k+1)^2.\]
\end{theorem}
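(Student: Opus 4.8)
The plan is to mimic the structure of the classical weak Erd\H{o}s--Milner proof just given, replacing the partition of $\omega^{\alpha+\beta}$ into $\omega^\beta$ intervals of type $\omega^\alpha$ by a partition of $\omega^{\omega^\alpha\cdot\beta}$ into a sequence of intervals each homeomorphic (in fact order-homeomorphic) to $\omega^{\omega^\alpha}$. Concretely, write $\omega^{\omega^\alpha\cdot\beta}=\sup_{x<\omega^{\omega^\alpha\cdot(\beta-?)}}\dots$; more cleanly, set $I_x=[\omega^{\omega^\alpha}\cdot x,\omega^{\omega^\alpha}\cdot(x+1))$ for $x<\omega^{\omega^\alpha\cdot\beta}$ — wait, that indexing gives $\omega^{\omega^\alpha\cdot\beta}$ many intervals, but they are not closed. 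The correct setup: let $\delta=\omega^{\omega^\alpha\cdot(\beta)}$ and use the order isomorphism $\delta\cong\omega^{\omega^\alpha}\cdot\omega^{\omega^\alpha\cdot\beta'}$ where $\omega^\alpha\cdot\beta=\omega^\alpha+\omega^\alpha\cdot\beta'$... Instead, the clean approach is: think of $\omega^{\omega^\alpha\cdot\beta}$ as built from $\omega^\beta$-many blocks, where block $x$ (for $x<\omega^\beta$) is the half-open interval of order type $\omega^{\omega^\alpha\cdot\beta'_x}$ for suitable exponents, but since we want all blocks to look alike we instead iterate: the key identity is $\omega^{\omega^\alpha\cdot\beta}$ can be written as an increasing $\omega^\beta$-indexed sum of intervals each order-homeomorphic to $\omega^{\omega^\alpha}$, closed up appropriately. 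This is where the topological subtlety enters, and I would handle it exactly as Weiss does in \cite[Theorem 2.3]{baumgartner} and as in Lemma~\ref{lemma:generalomegasquared+1}: inside each interval of type $\omega^{\omega^\alpha\cdot(\text{something})}$ one can find a closed-unbounded copy of $\omega^{\omega^\alpha}$, and the union of one such copy from each of a cofinal $\omega^\beta$-indexed family, together with the relevant limit points, is order-homeomorphic to $\omega^{\omega^\alpha\cdot\beta}$ — or rather to $\omega^\beta$, and that is what we build our red-homogeneous copy on.

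With that scaffolding in place, the combinatorial core runs as in the classical proof. Enumerate a sequence $(x_n)_{n\in\omega}$ in which every element of (a fixed cofinal copy of) $\omega^\beta$ appears infinitely often — but here we must be more careful, since to get a \emph{topological} copy of $\omega^\beta$ we cannot simply list all of $\omega^\beta$ in type $\omega$ when $\beta\geq\omega$; instead I would build the red-homogeneous set by transfinite recursion along a fixed closed copy of $\omega^\beta$ inside the "skeleton," at each limit stage taking care that the points chosen so far have a supremum that we have also reserved a point near. This is the standard device for producing closed/topological copies: reserve, inside each block $I_x$, a full order-homeomorphic copy of $\omega^{\omega^\alpha}$ rather than a single point, and when we reach a limit $x$ we use the points already placed in blocks cofinal below $x$ to supply the needed limit point. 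The hypothesis $\omega^{\omega^\alpha}\to_{top}(\omega^\beta,k)^2$ is applied exactly where the classical proof applies $\omega^\alpha\to(\omega^{1+\beta},k)^2$: at stage $x$, having finitely many previously-chosen points $a_{n_1},\dots,a_{n_{m-1}}$ whose blue-neighbourhoods $J_1,\dots,J_{m-1}$ within the reserved copy $J\cong\omega^{\omega^\alpha}$ in $I_x$ might cover $J$; if they do, then by the topological pigeonhole principle $P^{top}(\omega^{\omega^\alpha})_{m-1}=\omega^{\omega^\alpha}$ some $J_j$ contains a topological copy of $\omega^{\omega^\alpha}$, and then the hypothesis gives either a red-homogeneous topological $\omega^\beta$ (done) or a blue-homogeneous $k$-set, which together with $a_{n_j}$ is a blue-homogeneous $(k+1)$-set (done). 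Otherwise we pick $a_x$ in $J\setminus\bigcup J_j$, keeping it red-connected to all previous points.

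The main obstacle, and the reason this is genuinely more delicate than the classical case, is topological: ensuring the set $A$ we construct is actually a \emph{topological copy of $\omega^\beta$}, i.e. that its closure behaves correctly, rather than merely being order-isomorphic to $\omega^\beta$. In the classical proof "order type $\omega^{1+\beta}$" required only that $A\cap I_x$ be infinite; here we need $A$ to be closed in its supremum, so we must (i) choose the skeleton of blocks so that the blocks themselves, after closing up, form a copy of $\omega^\beta$, and (ii) at every limit ordinal $x<\omega^\beta$ arrange that $\sup\{a_{x'}:x'<x,\ x' \text{ in the skeleton cofinally}\}$ is itself realized as a chosen point $a_x$ (or is the limit of such). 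Managing this requires interleaving the recursion with a bookkeeping of "reserved cofinal copies" inside blocks indexed by limit ordinals, and verifying that a point placed to serve as such a limit can simultaneously be made red-connected to all finitely many earlier points that constrain it — which is possible because at each stage only finitely many constraints are active and each $J_j$ that fails to cover leaves an order-homeomorphic-$\omega^{\omega^\alpha}$ worth of room. Once this topological bookkeeping is set up correctly, the verification that $A\cong\omega^\beta$ and $[A]^2\subseteq c^{-1}(\text{red})$ is routine.
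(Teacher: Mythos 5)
Your high‑level intuition is right — partition $\omega^{\omega^\alpha\cdot\beta}$ into pieces homeomorphic to $\omega^{\omega^\alpha}$ and run a greedy red‑homogeneous construction, applying the hypothesis together with $P^{cl}(\omega^{\omega^\alpha})_{m}=\omega^{\omega^\alpha}$ whenever the blue‑neighbourhoods threaten to cover a piece. But two things in the paper's argument are absent from your sketch, and the second is a genuine gap.

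First, you never settle on a workable decomposition. Indexing $\omega^\beta$‑many consecutive intervals of type $\omega^{\omega^\alpha}$ only captures the ``leaves'' of the structure and leaves you no way to produce limit points at higher Cantor--Bendixson ranks. The paper instead indexes by \emph{pairs} $(x,y)$ with $x\in\beta$, taking $X_{(x,y)}=\{\omega^{\omega^\alpha\cdot(x+1)}\cdot y+\omega^{\omega^\alpha\cdot x}\cdot z\mid z\in\omega^{\omega^\alpha}\setminus\{0\}\}$ — these are the $\lhd^\ast$-intervals from the anti‑tree ordering, living at \emph{every} level $x$, and it is the points deposited into the higher‑level blocks that later furnish the limit points of the closed copy.

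Second, and more seriously, your plan to ``build the red‑homogeneous set by transfinite recursion along a fixed closed copy of $\omega^\beta$'' does not work as stated. At a limit stage $\lambda$, the point you place (or the supremum you wish to realize) must be red‑connected to \emph{every} $a_{x'}$ with $x'<\lambda$ that it dominates — infinitely many constraints, not finitely many. Your claim that ``at each stage only finitely many constraints are active'' is simply false for a recursion of length $\omega^\beta$ that also asks the set to be closed in its supremum at every limit. The paper sidesteps this entirely: $A$ is built in $\omega$ steps (so only finitely many constraints per step, exactly as in the classical proof), using a cofinal sequence $(\gamma_n)$ in $\omega^{\omega^\alpha}$ to force $A\cap X_{(x,y)}$ to be cofinal of type $\omega$ in each block; then a separate, purely structural claim shows that \emph{any} set with this thickness property already contains a closed copy of $\omega^\beta$. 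That claim is proved by a recursion on $\delta\in[1,\beta]$ building sets $C_{\delta,y}\subseteq A$ order‑homeomorphic to $\omega^\delta$, at the successor step splicing together closed copies of $\omega^x+1$ between consecutive elements $b_{n-1}<b_n$ of $A\cap X_{(x,y)}$ (an argument borrowed from the proof of Weiss's lemma), and at limit steps taking an increasing union with no top point needed. Because the thinning happens \emph{inside} an already red‑homogeneous $A$, no colour constraints are active during it. This two‑phase structure — greedy $\omega$‑recursion, then topological thinning — is the essential idea you are missing.
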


In particular, as we shall deduce in Corollary \ref{corollary:wtem}, $R^{top}(\alpha,k)$ and $R^{cl}(\alpha,k)$ are countable whenever $\alpha$ is a countable ordinal and $k$ is a 
positive integer.

Our proof follows the same outline as our proof of the classical version, except that we use intervals in the sense of the anti-tree partial ordering rather than in the usual sense. 
Furthermore, rather than constructing a closed copy of $\omega^\beta$ directly, we instead construct a larger set and then thin it out. As in the previous section, we make detailed use 
of the structure of countable ordinals, including an argument from the proof of \cite[Lemma 2.6]{baumgartner}. Note that the proof does not directly use any of our previous results.

\begin{proof}
Let $c:\left[\omega^{\omega^\alpha\cdot\beta}\right]^2\to\{\text{red},\text{blue}\}$ be a coloring.

First of all, fix a strictly increasing cofinal sequence $(\gamma_n)_{n\in\omega}$ from $\omega^{\omega^\alpha}$.

Define an indexing set of pairs
\[S=\left\{(x,y)\mid x\in\beta,\omega^{\omega^\alpha\cdot(x+1)}\cdot y\in\omega^{\omega^\alpha\cdot\beta}\right\},\]
and for each $(x,y)\in S$, let
\[X_{(x,y)}=\left\{\omega^{\omega^\alpha\cdot(x+1)}\cdot y+\omega^{\omega^\alpha\cdot x}\cdot z\mid z\in\omega^{\omega^\alpha}\setminus\{0\}\right\},\]
so $X_{(x,y)}$ is a closed copy of $\omega^{\omega^\alpha}$. Let $(x_n,y_n)_{n\in\omega}$ be a sequence of pairs from $S$ in which every member of $S$ appears infinitely many 
times. We attempt to inductively build a red-homogeneous set $A=\{a_n\mid n\in\omega\}$, which will contain a closed copy of $\omega^\beta$, with $a_n\in X_{(x_n,y_n)}$ for every 
$n\in\omega$.

Suppose that we have chosen $a_1,a_2,\dots,a_{m-1}$ for some $m\in\omega$. Let
\[P=\left\{a_n\mid n\in\{1,2,\dots,m-1\},a_n\in X_{(x_m,y_m)}\right\},\]
let
\[Q=P\cup\left\{\omega^{\omega^\alpha\cdot(x_m+1)}\cdot y_m+\omega^{\omega^\alpha\cdot x_m}\cdot\gamma_{|P|}\right\},\]
and let
\[Y=X_{(x_m,y_m)}\setminus[0,\max Q],\]
so $Y$ is a closed copy of $\omega^{\omega^\alpha}$. For each $n\in\{1,2,\dots,m-1\}$ let
\[Y_n=\{a\in Y\mid c(\{a_n,a\})=1\}.\]
If $\bigcup_{n=1}^{m-1}Y_n=Y$, then since $\omega^{\omega^\alpha}\to_{cl}(\omega^{\omega^\alpha})^1_{m-1}$, $Y_n$ contains a closed copy of $\omega^{\omega^\alpha}$ for 
some $n\in\{1,2,\dots,m-1\}$. Then since $\omega^{\omega^\alpha}\to_{cl}(\omega^\beta,k)^2$, $Y_n$ either contains a red-homogeneous closed copy of $\omega^\beta$, in which 
case we are done, or a blue-homogeneous set $B$ of $k$ points, in which case $B\cup\{a_n\}$ is a blue-homogeneous set of $k+1$ points, and we are done. Thus we may assume 
that $Y\setminus\bigcup_{n=1}^{m-1}Y_n\neq\emptyset$, and choose $a_m$ to be any member of this set.

Clearly the resulting set $A$ is red-homogeneous. To complete the proof, observe that by choice of $(x_n,y_n)_{n\in\omega}$, $A\cap X_{(x,y)}$ is infinite for all $(x,y)\in S$, and that 
by choice of $Q$, $A\cap X_{(x,y)}$ is a cofinal subset of $X_{(x,y)}$ of order type $\omega$ for all $(x,y)\in S$. We claim that this property is enough to ensure that $A$ contains a 
closed copy of $\omega^\beta$.

To prove the claim, for each $\delta\in[1,\beta]$ and each ordinal $y$ with $\omega^{\omega^\alpha\cdot\delta}\cdot y\in\omega^{\omega^\alpha\cdot\beta}$ we find a cofinal subset 
$C_{\delta,y}\subseteq\left[\omega^{\omega^\alpha\cdot\delta}\cdot y+1,\omega^{\omega^\alpha\cdot\delta}\cdot(y+1)\right)$ such that $C_{\delta,y}\subseteq A$ and $C_{\delta,y}$ is 
a closed copy of $\omega^\delta$; then $C_{\beta,0}\subseteq A$ is a closed copy of $\omega^\beta$, as required. We do this by induction on $\delta$.

First suppose $\delta\in[1,\beta]$ is a successor ordinal, say $\delta=x+1$. Fix an ordinal $y$ with $\omega^{\omega^\alpha\cdot\delta}\cdot y\in\omega^{\omega^\alpha\cdot\beta}$, 
and observe that $(x,y)\in S$ and that $X_{(x,y)}$ is a cofinal subset of the interval 
 $$ \left[\omega^{\omega^\alpha\cdot\delta}\cdot y+1,\omega^{\omega^\alpha\cdot\delta}\cdot (y+1)\right). $$
Recall now that $A\cap X_{(x,y)}$ is a cofinal subset of $X_{(x,y)}$ of order type $\omega$. Thus if $\delta=1$ then we may simply take $C_{1,y}=A\cap X_{(0,y)}$, so assume 
$\delta>1$, and write $A\cap X_{(x,y)}=\{b_n\mid n\in\omega\}$ in increasing order. For each $n\in\omega\setminus\{0\}$ we find a cofinal subset $D_n\subseteq(b_{n-1},b_n)$ such 
that $D_n\subseteq A$ and $D_n$ is a closed copy of $\omega^x$; then we may take $C_{\delta,y}=\bigcup_{n\in\omega\setminus\{0\}}D_n\cup\{b_n\}$. We do this using an argument 
essentially taken from the proof of \cite[Lemma 2.6]{baumgartner}. Fix $n\in\omega\setminus\{0\}$ and write $b_n=\omega^{\omega^\alpha\cdot(x+1)}\cdot y+\omega^{\omega^\alpha
\cdot x}\cdot z$ with $z\in\omega^{\omega^\alpha}\setminus\{0\}$. Let $v=\omega^{\omega^\alpha}\cdot y+z$ so that $b_n=\omega^{\omega^\alpha\cdot x}\cdot v$. If $v$ is a 
successor ordinal, say $v=u+1$, then by the inductive hypothesis we may take $D_n=C_{x,u}$. If $v$ is a limit ordinal, then let $(u_m)_{m\in\omega}$ be a strictly increasing cofinal 
sequence from $v$ with $\omega^{\omega^\alpha\cdot x}\cdot u_0\geq b_{n-1}$ and let $(\eta_m)_{m\in\omega}$ be a strictly increasing cofinal sequence from $\omega^x$. By the 
inductive hypothesis, for each $m\in\omega$ we may choose a subset $E_m\subseteq C_{x,u_m}$ such that $E_m$ is a closed copy of $\eta_m+1$. Then take $D_n=\bigcup_
{m\in\omega}E_m$.

Suppose instead $\delta\in[1,\beta]$ is a limit ordinal. Fix an ordinal $y$ with $\omega^{\omega^\alpha\cdot\delta}\cdot y\in\omega^{\omega^\alpha\cdot\beta}$. Let $(x_n)_
{n\in\omega}$ be a strictly increasing cofinal sequence from $\delta$. For each $n\in\omega$, let $\zeta_n$ be the ordinal such that $\delta=x_n+1+\zeta_n$ and let 
$y_n=\omega^{\omega^\alpha\cdot\zeta_n}\cdot y+1$. Then $\omega^{\omega^\alpha\cdot(x_n+1)}\cdot y_n=\omega^{\omega^\alpha\cdot\delta}\cdot y+\omega^{\omega^\alpha
\cdot(x_n+1)}$ and so $C_{x_n+1,y_n}\subseteq\left[\omega^{\omega^\alpha\cdot\delta}\cdot y+\omega^{\omega^\alpha\cdot(x_n+1)}+1,\omega^{\omega^\alpha\cdot\delta}\cdot 
y+\omega^{\omega^\alpha\cdot(x_n+1)}\cdot 2\right)$. By the inductive hypothesis, for each $n\in\omega$ we may choose a subset $D_n\subseteq C_{x_n+1,y_n}$ such that $D_n$
is a closed copy of $\omega^{x_n}+1$. Then take $C_{\delta,y}=\bigcup_{n\in\omega}D_n$.
\end{proof}

\begin{remark}\label{remark:strong}
We expect that as in the original Erd\H{o}s-Milner theorem, it should be possible to improve $k+1$ to $2k$ in Theorem \ref{theorem:wtem}. Indeed, the basic argument from 
\cite[Theorem 7.2.10]{W} works in the topological setting. The key technical difficulty appears to be in formulating and proving an appropriate topological version of statement (1) in that 
account.
\end{remark}

The weak topological Erd\H{o}s-Milner theorem allows us to obtain upper bounds for countable indecomposable ordinals. Before describing some of these, we first observe that by very 
slightly adapting our argument, we may obtain improved bounds for other ordinals. Here is a version for ordinals of the form $\omega^\beta\cdot m+1$.

\begin{theorem}\label{theorem:wtemsuccessor}
Let $\alpha$ and $\beta$ be countable nonzero ordinals, and let $k>1$ be a positive integer. If
\[\omega^{\omega^\alpha}\to_{cl}(\omega^\beta\cdot m+1,k)^2,\]
then
\[\omega^{\omega^\alpha\cdot\beta}\cdot R(m,k+1)+1\to_{top}(\omega^\beta\cdot m+1,k+1)^2.\]
\end{theorem}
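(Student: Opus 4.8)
The plan is to mirror the proof of Theorem~\ref{theorem:wtem} almost verbatim, inserting the finite Ramsey number $R(m,k+1)$ to account for the fact that the target ordinal is now $\omega^\beta\cdot m+1$ rather than an indecomposable power of $\omega$. The extra factor ``$\cdot R(m,k+1)$'' in front of $\omega^{\omega^\alpha\cdot\beta}$ is exactly what is needed to guarantee, by the classical Ramsey theorem, that among a sufficiently long red/blue-bookkept sequence of ``$\omega^\beta$-blocks'' we can find $m$ of them on which a single fixed vertex is uniformly red-connected, so that the resulting $m$ closed copies of $\omega^\beta$ can be stitched together (with their suprema) into a closed copy of $\omega^\beta\cdot m+1$.

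Here is the skeleton. Let $c\colon\left[\omega^{\omega^\alpha\cdot\beta}\cdot R(m,k+1)+1\right]^2\to\{\text{red},\text{blue}\}$ be a coloring, and let $\theta$ denote the top point $\omega^{\omega^\alpha\cdot\beta}\cdot R(m,k+1)$. Write the ordinal below $\theta$ as $R(m,k+1)$ consecutive intervals $I_0,\dots,I_{R(m,k+1)-1}$, each order-isomorphic to $\omega^{\omega^\alpha\cdot\beta}$. First, using the infinite pigeonhole principle $\omega^{\omega^\alpha\cdot\beta}\to_{cl}(\omega^{\omega^\alpha\cdot\beta})^1_2$ applied to the color $c(\{\,\cdot\,,\theta\})$, thin each $I_j$ to a closed copy $I_j'$ of $\omega^{\omega^\alpha\cdot\beta}$ on which the edge to $\theta$ has a constant color $i_j$; then apply finite pigeonhole to the finitely many values $i_j$ --- if some color class among $\{i_j\}$ has size at least $R(m,k+1)$\dots\ actually, more directly: we will only need that within each $I_j'$ we can run the construction below. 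The cleaner route: ignore $\theta$ for the moment, and in each $I_j$ run the construction from the proof of Theorem~\ref{theorem:wtem} (with $\beta$ in the role of $\beta$, i.e.\ building a red-homogeneous set containing a closed copy of $\omega^\beta$) using the hypothesis $\omega^{\omega^\alpha}\to_{cl}(\omega^\beta\cdot m+1,k)^2$ at the inductive step in place of $\omega^{\omega^\alpha}\to_{top}(\omega^\beta,k)^2$. If any such run produces a blue-homogeneous set of $k$ points we adjoin $\theta$ to get $k+1$; otherwise it produces, inside $I_j$, a red-homogeneous closed copy $A_j$ of $\omega^\beta$ (if it ever produces a red-homogeneous closed copy of $\omega^\beta\cdot m+1$ we are already done). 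Now 2-color $R(m,k+1)$ by $j\mapsto$ the color of some/most edges from $A_j$ to $\theta$; after a further pigeonhole-plus-thinning step (shrinking each $A_j$ so the edge to $\theta$ is monochromatic of color $i_j$, and passing to a color class of size $R(m,k+1)$ among the $i_j$ --- wait, $R(m,k+1)$ colors classes of size\dots) --- the correct move is: since we have $R(m,k+1)$-many copies and only two colors for the edge to $\theta$, actually we must be more careful, so instead run the single construction of Theorem~\ref{theorem:wtem} once across a sequence of $R(m,k+1)$ blocks simultaneously, exactly as in that proof but with the bookkeeping sequence $(x_n,y_n)$ ranging over pairs in which the first coordinate now also records which of the $R(m,k+1)$ copies of $\omega^{\omega^\alpha\cdot\beta}$ we are in.

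Concretely, I would index by $S=\{(j,x,y): j<R(m,k+1),\ x\in\beta,\ \omega^{\omega^\alpha\cdot(x+1)}\cdot y\in I_j\}$, build one red-homogeneous set $A$ meeting each block $X_{(j,x,y)}$ (a closed copy of $\omega^{\omega^\alpha}$) in a cofinal $\omega$-sequence, using $\omega^{\omega^\alpha}\to_{cl}(\omega^\beta\cdot m+1,k)^2$ and $\omega^{\omega^\alpha}\to_{cl}(\omega^{\omega^\alpha})^1_{m-1}$ exactly as in the proof of Theorem~\ref{theorem:wtem}; if along the way we ever get a blue set of $k$ points, adjoin an earlier vertex; if we ever get a red-homogeneous closed copy of $\omega^\beta\cdot m+1$, we are done. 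Otherwise the same closure argument as in Theorem~\ref{theorem:wtem} yields, inside $A\cap I_j$, a red-homogeneous closed copy $A_j$ of $\omega^\beta$ for each $j<R(m,k+1)$. Then: for each $j$, by the infinite pigeonhole principle applied to $c(\{\,\cdot\,,\theta\})$ on $A_j$, thin $A_j$ to keep it a closed copy of $\omega^\beta$ while making the edge to $\theta$ constant of color $i_j$. If some $i_j=\text{blue}$, then any $k$-sized blue set inside $A_j$ exists only if\dots\ no --- instead, we should have already ensured $A_j$ has no blue $k$-set (else done via $\theta$), so this doesn't immediately help; rather, if $i_j=\text{red}$ then $A_j\cup\{\theta\}$ is a red-homogeneous closed copy of $\omega^\beta+1$. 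So we need $m$ different $j$'s with $i_j=\text{red}$. This is where the finite Ramsey number enters honestly: color the complete graph on $R(m,k+1)$ vertices $\{0,\dots,R(m,k+1)-1\}$ by giving $\{j,j'\}$ the color of the edge between $\sup A_j$ and $\sup A_{j'}$ (the suprema are distinct points of the ambient ordinal); by definition of $R(m,k+1)$ there is either a blue-homogeneous set of $k+1$ indices --- yielding a blue-homogeneous $(k+1)$-set among the suprema, done --- or a red-homogeneous set $J$ of $m$ indices, and then $\bigcup_{j\in J}A_j$, together with the $m$ suprema $\{\sup A_j: j\in J\}$ and the point $\theta$, is a red-homogeneous closed copy of $\omega^\beta\cdot m+1$ (using that each $A_j\cup\{\sup A_j\}\cong\omega^\beta+1$, that edges between distinct blocks are red, that edges from each $A_j\cup\{\sup A_j\}$ to every $\sup A_{j'}$ with $j<j'$ are red --- which we can arrange by an additional thinning --- and that the top point $\theta$ sees everything red after one more pigeonhole pass).

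The main obstacle is the bookkeeping needed to simultaneously guarantee (i) each $A_j$ is red-homogeneous and red-connected to $\theta$, (ii) distinct blocks are red-connected to each other, and (iii) for $j<j'$ every point of $A_j\cup\{\sup A_j\}$ is red-connected to $\sup A_{j'}$ --- this last is the analogue of the ``asymmetry'' that made the classical Erd\H{o}s--Milner induction delicate, and in the topological setting it interacts with closure. I expect this can be absorbed by first fixing, for each $j$, the closed copy $A_j$ and its supremum, then going through the $j$'s in increasing order and at stage $j'$ applying the pigeonhole principle $\omega^\beta\to_{cl}(\omega^\beta)^1_2$ finitely many times (once for each $j<j'$, once for $\theta$) to thin $A_{j'}$ so that all the required edges from lower blocks and from $\theta$ into $A_{j'}$ become monochromatic; any resulting blue edge hands us, via an already-present blue $k$-set avoided assumption or via a direct triangle, a blue $(k+1)$-set. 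Modulo carefully ordering these thinnings, the argument is the one above, and the factor $R(m,k+1)$ is exactly the price of the final finite Ramsey step.
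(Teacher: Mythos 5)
Your proposal has the right two ingredients (the classical $R(m,k+1)$ step and re-running the Theorem~\ref{theorem:wtem} construction across the $R(m,k+1)$ blocks simultaneously), but the way you combine them leaves a genuine gap, and the ``main obstacle'' you flag yourself --- ensuring every point of $A_j\cup\{\sup A_j\}$ is red-connected to $\sup A_{j'}$ for $j<j'$, and to $\theta$ --- is not actually resolved by the thinnings you propose. The post-hoc pigeonhole pass ``thin $A_j$ to keep it a closed copy of $\omega^\beta$ while making the edge to $\theta$ constant'' needs $\omega^\beta\to_{cl}(\omega^\beta)^1_2$, which fails for a general (decomposable) countable $\beta$: for instance $P^{cl}(\omega^2)_2=\omega^3$, so a 2-coloring of a closed copy of $\omega^2$ need not have a monochromatic closed copy of $\omega^2$. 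Likewise, thinning $A_{j'}$ cannot control the edges from lower points to the \emph{fixed} supremum $\sup A_{j'}$, and your final finite-Ramsey step only colors pairs of suprema, so it says nothing about the mixed edges from interior points of $A_j$ to $\sup A_{j'}$.

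The paper's proof sidesteps all of this with one reordering. It writes the space as $M\cup N$ with $N=\{\omega^{\omega^\alpha\cdot\beta}\cdot(y+1)\mid y<R(m,k+1)\}$ the set of \emph{all} $R(m,k+1)$ block-suprema (so your $\theta$ is just one of them, not a distinguished extra point), applies the finite Ramsey number to $N$ \emph{first} to obtain either a blue-homogeneous $(k+1)$-set (done) or a red-homogeneous $m$-set $a_0,\dots,a_{m-1}\subseteq N$, and then runs the Theorem~\ref{theorem:wtem} construction \emph{starting with $a_0,\dots,a_{m-1}$ already placed in $A$} and working thereafter inside $M$. Because the construction only ever adds a new point when it is red-connected to all points already in $A$, every element eventually placed in $A$ --- in particular every point of each $C_{y_i}$ --- is automatically red-connected to every $a_j$, and the $a_j$'s are mutually red by the initial Ramsey step. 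No after-the-fact thinnings, no monochromatization of $\omega^\beta$, and no separate bookkeeping for ``lower block to higher supremum'' edges are needed: $\bigcup_{i<m}C_{y_i}\cup\{a_i\}$ is red-homogeneous by construction. If you reorganize your argument to seed $A$ with the $m$ Ramsey-selected suprema before the recursion begins, the obstacle you identified evaporates and the proof goes through.
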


\begin{proof}
Write $\omega^{\omega^\alpha\cdot\beta}\cdot R(m,k+1)+1$ as a disjoint union $M\cup N$, where
\[N=\left\{\omega^{\omega^\alpha\cdot\beta}\cdot(y+1)\mid y\in\{0,1,\dots,R(m,k+1)-1\}\right\},\]
and let $c:[M\cup N]^2\to\{\text{red},\text{blue}\}$ be a coloring.

First of all, we may assume that $N$ contains a red-homogeneous set of $m$ points, say $a_0,a_1,\dots,$ $a_{m-1}$. Now continue as in the proof of Theorem \ref{theorem:wtem} and 
attempt to build a red-homogeneous set $A=\{a_n\mid n\in\omega\}$, only start by including $a_0,a_1,\dots,a_{m-1}$, and then work entirely within $M$.

If we succeed, then the same proof as the one in Theorem \ref{theorem:wtem} shows that for each $y\in\{0,1,\dots,R(m,k+1)-1\}$, $A\setminus\{a_0,a_1,\dots,a_{m-1}\}$ contains a 
closed copy $C_y$ of $\omega^\beta$ that is a cofinal subset of the interval $\left[\omega^{\omega^\alpha\cdot\beta}\cdot y+1,\omega^{\omega^\alpha\cdot\beta}\cdot(y+1)\right]$. 
Writing $a_i=\omega^{\omega^\alpha\cdot\beta}\cdot(y_i+1)$ for each $i\in\{0,1,\dots,m-1\}$, we see that
\[\bigcup_{i=0}^{m-1}C_{y_i}\cup\{a_i\}\]
is a red-homogeneous closed copy of $\omega^\beta\cdot m+1$, as required.
\end{proof}

We conclude this section with some explicit upper bounds implied by our results. It is easy to verify similar results for ordinals of other forms.

\begin{corollary}\label{corollary:wtem}
Let $\alpha$ be a countable nonzero ordinal and let $k$, $m$ and $n$ be positive integers.
\begin{enumerate}
\item
$R^{top}(\omega^{\omega^\alpha},k+1)=R^{cl}(\omega^{\omega^\alpha},k+1)\leq\omega^{\omega^{\alpha\cdot k}}$.
\item
$R^{top}(\omega^{\omega^\alpha}+1,k+1)=R^{cl}(\omega^{\omega^\alpha}+1,k+1)\leq
\begin{cases}
\omega^{\omega^{\alpha\cdot k}}+1,&\text{if $\alpha$ is infinite}\\
\omega^{\omega^{(n+1)\cdot k-1}}+1,&\text{if $\alpha=n$ is finite.}
\end{cases}$
\item
$R^{top}(\omega^n\cdot m+1,k+2)=R^{cl}(\omega^n\cdot m+1,k+2)\leq\omega^{\omega^k\cdot n}\cdot R(m,k+2)+1$.
\end{enumerate}
\end{corollary}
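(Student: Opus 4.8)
The plan is to derive all three items by induction on $k$, reducing each inductive step to the weak topological Erd\H{o}s-Milner theorem (Theorem~\ref{theorem:wtem}) or its successor variant (Theorem~\ref{theorem:wtemsuccessor}) together with a little ordinal arithmetic. As a preliminary observation, the targets $\omega^{\omega^\alpha}$, $\omega^{\omega^\alpha}+1=\omega^{\omega^\alpha}\cdot 1+1$ and $\omega^n\cdot m+1$ are all order-reinforcing (they are of the form $\omega^\gamma$ or $\omega^\gamma\cdot m+1$ with $\gamma=\omega^\alpha$ or $\gamma=n$ nonzero), so by the classification of order-reinforcing ordinals the equalities $R^{top}=R^{cl}$ hold automatically, and it suffices to prove the displayed upper bounds for $R^{cl}$; moreover the conclusions of Theorems~\ref{theorem:wtem} and~\ref{theorem:wtemsuccessor}, being stated for $\to_{top}$, yield the corresponding $\to_{cl}$ statements for these order-reinforcing targets.

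For item (1), I would repeat the argument indicated after Theorem~\ref{theorem:wtem}. The base case $k=1$ is the trivial relation $\omega^{\omega^\alpha}\to_{top}(\omega^{\omega^\alpha},2)^2$. For the step, assuming $\omega^{\omega^{\alpha\cdot k}}\to_{top}(\omega^{\omega^\alpha},k+1)^2$, apply Theorem~\ref{theorem:wtem} with the countable nonzero ordinals $\alpha\cdot k$ and $\omega^\alpha$ in the roles of $\alpha$ and $\beta$ and with $k+1$ (which is $>1$) in the role of $k$; since $\omega^{\alpha\cdot k}\cdot\omega^\alpha=\omega^{\alpha\cdot k+\alpha}=\omega^{\alpha\cdot(k+1)}$, the conclusion is $\omega^{\omega^{\alpha\cdot(k+1)}}\to_{top}(\omega^{\omega^\alpha},k+2)^2$, as wanted.

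For item (3), I would induct on $k$ uniformly in $m$ and $n$. To bound $R^{cl}(\omega^n\cdot m+1,k+2)$ I apply Theorem~\ref{theorem:wtemsuccessor} with $\beta=n$, the given $m$, and $k+1$ in the role of $k$; this requires $R^{cl}(\omega^n\cdot m+1,k+1)\le\omega^{\omega^k}$. For $k\ge 2$ this follows from item (3) at $k-1$, as $\omega^{\omega^{k-1}\cdot n}\cdot R(m,k+1)+1<\omega^{\omega^{k-1}\cdot n+1}\le\omega^{\omega^k}$; for $k=1$ it is immediate from $R^{cl}(\omega^n\cdot m+1,2)=\omega^n\cdot m+1<\omega^\omega$. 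Theorem~\ref{theorem:wtemsuccessor} then delivers $\omega^{\omega^k\cdot n}\cdot R(m,k+2)+1\to_{top}(\omega^n\cdot m+1,k+2)^2$, which is the claimed bound.

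Item (2) follows the same template, but the exponent count is more delicate, and this is where the real (if elementary) work lies. Induct on $k$ uniformly in $\alpha$. The base case $k=1$ is the identity $R^{cl}(\omega^{\omega^\alpha}+1,2)=\omega^{\omega^\alpha}+1$, which agrees with both formulas (as $\alpha\cdot 1=\alpha$ and $(n+1)\cdot 1-1=n$). For the step (valid once $k\ge 2$), the inductive bound has the shape $R^{cl}(\omega^{\omega^\alpha}+1,k)\le\omega^{\omega^e}+1$; absorbing the ``$+1$'' via $\omega^{\omega^e}+1<\omega^{\omega^{e+1}}$ gives $\omega^{\omega^{e+1}}\to_{cl}(\omega^{\omega^\alpha}+1,k)^2$, and then Theorem~\ref{theorem:wtemsuccessor} with $e+1$ and $\omega^\alpha$ in the roles of $\alpha$ and $\beta$, with $m=1$, and with $R(1,k+1)=1$, yields $\omega^{\omega^{(e+1)+\alpha}}+1\to_{top}(\omega^{\omega^\alpha}+1,k+1)^2$. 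It remains to check that $(e+1)+\alpha$ is the exponent named in the statement: when $\alpha$ is infinite one has $e=\alpha\cdot(k-1)$ and $(e+1)+\alpha=\alpha\cdot(k-1)+1+\alpha=\alpha\cdot(k-1)+\alpha=\alpha\cdot k$, using $1+\alpha=\alpha$; when $\alpha=n$ is finite one has $e=(n+1)\cdot(k-1)-1$ and $(e+1)+\alpha=(n+1)\cdot(k-1)+n=(n+1)\cdot k-1$. This finite/infinite dichotomy, forced precisely by the failure of $1+\alpha=\alpha$ for finite $\alpha$, is exactly the one in the statement. The main obstacle is thus bookkeeping: tracking these exponents precisely, keeping in mind that Theorem~\ref{theorem:wtemsuccessor} needs its parameter $k$ to exceed $1$ (which for item (2) means starting the induction at $k=1$), and noting that both inductions ultimately rest on the trivial fact $R^{cl}(\gamma,2)=\gamma$.
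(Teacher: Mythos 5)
Your proposal is correct and rests on the same ingredients as the paper's proof: the order-reinforcing observation to identify $R^{top}$ with $R^{cl}$, Theorem~\ref{theorem:wtem} iterated for part (1), and Theorem~\ref{theorem:wtemsuccessor} for parts (2) and (3), with the same ordinal arithmetic (including the $1+\alpha=\alpha$ dichotomy producing the finite/infinite case split). The only organizational difference is that for parts (2) and (3) you run a fresh induction on $k$, applying Theorem~\ref{theorem:wtemsuccessor} at each step, whereas the paper derives both in one shot from part (1): it applies part (1) with $\alpha+1$ in place of $\alpha$ (respectively with $\alpha=1$) to obtain $\omega^{\omega^{(\alpha+1)\cdot(k-1)}}\to_{cl}(\omega^{\omega^\alpha}+1,k)^2$ (respectively $\omega^{\omega^k}\to_{cl}(\omega^n\cdot m+1,k+1)^2$) and then invokes Theorem~\ref{theorem:wtemsuccessor} a single time; since $(\alpha+1)\cdot(k-1)$ equals your $e+1$ in both the finite and infinite cases, the two routes produce identical exponents, so this is bookkeeping rather than a genuinely different argument.
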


\begin{proof}
First note that all three equalities are immediate since all ordinals considered are order-reinforcing. It remains to prove the inequalities.
\begin{enumerate}
\item\label{item:wteminduction}
This follows immediately from Theorem \ref{theorem:wtem} by induction on $k$, the case $k=1$ being trivial.
\item\label{item:wtemsuccessorinduction}
It follows from part \ref{item:wteminduction} that $\omega^{\omega^{(\alpha+1)\cdot(k-1)}}\to_{cl}(\omega^{\omega^{\alpha+1}},k)^2$ (the case $k=1$ again being trivial) and hence 
$\omega^{\omega^{(\alpha+1)\cdot(k-1)}}\to_{cl}(\omega^{\omega^\alpha}+1,k)^2$. Therefore $\omega^{\omega^{(\alpha+1)\cdot(k-1)+\alpha}}+1\to_{cl}(\omega^{\omega^\alpha}+1,
k+1)^2$ by Theorem \ref{theorem:wtemsuccessor}. If $\alpha$ is infinite, then $(\alpha+1)\cdot(k-1)+\alpha=\alpha\cdot k$, and if $\alpha=n$ is finite, then $(\alpha+1)\cdot(k-1)+
\alpha=(n+1)\cdot k-1$, as required.
\item
It follows from part \ref{item:wteminduction} that $\omega^{\omega^k}\to_{cl}(\omega^\omega,k+1)^2$ and hence $\omega^{\omega^k}\to_{cl}(\omega^n\cdot m+1,k+1)^2$. The result 
then follows from Theorem \ref{theorem:wtemsuccessor}.\qedhere
\end{enumerate}
\end{proof}

\begin{remark}
For the case in which $\alpha=n$ is a positive integer, if a cofinal version of part \ref{item:wteminduction} holds, then we could use Theorem \ref{theorem:generalomegasquared+1} to 
improve part \ref{item:wtemsuccessorinduction} to $R^{cl}(\omega^{\omega^n}+1,k+2)\leq\omega^{\omega^{n\cdot(k+1)}\cdot k}+1$.
\end{remark}

\section{Questions}\label{section:questions}

We close with a few questions. Firstly, there is typically a large gap between our lower and upper bounds, leaving plenty of room for improvement. In particular, our general lower bound 
in Proposition \ref{proposition:lowerbound} is very simple and yet is still our best bound with the exception of a couple of special cases.

Some further exact equalities could be informative. One of the key reasons why the classical results detailed in \cite{HS,haddadsabbagh,HS2,caicedohs,Mil} are more precise than our 
topological results in Sections \ref{section:steppingup}--\ref{section:omegasquared+1} is that for various $\alpha<\omega^\omega$, the computation of $R(\alpha,k)$ reduces to a
problem in finite combinatorics. Some hint of a topological version of this appears in the proof of the lower bounds of Lemmas \ref{lemma:omegaplus2lower} and 
\ref{lemma:omega2lower}, though it is far from clear how to obtain an exact equality.

\begin{question}
Is it possible to reduce the computation of $R^{top}(\alpha,k)$ or $R^{cl}(\alpha,k)$ to finite combinatorial problems, even for $\alpha<\omega^2$?
\end{question}

A partition ordinal is an ordinal $\alpha$ satisfying $\alpha\to(\alpha,3)^2$. As we have seen, $\omega$ and $\omega^2$ are partition ordinals. Other than these, every countable 
partition ordinal has the form $\omega^{\omega^\beta}$, and in the other direction, $\omega^{\omega^\beta}$ is a partition ordinal if $\beta$ has the form $\omega^\gamma$ or 
$\omega^\gamma+\omega^\delta$ \cite{schipperusord}. 

\begin{question}
Are there any countable ``topological partition'' ordinals, satisfying $\alpha\to_{top}(\alpha,3)^2$, other 
than $\omega$?
\end{question}
By our lower bound (Proposition \ref{proposition:lowerbound}), these must all have the form $\omega^{\omega^\beta}$. Since every power of $\omega$ is order-reinforcing, this 
question is equivalent to the version resulting from replacing the topological partition relation by the closed partition relation.

We expect that a strong version of the topological Erd\H{o}s-Milner theorem should hold, that is, we expect that in Theorem \ref{theorem:wtem} it should be possible to improve $k+1$ 
to $2k$. 
\begin{question}
Let $\alpha$ and $\beta$ be countable nonzero ordinals, and let $k>1$ be a positive integer. Is it the case that if 
\[\omega^{\omega^\alpha}\to_{top}(\omega^{1+\beta},k)^2,\]
then
\[\omega^{\omega^\alpha\cdot\beta}\to_{top}(\omega^{1+\beta},2k)^2?\]
\end{question}
See Remark \ref{remark:strong} for further details.

Finally, it would be nice to determine whether the existing conjecture $\omega_1\to(\alpha,k)^3$ holds, and also to discover something about the topological or closed version of this 
relation. 
\begin{question}
Is it the case that for all countable ordinals $\alpha$ and all finite $k$, $\omega_1\to_{top}(\alpha,k)^3$?
\end{question}
Again, since every power of $\omega$ is order-reinforcing, this question is equivalent if we replace the topological partition relation by the closed partition relation. If the 
answer to this question is yes, then the classical and closed relations are in fact equivalent whenever the ordinal on the left-hand side is $\omega_1$ and the ordinals on the 
right-hand side are countable.

\bibliographystyle{alpha}
\bibliography{references}

\end{document}